
\documentclass[11pt]{amsart}

\usepackage{amsmath}
\usepackage{amsthm}
\usepackage{hyperref}
\usepackage{xcolor}
\hypersetup{
	colorlinks,
	linkcolor={blue!80!black},
	citecolor={green!60!black},
	urlcolor={blue!80!black}
}
\usepackage[margin=1.0in]{geometry}
\usepackage[T1]{fontenc}
\usepackage{MnSymbol}

\numberwithin{equation}{section}

\newtheorem{prop}{Proposition}
\newtheorem{lemma}[prop]{Lemma}

\newtheorem{thm}[prop]{Theorem}
\newtheorem{cor}[prop]{Corollary}

\numberwithin{prop}{section}

\theoremstyle{definition}
\newtheorem{defn}[prop]{Definition}

\newtheorem{rmk}[prop]{Remark}
\newtheorem{qtn}[prop]{Question}

%
\DeclareSymbolFont{script}{U}{eus}{m}{n}
\DeclareSymbolFontAlphabet{\mathscr}{script}
\DeclareMathSymbol{\Wedge}{0}{script}{"5E}
\DeclareMathAlphabet{\mathrmsl}{OT1}{cmr}{m}{sl}

\newcommand{\del}{\partial}
\newcommand{\dt}{\tfrac{\partial}{\partial t}}

\newcommand{\gs}{\sigma}

\newcommand{\poiss}{{\mathrm \pi}}

\newcommand{\gk}{\kappa}

\newcommand{\gw}{\omega}
\newcommand{\ga}{\alpha}

\newcommand{\Jj}{J}
\newcommand{\Ii}{I}
\newcommand{\sj}{-}
\newcommand{\msj}{+}
\newcommand{\J}{{\mathbb J}}
\newcommand{\half}{\tfrac{1}{2}}
\newcommand{\nonhalf}{}
\newcommand{\twice}{2}
\newcommand{\nontwice}{}
\newcommand{\cc}{\frac{1}{4}}
\newcommand{\ccc}{ }
\newcommand{\CCF}{{C^{\infty}_0(M, {dV_F})}}

\newcommand{\GK}{\mathcal{GK}}
\newcommand{\KK}{\mathcal K}
\newcommand{\AK}{\mathcal{AK}}
\newcommand{\AGK}{\mathcal{AGK}}
\newcommand{\AC}{\mathcal{AC}}
\newcommand{\II}{{\mathbb I}}
\newcommand{\JJ}{{\mathbb J}}
\newcommand{\bxi}{\boldsymbol{\xi}}
\newcommand{\til}[1]{\widetilde{#1}}

\renewcommand{\bar}[1]{\overline{#1}}

\renewcommand{\i}{\sqrt{-1}}
\newcommand{\Gu}{\gamma} 

\renewcommand{\part}{\del}

\newcommand{\Scal}{\mathrm{Scal}}
\newcommand{\Gscal}{\mathrm{Gscal}}

\newcommand{\R}{{\mathbb R}}
\newcommand{\C}{{\mathbb C}}

\newcommand{\T}{{\mathbb T}}
\newcommand{\tor}{{\mathfrak t}}

\newcommand{\Lab}{{\mathrm L}}
\newcommand{\Pol}{{\mathrm P}}
\newcommand{\ang}{\theta}
\newcommand{\kom}{\omega}
\newcommand{\Hess}{{\mathrm {Hess}}}
\newcommand{\Aut}{{\mathrm {Aut}}}
\newcommand{\hred}{\mathfrak{h}_{\mathrm{red}}}

\newcommand{\la}{\langle}
\newcommand{\ra}{\rangle}

\DeclareMathOperator{\tr}{tr}
\DeclareMathOperator{\Ker}{Ker}

\DeclareMathOperator{\grad}{grad}
\DeclareMathOperator{\Vol}{Vol}

\DeclareMathOperator{\End}{End}

\DeclareMathOperator{\Ham}{Ham}

\begin{document}

\title[Geometry of the space of generalized K\"ahler structures]{The Riemannian and symplectic geometry of the space of generalized K\"ahler structures}

\begin{abstract} On a compact complex manifold $(M, J)$ endowed with a holomorphic Poisson tensor $\poiss_J$ and a deRham class $\alpha\in H^2(M, \R)$, we study the space of generalized K\"ahler (GK) structures defined by a symplectic form $F\in \alpha$ and whose holomorphic Poisson tensor is $\poiss_J$.  We define a notion of generalized K\"ahler class of such structures, and use the moment map framework of Boulanger~\cite{boulanger} and Goto~\cite{Gotomoment} to extend the Calabi program to GK geometry.  We obtain generalizations of the Futaki--Mabuchi extremal vector field~\cite{Futaki-Mabuchi} and Calabi--Lichnerowicz--Matsushima result~\cite{calabi2, Lichnerowicz, Matsushima} for the Lie algebra of the group of  automorphisms of $(M, J, \poiss_J)$.  We define a closed $1$-form on a GK class, which yields a generalization of the Mabuchi energy and thus a variational characterization of GK structures of constant scalar curvature.  Next we introduce a formal Riemannian metric on a given GK class, generalizing the fundamental construction of Mabuchi--Semmes--Donaldson~\cite{Mabuchi,Semmes,donaldson-GIT}.  We show that this metric has nonpositive sectional curvature, and that the Mabuchi energy is convex along geodesics, leading to a conditional uniqueness result for constant scalar curvature GK structures.  We finally examine the toric case, proving the uniqueness of extremal generalized K\"ahler structures and showing that their existence is obstructed by the uniform relative K-stability of the corresponding Delzant polytope.  Using the resolution of the Yau--Tian--Donaldson conjecture in the toric case by Chen--Cheng~\cite{CC} and He~\cite{He}, we show in some settings that this condition suffices for existence and thus construct new examples.
\end{abstract}

\date{February 22, 2023}

\author{Vestislav Apostolov}
\address{V.\,Apostolov\\ D{\'e}partement de Math{\'e}matiques\\ UQAM \\
 and \\ Institute of Mathematics and Informatics\\ Bulgarian Academy of Sciences}
\email{\href{mailto:apostolov.vestislav@uqam.ca}{apostolov.vestislav@uqam.ca}}

\author{Jeffrey Streets}
\address{J.\,Streets \\ Rowland Hall\\
        University of California\\
        Irvine, CA 92617}
\email{\href{mailto:jstreets@uci.edu}{jstreets@uci.edu}}

\author{Yury Ustinovskiy}
\address{Y.\,Ustinovskiy \\ Chandler-Ullmann Hall\\ Lehigh University, Bethlehem, PA, 18015}
\email{\href{mailto:yuu221@lehigh.edu}{yuu221@lehigh.edu}}

\maketitle

\section{Introduction}

E.\,Calabi~\cite{calabi} initiated a far reaching  program of finding, on a given compact K\"ahler manifold $(M, \Jj)$, a canonical representative of the space ${\KK}_{\alpha}$ of K\"ahler metrics belonging to a fixed deRham class $\alpha \in H^2(M, \R)$.  He proposed as a candidate of such  representative the notion of  \emph{extremal K\"ahler metric}, i.e., one whose scalar curvature ${\Scal}_{\omega}$ defines a Hamiltonian vector field $\chi = -\omega^{-1}(d{\Scal}_{\omega})$ satisfying ${\mathcal L}_{\chi} \Jj=0$.  This problem unifies the existence problems for constant scalar curvature (csc) and K\"ahler--Einstein metrics and represents one of the most active areas of research in K\"ahler geometry during the  last half-century.  The central conjecture in the field, still open in full generality, is the \emph{Yau--Tian--Donaldson} (YTD) conjecture.  It states, broadly speaking, that the full obstruction for ${\KK}_{\alpha}$ to admit an extremal K\"ahler metric can be expressed in terms of a complex-algebraic notion of stability of $(M, \Jj, \alpha)$~\cite{donaldsonJDG-02, Sz, Tian,Yau}.  This correspondence, if established, will have further deep implications for the definition of well-behaved moduli spaces of K\"ahler manifolds~\cite{Dervan-Naumann, Fujiki-Schumacher}. 

Only two years after Calabi's seminal paper appeared, an extension of K\"ahler geometry emerged from studies in $(2,2)$ supersymmetric quantum field theory in physics~\cite{Gates}.  These geometric structures were later rediscovered, and given the name of  \emph{generalized K\"ahler (GK) structures}, in the context of Hitchin's generalized geometry program \cite{Gualtieri-PhD,Gualtieri-CMP,HitchinGCY}.  In the ensuing decades it has become clear that GK geometry is a deeply structured extension of K\"ahler geometry with novel implications for complex, symplectic and Poisson geometry.  In this paper we motivate a natural extension of the Calabi program to the setting of GK structures compatible with a given holomorphic Poisson tensor $\poiss_J$  and a deRham class $\alpha\in H^2(M, \R)$.

\subsection{Generalized K\"ahler structures of symplectic type}

To begin we first recall the classical \emph{biHermitian} definition of generalized K\"ahler structure~\cite{Gates}.  Here a GK structure consists of a quadruple $(g, b, I, J)$ of a Riemannian metric $g$ with compatible integrable complex structures $I$, $J$, and a $2$-form $b$,  satisfying
\begin{align*}
- d^c_{\Ii} \gw_{\Ii} = db = d^c_{\Jj} \gw_{\Jj},
\end{align*}
where $\gw_I=gI, \, \gw_J=gJ$ are the fundamental $2$-forms of $(g, I)$ and $(g, J)$.
The case when $I=J$ and $b=0$  gives rise to a K\"ahler metric $(g, \Jj)$.  A key point, observed by Hitchin~\cite{HitchinPoisson}, is that for any GK structure the tensor
\[  \poiss := \tfrac{1}{2} [I, J]g^{-1}\]
is a bivector which defines a real Poisson structure whereas  the complex bivectors
 \[ \poiss_{\Jj} := \poiss - \i \Jj \poiss , \qquad \poiss_{\Ii} := \poiss - \i \Ii\poiss  \]
are \emph{holomorphic Poisson structures} defined respectively on $(M, \Jj)$ and $(M, \Ii)$.
We will be interested in the special case of GK structures of \emph{symplectic type}.  In this setting we fix a complex manifold $(M, J)$ and consider a symplectic form $F$ on $M$ which \emph{tames} $J$, i.e.,such that
\[-F\Jj = g + b, \]
where $g$ is a Riemannian metric and $b$ is a $2$-form on $M$.  Given this, we define a second $g$-compatible almost complex structure by
\[ \Ii =  - F^{-1} \Jj^* F.\]
If $\Ii$ is also integrable, then $(g, b, I, J)$ is GK as defined above.  Note that for a fixed complex structure $\Jj$, the entire quadruple $(g, b, I, J)$ is encoded by $F$, while the integrability of $\Ii$ places a further nonlinear, first-order differential condition on $F$.  We denote by  ${\GK}_{\poiss, \alpha}$ the space of such symplectic type generalized K\"ahler structures, compatible with the holomorphic Poisson manifold $(M, \Jj, \poiss_{\Jj})$ and satisfying $[F] = \alpha$.  In the case when $\poiss_J=0$, the space  ${\GK}_{0, \alpha}$ is just the space ${\KK}_{\alpha}$ of K\"ahler metics in a fixed cohomology class.

A fundamental issue in extending the Calabi program to this setting is the nonlinear structure of ${\GK}_{\poiss, \alpha}$.  In the K\"ahler case, using the $dd^c_{\Jj}$-lemma, the space ${\KK}_{\alpha}$ is a convex-linear, Fr\'echet  manifold modeled on $C^{\infty}(M,\R)/\R$.  Such a description is no longer globally possible in the GK case due to the integrability condition on $\Ii$ (cf.\,\cite{BGZ,LindstromGKpotential} for results on \emph{local} generalized K\"ahler potentials).  Nonetheless, a natural notion of \emph{generalized K\"ahler class} has now emerged \cite{BGZ,GiS,Gualtieri-Hamiltonian}, which here consists of GK structures defined by deforming an element $F_0\in {\GK}_{\poiss, \alpha}$ by a smooth path of functions $\phi_t \in  C^{\infty}(M,\R)/\R$ in the following nonlinear way~\cite{Gualtieri-Hamiltonian}:
\[
	F_{\phi_t} := F_0 + \int_{0}^t dd^c_{\Ii_s} \phi_s \, ds, \qquad \Ii_s := (\Phi_s)_* I_0 (\Phi_s)_*^{-1},
\]
where $\Phi_t$ is the isotopy of diffeomorphisms corresponding to the time dependent vector field $\sj\nonhalf\poiss(d\phi_t)$. It turns out that if $F_{\phi_t}$ tames $\Jj$ (an open condition), then  $(F_{\phi_t}, \Jj)$ gives rise to an element of ${\GK}_{\poiss, \alpha}$ with $\Ii_t$ as defined above. In Proposition~\ref{p:gk_class} we prove that any GK structure $F\in\GK_{\pi,\alpha}$ in the $C^\infty$ path-connected component of $F_0$ can be obtained by the above deformation. In what follows, we denote this path-connected component by $\GK_{\poiss,\alpha}^0(F_0)$, or simply $\GK_{\poiss,\alpha}^0$ if the underlying base point is clear.  Thus the space $\GK_{\poiss,\alpha}^0$ will be referred to as  a \emph{generalized K\"ahler class} and we show in Lemma~\ref{l:commuting} that $\GK_{\poiss,\alpha}^0$ is an integrable (formal) submanifold of a distribution of vector fields on a formal Fr\'echet manifold.
Our point of view in this paper is that  $\GK_{\poiss,\alpha}^0$ is the right substitute of the K\"ahler class ${\KK}_{\alpha}$ in the symplectic type generalized K\"ahler setting.

 \subsection{Scalar curvature as moment map}

The second problem of extending the Calabi problem to the space $\GK_{\poiss,\alpha}^0$ stems from the fact that  in the generalized K\"ahler setting, there is no connection preserving all the structure, and thus there is no obvious way to define a scalar curvature.  Such a definition was first proposed by Boulanger~\cite{boulanger}, building on an unpublished work of Gauduchon~\cite{gauduchon-GIT}.  It uses the approach developed  by Fujiki~\cite{fujiki-GIT} and Donaldson~\cite{donaldson-GIT} in the K\"ahler case, who recast the Calabi program as a formal GIT problem of finding zeroes of a momentum map.  In their set-up, the ``manifold'' is the space ${\AK}_{\omega_0}$ (a formal Fr\'echet manifold) of all almost complex structures  $\Jj$ on $M$ compatible with a fixed symplectic form $\omega_0$, acted upon by the group ${\rm Ham}(M,\omega_0)$ of $\omega_0$-Hamiltonian diffeomorphisms.
It turns out that  ${\AK}_{\omega_0}$ admits a formal K\"ahler structure $({\bf \Omega}, {\bf \Jj})$ such that ${\rm Ham}(M, \omega_0)$ acts in a  Hamiltonian way with momentum map  identified,   at any integrable almost complex structure $\Jj \in {\AK}_{\omega_0}$,  with
\[
    \langle {\boldsymbol \mu}(\Jj), f \rangle = -\int_M {\Scal}_{(\omega_0, \Jj)}  f dV_{\omega_0}, \qquad f\in C^{\infty}(M,\R), \,\  \int_M f dV_{\omega_0}=0,
\]
where ${\Scal}_{(\omega_0, \Jj)}$ is the scalar curvature of the corresponding K\"ahler structure $(\omega_0, \Jj)$ and the Lie algebra of ${\rm Ham}(M, \omega_0)$ is identified with the vector space of zero mean smooth functions endowed with  the Poisson bracket with respect to $\omega_0$. Using Moser's lemma, ${\KK}_{[\omega_0]}$ can be mapped to a subset of  ${\AK}_{\omega_0}$.  As
observed in \cite{donaldson-GIT},  the image of  ${\KK}_{[\omega_0]}$  belongs to the  \emph{complexified orbit} of ${\rm Ham}(M, \omega_0)$  and is transversal to the orbits of ${\rm Ham}(M, \omega_0)$ inside this complexified orbit. Thus, the Calabi problem becomes the  familiar GIT problem of finding zeros  $\Jj$ of $\boldsymbol{\mu}$  in a given complexified orbit of the group action.  

In~\cite{boulanger}, the Donaldson--Fujiki setting is extended to the space ${\AGK}_{F_0}$ of \emph{almost generalized K\"ahler structures of symplectic type}  compatible with a fixed symplectic form  $F_0$, i.e.,the space of all almost complex structures $\Jj$ on $M$ such that $F_0$ \emph{tames} $\Jj$.    It is shown in \cite{boulanger} that ${\AGK}_{F_0}$ admits a formal K\"ahler structure such that ${\AK}_{F_0}$ is a formal K\"ahler submanifold;  when $(M, F_0, \T)$ is a compact toric manifold,  \cite{boulanger} also  proves that the equivariant part ${\rm Ham}^{\T}(M, F_0)$ acts in a Hamiltonian way on the invariant part ${\AGK}^{\T}_{F_0}$.  Furthermore,  in the $4$-dimensional toric case,  \cite{boulanger}  identifies the corresponding momentum map $\boldsymbol{\mu}(\Jj)$  for certain generalized K\"ahler structures $\Jj\in{\AGK}^{\T}_{F_0}$ with the smooth function
\begin{equation}\label{e:GK-scal-0}
{\Gscal}_{(F_0, \Jj)} :=  {\Scal}_g - \frac{1}{12}|db|^2_g + 2\Delta_g \Psi - |d\Psi|_g^2, \qquad \Psi = -\log\frac{dV_{F_0}}{dV_g}, \end{equation}
where ${\Scal}_g$ denotes the scalar curvature of $g$,  $\Delta_g = -d^*d$ is the corresponding Laplace operator, and $dV_{F_0}$ and $dV_g$ are  the volume forms associated to $F_0$ and $g$, respectively.  Surprisingly, this function precisely corresponds to the density of the string effective action for $H = db$ \cite{Polchinski}, and is thus a natural candidate for scalar curvature on physical grounds.

The approach of \cite{boulanger} was generalized by Goto~\cite{Gotomoment} who established without any restrictions on $(M, F_0)$ that ${\rm Ham}(M, F_0)$ acts in a Hamiltonian way on ${\AGK}_{F_0}$.  The associated moment map was taken to be the definition of scalar curvature, and Goto furthermore gave an expression for this curvature in terms of the underlying generalized complex structures $(\JJ, \II)$ on $TM\oplus T^*M$, which are associated to the biHermitian data $(g, b, I, J)$ via Gualtieri's map~\cite{Gualtieri-PhD}.  Despite this formula, there is no straightforward way to express Goto's scalar curvature in terms of the underlying biHermitian geometry, as it relies on local sections of the underlying generalized canonical bundles.  Our first main result resolves the apparent ambiguities between the different approaches to scalar curvature and confirms that the moment map of \cite{Gotomoment} in the general integrable case is given by~\eqref{e:GK-scal-0}:
\begin{thm}[Theorem~\ref{t:gscal_equiv}]
    Goto's moment map $\boldsymbol{\mu}$ computed at a generalized K\"ahler structure $J$ is given by the formula \eqref{e:GK-scal-0}.
\end{thm}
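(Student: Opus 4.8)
The plan is to compute $\boldsymbol{\mu}$ directly from Goto's definition and match it term by term with \eqref{e:GK-scal-0}. Goto's scalar curvature is spinorial in nature: it is assembled from the pair of generalized complex structures $(\JJ, \II)$ associated to $(g,b,I,J)$ by Gualtieri's map \cite{Gualtieri-PhD}, and, crucially, from a local nonvanishing pure spinor $\Phi$ trivializing the generalized canonical bundle of $\JJ$ together with the associated generalized Dirac operators. The first step is to fix, over a coordinate chart, an explicit such $\Phi$ adapted to the underlying complex geometry of $(M,\Jj)$: using Gualtieri's correspondence for a structure of symplectic type, $\Phi$ can be written in closed form in terms of a local $\Jj$-holomorphic volume form $\Omega$ and the two-form $b$, as a $b$-field transform of $\Omega$. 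This converts the abstract operators in Goto's formula into concrete differential operations on $\Omega$, $b$ and the metric $g$.

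The second step is to evaluate Goto's operators on $\Phi$ and to organize the result around the Bismut connections $\N^{\pm} = \N^g \pm \tfrac12 g^{-1} db$, which are the natural connections compatible with the generalized K\"ahler data. Differentiating $\Phi$ produces a generalized Lee-type one-form; feeding this through the second-order operator that defines $\boldsymbol{\mu}$ yields the Ricci curvature of $\N^{\pm}$, whose $g$-trace is, by the standard identity for metric connections with totally skew torsion $db$ and the conventions of \cite{Gotomoment}, precisely ${\Scal}_g - \tfrac{1}{12}|db|^2_g$. The remaining terms come from a single source: the holomorphic trivialization $\Omega$ is not adapted to the Riemannian volume $dV_g$, and the logarithmic mismatch between $dV_g$ and the symplectic volume $dV_{F_0}$ is exactly the dilaton $\Psi = -\log(dV_{F_0}/dV_g)$. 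Tracking how $\Psi$ enters upon differentiating $\Phi$ twice produces the dilaton contribution $2\Delta_g \Psi - |d\Psi|_g^2$, completing the match with \eqref{e:GK-scal-0}.

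I expect the principal difficulty to be exactly this passage from the trivialization-dependent, spinorial quantities in Goto's definition to the globally defined tensorial expression \eqref{e:GK-scal-0}, together with the bookkeeping of the volume mismatch. Two points require care. First, $\Phi$ is determined only up to multiplication by a nonvanishing complex function, and one must verify that the contributions of this scaling cancel, so that the answer is independent of the choice of $\Omega$ and descends to a globally well-defined function; the invariant combination $2\Delta_g \Psi - |d\Psi|_g^2$ is what survives this cancellation. Second, the numerical normalizations---the coefficient $\tfrac{1}{12}$ multiplying $|db|_g^2$ and the zero-mean condition that characterizes $\boldsymbol{\mu}$ as a moment map for $\Ham(M,F_0)$ paired against $dV_{F_0}$---must be pinned down using the precise curvature identities for connections with skew torsion. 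Once the computation is reorganized in terms of $\N^{\pm}$ and $\Psi$, the remaining work is a direct if lengthy calculation, and the agreement of the outcome with the density of the string effective action \cite{Polchinski} provides an independent consistency check on the identification with \eqref{e:GK-scal-0}.
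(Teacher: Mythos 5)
Your strategy---evaluating Goto's moment map directly from its spinorial definition via an explicit local trivialization of the generalized canonical bundle $K_{\JJ}$---is genuinely different from the paper's, but it breaks down at the very first step. The canonical bundle of $\JJ=\JJ_{P,Q}$ is \emph{not} generated by a $b$-field transform $e^{b}\wedge\Omega$ of a local $\Jj$-holomorphic volume form: such a spinor has lowest-degree component of degree $n$ and corresponds to a generalized complex structure of constant type $n$ (complex type), which happens only when $\poiss_J=0$, i.e.\ in the K\"ahler case. For a symplectic type GK structure the $T^*M\to TM$ block of $\JJ$ is $QF^{-1}=\tfrac{1}{2}(I-J)g^{-1}$, so the type of $\JJ$ at a point is $n-\tfrac{1}{2}\rank(I-J)$; the correct local generator is a Poisson ($\beta$-field) transform built from $\poiss_J$ acting on $\Omega$, its lowest-degree component drops in degree where $\poiss_J$ has higher rank, and the type \emph{jumps} along the degeneracy locus of $\poiss$. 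There is therefore no single closed-form ansatz for $\Phi$ valid across that locus, and the computation you outline (differentiate $\Phi$, reorganize around the Bismut connections and the dilaton $\Psi$) cannot be carried out uniformly near points where $\rank(I-J)$ changes. This type-jumping is precisely why Goto's scalar curvature is hard to express in biHermitian terms, and it is the obstacle your proposal does not address. (Your identification of the dilaton $\Psi$ and of the target $\Scal_g-\tfrac{1}{12}|db|^2_g$, and your insistence on checking independence of the trivialization, are all on the right track---but they presuppose a uniform spinor that does not exist.)

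The paper sidesteps the direct spinorial computation entirely. It starts from Goto's own evaluation of the moment map in the \emph{nondegenerate} case ($I\pm J$ invertible, where $\JJ$ has constant type $0$), namely $\Gscal^{\mathrm{Goto}}_{(F,J)}=\tr_F\bigl(d(Fg^{-1}d\Phi)\bigr)$ with $\Phi=\tfrac{1}{2}\log\det(I-J)-\tfrac{1}{2}\log\det(I+J)$ \cite{Gotomoment}, converts this to \eqref{e:GK-scal-0} using the Lee-form identities of \cite{AFSU} (in particular $I\theta_I+J\theta_J=\tfrac{1}{2}(I+J)\,d\log\det(I+J)$ and the formula for $\Scal_g-\tfrac{1}{12}|db|^2_g$ in terms of $\log\det(I\pm J)$), and then removes the nondegeneracy hypothesis by the local approximation technique of \cite{AFSU}: after multiplying by a flat factor (harmless, since $\Gscal^{\mathrm{Goto}}$ is additive under products and vanishes on flat structures), any GK structure is locally, on an open dense set, a $C^{2,\alpha}$-limit of GK structures with $I\pm J$ invertible, and both sides of the identity depend continuously on the $2$-jet of the data. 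To rescue your route you would have to either restrict to strata where $\rank(I-J)$ is locally constant and then argue the identity of smooth functions extends across the degeneracy set, or import an approximation step of exactly this kind---at which point one may as well take Goto's nondegenerate formula as the starting point, as the paper does.
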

\noindent The proof relies on the local nondegenerate approximation technique introduced in \cite{AFSU}.  We thus refer to the function ${\Gscal}_{(F, \Jj)}$ associated to a symplectic type generalized K\"ahler structure $(F, \Jj)$ via \eqref{e:GK-scal-0} as the \emph{generalized K\"ahler scalar curvature} of  $(F, \Jj)$.

\subsection{Variational formulations and obstructions}

Based on the discussion of the previous subsection, Goto's moment map suggests natural generalizations of the extremal and csc metrics in K\"ahler geometry.  In particular, a generalized K\"ahler structure for which the vector field $\chi:=-F^{-1}\left(d{\Gscal}_{(F, \Jj)}\right)$ preserves $\Jj$ will be called \emph{extremal}, and a generalized K\"ahler structure for which  ${\Gscal}(F, \Jj) = \mathrm{const}$ will be called a \emph{cscGK} structure.  As an initial fundamental step in understanding the existence and uniqueness of extremal GK structures, we show in Lemma~\ref{l:complex-orbit}
 that the space $\GK_{\poiss,\alpha}^0$ is mapped via Moser's lemma into a formal ``complexified orbit'' for the action of $\Ham(M, F_0)$ on ${\AGK}_{F_0}$, being transversal to the $\Ham(M, F_0)$-orbits inside it. This is precisely as in the description \cite{donaldson-GIT} in the K\"ahler setting.  Thus we have the familiar GIT setup and we naturally arrive at the following:

\medskip

\noindent
 {\bf Generalized K\"ahler Calabi Program.}  {\it
 Express the obstructions to the existence of extremal generalized K\"ahler structures in $\GK^{0}_{\poiss, \alpha}$ in the form of a complex algebraic notion of stability of $(M, \Jj, \poiss_{\Jj}, \alpha)$.  When they exist show that such structures are unique up to the action of the connected component of the identity ${\Aut}_0(\Jj, \poiss_{\Jj})$ of the automorphism group ${\rm Aut}(\Jj, \poiss_{\Jj})$ of $(M, J, \poiss_{\Jj})$.}

\medskip

As a direct consequence of the GIT framework we provide an alternative characterization of extremal metrics as critical points of a Calabi functional $\mathbf{Ca}$ on $\GK_{\poiss,\alpha}$, see Definition~\ref{d:Calabi} and Proposition~\ref{p:Calabi}. Next, in Section~\ref{ss:Mabuchi} we introduce a proxy for the Mabuchi energy by defining a  closed 1-form $\boldsymbol{\tau}$ on $\GK_{\poiss,\alpha}$ which vanishes at cscGK structures.  A primitive of this 1-form would provide an analogue of Mabuchi's $K$-energy, which is a central object in the YTD conjecture concerning the K\"ahler Calabi Problem. 

Next we prove a structure theorem for the complex Lie group ${\Aut}(J, \poiss_J)$ of automorphisms of the complex Poisson manifold $(M, J, \poiss_J)$, similar to the well-known results by Matsushima~\cite{Matsushima} in the K\"ahler--Einstein case, Lichnerowicz~\cite{Lichnerowicz} in the constant scalar curvature K\"ahler case, and Calabi~\cite{calabi2} in the extremal K\"ahler case.  Such results are obtained by Goto~\cite{GotoLichne} for the Lie algebra $\mathfrak{g_0}$ of reduced automorphisms of one of the generalized complex structures associated to a symplectic type cscGK structure.  In the special case when the first Betti number of $M$ is zero, and $(F,J)$ is a small Poisson deformation of a K\"ahler structure, Goto proved that $\mathfrak g_0$ is the Lie algebra of holomorphic vector fields preserving the underlying Poisson tensor.
Compared to this, our result is obtained as a direct corollary of our formal GIT framework, which has the advantages of treating immediately the case of extremal GK structures, and being directly formulated in terms of the complex Poisson manifold $(M, J, \poiss_J)$ and its \emph{reduced} automorphism group $\mathrm{Aut}_{\mathrm{red}}(J,\poiss_J)$ (see Definitions~\ref{d:hred} and~\ref{d:gred})  without any extra technical assumptions, thus sharpening \cite[Theorems 6.5 \& 9.11]{GotoLichne}. 

\begin{thm}[Calabi-Lichnerowicz-Matsushima obstruction, Theorem~\ref{t:Calabi-Lichne-Matsushima}]
Suppose $F \in {\GK}_{\poiss, \alpha}$ is an extremal generalized K\"ahler structure with the holomorphic extremal vector field
\[
\chi = -F^{-1}(d\Gscal_{(F, J)}).
\]
Denote by ${\Aut}_{\rm red}(J, \poiss_J)^{\chi}\subset {\Aut}_{\rm red}(J, \poiss_J)$ the connected subgroup preserving $\chi$. Then the group 
\[
K_0:=({\Aut}_{\rm red}(J, \poiss_J)\cap \Ham(M,F))_0
\]
is a maximal compact connected subgroup of ${\Aut}_{\rm red}(J, \poiss_J)$ and ${\Aut}_{\rm red}(J, \poiss_J)^{\chi}=K_0^\C$.
In particular, $(F, J)$  must be invariant under a maximal real torus in ${\Aut}_{\rm red}(J, \poiss_J)$ containing the one-parameter subgroup $\exp(t\chi)$. If $F$ is cscGK,  then $\chi=0$ and  ${\Aut}_{\rm red}(J, \poiss_J)= K_0^{\C}$.
\end{thm}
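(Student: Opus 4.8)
The plan is to work entirely inside the formal GIT picture set up above, identifying $\hred=\mathrm{Lie}(\Aut_{\rm red}(J,\poiss_J))$ with the stabilizer of $J$ in the complexified Lie algebra of $\Ham(M,F)$ under the complexified--orbit description of Lemma~\ref{l:complex-orbit}. Concretely, I would realize $\mathfrak{k}_0=\mathrm{Lie}(K_0)$ as the $F$-Hamiltonian real holomorphic vector fields preserving the GK data, and $\hred$ as their complexification through the complexified $\Ham(M,F)$-action on $\AGK_F$, so that the infinitesimal action of $Z=X_f+iX_g$ (with $f,g$ real, zero-mean Hamiltonians) on $J$ is $\mathcal{L}_{X_f}J+\mathbf{J}\,\mathcal{L}_{X_g}J$, where $\mathbf J$ is the formal complex structure on $\AGK_F$. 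Since $\chi=-F^{-1}(d\Gscal_{(F,J)})$ is $F$-Hamiltonian with potential $\Gscal_{(F,J)}$, preserves $J$ by the extremal hypothesis, and preserves $\poiss_J$, one first checks $\chi\in\mathfrak{k}_0$ and $\exp(t\chi)\subset K_0$. I would also invoke the generalized Futaki--Mabuchi theorem proved earlier: $\chi$ is the $L^2(dV_F)$-projection of $\Gscal_{(F,J)}$ onto the potentials of $\mathfrak{k}_0$, hence is intrinsic to the GK class and central in $\mathfrak{k}_0$, and by its canonicity every maximal compact subgroup of $\Aut_{\rm red}(J,\poiss_J)$ centralizes $\chi$.

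The heart of the argument is the reductive decomposition $\hred^{\chi}=\mathfrak{k}_0\oplus i\mathfrak{k}_0$. Take $Z=X_f+iX_g\in\hred^{\chi}$, so the stabilizing relation reads $\mathcal{L}_{X_f}J=-\mathbf{J}\,\mathcal{L}_{X_g}J$. Pairing with $\mathcal{L}_{X_g}J$ against the formal K\"ahler form $\mathbf{\Omega}$ and using $\mathbf{g}=\mathbf{\Omega}(\cdot,\mathbf J\cdot)$ gives $\lVert\mathcal{L}_{X_g}J\rVert^2=\mathbf{\Omega}(\mathcal{L}_{X_f}J,\mathcal{L}_{X_g}J)$. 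Now $\mathcal{L}_{X_g}J$ is the infinitesimal action of $g$, so $\iota_{\mathcal{L}_{X_g}J}\mathbf{\Omega}=-d\langle\boldsymbol\mu,g\rangle$, and by equivariance of Goto's moment map $\boldsymbol\mu(J)=\Gscal_{(F,J)}$ (Theorem~\ref{t:gscal_equiv}) the right-hand side equals $\langle\boldsymbol\mu,\{f,g\}_F\rangle=\int_M\Gscal_{(F,J)}\{f,g\}_F\,dV_F$. This is exactly where extremality enters: an integration by parts against the $\Ham$-invariant volume $dV_F$ rewrites this as $-\int_M f\,\{\Gscal_{(F,J)},g\}_F\,dV_F=-\int_M f\,\chi(g)\,dV_F$, and the hypothesis $[\chi,X_g]=0$ forces $\{\Gscal_{(F,J)},g\}_F=0$. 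Hence $\lVert\mathcal{L}_{X_g}J\rVert^2=0$, so $\mathcal{L}_{X_g}J=0$ and then $\mathcal{L}_{X_f}J=0$; a parallel check shows both fields preserve $\poiss_J$, giving $Z\in\mathfrak{k}_0\oplus i\mathfrak{k}_0$. The reverse inclusion is immediate, so $\hred^{\chi}=\mathfrak{k}_0^{\C}$, i.e.\ $\Aut_{\rm red}(J,\poiss_J)^{\chi}=K_0^{\C}$ and $\mathfrak{k}_0$ is a compact real form.

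It remains to assemble the structural conclusions. Since $K_0$ is the maximal compact of the reductive group $K_0^{\C}=\Aut_{\rm red}(J,\poiss_J)^{\chi}$, and since by the first paragraph every maximal compact subgroup of $\Aut_{\rm red}(J,\poiss_J)$ centralizes $\chi$ and therefore lies in $\Aut_{\rm red}(J,\poiss_J)^{\chi}=K_0^{\C}$, I conclude that $K_0$ is maximal compact in the whole of $\Aut_{\rm red}(J,\poiss_J)$. Because $K_0\subset\Aut(J,\poiss_J)\cap\Ham(M,F)$ preserves both $J$ and $F$, any maximal real torus of $K_0$ containing $\exp(t\chi)$ leaves $(F,J)$ invariant, which is the ``in particular'' statement. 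Finally, in the cscGK case $\Gscal_{(F,J)}$ is constant, so $d\Gscal_{(F,J)}=0$ and $\chi=0$; then $\Aut_{\rm red}(J,\poiss_J)^{\chi}$ is the entire group and the decomposition collapses to $\Aut_{\rm red}(J,\poiss_J)=K_0^{\C}$, recovering full reductivity.

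\textbf{Main obstacle.} The delicate step is the reductive decomposition of the second paragraph, since in the generalized setting there is no single connection preserving the structure and the curvature functional is Goto's $\Gscal$ rather than $\Scal$. Carrying out the moment-map integration by parts rigorously requires the explicit form of $(\mathbf{\Omega},\mathbf{J})$ on $\AGK_F$ together with the equivariance guaranteed by Theorem~\ref{t:gscal_equiv}; and, crucially, one must verify that the imaginary-part field $X_g$ preserves not merely $J$ but also the holomorphic Poisson tensor $\poiss_J$, so that $Z$ genuinely lands in $\hred$ and not in a larger algebra of holomorphic fields. Matching the abstract reduced automorphism group of $(M,J,\poiss_J)$ (Definitions~\ref{d:hred} and~\ref{d:gred}) with the analytic stabilizer produced by the complexified-orbit picture is the second source of friction, and is what makes the statement sharper than a purely Lie-theoretic consequence.
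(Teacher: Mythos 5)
Your second paragraph is essentially a correct, hands-on proof of the identity $\hred(J,\poiss_J)^{\chi}=\mathfrak{k}(F,J)^{\C}$: the chain $\lVert\mathcal{L}_{X_g}J\rVert^2_{\boldsymbol{g}}=\boldsymbol{\Omega}(\mathcal{L}_{X_f}J,\mathcal{L}_{X_g}J)=\int_M\Gscal_{(F,J)}\{f,g\}_F\,dV_F=-\int_M f\,\{\Gscal_{(F,J)},g\}_F\,dV_F$ combined with $[\chi,Z]=0$ does force $\mathcal{L}_{X_g}J=\mathcal{L}_{X_f}J=0$, and your handling of the cscGK case is fine. This is a genuinely more explicit route than the paper, which does not carry out this computation: it only verifies that the stabilizer of an integrable $J$ under the complexified infinitesimal $\Ham(M,F)$-action is exactly $\hred(J,\poiss_J)$ (the point you flag as friction: if $X=-F^{-1}(d\phi)-JF^{-1}(d\psi)$ preserves $J$, then $\mathcal{L}_XF=-dd^c_I\psi$ is of $I$-type $(1,1)$, whence $\mathcal{L}_X\poiss=0$ by the algebraic constraint \eqref{algebraic}) and then invokes the abstract moment-map structure theorem of Lee--Sturm--Wang applied to $\lVert\boldsymbol{\mu}\rVert^2$.

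There is, however, a genuine gap in your deduction that $K_0$ is a \emph{maximal} compact connected subgroup. You base it on the assertion that ``by its canonicity every maximal compact subgroup of $\Aut_{\rm red}(J,\poiss_J)$ centralizes $\chi$.'' The Futaki--Mabuchi invariance (Theorem~\ref{t:GK-extremal}) does not give this: the extremal field is canonical only relative to a chosen maximal torus, and two maximal compacts $K$ and $K'=gKg^{-1}$ centralize the \emph{conjugate} fields $\chi$ and $\mathrm{Ad}_g\chi$, which need not coincide when $\chi\neq 0$. More to the point, the equality $\hred^{\chi}=\mathfrak{k}(F,J)^{\C}$ alone does not exclude a compact connected subgroup strictly containing $K_0$ whose Lie algebra meets $\hred^{\chi}$ only in $\mathfrak{k}(F,J)$ (the model scenario is $\mathfrak{su}(2)\supset\mathfrak{t}$ with $\chi$ a generic element of $\mathfrak{t}$, so that the centralizer of $\chi$ in $\mathfrak{sl}(2,\C)$ is just $\mathfrak{t}^{\C}$). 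What rules this out is the other half of the structure theorem: the semidirect splitting $\hred(J,\poiss_J)=\hred(J,\poiss_J)^{\chi}+\mathfrak{s}$ with $\mathfrak{s}$ a solvable ideal, which makes $\hred^{\chi}$ a \emph{maximal} reductive subalgebra and hence $K_0$ maximal compact; the ``in particular'' clause about invariance under a maximal torus also rests on this. The paper gets the splitting from \cite[Thm.~3.3]{LSW}; your argument would need to supply it directly, e.g.\ by producing the $\mathrm{ad}_{\chi}$-eigenspace decomposition of $\hred(J,\poiss_J)$ as in Calabi's original proof.
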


In Section~\ref{ss:extremal} we provide an intrinsic description of the extremal vector field of a potential extremal GK structure in $\GK_{\poiss,\alpha}$. It provides an efficient obstruction for the existence of cscGK metrics in a given GK class and is instrumental in our treatment of the toric case. In the K\"ahler case this description was obtained by Futaki-Mabuchi~\cite{Futaki-Mabuchi}.

\begin{thm}[{Extremal vector field, Theorem \ref{t:GK-extremal}}]
    Given a torus $\T\subset \mathrm{Aut}_{\mathrm{red}}(J,\poiss_J)$ let $\GK_{\poiss,\alpha}^\T$ be the space of $\T$-invariant generalized K\"ahler structures. Then, for any  $F_0\in \GK_{\poiss,\alpha}^\T$, $\T\subset \mathrm{Ham}(M,F_0)$ and we denote by 
    $\Pi_{F_0}\colon C^\infty(M,\R)\mapsto C^\infty(M,\R)$ the $L^2(M,dV_F)$ projection onto the space of $F_0$-Hamiltonians of $\T$.
    Moreover, the vector field
    \begin{equation}\label{e:extremal_vf_i}
        \chi=-F^{-1}_0(d\Pi_{F_0}(\Gscal_{(F_0,J)}))
    \end{equation}
    is independent of the choice of a $\T$-invariant symplectic form $F_0\in \GK_{\poiss,\alpha}^\T$.  If, furthermore, $\T$ is a maximal torus in $\mathrm{Aut}_{\mathrm{red}}(J,\poiss_J)$,  then $F\in\GK_{\poiss,\alpha}^\T$ is an extremal structure if and only if
    \[
        \Gscal_{(F,J)}-\Pi_F(\Gscal_{(F,J)})=0.
    \]
    In particular,  the underlying extremal vector field $-F^{-1}(d\Gscal_{(F,J)})$ is necessarily given by~\eqref{e:extremal_vf_i}.
\end{thm}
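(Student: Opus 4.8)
The plan is to characterize $\chi$ purely in terms of $\T$-equivariant data and then show that each ingredient of the characterization is constant along paths in $\GK_{\poiss,\alpha}^\T$. First, $\T\subset\Ham(M,F_0)$: for $\xi\in\tor$ the flow preserves $F_0$ (as $F_0\in\GK_{\poiss,\alpha}^\T$), so $\iota_\xi F_0$ is closed; that its class in $H^1(M,\R)$ vanishes — equivalently that $\xi$ admits a global $F_0$-Hamiltonian $h_\xi^{F_0}$, which may be chosen $\T$-invariant and zero-mean by averaging — is exactly the content of $\tor\subset\hred$, Definitions~\ref{d:hred}--\ref{d:gred}. Writing $\Pi_{F_0}$ for the $L^2(M,dV_{F_0})$ projection onto $\R\cdot 1\oplus\{h_\xi^{F_0}:\xi\in\tor\}$, the vector field $-F_0^{-1}(d\Pi_{F_0}\Gscal_{(F_0,J)})$ equals the unique $\chi\in\tor$ satisfying, for all $\eta\in\tor$,
\[
\langle\chi,\eta\rangle_{F_0}:=\int_M h_\chi^{F_0}h_\eta^{F_0}\,dV_{F_0}=\int_M \Gscal_{(F_0,J)}\,h_\eta^{F_0}\,dV_{F_0}.
\]
Thus it suffices to show that both the bilinear form $\langle\cdot,\cdot\rangle_{F_0}$ and the linear functional $\eta\mapsto\int_M\Gscal_{(F_0,J)}h_\eta^{F_0}\,dV_{F_0}$ on $\tor$ are independent of $F_0\in\GK_{\poiss,\alpha}^\T$; non-degeneracy of the positive-definite form then forces $\chi$ to be independent as well.

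For the bilinear form I would invoke Duistermaat--Heckman. The zero-mean $\T$-moment map $\mu_{F_0}=(h_\xi^{F_0})\colon M\to\tor^*$ pushes the Liouville measure $dV_{F_0}$ forward to a measure supported on a convex polytope in $\tor^*$, and this pushforward depends only on $[F_0]=\alpha$ and the $\T$-action, hence is the same for every $F_0\in\GK_{\poiss,\alpha}^\T$; the zero-mean normalization pins the common barycenter at the origin and removes the translational ambiguity. Since $\langle\xi,\eta\rangle_{F_0}=\int_{\tor^*}\langle p,\xi\rangle\langle p,\eta\rangle\,d(\mu_{F_0})_*dV_{F_0}(p)$, the form is $F_0$-independent and, by full-dimensionality of the polytope for an effective $\T$, non-degenerate.

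For the functional I would pass through the moment-map picture of Lemma~\ref{l:complex-orbit}: a $\T$-equivariant Moser isotopy $\Psi_t$ carries $\GK_{\poiss,\alpha}^\T$ into the complexified $\Ham(M,F_0)$-orbit in $\AGK_{F_0}$, along which the integrable structures $J_t:=\Psi_t^*J$ stay $\T$-invariant, and by Theorem~\ref{t:gscal_equiv} the functional equals $-\langle\boldsymbol{\mu}(J_t),\eta\rangle$. Differentiating and using $d\langle\boldsymbol{\mu},\eta\rangle=\iota_{X_\eta}\boldsymbol{\Omega}$, where $X_\eta$ is the field on $\AGK_{F_0}$ induced by $\eta\in\tor\subset\Ham(M,F_0)$, gives
\[
\frac{d}{dt}\langle\boldsymbol{\mu}(J_t),\eta\rangle=\boldsymbol{\Omega}\bigl(X_\eta|_{J_t},\dot J_t\bigr).
\]
The crucial point is that $X_\eta|_{J_t}=\cL_\eta J_t=0$, since $J_t$ is $\T$-invariant and $\eta$ acts holomorphically; hence the derivative vanishes and the functional is constant. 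Combined with the previous paragraph this yields the independence of $\chi$. I expect this to be the main obstacle: one must verify that the $\T$-equivariant Moser transport identifies $\int_M\Gscal_{(F_t,J)}h_\eta^{F_t}\,dV_{F_t}$ with $-\langle\boldsymbol{\mu}(J_t),\eta\rangle$ compatibly — i.e.\ that the Hamiltonians, the Liouville volumes, and the zero-mean normalization all match under $\Psi_t$ — and that the moment-map formula of Theorem~\ref{t:gscal_equiv} applies at every (integrable) $J_t$ along the path.

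Finally, assume $\T$ maximal and $F\in\GK_{\poiss,\alpha}^\T$. If $\Gscal_{(F,J)}-\Pi_F\Gscal_{(F,J)}=0$, then $\Gscal_{(F,J)}$ is a constant plus a $\T$-Hamiltonian, so $-F^{-1}(d\Gscal_{(F,J)})=\chi\in\tor$ preserves $J$ and $F$ is extremal with extremal field~\eqref{e:extremal_vf_i}. Conversely, if $F$ is extremal then $\chi_F:=-F^{-1}(d\Gscal_{(F,J)})$ preserves $J$; being the $F$-Hamiltonian field of the real function $-\Gscal_{(F,J)}$ it also preserves $F$, hence the metric $g$, so $\chi_F$ is a Killing field whose flow closure is a torus in $\Aut_{\mathrm{red}}(J,\poiss_J)$. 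Since $\Gscal_{(F,J)}$ is $\T$-invariant, $\chi_F$ commutes with $\T$, so this torus together with $\T$ is abelian and, by maximality of $\T$, forces $\chi_F\in\tor$. Then $\Gscal_{(F,J)}\in\R\cdot 1\oplus\{h_\xi^F\}$, i.e.\ $\Gscal_{(F,J)}-\Pi_F\Gscal_{(F,J)}=0$, and $\chi_F=\chi$ is given by~\eqref{e:extremal_vf_i}.
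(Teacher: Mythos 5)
Your proposal is correct in substance and its engine is the same as the paper's: a $\T$-equivariant Moser isotopy reduces the comparison of two symplectic forms to a comparison of two $\T$-invariant complex structures tamed by the \emph{same} $F_0$, and then the derivative $\tfrac{d}{dt}\langle\boldsymbol{\mu}(J_t),\eta\rangle=\boldsymbol{\Omega}(\mathcal L_{\eta}J_t,\dot J_t)=0$ kills the $t$-dependence because the fundamental vector field of $\eta\in\tor$ vanishes on invariant structures --- this is exactly Proposition~\ref{p:futkai_symplectic}. Where you diverge: (i) you split the defining identity for $\chi$ into a bilinear form on $\tor$ (whose invariance you get from Duistermaat--Heckman plus the zero-mean normalization) and a linear functional (from the moment map); the paper skips the splitting and simply transports the projection $\Pi_{F_0}(\Gscal)$ through the equivariant diffeomorphism, which is a little leaner but buys nothing essential. (ii) For the maximality step you replace the citation of Theorem~\ref{t:Calabi-Lichne-Matsushima} by a direct torus-closure argument; this is valid (and arguably more self-contained) because $\chi_F$ preserves $F$, $J$, hence $g$ and $\poiss_J$, it is $F$-Hamiltonian so lies in $\hred(J,\poiss_J)$, and $\Aut_{\mathrm{red}}(J,\poiss_J)$ is a \emph{closed} subgroup, so the closure of $\exp(t\chi_F)$ is a torus inside it commuting with $\T$. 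The one point you must repair is the path used for the derivative computation: you route it through Lemma~\ref{l:complex-orbit}, i.e.\ through a Hamiltonian-deformation path joining $F_0$ to $F_1$ \emph{inside} $\GK_{\poiss,\alpha}^{\T}$, but such a path is not known to exist in general (path-connectedness of $\GK_{\poiss,\alpha}^{\T}$ is only established in the toric case). The paper sidesteps this: the Moser isotopy is taken along the linear segment $(1-t)F_0+tF_1$ (symplectic for all $t$ since taming $J$ is a convex condition), producing $J$ and $\Phi^*J$ in $\AGK_{F_0}^{\T}$, and these two are then joined by the Cayley-transform path inside $\AGK_{F_0}^{\T}$, along which Theorem~\ref{t:GKscal-moment-map} applies without any integrability of the interpolating almost complex structures; your worry about Theorem~\ref{t:gscal_equiv} holding along the path is thereby moot, since only the moment-map property (not the biHermitian formula for $\Gscal$) is needed there.
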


Using the Mabuchi 1-form $\boldsymbol{\tau}$ on $\GK^0_{\poiss, \alpha}$,  we define a generalized K\"ahler analogue of the classical obstruction for the existence of cscK metrics in K\"ahler geometry~--- the Futaki character of the Lie algebra $\hred(J,\poiss_J)$ of $\mathrm{Aut}_{\mathrm{red}}(J,\poiss_J)$.

\begin{thm}[Futaki character, {Theorem~\ref{t:futaki-character}}]
    Let $F\in\GK_{\poiss,\alpha}$ be a generalized K\"ahler structure on $(M,J,\poiss_J)$.  Define a linear map  $\mathcal F_{(F,J)}\colon \hred(J,\poiss_J)\to \R$ by
    \begin{equation}\label{e:futaki-character_intro}
        \mathcal F_{(F,J)}(X)=\int_M \psi\Gscal_{(F,J)}dV_F ,\quad X=F^{-1}(d\phi+Id\psi).
    \end{equation}
    Then $\mathcal F_{(F,J)}$ is independent of $F\in\GK_{\poiss,\alpha}^0$ and vanishes on the commutator $[\hred(J,\poiss_J),\hred(J,\poiss_J)]$.
    In particular $\mathcal F_{(F,J)}$ is a character of $\hred(J,\poiss_J)$ and is identically zero if $\GK_{\poiss,\alpha}^0$ admits a cscGK metric.
\end{thm}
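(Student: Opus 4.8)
The plan is to derive all three conclusions formally from the two properties of the Mabuchi $1$-form $\boldsymbol\tau$ established in Section~\ref{ss:Mabuchi}: that it is closed, and that it is given by $\boldsymbol\tau_F(\dot\phi)=-\int_M\dot\phi\,\big(\Gscal_{(F,J)}-\underline{\Gscal}\big)\,dV_F$, where $\underline{\Gscal}$ is the cohomologically fixed average of the generalized scalar curvature. First I would set up the dictionary between $\hred(J,\poiss_J)$ and vector fields on the class $\GK_{\poiss,\alpha}^0$. Given $X\in\hred(J,\poiss_J)$ with $X=F^{-1}(d\phi+Id\psi)$, its flow $\Phi_t$ lies in $\Aut_{\mathrm{red}}(J,\poiss_J)$, hence fixes $(J,\poiss_J)$ and acts trivially on $H^2(M,\R)$; thus $F\mapsto\Phi_t^*F$ defines a flow on $\GK_{\poiss,\alpha}^0$ whose infinitesimal generator $V_X$ has, at each base point, generalized K\"ahler potential $\psi$. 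Indeed $F$ is closed, so $\mathcal L_X F=d(\iota_X F)=dd^c_I\psi$, which is precisely the velocity of the deformation defining $\GK_{\poiss,\alpha}^0$ with potential $\psi$. With the zero-mean normalization of $\psi$ built into Definitions~\ref{d:hred} and~\ref{d:gred}, the defining formula \eqref{e:futaki-character_intro} then reads $\mathcal F_{(F,J)}(X)=\int_M\psi\,\Gscal_{(F,J)}\,dV_F=-\boldsymbol\tau_F(V_X)$.

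Next I would prove independence of $F$. Because $\boldsymbol\tau$ is built solely from the natural data $\Gscal_{(F,J)}$ and $dV_F$, it is invariant under the pullback action of $\Aut_{\mathrm{red}}(J,\poiss_J)$ on $\GK_{\poiss,\alpha}^0$: a direct change of variables shows $R_\Phi^*\boldsymbol\tau=\boldsymbol\tau$ for $R_\Phi\colon F\mapsto\Phi^*F$, so $\mathcal L_{V_X}\boldsymbol\tau=0$. Since $d\boldsymbol\tau=0$, Cartan's formula gives $d\big(\boldsymbol\tau(V_X)\big)=\mathcal L_{V_X}\boldsymbol\tau-\iota_{V_X}d\boldsymbol\tau=0$, so the function $\boldsymbol\tau(V_X)=-\mathcal F_{(F,J)}(X)$ is constant on the connected class $\GK_{\poiss,\alpha}^0$. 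This is the asserted independence of the representative $F$.

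The character property follows from the same closedness, now evaluated on a pair. The assignment $X\mapsto V_X$ is a Lie-algebra anti-homomorphism, as it arises from the right action by pullback, so $[V_X,V_Y]=-V_{[X,Y]}$. Evaluating the closed form,
\[
0=d\boldsymbol\tau(V_X,V_Y)=V_X\big(\boldsymbol\tau(V_Y)\big)-V_Y\big(\boldsymbol\tau(V_X)\big)-\boldsymbol\tau\big([V_X,V_Y]\big),
\]
and, by the independence just proved, $\boldsymbol\tau(V_X)$ and $\boldsymbol\tau(V_Y)$ are constant; hence the first two terms vanish and $\boldsymbol\tau\big([V_X,V_Y]\big)=0$, i.e.\ $\mathcal F_{(F,J)}([X,Y])=0$, so $\mathcal F_{(F,J)}$ is a character. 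Finally, if $F\in\GK_{\poiss,\alpha}^0$ is cscGK then $\Gscal_{(F,J)}\equiv\underline{\Gscal}$, whence the zero-mean normalization gives $\mathcal F_{(F,J)}(X)=\underline{\Gscal}\int_M\psi\,dV_F=0$; by the independence already shown, $\mathcal F_{(F,J)}$ then vanishes identically on the whole class.

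The step I expect to require the most care is the dictionary of the first paragraph: verifying that the flow of $X\in\hred(J,\poiss_J)$ genuinely preserves $\GK_{\poiss,\alpha}^0$ and that its velocity is exactly the potential $\psi$ of \eqref{e:futaki-character_intro}. This rests on the compatibility between the pullback action of the automorphism and the nonlinear deformation defining the class, whose auxiliary complex structure $I_s=(\Phi_s)_*I_0(\Phi_s)_*^{-1}$ is itself transported by the flow, together with the $F$-dependent zero-mean normalization of $\psi$ encoded in Definitions~\ref{d:hred} and~\ref{d:gred}; the complexified-orbit description of Lemma~\ref{l:complex-orbit} is what makes this identification available. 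Once it is in place, the closedness and naturality of $\boldsymbol\tau$ carry out the remaining steps automatically.
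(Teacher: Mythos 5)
Your proposal is correct and follows essentially the same route as the paper's proof: identify the infinitesimal action of $X$ on $F$ with the fundamental vector field of potential $\psi$ (so that $\mathcal F_{(F,J)}(X)=-\boldsymbol\tau(\mathbf X_{\psi})$), then combine the invariance of $\boldsymbol\tau$ under the induced flow with its closedness via Cartan's formula to get constancy on $\GK^0_{\poiss,\alpha}$, and evaluate $d\boldsymbol\tau$ on a pair to kill the commutator. The only cosmetic differences are your explicit subtraction of the average $\underline{\Gscal}$ (equivalent to the paper's zero-mean normalization of $\psi$) and your remark on the anti-homomorphism sign, neither of which changes the argument.
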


\begin{rmk} A conceptually distinct notion of Futaki invariant has previously appeared in the context of generalized geometry, providing obstructions to the existence of solutions of the Hull-Strominger system \cite{GFM}.
\end{rmk}

\subsection{Formal metric structure and uniqueness}

In the K\"ahler setting, the GIT framework is naturally completed by a formal Riemannian metric, known as the Mabuchi--Semmes--Donaldson metric \cite{Mabuchi, Semmes, donaldson-GIT}.  This metric formally gives ${\mathcal K}_{\ga}$ the structure of a symmetric space of nonpositive curvature.  Furthermore, the Mabuchi $K$-energy, whose critical points are the constant scalar curvature K\"ahler metrics, is convex along geodesics of ${\mathcal K}_{\ga}$, a key point leading to uniqueness of csc K\"ahler metrics \cite{Chen, CT, BB}.  We show that this formal picture extends to the symplectic type GK setting.

To begin, the tangent space ${\bf T}_F\left(\GK_{\poiss,\alpha}\right)$ of $\GK_{\poiss,\alpha}$ at a point $F$ can be identified with
\[
{\bf T}_F\left(\GK_{\poiss,\alpha}^0\right)  \simeq C^{\infty}(M,\R)/\R.
\]
We define a formal Riemannian metric at a point $F\in \GK_{\poiss,\alpha}$ by
\[
	\big\llangle \phi_1, \phi_2 \big\rrangle_F :=
	\int_M \phi_1 \phi_2 \frac{F^n}{n!},
	\qquad \phi_i \in \CCF,\ i=1,2,
\]
where $\CCF$ is the space of functions with zero average against the symplectic volume form $dV_F$.  We first give a generalization of the formal symmetric space structure established by Mabuchi in the K\"ahler case:
\begin{thm}[{Theorem~\ref{p:Mabuchi-curvature}}] The curvature tensor $\mathcal R$ of $\llangle \cdot, \cdot \rrangle$  at a point $F\in\GK_{\poiss,\alpha}$ is given by
\[\left({\mathcal R}_{[{\phi_1}], [{\phi_2}]} [{\phi_3}]\right)_F =-\Big\{\big\{\phi_1,\phi_2\big\}_F, \phi_3\Big\}_F,  \]
where $\{\cdot, \cdot \}_{F}$ denotes the Poisson bracket of functions with respect to the symplectic form $F$ and $[\phi_i]\in C^{\infty}(M,\R)/\R, \, i=1,2,3$ are tangent vectors in ${\bf T}_{F}\left(\GK_{\poiss,\alpha}^0\right)$. In particular, the sectional curvature of $\big\llangle \cdot, \cdot \big\rrangle$ is everywhere nonpositive.
\end{thm}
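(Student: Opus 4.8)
The plan is to compute the Levi-Civita connection of $\llangle\cdot,\cdot\rrangle$ explicitly and then read off its curvature, following the template of Mabuchi and Donaldson in the K\"ahler case while tracking the genuinely generalized-K\"ahler contributions. The essential preliminary is the first variation of the volume form along a path $F_t\in\GK^0_{\poiss,\alpha}$ with velocity $\phi$. Since $dV_{F_t}=F_t^n/n!$ depends only on the symplectic form and $\frac{d}{dt}F_t = dd^c_{\Ii_t}\phi$, one has
\[
\frac{d}{dt}\, dV_{F_t} \;=\; dd^c_{\Ii_t}\phi\wedge\frac{F_t^{n-1}}{(n-1)!}\;=:\;(\mathcal A_{F_t}\phi)\,dV_{F_t},
\]
which defines a second-order operator $\mathcal A_F$. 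Because $F$ is closed this $(2n)$-form is exact, so $\int_M\mathcal A_F\phi\,dV_F=0$ and the metric descends to $\CCF$; integrating by parts once and using the closedness of $F$ together with the compatibility of $g$ with $\Ii$ shows that $\mathcal A_F$ is formally self-adjoint against $dV_F$. This self-adjointness is the first place where the GK data genuinely enter, and it is what makes the subsequent connection computation close up exactly as in the K\"ahler case.

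Next I would determine the Christoffel symbol. For a path with velocity $\phi$ carrying a vector field $\psi$ along it, write $D_t\psi=\dot\psi+\Gamma_F(\phi,\psi)$ with $\Gamma_F$ symmetric (torsion-free). Differentiating $\llangle\psi_1,\psi_2\rrangle_{F_t}$ and imposing metric compatibility gives
\[
\int_M\psi_1\psi_2\,(\mathcal A_F\phi)\,dV_F \;=\;\int_M\bigl(\Gamma_F(\phi,\psi_1)\,\psi_2+\psi_1\,\Gamma_F(\phi,\psi_2)\bigr)\,dV_F ,
\]
which, together with symmetry, is solved by a first-order symmetric bilinear expression $\Gamma_F(\psi_1,\psi_2)$ built from $d\psi_1,d\psi_2$ and the metric $g$ underlying $(F,\Jj)$---the precise analogue of $-\langle\nabla\psi_1,\nabla\psi_2\rangle$ in the K\"ahler case. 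A useful cross-check is that the resulting geodesic equation $\ddot\phi=\Gamma_F(\dot\phi,\dot\phi)$ reduces to Mabuchi's when $\poiss_J=0$.

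With $\Gamma_F$ in hand I would compute $\mathcal R_{\psi_1,\psi_2}\psi_3 = D_{\psi_1}D_{\psi_2}\psi_3 - D_{\psi_2}D_{\psi_1}\psi_3$, taking $\psi_1,\psi_2,\psi_3$ to be the velocity fields of commuting coordinate directions so that the $\nabla_{[\psi_1,\psi_2]}$ term drops. The purely second-order (symmetric) contributions cancel upon antisymmetrization in $\psi_1,\psi_2$; the surviving first-order terms reorganize, via the identity $[X_{\psi_1},X_{\psi_2}]=X_{\{\psi_1,\psi_2\}_F}$ for the $F$-Hamiltonian vector fields $X_\psi=-F^{-1}(d\psi)$, into $-\{\{\psi_1,\psi_2\}_F,\psi_3\}_F$, which is the stated formula. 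Nonpositivity is then immediate: pairing with the metric and integrating by parts, using that $X_\psi$ is divergence-free (so $\{\cdot,\phi\}_F$ is skew-adjoint against $dV_F$), gives
\[
\big\llangle \mathcal R_{\phi_1,\phi_2}\phi_1,\phi_2\big\rrangle_F \;=\; -\int_M \{\phi_1,\phi_2\}_F^{\,2}\,dV_F\;\le\;0 .
\]

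The main obstacle is isolating the genuinely GK contributions in the middle steps: although both the metric and the target formula involve only the symplectic datum $F$ (through $dV_F$ and $\{\cdot,\cdot\}_F$), the admissible variations are the nonlinear ones $dd^c_{\Ii_t}\phi$ and the companion structure $\Ii_t$ itself moves along the path, so a priori $\Gamma_F$ and $\mathcal R$ could depend on $\Ii$, the $b$-field, and $\frac{d}{dt}\Ii_t$. The crux is therefore to verify that all terms involving the companion structure and $b$---in particular those coming from $\frac{d}{dt}\Ii_t$ and from the antisymmetric part $b$ of $-F\Jj$---cancel in the antisymmetrized curvature, leaving only the $F$-Poisson expression. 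As a conceptual guide and an independent check, this formula is exactly what the formal symmetric-space picture predicts: under Moser's lemma $\GK^0_{\poiss,\alpha}$ embeds (Lemma~\ref{l:complex-orbit}) as a section of a complexified $\Ham(M,F_0)$-orbit transverse to the $\Ham(M,F_0)$-orbits, and for a symmetric space $G^{\C}/G$ the curvature is $\mathcal R_{X,Y}Z=-[[X,Y],Z]$ with $[\cdot,\cdot]$ the Lie bracket of $\Ham(M,F)$, i.e. the Poisson bracket $\{\cdot,\cdot\}_F$.
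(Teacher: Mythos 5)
There is a genuine gap at the central step of your plan. You propose to compute $\mathcal R$ from $D_{\psi_1}D_{\psi_2}\psi_3-D_{\psi_2}D_{\psi_1}\psi_3$ after choosing ``commuting coordinate directions so that the $\nabla_{[\psi_1,\psi_2]}$ term drops.'' In the generalized K\"ahler setting no such frame is available: unlike $\KK_\alpha$, the space $\GK_{\poiss,\alpha}$ is not a convex-linear chart in $C^\infty(M,\R)/\R$, and the natural ``constant'' vector fields --- the fundamental fields ${\bf X}_\phi(F)=dd^c_I\phi$ of Definition~\ref{d:fundamental-field} --- satisfy $[{\bf X}_{\phi_1},{\bf X}_{\phi_2}]=-{\bf X}_{\{\phi_1,\phi_2\}_{\poiss}}$ by Lemma~\ref{l:commuting}, where $\{\cdot,\cdot\}_{\poiss}$ is the bracket of the \emph{real Poisson tensor} $\poiss=\tfrac12[I,J]g^{-1}$, not of $F$. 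The term ${\mathcal D}_{\{\phi_1,\phi_2\}_{\poiss}}{\bf X}_{\phi_3}$ is therefore an essential contribution to the curvature, not something that can be arranged to vanish; in the paper's computation it is precisely the piece that, after invoking the identity $\{\phi_1,\phi_2\}_{\poiss}=-\tr_F(d\phi_1\wedge Id\phi_2)+\tr_F(d\phi_1\wedge Jd\phi_2)$ from \eqref{a:poisson} and the algebraic identities \eqref{a:deep4}--\eqref{a:deep5}, cancels the $I$-dependent terms and leaves $-\int_M\bigl(\tr_F(d\phi_1\wedge d\phi_2)\bigr)^2F^{[n]}$. Your assertion that ``the surviving first-order terms reorganize, via $[X_{\psi_1},X_{\psi_2}]=X_{\{\psi_1,\psi_2\}_F}$, into $-\{\{\psi_1,\psi_2\}_F,\psi_3\}_F$'' conflates the bracket of Hamiltonian vector fields on $M$ with the bracket of the deformation vector fields on $\GK_{\poiss,\alpha}$, and this is exactly where the claim needs an argument.

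A second, related issue is the determination of the connection. Your displayed metric-compatibility identity does not by itself pin down $\Gamma_F$; the Koszul formula does, but it again requires the brackets $[{\bf X}_{\phi_i},{\bf X}_{\phi_j}]$, i.e.\ the $\poiss$-bracket terms. The correct Christoffel expression is $({\mathcal D}_\phi\bxi)_F=\dot\bxi_F(\phi)-\tr_F(d\phi\wedge Jd\bxi_F)=\dot\bxi_F(\phi)-\tfrac12\bigl(g(d\phi,d\bxi_F)+g(Id\phi,Jd\bxi_F)\bigr)$, which carries an extra term mixing $I$ and $J$ beyond the K\"ahler analogue $-\langle\nabla\phi,\nabla\psi\rangle$; verifying that all $I$- and $b$-dependent terms cancel in the curvature is the actual content of the proof and cannot be deferred to ``the crux is therefore to verify\dots''. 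Your nonpositivity step (ad-invariance of $\llangle\cdot,\cdot\rrangle_F$ under $\{\cdot,\phi\}_F$) and the symmetric-space heuristic are both fine, but only conditionally on the curvature formula, which your outline does not establish.
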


The formal Riemannian metric naturally determines a notion of geodesic.  We observe that a special class of geodesics are generated by the flow $\exp(-tJY)$ of any  $F_0$-Hamiltonian Killing vector field $Y$ of $(F_0, \Jj)$ (cf.\,Proposition~\ref{p:killing-geodesics}).  Analogous to the geodesic convexity of the Mabuchi energy in the K\"ahler setting, we show that the Mabuchi $1$-form $\boldsymbol{\tau}$ increases along a smooth geodesic in $\GK_{\poiss,\alpha}$, and is identically zero along a geodesic precisely when the latter is induced by a Hamiltonian Killing field as above.  This leads to a formal proof of uniqueness:

\begin{cor}[{Corollary \ref{c:conditional-uniqueness}}]
Suppose $F_0, F_1 \in \GK_{\poiss,\alpha}$ are cscGK structures connected by a smooth geodesic $F_t$. Then there exists $Y\in\hred(J,\poiss_J)$ such that $F_t=\Phi_t^* F_0$, where $\Phi_t=\exp(- tJY)\in \mathrm{Aut}_{\mathrm{red}}(J,\poiss_J)$.
\end{cor}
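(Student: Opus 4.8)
The plan is to obtain the corollary as a short deduction from three facts already established in the body: first, that the Mabuchi $1$-form $\boldsymbol{\tau}$ vanishes identically at any cscGK structure; second, the geodesic convexity asserting that $\boldsymbol{\tau}$ is nondecreasing along any smooth geodesic in $\GK_{\poiss,\alpha}$; and third, the equality characterization, namely that $\boldsymbol{\tau}$ vanishes along the \emph{entire} geodesic precisely when the latter is generated by an $F_0$-Hamiltonian Killing field of $(F_0,J)$ via the flow $\exp(-tJY)$. The honest content of the statement is just how these three inputs fit together.

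Concretely, I would set $g(t):=\boldsymbol{\tau}_{F_t}\big([\dot\phi_t]\big)$, where $[\dot\phi_t]\in C^\infty(M,\R)/\R\simeq{\bf T}_{F_t}\big(\GK^0_{\poiss,\alpha}\big)$ denotes the velocity of the geodesic $F_t$; this is the $t$-derivative of (a local primitive of) the Mabuchi energy along $F_t$. Since both endpoints $F_0,F_1$ are cscGK, the first fact gives $\boldsymbol{\tau}_{F_0}=\boldsymbol{\tau}_{F_1}=0$ as $1$-forms, so in particular $g(0)=g(1)=0$.

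Next I would invoke the convexity: by the second fact $g$ is monotone nondecreasing on $[0,1]$, and a nondecreasing function with $g(0)=g(1)=0$ is identically zero; hence $\boldsymbol{\tau}$ vanishes along the whole geodesic $F_t$. Applying the third fact then produces a field $Y\in\hred(J,\poiss_J)$, Hamiltonian and Killing for $(F_0,J)$, with $F_t=\Phi_t^*F_0$ and $\Phi_t=\exp(-tJY)$. Since $\hred(J,\poiss_J)$ is the $J$-invariant Lie algebra of the complex group $\mathrm{Aut}_{\mathrm{red}}(J,\poiss_J)$, the curve $\Phi_t=\exp(-tJY)$ is a one-parameter subgroup of $\mathrm{Aut}_{\mathrm{red}}(J,\poiss_J)$, which is exactly the asserted conclusion.

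The main obstacle lies not in this deduction but in the equality case it rests upon, should one wish to prove it here. That requires the precise second-variation formula for $\boldsymbol{\tau}$ along geodesics, whose nonnegative integrand, after the geodesic equation cancels the remaining terms, is the squared norm of a Lichnerowicz-type operator applied to the velocity $[\dot\phi_t]$; one then identifies the kernel of that operator with the Hamiltonian Killing fields preserving $(J,\poiss_J)$, and verifies that the pointwise-in-$t$ vanishing of this norm assembles into a single autonomous flow $\exp(-tJY)$ rather than a $t$-dependent family. This rigidity, together with the smoothness of the geodesic, is where the real work sits.
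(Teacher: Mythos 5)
Your proposal is correct and takes essentially the same route as the paper: the paper's proof likewise observes that $\boldsymbol{\tau}$ vanishes at the cscGK endpoints, concludes from the monotonicity in Proposition~\ref{p:Mabuchi-convexity} that $t\mapsto\boldsymbol{\tau}(\mathbf{X}_{\phi_t})$ vanishes identically, and then invokes the equality case of that proposition to produce $Y\in\hred(J,\poiss_J)\cap\mathfrak{ham}(M,F_0)$ whose flow $\exp(-tJY)$ generates the geodesic. The heavy lifting (the equality characterization) is indeed deferred to Proposition~\ref{p:Mabuchi-convexity}, exactly as you indicate.
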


\noindent Turning this into a genuine proof of uniqueness requires developing the theory of geodesics in $\mathcal {GK}_{\pi,\ga}$.  Such curves do not readily reduce to a Monge--Amp\`ere equation, as exploited in the construction of (weak) geodesics in ~\cite{Chen}.  However, there is a generalization of the Semmes construction \cite{Semmes} expressing the geodesic equation as a prescribed volume form problem for a natural family of symplectic forms on an augmented spacetime track (cf.\,Remark \ref{r:Semmes}).

\subsection{The toric case}

In the final portion of the paper we consider the case  when $(M, J, \T^{\C})$ is a smooth projective toric variety and $\poiss_J$ a $\T^{\C}$-invariant Poisson tensor.  In this case, the $\T$-invariant symplectic type generalized K\"ahler structures have been studied in \cite{ASU,W2}  where they were described in terms of a smooth convex function defined on the interior of  the Delzant polytope of $(M, F, \T)$ and a bivector $\poiss_J \in \Wedge^2({\rm Lie}(\T^{\C}))$. This is analogous to the Abreu--Guillemin description~\cite{Abreu, guillemin}  of toric K\"ahler structures. Building on  this theory, we are able to obtain an almost complete picture for the generalized K\"ahler Calabi problem on a toric variety:

\begin{thm} Suppose $(M, J, \poiss_\Jj, \T^{\C})$ is a toric projective holomorphic Poisson manifold and $F_0$ a $\T$-invariant $\poiss_\Jj$-compatible symplectic type generalized K\"ahler structure  in the deRham class $\alpha$. Then:

\begin{enumerate}
    \item[(a)] (Corollary~\ref{c:T-reduction}) Any extremal generalized K\"ahler structure in $\mathcal {GK}_{\pi,\ga}$ is isometric to a $\T$-invariant such structure.
    \item[(b)] (Proposition~\ref{p:contractible} and Theorem~\ref{t:uniqueness-toric}) The space of $\T$-invariant generalized K\"ahler structures compatible with $\poiss$ and $\alpha=[F_0]$ is path connected: $({\GK}_{\poiss,\alpha}^\T)^{0} = {\GK}_{\poiss,\alpha}^\T$.  Furthermore, any two extremal generalized K\"ahler structures in $\GK_{\poiss,\alpha}^0$ are isometric by an element of $\Aut_0(\Jj, \poiss_{\Jj})$.
    \item[(c)] (Theorem~\ref{c:easy}) If $\GK_{\poiss,\alpha}^\T$ admits an extremal generalized K\"ahler structure then the Delzant polytope of $\alpha$ is uniform relative K-stable.
    \item[(d)] (Theorems~\ref{t:hard} and~\ref{t:LS}) If the Delzant polytope of $(M, J, \alpha, \T^{\C})$ is uniform relative K-stable, and $\poiss_{\Jj} \in \Wedge^{2}\left({\rm Lie}(\T^{C})\right)$, then there exists an $\varepsilon >0$, such that ${\GK}_{t\poiss, \alpha}$ admits an extremal generalized K\"ahler metric  for all $t \in \R, \, |t|< \varepsilon$.
\end{enumerate}
\end{thm}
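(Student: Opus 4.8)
The plan is to assemble the four parts from the formal GIT framework together with the toric reduction of the generalized scalar-curvature equation to a generalized Abreu equation on the Delzant polytope $\Delta$. For (a) I would invoke the Calabi--Lichnerowicz--Matsushima obstruction: an extremal $F\in\GK_{\poiss,\alpha}$ has extremal vector field $\chi\in\hred(J,\poiss_J)$, and by that theorem $F$ is invariant under a maximal real torus $\T'$ of $\Aut_{\mathrm{red}}(J,\poiss_J)$ containing $\exp(t\chi)$. Since any two maximal tori are conjugate in the connected group $\Aut_0(J,\poiss_J)$, there is $g\in\Aut_0(J,\poiss_J)$ with $g\T' g^{-1}=\T$, so that $g_*F$ is $\T$-invariant and isometric to $F$. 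For (b), path-connectedness follows from the convexity of the space of $\T$-invariant structures: under the description of \cite{ASU,W2} these are parametrized (with $\poiss_J$ fixed) by symplectic potentials forming an affine-convex, hence contractible, set, giving $(\GK_{\poiss,\alpha}^\T)^0=\GK_{\poiss,\alpha}^\T$. For uniqueness, by (a) two extremal structures may be taken $\T$-invariant; joining them by the explicit geodesic available in this convex space and using the monotonicity of the (relative) Mabuchi $1$-form $\boldsymbol{\tau}$ along geodesics together with its vanishing at extremal structures, the conditional uniqueness corollary forces the geodesic to be generated by a Hamiltonian Killing field, yielding the isometry in $\Aut_0(J,\poiss_J)$.

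For (c), a $\T$-invariant extremal structure solves $\Gscal_{(F,J)}=\ell$ for the extremal affine function $\ell$ pinned down by the extremal vector field theorem. Testing this identity against an arbitrary convex rational piecewise-linear function on $\Delta$ and integrating by parts twice --- using the toric expression for $\Gscal_{(F,J)}$ and the $\poiss_J$-modified volume $dV_F$ --- expresses the relative Donaldson--Futaki invariant of the corresponding test configuration as a manifestly nonnegative integral, namely the Hessian of the convex function paired with the positive inverse Hessian of the potential. Tracking the normalization then upgrades nonnegativity to the required uniform lower bound, i.e.\ uniform relative K-stability of $\Delta$.

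The heart of the theorem, and the main obstacle, is the converse (d), which I would prove by a continuity argument in $t$. At $t=0$ one has $\GK_{0,\alpha}=\KK_\alpha$, and uniform relative K-stability of $\Delta$ supplies, through the toric resolution of the Yau--Tian--Donaldson conjecture by Chen--Cheng \cite{CC} and He \cite{He}, an extremal K\"ahler metric, i.e.\ a convex symplectic potential $u_0$ solving the classical Abreu equation. Writing the generalized Abreu equation for $t\poiss_J$ (with $\poiss_J\in\Wedge^2(\mathrm{Lie}(\T^\C))$) as $\Phi(t,u)=0$, so that $\Phi(0,u_0)=0$, I would compute the linearization $D_u\Phi(0,u_0)$, a fourth-order elliptic operator, and invoke the implicit function theorem in suitable weighted H\"older spaces of potentials to obtain solutions $u_t$ for $|t|<\varepsilon$.

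The difficulty is precisely the invertibility of $D_u\Phi(0,u_0)$: its kernel is the space of linearized automorphism directions, so the equation must be solved relative to the continuously varying extremal vector field, i.e.\ modulo the finite-dimensional space of $\T$-Hamiltonians, and one must show the operator is an isomorphism on the complement. This is exactly where uniform relative stability enters quantitatively, through nondegeneracy of the Hessian of the relative Mabuchi functional at $u_0$. I expect the genuine crux to be the a priori estimates and boundary regularity guaranteeing that the perturbed $u_t$ remain honest strictly convex symplectic potentials with the correct behavior on $\partial\Delta$ (so that they define bona fide elements of $\GK_{t\poiss,\alpha}$), rather than the soft functional-analytic setup.
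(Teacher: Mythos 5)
Your outline for (a), for the path-connectedness in (b), and for (c) matches the paper's arguments: (a) is exactly the conjugation of maximal tori inside $\Aut_0(J,\poiss_J)$ via Theorem~\ref{t:Calabi-Lichne-Matsushima} (using $b_1(M)=0$ so that $\Aut_0=\Aut_{\rm red}$); connectedness in (b) is the linear convexity of the space $\mathcal{S}_{A,B}(\Pol,\Lab)$ of symplectic potentials combined with equivariant Moser isotopies; and (c) is the double integration by parts against convex functions, producing the nonnegative pairing of $\Hess(f)$ with ${\bf X}=\Re e\left(\Hess(u)+\i B\right)^{-1}$, with the upgrade from nonnegativity to the uniform bound imported from Chen--Li--Sheng exactly as you indicate. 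For the uniqueness in (b) you take a genuinely different route: joining two extremal structures by a geodesic and invoking monotonicity of $\boldsymbol{\tau}$ plus the conditional uniqueness corollary. The paper instead works directly with the explicit relative Mabuchi functional ${\bf M}^{\ell_{\rm ext}}$ on the linearly convex set $\mathcal{S}_{A,B}(\Pol,\Lab)$, whose second variation along linear segments is $\int_{\Pol}\tr\big[\big((\Hess(u)+\i B)^{-1}\Hess(\dot u)\big)^2\big]\,dx\ge 0$ with equality iff $\dot u$ is affine-linear, so two critical points differ by an element of $\T^{\C}$. Your route is viable but requires two ingredients you do not supply: Corollary~\ref{c:conditional-uniqueness} is stated for cscGK, not extremal, structures, so you need its relative analogue; and the identification of linear segments of potentials with metric geodesics (Proposition~\ref{l:geodesic-toric}) is proved only for $A=0$, so you must first reduce to that case via Corollary~\ref{c:A-independence}. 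The polytope-functional argument delivers all of this at once.

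The genuine gap is in (d). You propose to run the implicit function theorem directly on the generalized Abreu equation for symplectic potentials $u$ on $\Pol$, and you yourself flag that the crux would then be boundary regularity and preservation of the Guillemin-type boundary conditions. That is not a technicality to be deferred: the linearized Abreu operator degenerates at $\partial\Pol$, and constructing Banach spaces of potentials on which it is an isomorphism modulo affine-linear functions is a substantial problem in its own right --- it is essentially why the toric continuity method is hard. The paper avoids this entirely by Legendre-transforming back to the manifold: setting $\omega_{\phi}=\omega_0+dd^c_J\phi$ and $F_{(\phi,tB)}=\omega_{\phi}+t\sum_{i,j}B_{ij}\,d\mu^{\phi}_i\wedge d\mu^{\phi}_j$, the unknown becomes a $\T$-invariant function $\phi$ on the closed manifold $M$, the linearization at $t=0$ is the standard Lichnerowicz operator $\mathbb{L}_{\omega_0}$ of the extremal K\"ahler metric furnished by Chen--Cheng and He, and the finite-dimensional kernel (pullbacks of affine-linear functions) is removed by the LeBrun--Simanca projection $\mathrm{Id}-\Pi_0$, with the a posteriori observation that $\Pi_t\big(\Gscal-\ell_{\rm ext}(\mu^{\phi_t})\big)=0$ automatically, so the projected solution solves the full equation. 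Without this change of variables (or an equivalent device), your step ``invoke the implicit function theorem in suitable weighted H\"older spaces of potentials'' is not justified as stated.
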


The statement (a) follows formally from the Matsushima-type obstruction of Theorem \ref{t:Calabi-Lichne-Matsushima}.  The path-connectedness in (b) follows from the Abreu--Guillemin description of toric generalized K\"ahler structures obtained in \cite{boulanger, W2} and reviewed in Section~\ref{s:toric} below.  The uniqueness relies on establishing the smooth geodesic convexity of $\GK_{\poiss,\alpha}^{\T}$.  In the toric K\"ahler case, this due to  Guan~\cite{Guan} who showed that the geodesics of ${\KK}^{\T}_{\alpha}$ become linear segments in the Abreu--Guillemin description.  In Proposition~\ref{l:geodesic-toric} we show this is still the case in the generalized K\"ahler case, but only for a special class of Poisson tensors $\poiss_{\Jj}$.  To obtain the full claim, we argue in Corollary~\ref{c:A-independence} that the existence problem for an extremal generalized K\"ahler structure can be reduced to this subclass of Poisson tensors.  The statement in (c) makes a direct connection with the YTD conjecture which is now established for toric varieties due to the work of  X.X.\,Chen and J.\,Cheng~\cite{CC} in the constant scalar curvature case and its extension by W.\,He~\cite{He} to the extremal case. The condition of  uniform relative K-stability  of the polytope was introduced by Donaldson~\cite{donaldsonJDG-02} and is now known~\cite{hisamoto} that it is  equivalent to the uniform relative K-stability  on toric test configurations of  the corresponding polarized toric variety. Our proof of (c) adapts the original arguments~\cite{CLS} in the toric K\"ahler case  to the generalized K\"ahler context,  by replacing  the hessian of the K\"ahler potential with a suitable positive-definite  smooth symmetric-matrix valued function on the Delzant polytope, coming from the Abreu--Guillemin description. We also notice that (c) provides many constructible examples of toric varieties which do not admit extremal generalized K\"ahler structures. The final point (d)  follows from the existence in $\alpha$ of a toric extremal K\"ahler metric~\cite{CC, He} and  an  adaptation of the LeBrun--Simanca openness  result~\cite{LS} where instead of varying the K\"ahler class we vary the Poisson tensor.  This  yields new examples of cscGK structures even on ${\bf CP}^2$, as the existence results obtained in \cite{boulanger, Gotomoment, GotoLichne} apply only to a special class of toric Poisson tensors.

%
%
\bigskip
\noindent
\textbf{Acknowledgements.}   The first named author was supported in part  by a Connect Talent fellowship of the Region des Pays de la Loire (France) and an NSERC Discovery Grant.  The second named author was supported by a Simons Fellowship and by the NSF via DMS-2203536. The authors are grateful to R.\,Goto for his interest and remarks.

\section{Generalized K\"ahler structures of symplectic type} \label{s:defns}
\subsection{Conventions}

For a Hermitian manifold $(M, g, J)$ we consider
the Riemannian metric $g$ as a field of  isomorphisms
\[g: TM \to T^*M, \qquad X \to g(X, \cdot), \qquad X\in TM, \]
with inverse $g^{-1} : T^*M \to TM$.  This leads to a definition of inner product on $T^*M$ and a compatible almost complex structure
\[ J:= g J g^{-1}. \]
Notice that if $J^*$ is the induced almost complex  structure  on $T^*M$ by $(J^* \alpha)(X):= \alpha (JX)$, then
\[ J = - J^*.\]
More generally, we will implicitly consider a field of bilinear forms $B\in C^{\infty}(T^*M \otimes T^*M)$ on $TM$ (such as a Riemannian metric $g$ or a symplectic form $F$ on $M$) as a field of endomorphisms
\[ B: TM \to T^*M, \qquad X\to B(X, \cdot).\]
 In case $B$ is nondegenerate we denote by $B^{-1}$  the inverse endomorphism. Similarly, a section of $\Pi \in C^{\infty}(TM\otimes TM)$ will be viewed as a field of endomorphisms
 \[ \Pi: T^*M \to TM, \qquad \alpha \to \Pi(\alpha, \cdot).\]   We denote by $\langle \cdot, \cdot \rangle$ the natural pairing between $TM$ and $T^*M$. If $B\in C^{\infty}(T^*M \otimes T^*M)$ and $\Pi \in C^{\infty}(TM\otimes TM)$ is skew (i.e.,a field of bivectors), then we have
\[ \left\langle \Pi, B \right\rangle = -\tr \left(\Pi\circ B \right). \]
For a $2$-form $\psi$, we will  let
\[\psi^{[k]} := \frac{1}{k!}\psi^{k}.\]
Our convention for the trace  $\tr_F(\psi)$ of a $2$-form $\psi$ with respect to  a symplectic form $F$ is then 
\begin{equation*} \tr_F (\psi) := \frac{\psi \wedge F^{[n-1]}}{F^{[n]}} = \tfrac{1}{2} \tr\left( F^{-1} \psi \right). \end{equation*}  
The volume form of a symplectic structure is
\[dV_F := F^{[n]}, \]
and the Riemannian volume form of the Hermitian structure $(g, J)$ is
\[ dV_g = \omega_J ^{[n]}, \qquad \omega_J := gJ.\]
Furthermore, given a symplectic form $F$ we let $\CCF$denote the space of \textit{normalized} smooth functions with zero average relative to $dV_F$:
\[
\CCF=\left\{\phi\in C^\infty(M,\R)\ |\ \int_M\phi dV_F=0\right\}.
\]

\subsection{Symplectic type generalized K\"ahler structures} \label{NGKdef}

In this subsection we recall the biHermitian formulation of generalized K\"ahler
geometry, and the basic properties of the associated Poisson structures.

\begin{defn} \label{GKdef} Given a smooth manifold $M$ endowed with a closed $3$-form $H_0$, we say that $(g, b, I, J)$
is a \emph{generalized K\"ahler structure (GK structure)} if $I$ and $J$ are
integrable complex structures, $b$ is a $2$-form, $g$ is Riemannian metric compatible with both $I$ and $J$, and
furthermore
\begin{align*}
 d^c_\Ii \gw_\Ii = H_0 + db = - d^c_\Jj \gw_\Jj.
\end{align*}
Associated to this structure we define the tensor
\begin{align*}
 \poiss := \tfrac{1}{2} [I,J]g^{-1} \in \Wedge^2 (TM).
\end{align*}
By \cite{HitchinPoisson, AGG}, $\poiss$ is the real part of holomorphic $(2,0)$-bivectors
with respect to  $I$ and $J$, and we define
\begin{gather} \label{e:PoissonJ}
\begin{split}
\pi_J := \pi - \i J \pi, \qquad \pi_I := \pi - \i I \pi.
\end{split}
\end{gather}
\end{defn}

\begin{defn}\label{symplectic typeGKdef} Given a complex manifold $(M, \Jj)$, suppose there exists a symplectic form $F$ such that
\begin{equation*}
 g := -(F\Jj)^{\rm sym},  \qquad  \Ii :=  -F^{-1} \Jj^* F \end{equation*}
define respectively a Riemannian metric $g$  and an integrable almost complex structure $\Ii$.  Then, setting
\begin{equation*}
H_0=0, \qquad b := -(FJ)^{\rm skew}, \end{equation*}
the data $(g, b, I, J)$ satisfies \eqref{GKdef} and is referred to as \emph{symplectic type generalized K\"ahler structure} on $(M, \Jj)$.  Since $(F,J)$ algebraically determine the tuple $(g,b,I,J)$, by abuse of notation we will often refer to $(F,J)$ as a symplectic type generalized K\"ahler structure.
\end{defn}

\begin{rmk}\label{r:LA} Elementary linear algebra yields that for a symplectic type GK structure one has
\begin{equation}\label{F-g-relation}
F(I+J)= -2g, \end{equation}
so that $\det(I + J)\neq 0$ on $M$.  Notice that $\Ii$ coincides with $\Jj$ (and is then automatically integrable) precisely when $F$ is of type $(1,1)$ with respect to $\Jj$, i.e.,defines a K\"ahler structure $(g, \Jj)$.
\end{rmk}

From the point of view of generalized geometry \cite{Gualtieri-PhD,Gualtieri-CMP} adapted in~\cite{Gotomoment,GotoLichne}, symplectic type generalized K\"ahler structures correspond to pairs of commuting \emph{generalized complex structures}. To make this connection more precise, we explicitly express the underlying (almost) generalized complex structures in terms of $(F,J)$.  The following proposition immediately follows from~\cite[\S 6.4]{Gualtieri-PhD}.

\begin{prop}\label{p:GK-interpretation}
    Let $(M,J)$ be an almost complex manifold. For a nondegenerate 2-form $F$ denote as above $I=-F^{-1}J^*F$, $g=-(FJ)^\mathrm{sym}$, $b=-(FJ)^\mathrm{skew}$. Then $(F,J)$ gives rise to a pair of commuting generalized almost complex structures $\JJ,\II\in\End(TM\oplus T^*M)$
    \begin{equation}\label{e:GK-interpretation}
        \qquad {\JJ} =  \JJ_{P, Q}=\left(\begin{array}{cc} P & Q F^{-1}\\ -FQ & -P^* \end{array}\right), \qquad  {\II} ={\mathbb J}_F=\left(\begin{array}{cc} 0 & - F^{-1} \\ F & 0 \end{array}\right),
    \end{equation}
    where $P, Q \in C^{\infty}(M, {\rm End}(TM))$ are given by
    \begin{equation*}
        P:= -2(I+J)^{-1}, \qquad  Q:= (J-I)(I+J)^{-1}.
    \end{equation*}
    Furthermore, $\II$ is integrable if and only if $F$ is closed, and $(\JJ,\II)$ are both integrable if and only $F$ is symplectic, and $(J,I)$ are integrable. Conversely any generalized almost complex structure $\JJ$ commuting with $\JJ_F$ is necessarily given by~\eqref{e:GK-interpretation} for some almost complex structures $J$.
\end{prop}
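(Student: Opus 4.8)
The plan is to reduce the algebraic assertions to the single identity \eqref{F-g-relation} together with the $g$-compatibility of $I$ and $J$, and then to read off the integrability statements from Gualtieri's biHermitian description. Recall that $TM\oplus T^*M$ carries the tautological pairing $\langle X+\xi,Y+\eta\rangle=\tfrac12(\xi(Y)+\eta(X))$; a generalized almost complex structure is an endomorphism squaring to $-\Id$ and skew for this pairing, and (since $H_0=0$) it is integrable exactly when its $\i$-eigenbundle is closed under the ordinary Courant bracket. I would first record the dictionary between the blocks and the biHermitian data. From $g=-(FJ)^{\rm sym}$ one checks $gJg^{-1}=-J^*$, and symmetrically $gIg^{-1}=-I^*$, so $I$ and $J$ are $g$-compatible; moreover \eqref{F-g-relation} gives $I+J=-2F^{-1}g$, hence $P=-2(I+J)^{-1}=g^{-1}F$ and $Q=(J-I)(I+J)^{-1}=-\tfrac12(J-I)g^{-1}F$, so that every block of $\JJ_{P,Q}$ is expressed through $F,g,I,J$.

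With this dictionary the first assertion is a direct computation. That $\JJ_F$ squares to $-\Id$ and is pairing-skew is immediate from $F^*=-F$. For $\JJ_{P,Q}$, pairing-skewness reduces, using $F^*=-F$, $(g^{-1})^*=g^{-1}$ and $gIg^{-1}=-I^*$, $gJg^{-1}=-J^*$, to the single relation $Q^*=FQF^{-1}$, which the dictionary confirms, while $\JJ_{P,Q}^2=-\Id$ follows in the same way from \eqref{F-g-relation}; alternatively, since $(g,b,I,J)$ with $b=-(FJ)^{\rm skew}$ is biHermitian, one may recognize $(\JJ_{P,Q},\JJ_F)$ as the $b$-field transform of the commuting pair associated to $(g,b,I,J)$ in \cite[\S 6.4]{Gualtieri-PhD}. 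Finally, multiplying out $\JJ_{P,Q}\JJ_F$ and $\JJ_F\JJ_{P,Q}$ blockwise shows that the two agree precisely when $FP$ is symmetric; as $FP=Fg^{-1}F$ with $g^{-1}$ symmetric and $F$ skew, this is automatic, so $\JJ_{P,Q}$ and $\JJ_F$ commute.

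For integrability I treat the two structures separately. That $\JJ_F$ is integrable if and only if $dF=0$ is the standard characterization of symplectic-type generalized complex structures. Granting $dF=0$, Gualtieri's theorem identifies integrability of the commuting pair $(\JJ_{P,Q},\JJ_F)$ with the statement that $(g,b,I,J)$ is a generalized K\"ahler structure in the sense of Definition~\ref{GKdef}; in particular this forces $I$ and $J$ to be integrable. Conversely, if $F$ is symplectic and $I,J$ are integrable, then the elementary fact recorded in the definition of a symplectic-type structure---that such data automatically satisfy the generalized K\"ahler condition of Definition~\ref{GKdef}---together with Gualtieri's theorem shows that both $\JJ_{P,Q}$ and $\JJ_F$ are integrable. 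This gives the stated equivalence.

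It remains to prove the converse. Let $\JJ=\left(\begin{smallmatrix}A&B\\ C&D\end{smallmatrix}\right)$ be any generalized almost complex structure commuting with $\JJ_F$. Pairing-skewness forces $D=-A^*$ with $B$ and $C$ skew, while commutation with $\JJ_F$ forces $D=FAF^{-1}$ and $C=-FBF$; setting $P:=A$ and $Q:=BF$ then puts $\JJ$ into the asserted shape $\JJ_{P,Q}$. The condition $\JJ^2=-\Id$ yields $P^2-Q^2=-\Id$ and $PQ+QP=0$, and I would define $J:=-(\Id+Q)P^{-1}$ and $I:=-F^{-1}J^*F$; using the two displayed relations one checks $J^2=-\Id$, $I=-P^{-1}(\Id+Q)$, and finally $P=-2(I+J)^{-1}$, $Q=(J-I)(I+J)^{-1}$, so that $\JJ=\JJ_{P,Q}$ indeed arises from the almost complex structure $J$. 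The main obstacle, and the only genuinely delicate point, is the integrability step: one must match the $b$-field-transformed block matrices $\JJ_{P,Q}$ and $\JJ_F$ to Gualtieri's normal forms with the correct sign and $b$-transform conventions, so that his Courant-bracket integrability theorem and the automatic validity of Definition~\ref{GKdef} for symplectic-type data apply verbatim.
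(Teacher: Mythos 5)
Your proposal is correct and in substance coincides with the paper's treatment: the paper offers no computation at all, stating only that the proposition ``immediately follows from \cite[\S 6.4]{Gualtieri-PhD}'', and your argument simply makes explicit the algebra behind that citation (the dictionary $P=g^{-1}F$, $Q=-\tfrac12(J-I)g^{-1}F$ via \eqref{F-g-relation}, the reduction of skewness to $Q^*=FQF^{-1}$ and of commutativity to the symmetry of $FP=Fg^{-1}F$, and the relations $P^2-Q^2=-\Id$, $PQ+QP=0$ in the converse) before deferring the integrability equivalence to Gualtieri's theorem, exactly as the authors do. I verified the block computations and the recovery of $J=-(\Id+Q)P^{-1}$, $I=-P^{-1}(\Id+Q)$, $P=-2(I+J)^{-1}$, $Q=(J-I)(I+J)^{-1}$; they are all correct.

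One caveat in the converse: you invert $P$ without comment, and this can genuinely fail --- for $\JJ=\JJ_F$ itself one has $P=0$, $Q=-\Id$, which satisfies all your constraints but is not of the form $\JJ_{P,Q}$ for any almost complex structure $J$. The invertibility of $P$ must be extracted from the positivity condition $-\la\JJ\JJ_F\cdot,\cdot\ra>0$ (equivalently, definiteness of $FP$), which is implicit in the paper's statement rather than explicit; you should either add this hypothesis or note that $P$ invertible is exactly the condition under which the converse holds. This is an imprecision inherited from the statement itself rather than a flaw in your approach, but it is the one step of your argument that does not go through verbatim.
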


We note that fundamental works of Goto and Gualtieri \cite{Goto-JDG, Goto-AM, Gualtieri-Hamiltonian} show that on a compact \emph{K\"ahler} manifold endowed with a holomorphic Poisson tensor $\pi_J$, there always exist symplectic type GK structures with Poisson tensor $\pi_J$.  Furthermore there are natural deformation spaces, reviewed below, which naturally fix the holomorphic Poisson geometry.  We thus define the space of GK structures on such a background.

\begin{defn} A symplectic type generalized K\"ahler structure $F$ on  $(M, \Jj)$ is called $\poiss_{\Jj}$-\emph{compatible} if the holomorphic Poisson tensor associated to $(g, b, I, J)$ via (\ref{e:PoissonJ}) equals $\poiss_{\Jj}$. As $\poiss= \Re e(\poiss_{\Jj})$ determines $\poiss_{\Jj}$ on $(M, J)$, we let ${\GK}_{\poiss}$ denote
the space  of $\poiss_{\Jj}$-compatible symplectic type GK structures.  The deRham class $\alpha:=[F]\in H^2(M, \mathbb R)$ of a generalized K\"ahler structure in ${\GK}_{\poiss}$ will be called a \emph{compatible deRham class}. We denote by ${\GK}_{\poiss, \alpha}$ the space of $\poiss_{\Jj}$-compatible symplectic type generalized K\"ahler structures within a fixed $\poiss_{\Jj}$-compatible deRham class $\alpha$.
\end{defn}

\subsection{Algebraic identities on a symplectic type generalized K\"ahler manifold}\label{s:appendix}

Here we collect some elementary identities for a symplectic type GK manifold $(M, g, I, J, F)$ which are useful in the sequel.  Recall the basic expressions
\begin{equation}\label{a:basic}
F = -2g(I+J)^{-1}, \qquad F^{-1} = -\tfrac{1}{2}\left(I + J\right) g^{-1},
\end{equation}

We compute for any $1$-forms $\alpha, \beta$
\begin{equation}\label{a:deep1}
\tr_F(\alpha\wedge \beta)  = \tfrac{1}{2}\tr\left(F^{-1} (\alpha \wedge \beta)\right) =\tfrac{1}{2}\bigl\langle (I+ J)\alpha, \beta \big\rangle_g, \end{equation}
where  $\langle\cdot, \cdot \rangle_g$ denotes the inner product on $T^*M$ induced by $g$ and have used the fact that $I, J$ are skew with respect to $g$.
In particular, using \eqref{a:basic}, the $g$-orthogonality of $I$ and $J$, and the identity
\begin{equation}\label{a:identity}
I(I+J) = (I+ J) J,
\end{equation}
we get
\begin{equation}\label{a:Fvolumeidentity}
\tr_F(\alpha\wedge J\beta)  = - \tr_F(I\alpha\wedge \beta),
\end{equation}
or, equivalently,
\begin{equation}\label{f:Fvolumeidentity}
\alpha\wedge J\beta \wedge F^{[n-1]} = -I\alpha \wedge \beta \wedge F^{[n-1]}.
\end{equation}
Recall that the Poisson tensor $\poiss$ is given by
\[\poiss = \half [I, J]g^{-1} =  \half (I-J)(I+J)g^{-1}\]
Using \eqref{a:deep1},  the fact that $I$ and $J$ are skew, and \eqref{a:Fvolumeidentity}, we compute for any $1$-forms $\alpha, \beta$
\begin{equation}\label{a:poisson}
\begin{split}
\Big\langle \nonhalf \poiss, \alpha\wedge \beta \Big\rangle &=- \frac{1}{4}\tr\Big(\poiss \circ (\alpha\wedge \beta)\Big)\\
& = -\frac{1}{4}\Big( \big\langle \alpha, (I-J)(I+J)\beta\big\rangle_g-\big\langle (I-J)(I+J)\alpha, \beta\big\rangle_g\Big)\\
&=-\tfrac{1}{2}\big\langle (I+J) (I-J)\alpha, \beta \big\rangle =\tr_F\big((J-I) \alpha \wedge \beta \big).
\end{split}
\end{equation}
We next establish the following identity, which holds for any $1$-forms $\alpha_1, \alpha_2, \beta_1, \beta_2$
\begin{equation}\label{a:deep2}
\begin{split}
\tr \Big(F^{-1}(\beta_1 \wedge \beta_2) F^{-1} (\alpha_1\wedge \alpha_2)\Big) & = \tfrac{1}{2}\Big\langle (I+J)\alpha_1\wedge (I+J)\alpha_2, \beta_1 \wedge \beta_2 \Big\rangle_g \\
&= \tfrac{1}{2}\Big\langle \alpha_1\wedge \alpha_2,  (I+J) \beta_1 \wedge (I+J)\beta_2 \Big\rangle_g.
\end{split}
\end{equation}
To this end, we use \eqref{a:basic}, \eqref{a:identity} and the fact that $I$ and $J$ are skew with respect to $g$ to compute
\[
\begin{split}
& 4\tr \Big(F^{-1}(\beta_1 \wedge \beta_2) \circ F^{-1} (\alpha_1\wedge \alpha_2)\Big) \\
&=  \tr\Big(\big(\beta_1\otimes (I+J)\beta_2^{\sharp} - \beta_2 \otimes (I+J)\beta_1^{\sharp}\big)\circ \big(\alpha_1\otimes (I+J)\alpha_2^{\sharp} - \alpha_2\otimes (I+J)\alpha_1^{\sharp}\big)\Big)\\
&=\Big(\big\langle(I+J)\alpha_2, \beta_1\big\rangle_g\big\langle \alpha_1, (I+J)\beta_2\big\rangle_g - \big\langle \beta_1, (I+J)\alpha_1\big\rangle_g\big\langle \alpha_2, (I+J) \beta_2\big\rangle_g \\
& \hspace{0.7cm} -\big\langle (I+J)\beta_1, \alpha_1\big\rangle_g\big\langle \beta_2, (I+J)\alpha_2\big\rangle_g + \big\langle \beta_2, (I+J)\alpha_1\big\rangle_g \big\langle (I+J)\beta_1, \alpha_2\big\rangle_g\Big)\\
&=2\Big( \big\langle (I+J)\alpha_1, \beta_1\big\rangle_g \big\langle (I+J)\alpha_2, \beta_2\big\rangle_g -  \big\langle (I+J)\alpha_1, \beta_2\big\rangle_g \big\langle (I+J)\alpha_2, \beta_1\big\rangle_g\Big).
\end{split}
\]
A useful ramification of \eqref{a:deep2} is the formula
\begin{equation}\label{a:deep4}
\begin{split}
& \tr \Big(F^{-1}(\beta \wedge I \beta) \circ F^{-1} (\alpha\wedge J\alpha)\Big) =\tfrac{1}{2} \Big(\big\langle (I+J)\alpha, \beta\big\rangle_g^2  +  \big\langle (I+J)\alpha, I\beta\big\rangle_g^2 \Big).
\end{split}
\end{equation}
Notice finally that \eqref{a:deep2} and \eqref{a:basic} also yield
\begin{equation}\label{a:deep5}
\tr \Big(F^{-1}(I\beta_1 \wedge I\beta_2) F^{-1} (\alpha_1\wedge \alpha_2)\Big) =\tr \Big(F^{-1}(\beta_1 \wedge \beta_2) F^{-1} (J\alpha_1\wedge J\alpha_2)\Big). \end{equation}

\subsection{An integration by parts formula} We will frequently use in this paper the following basic integration by parts identity:

\begin{lemma} For $(F, J)$ a symplectic type generalized
K\"ahler structure one has
\begin{equation}\label{e:by-parts}
\begin{split}
\int_M \phi \tr_F (d I d \psi) F^{[n]} =&\ \int_M \psi \tr_F (d J d \phi) F^{[n]}.
\end{split}
\end{equation}
\begin{proof}
We use (\ref{f:Fvolumeidentity}) to compute 
\begin{equation*}
\begin{split}
\int_M \phi \tr_F (d I d \psi) F^{[n]} =&\ \int_M \phi d I d \psi \wedge F^{[n-1]} = - \int_M d \phi \wedge I d \psi \wedge F^{[n-1]}\\
=&\ \int_M J d \phi \wedge d \psi \wedge F^{[n-1]} = \int_M \psi d J d \phi \wedge F^{[n-1]}\\
=&\ \int_M \psi \tr_F (d J d \phi) F^{[n]}.
\end{split}
\end{equation*}
\end{proof}
\end{lemma}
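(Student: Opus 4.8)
The plan is to reduce the identity to two applications of Stokes' theorem sandwiched around the pointwise algebraic identity \eqref{f:Fvolumeidentity}. First I would rewrite everything in terms of differential forms rather than the trace operator: by the very definition of $\tr_F$, one has $\tr_F(\omega)\, F^{[n]} = \omega \wedge F^{[n-1]}$ for any $2$-form $\omega$, so the left-hand side of \eqref{e:by-parts} equals $\int_M \phi\, (dI d\psi)\wedge F^{[n-1]}$.

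Next I would integrate by parts. Since $F$ is symplectic we have $dF = 0$ and hence $dF^{[n-1]} = 0$; therefore $d\bigl(\phi\, (I d\psi)\wedge F^{[n-1]}\bigr) = d\phi \wedge I d\psi \wedge F^{[n-1]} + \phi\, (dId\psi)\wedge F^{[n-1]}$, and as $M$ is compact Stokes' theorem gives $\int_M \phi\, (dId\psi)\wedge F^{[n-1]} = -\int_M d\phi \wedge Id\psi \wedge F^{[n-1]}$.

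The crucial algebraic step is to move the factor $I$ off $d\psi$ and onto $d\phi$, converting it into a $J$. Applying \eqref{f:Fvolumeidentity} with $\alpha = d\psi$ and $\beta = d\phi$, together with the fact that the wedge of two $1$-forms anticommutes, yields $-d\phi \wedge I d\psi \wedge F^{[n-1]} = Jd\phi \wedge d\psi \wedge F^{[n-1]}$. A final integration by parts, again using $dF^{[n-1]}=0$ and Stokes, shifts the exterior derivative back onto $\psi$ and produces $\int_M Jd\phi \wedge d\psi \wedge F^{[n-1]} = \int_M \psi\, (dJd\phi) \wedge F^{[n-1]} = \int_M \psi\, \tr_F(dJd\phi)\, F^{[n]}$, which is exactly the right-hand side.

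The only place requiring genuine care is the sign in the algebraic step: identity \eqref{f:Fvolumeidentity} carries $I$ on the first slot and $J$ on the second, so one must track the anticommutation of the two $1$-forms correctly in order to land on $Jd\phi \wedge d\psi$ with the right overall sign. Everything else is routine Stokes' theorem, and the symmetry between $I$ and $J$ encoded in \eqref{f:Fvolumeidentity} is precisely what forces $\phi$ and $\psi$ to exchange roles between the two sides.
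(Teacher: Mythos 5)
Your proposal is correct and follows essentially the same route as the paper: two integrations by parts (using $dF^{[n-1]}=0$ and compactness) around the pointwise identity \eqref{f:Fvolumeidentity}, with the sign bookkeeping in the step $-d\phi\wedge Id\psi\wedge F^{[n-1]} = Jd\phi\wedge d\psi\wedge F^{[n-1]}$ handled correctly. Nothing is missing.
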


\subsection{Generalized K\"ahler Hodge theory}

In this subsection we briefly review the Hodge theory on a generalized K\"ahler manifold focusing on the symplectic type case. We refer the reader to~\cite{Gualtieri-Hodge} and~\cite{CavalcantiBook} for a more detailed exposition.

We start with some linear-algebraic preliminaries. Elements of the \emph{generalized tangent bundle} $TM\oplus T^*M$ naturally act on the space of differential forms $\Wedge^*(M)$ via contraction and exterior product. This action extends to an action of the Clifford algebra $\mathrm{Cl}(TM\oplus T^*M,\ \la\cdot,\cdot\ra)$, where $\la\cdot,\cdot\ra$ is the usual pairing between $TM$ and $T^*M$. This  turns $\Wedge^*(M)$ into a Clifford module. The action on $TM\oplus T^*M$ of the commuting operators $\JJ$ and $\II$ from~\eqref{e:GK-interpretation} extends to an action on $\Wedge^*(M)$,  via the representation of the $\mathfrak{spin}(n,n)$ Lie algebra. We thus get a decomposition of $\Wedge^*(M)\otimes \C$ into the eigenspaces of $\JJ$ and $\II$:
\begin{equation}\label{e:eigenspace_forms}
	\Wedge^*(M)\otimes \C=\bigoplus_{-n\leq p,q\leq n} U^{p,q},
\end{equation}
where $U^{p,q}$ is the $(\i p,\i q)$ eigenspace of $(\JJ,\II)$.  One can show that $U^{p,q}=0$ unless $|p-q|,|p+q|\leq n$, and  $\bar{U^{p,q}}=U^{n-p,n-q}$. For a form $\xi\in\Wedge^*(M)\otimes \C$ we will denote by $\xi^{p,q}$ its component in $U^{p,q}$. More concretely, we can describe the spaces $U^{p,q}$ as follows. Let
\begin{equation}\label{e:eqigenspace_tangent}
	(TM\oplus T^*M)\otimes \C= L_+\oplus L_-\oplus \bar{L_+}\oplus \bar {L_-}
\end{equation}
be the decomposition of the complexified generalized tangent bundle into the eigenspaces of $(\JJ,\II)$:
\[
\JJ\big|_{L_+\oplus\bar{L_-}}=\i \mathrm{Id},\quad
\II\big|_{L_+\oplus L_-}=\i\mathrm{Id}.
\]
Specifically, using (\ref{e:GK-interpretation}) we have
\begin{equation}\label{e:L_pm}
	\begin{split}
		L_+&=\{v-\i F(v,\cdot)\ |\ v\in T^{1,0}_IM\}\\
		L_-&=\{v-\i F(v,\cdot)\ |\ v\in T^{0,1}_JM\}\\
	\end{split}
\end{equation}
Then one can compute (see~\cite{CavalcantiBook})
\begin{equation}\label{e:upq}
U^{0,n}=\mathrm{span}(e^{\i F}), \qquad U^{a-b,n-a-b}=\Wedge^{a}\bar{L_-}\cdot (\Wedge^b\bar{L_+}\cdot U^{0,n}).
\end{equation}
Since $FI=-J^*F$, and $v\cdot e^{\i F}=\i F(v,\cdot)\wedge e^{\i F}$, it follows from~\eqref{e:upq} that
\begin{equation}\label{e:u_(1,n-1)}
\begin{split}
U^{1,n-1}&=\{\xi\wedge e^{\i F}\ |\ \xi\in \Wedge^{0,1}_I(M)\}
\\
U^{-1,n-1}&=\{\xi\wedge e^{\i F}\ |\ \xi\in \Wedge^{1,0}_J(M)\}.
\end{split}
\end{equation}

Now let us review the underlying differential complex of $(U^{*,*},d)$. The integrability assumptions of the generalized K\"ahler structure $(\JJ,\II)$ imply that $d : U^{*,*}\to U^{*,*}$ has four components of bidegrees $(\pm 1,\pm 1)$. We denote them respectively
\[
\delta_+: U^{p,q}\mapsto U^{p+1,q-1},\quad \delta_-: U^{p,q}\mapsto U^{p-1,q-1},\quad \bar{\delta_+}: U^{p,q}\mapsto U^{p-1,q+1},\quad \bar{\delta_-}: U^{p,q}\mapsto U^{p+1,q+1}
\]
so that $d=\delta_++\delta_-+\bar{\delta_+}+\bar{\delta_-}$.  Furthermore, there is a natural pairing $\Wedge^*(M) \times \Wedge^*(M)\to \Wedge^{2n}(M)$:
\[
(\xi,\eta):=[\xi\wedge\sigma(\eta)]_{\mathrm{top}},
\]
where $[\cdot]_{\mathrm{top}}$ is the top degree component, and $\sigma: \Wedge^*(M)\mapsto\Wedge^*(M)$ is the Clifford involution $\sigma(dx^1\wedge\dots\wedge dx^k):=dx^k\wedge\dots\wedge dx^1$. It is shown in~\cite{Gualtieri-Hodge, CavalcantiBook} that the real \textit{star} operator
\[
\star\colon U^{*,*}\to U^{*,*},\quad \star\big|_{U^{p,q}}=(\i)^{p+q}\mathrm{Id}
\]
defines a positive definite pairing $\Wedge^*(M)\times\Wedge^*(M)\to\R$
\[
G(\xi,\eta):=\int_M (\xi,\star\eta).
\]
Using this pairing and integration by parts we define the adjoint linear first order differential operator $d^*: \Wedge^*(M)\mapsto \Wedge^*(M)$
\[
G(d\xi,\eta)=G(\xi,d^*\eta).
\]
Note that $d^*$ differs from the Riemannian co-differential. The following result is the crucial fact of Hodge theory on $M$:

\begin{thm}[{Hodge identities, see \cite{Gualtieri-Hodge,CavalcantiBook}}]
	On a compact generalized K\"ahler manifold there is a decomposition $d^*=\delta_+^*+\delta_-^*+\bar{\delta_+}^*+\bar{\delta_-}^*$, where $\delta_\pm^*$ and $\bar{\delta_\pm}^*$ are the adjoint operators of $\delta_\pm$ and $\bar{\delta_\pm}$. These operators satisfy the following identities:
    \begin{equation*}
    \begin{split}
    \delta_+^*=\bar{\delta_+},\quad\delta_-^*=-\bar{\delta_-}\\
    4\Delta=\Delta_{\delta_+} = \Delta_{\delta_-}=\Delta_{\bar{\delta_+}}=\Delta_{\bar{\delta_-}},
    \end{split}
    \end{equation*}
where $\Delta=dd^*+d^*d$, $\Delta_{\delta_+}=\delta_+\delta_+^*+\delta_+^*\delta_+$, and the remaining Laplacians are defined analogously.
\end{thm}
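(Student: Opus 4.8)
The plan is to prove the statement in two stages: first the first-order adjunction identities $\delta_+^*=\bar{\delta_+}$ and $\delta_-^*=-\bar{\delta_-}$, and then the second-order Laplacian identities, which follow from the first stage in an essentially formal way using only $d^2=0$. The entire content of the theorem is really concentrated in the first stage, which rests on identifying the $G$-adjoint $d^*$ with the star operator.

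\textbf{Stage 1 (adjunction).} Up to an overall sign fixed by conventions, I would first establish
\[
d^*=\star^{-1}d\star .
\]
To obtain this, apply the Leibniz rule for the Mukai pairing $(\xi,\eta)=[\xi\wedge\sigma(\eta)]_{\mathrm{top}}$ to the (non-truncated) mixed-degree form $\xi\wedge\sigma(\eta)$, extract the top-degree component, and integrate: by Stokes' theorem the integral of the exact part vanishes, which expresses $\int_M(d\xi,\eta)$ in terms of $\int_M(\xi,\,\cdot\,)$ after commuting $d$ past the Clifford involution via $\sigma d=(-1)^{\deg}d\sigma$. Combined with the defining relation $G(\xi,\eta)=\int_M(\xi,\star\eta)$ and the compatibility of $\star$ with the Mukai pairing, this identifies $d^*$ with $\star^{-1}d\star$. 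I would then decompose $d=\delta_++\delta_-+\bar{\delta_+}+\bar{\delta_-}$ and conjugate each summand by $\star$. Since $\star$ acts as multiplication by $\i^{p+q}$ on $U^{p,q}$, conjugating a component of bidegree $(a,b)$ multiplies it by $\i^{a+b}$; thus $\delta_+$ and $\bar{\delta_+}$ (bidegrees $(\pm1,\mp1)$) are fixed, while $\delta_-$ and $\bar{\delta_-}$ (bidegrees $(\mp1,\mp1)$) pick up $\i^{\pm2}=-1$. Matching the bidegree components of $\star^{-1}d\star$ against those of $d^*=\delta_+^*+\delta_-^*+\bar{\delta_+}^*+\bar{\delta_-}^*$ (noting that $\delta_+^*$ carries the bidegree of $\bar{\delta_+}$, and $\delta_-^*$ that of $\bar{\delta_-}$) yields exactly $\delta_+^*=\bar{\delta_+}$ and $\delta_-^*=-\bar{\delta_-}$, the sign discrepancy being precisely the $\i^2$ above.

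\textbf{Stage 2 (Laplacians).} Expanding $d^2=0$ into its bidegree components produces, among others, the $(0,0)$-identity
\[
\{\delta_+,\bar{\delta_+}\}+\{\delta_-,\bar{\delta_-}\}=0,\qquad \{A,B\}:=AB+BA .
\]
Using Stage 1, one has $\Delta_{\delta_+}=\{\delta_+,\delta_+^*\}=\{\delta_+,\bar{\delta_+}\}$ and $\Delta_{\delta_-}=\{\delta_-,\delta_-^*\}=-\{\delta_-,\bar{\delta_-}\}$, and likewise $\Delta_{\bar{\delta_+}}=\{\delta_+,\bar{\delta_+}\}$, $\Delta_{\bar{\delta_-}}=-\{\delta_-,\bar{\delta_-}\}$; the $(0,0)$-identity then forces all four to coincide. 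Finally I expand $\Delta=\{d,d^*\}$ with the expressions for $d$ and $d^*$ from Stage 1: every component of nonzero bidegree cancels, partly identically and partly via the components $\delta_\pm^2=\bar{\delta_\pm}^2=0$ of $d^2=0$, leaving $\Delta$ a scalar multiple of $\{\delta_+,\bar{\delta_+}\}=\Delta_{\delta_+}$. Tracking the normalization of $G$ and $\star$ fixes the scalar and yields the factor recorded in the statement.

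\textbf{Main obstacle.} The genuine difficulty lies entirely in Stage 1, and specifically in the precise sign and normalization in $d^*=\star^{-1}d\star$: this demands careful bookkeeping of how the Clifford involution $\sigma$, the Mukai pairing, the bidegree-weighted star $\star=\i^{p+q}$, and the conjugation law $\bar{U^{p,q}}=U^{n-p,n-q}$ interact, since a single misplaced sign would corrupt the crucial relative sign between $\delta_+^*=+\bar{\delta_+}$ and $\delta_-^*=-\bar{\delta_-}$. A robust alternative, following Cavalcanti, is to reduce the identities to a pointwise statement: choosing a local frame adapted to the joint eigenbundle decomposition $L_+\oplus L_-\oplus \bar{L_+}\oplus\bar{L_-}$, one reads off the bracket relations from the integrability of $(\JJ,\II)$ and reduces everything to a linear-algebraic computation in the $\mathfrak{spin}(n,n)$-representation on $\Wedge^*(M)$, where all the constants are transparent.
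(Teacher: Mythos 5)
The paper does not actually prove this statement: it is quoted verbatim from Gualtieri's Hodge-decomposition paper and Cavalcanti's notes, so there is no internal argument to compare yours against. Your outline is the standard proof from those references and is essentially correct. Stage 1 is the right strategy: Stokes' theorem for the Mukai pairing gives $d^{*}=\epsilon\,\star^{-1}d\star$ for a conventional sign $\epsilon$, the bidegree bookkeeping (a bidegree-$(a,b)$ component of $d$ is conjugated by $\star$ into $\i^{-(a+b)}$ times itself, which is $+1$ for $\delta_{+},\bar{\delta_{+}}$ and $-1$ for $\delta_{-},\bar{\delta_{-}}$) produces exactly the relative sign in $\delta_{+}^{*}=\bar{\delta_{+}}$, $\delta_{-}^{*}=-\bar{\delta_{-}}$, and you are right that only the overall sign $\epsilon$ requires convention-chasing. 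One small ingredient you use tacitly and should state: the $U^{p,q}$ are mutually $G$-orthogonal (equivalently, the Mukai pairing only couples $U^{p,q}$ with $U^{-p,-q}$), which is what lets you match bidegree components of $d^{*}$ against those of $\star^{-1}d\star$; this follows from the $\mathfrak{spin}$-invariance of the Mukai pairing. Stage 2 is a purely formal consequence of Stage 1 together with the $(0,0)$-component $\{\delta_{+},\bar{\delta_{+}}\}+\{\delta_{-},\bar{\delta_{-}}\}=0$ of $d^{2}=0$, exactly as you say.

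One caveat on the constant. If you carry out your Stage 2 honestly you get
\[
\Delta \;=\; \{d,d^{*}\} \;=\; 2\{\delta_{+},\bar{\delta_{+}}\}-2\{\delta_{-},\bar{\delta_{-}}\} \;=\; 4\,\Delta_{\delta_{+}},
\]
which is Gualtieri's normalization $\Delta_{d}=4\Delta_{\delta_{+}}$. The display in the statement reads $4\Delta=\Delta_{\delta_{+}}$, i.e.\ the factor sits on the other side; under the paper's own definitions of $G$, $d^{*}$ and the $\Delta$'s this appears to be a transcription slip rather than a different convention. So your closing remark that ``tracking the normalization \dots yields the factor recorded in the statement'' is the one place where your outline would not literally go through as written: you should expect to prove $\Delta=4\Delta_{\delta_{+}}$, not $4\Delta=\Delta_{\delta_{+}}$. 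Since the paper only ever uses the qualitative consequence that the four Laplacians have a common kernel, nothing downstream is affected.
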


This theorem has several important consequences. First, the cohomology of $U^{*,*}$ with respect to any of the differentials $d,\delta_+,\delta_-$ are naturally isomorphic to each other and to the space of \textit{harmonic forms}
\[
    \mathcal{H}^{p,q}=\{\xi\in U^{p,q}\ |\ \Delta\xi=0\}
\]
Interestingly, a form $\xi\in U^{p,q}$ of pure type is harmonic if and only if it is closed.
Furthermore, every form $\xi\in U^{p,q}$ has a decomposition
\[
    \xi=\xi_h+\Delta \eta,
\]
where $\xi_h\in \mathcal{H}^{p,q}$ is the harmonic part of $\xi$ and $\eta\in U^{p,q}$. Further we will need a more explicit description of $\mathcal H^{\pm 1,n-1}$. Using the identification~\eqref{e:u_(1,n-1)} we conclude that there are natural isomorphisms given by wedging with $e^{-\i F}$:
\begin{equation}\label{Hodge-isomorphism}
    \begin{split}
    \mathcal{H}^{1,n-1}&\simeq \{\xi\in \Wedge^{0,1}_I(M)\ |\ d\xi=0\}\\
    \mathcal{H}^{-1,n-1}&\simeq \{\xi\in\Wedge^{1,0}_J(M)\ |\ d\xi=0\}\\
    \mathcal{H}^{1,n-1}\oplus \mathcal{H}^{-1,n-1}&\simeq H^1(M,\C).
    \end{split}
\end{equation}

\begin{lemma}\label{l:h1_isomorphism}
    Let $(F,J)$ be a symplectic type generalized K\"ahler structure on a compact manifold~$M$. Given a 1-form $\xi\in \Wedge^1(M)\otimes\C$ we decompose it as
    \[
        \xi=\xi_++\xi_-,
        \quad
        \xi_+=I(I+J)^{-1}\xi+\i (I+J)^{-1}\xi,\quad \xi_-=J(I+J)^{-1}\xi-\i(I+J)^{-1}\xi,
    \]
    so that
    \[
        \xi\wedge e^{\i F}=\xi_+\wedge e^{\i F}+\xi_-\wedge e^{\i F}
    \]
    is the decomposition of $\xi\wedge e^{\i F}\in U^{1,n-1}\oplus U^{-1,n-1}$ by type. Then the maps
    \[
        \xi\mapsto (\xi\wedge e^{\i F})^{1,n-1}_h,\quad \xi\mapsto(\xi\wedge e^{\i F})^{-1,n-1}_h
    \]
    induce isomorphisms
    \[
        i_+: H^1(M,\R)\mapsto \mathcal H^{1,n-1},\quad
        i_-: H^1(M,\R)\mapsto \mathcal H^{-1,n-1}.
    \]
    Furthermore, any $[\xi]\in H^1(M,\R)$ can be represented by a closed, $d^c_J$-closed form $\xi_-$.
\end{lemma}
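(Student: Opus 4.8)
The plan is to identify the maps $i_\pm$ with the two projections of the real subspace $H^1(M,\R)\subset H^1(M,\C)$ onto the summands of the generalized Hodge decomposition $\mathcal{H}^{1,n-1}\oplus\mathcal{H}^{-1,n-1}\simeq H^1(M,\C)$ from~\eqref{Hodge-isomorphism}, and then to deduce that each is an isomorphism by a dimension count that bootstraps off injectivity. First I would record the elementary facts that wedging with the closed form $e^{\pm\i F}$ is a cohomology-preserving isomorphism, and that harmonic projection commutes with projection onto the $U^{p,q}$ (as $\Delta$ commutes with $\JJ,\II$). For a $d$-closed $1$-form $\xi$, the form $\xi\wedge e^{\i F}$ is closed and lies in $U^{1,n-1}\oplus U^{-1,n-1}$ by~\eqref{e:u_(1,n-1)}, so $i_\pm([\xi])=(\xi\wedge e^{\i F})^{\pm1,n-1}_h$ descends to cohomology (if $\xi=df$ then $\xi\wedge e^{\i F}=d(fe^{\i F})$ is exact). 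Writing $i_+([\xi])\wedge e^{-\i F}=\zeta_1\in\mathcal{Z}^{0,1}_I$ and $i_-([\xi])\wedge e^{-\i F}=\zeta_2\in\mathcal{Z}^{1,0}_J$ for the closed pure-type forms of~\eqref{Hodge-isomorphism}, one has $[\xi]=[\zeta_1]+[\zeta_2]$, so that under~\eqref{Hodge-isomorphism} the combined map $(i_+,i_-)$ is exactly the inclusion of the real subspace and $i_\pm$ are its two component projections.

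The key input is the triviality of the intersection $[\mathcal{Z}^{1,0}_K]\cap[\mathcal{Z}^{0,1}_K]=0$ in $H^1(M,\C)$, for each $K\in\{I,J\}$, where $\mathcal{Z}^{1,0}_K,\mathcal{Z}^{0,1}_K$ denote the closed forms of pure type. This subspace is stable under conjugation, hence spanned by real classes; a real class in it is represented both by a closed $(1,0)_K$-form $\alpha$ and by $\bar\alpha$, so $\alpha-\bar\alpha=dh$, which after separating types and using that a real function with $\partial_K u=0$ is constant forces $\alpha=\i\partial_K v$ with $v$ real and $\partial_K\bar\partial_K v=0$. Such a $v$ is pluriharmonic, hence plurisubharmonic, hence constant on the compact connected $M$ by the maximum principle, so $\alpha=0$. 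I expect this maximum-principle step — recognizing that injectivity of $i_\pm$ is governed by the absence of nonconstant pluriharmonic functions — to be the conceptual heart of the argument.

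Granting this, injectivity of $i_\pm$ follows from reality: if $i_+([\xi])=0$ then $[\xi]=[\zeta_2]\in[\mathcal{Z}^{1,0}_J]$, and since $[\xi]$ is real also $[\xi]=[\bar\zeta_2]\in[\mathcal{Z}^{0,1}_J]$, whence $[\xi]\in[\mathcal{Z}^{1,0}_J]\cap[\mathcal{Z}^{0,1}_J]=0$; the argument for $i_-$ is symmetric with $I$ in place of $J$. Setting $h^K=\dim_\C\mathcal{Z}^{1,0}_K$, the decomposition~\eqref{Hodge-isomorphism} gives $h^I+h^J=b_1$, while injectivity yields $b_1\le\dim_\R\mathcal{H}^{1,n-1}=2h^I$ and $b_1\le2h^J$; adding these forces $b_1=2h^I=2h^J$, so the injective maps $i_\pm$ are isomorphisms of real vector spaces of equal dimension.

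For the final statement, I would characterize the real $1$-forms $\eta$ that are simultaneously $d$- and $d^c_J$-closed: separating $d\eta=0$ and $d^c_J\eta=0$ into $J$-bidegrees shows these are exactly the forms $\eta=\zeta+\bar\zeta$ with $\zeta\in\mathcal{Z}^{1,0}_J$. The $\R$-linear map $\mathcal{Z}^{1,0}_J\to H^1(M,\R)$, $\zeta\mapsto[\zeta+\bar\zeta]$, has trivial kernel by the same intersection fact (if $[\zeta]=-[\bar\zeta]$ then $[\zeta]\in[\mathcal{Z}^{1,0}_J]\cap[\mathcal{Z}^{0,1}_J]=0$), and since $\dim_\R\mathcal{Z}^{1,0}_J=2h^J=b_1$ it is onto. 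Thus every real class is represented by such an $\eta$, whose pure $J$-type part $\xi_-:=\zeta$ is closed and therefore $d^c_J$-closed, as required. The only remaining subtlety is bookkeeping the type conventions so that this part matches the form $\xi_-$ of the statement; the analytic content is entirely contained in the intersection fact established above.
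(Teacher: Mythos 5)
Your proof is correct, and its skeleton --- well-definedness on cohomology, reduction to injectivity of $i_\pm$, and the dimension count $\dim_\R\mathcal H^{1,n-1}+\dim_\R\mathcal H^{-1,n-1}=2b_1(M)$ extracted from \eqref{Hodge-isomorphism} --- coincides with the paper's. Where you genuinely diverge is the injectivity step. The paper stays inside the generalized Hodge complex: if $(\xi\wedge e^{\i F})^{1,n-1}_h=0$, it invokes the $\delta_+$-Hodge decomposition to write $(\xi\wedge e^{\i F})^{1,n-1}=\delta_+((\phi+\i\psi)e^{\i F})$, unwinds the explicit formula for $\delta_+$ to get $\xi=d\psi+Jd\phi$, and concludes via the maximum principle applied to the $dd^c_J$-closed function $\phi$. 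You instead push the question back to classical complex geometry on $(M,I)$ and $(M,J)$ separately, reducing injectivity (and also the final surjectivity claim) to the single fact that the classes of closed $(1,0)_K$-forms and of closed $(0,1)_K$-forms intersect trivially in $H^1(M,\C)$ for each integrable $K\in\{I,J\}$ --- proved by the same maximum-principle input about pluriharmonic functions. Your route avoids manipulating $\delta_\pm$ beyond what \eqref{Hodge-isomorphism} already packages and isolates the analytic content more cleanly; the paper's is shorter once the expression for $\delta_+$ on $U^{0,n}$ is accepted. Two minor points to tidy: in your kernel argument for $\zeta\mapsto[\zeta+\bar{\zeta}]$ you need to upgrade $[\zeta]=0$ to $\zeta=0$ (an exact closed $(1,0)_J$-form equals $df$ with $f$ holomorphic, hence constant on compact $M$); and the ``bookkeeping'' you defer at the end is indeed harmless, since the paper's own representative is likewise $\Re(\xi_-)$ for $\xi_-$ a closed form in $\Wedge^{1,0}_J$, which is your $\tfrac12(\zeta+\bar{\zeta})$.
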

\begin{proof}
    First, we observe that the map $\xi\mapsto (\xi\wedge e^{\i F})^{1,n-1}_h$ yields a well-defined map on $H^1(M,\R)$. Indeed, for an exact 1-form $df$ we have $(df\wedge e^{\i F})^{1,n-1}_h=\left(\delta_+(f\wedge e^{\i F})\right)_h=0$. Next we observe that the map
    \[
        \Wedge^1(M)\otimes\C\mapsto U^{1,n-1}\oplus U^{-1,n-1},\quad \xi\mapsto \xi\wedge e^{\i F}
    \]
    establishes an isomorphism between $H^1(M,\C)$ and $H^{1,n-1}(U^{*,*},d)\oplus H^{1,n-1}(U^{*,*},d)\simeq \mathcal H^{1,n-1}\oplus \mathcal H^{-1,n-1}$, thus
    \[
        2\dim_\R H^1(M,\R)=\dim_\R \mathcal H^{1,n-1}+\dim_\R\mathcal H^{-1,n-1}.
    \]
    Therefore it suffices to prove that the maps $i_\pm$ are injective. Indeed, consider a closed real form $\xi\in\Wedge^{1}(M)$. Assume that $(\xi\wedge e^{\i F})^{1,n-1}_h=0$, so that the $\delta_+$-closed form $(\xi\wedge e^{\i F})^{1,n-1}$ is $\delta_+$-exact: $(\xi\wedge e^{\i F})^{1,n-1}=\delta_+(f e^{\i F})$ for some complex-valued function $f=\phi+\i\psi\in C^\infty(M,\C)$. Using the definition of $\xi_+$, and the explicit expression for $\delta_+$ we can rewrite it as
    \[
	\xi=d\psi+Jd\phi.
    \]
    Since $\xi$ is closed, we conclude that $dd^c_J\phi=0$. On a compact manifold $M$ the latter implies that $\phi=\mathrm{const}$ by the maximum principle. Thus $\xi=d\psi$ represents the trivial cohomology class, so the map $i_+$ is injective. Similarly $i_-$ is also injective, and by the dimension count both must by isomorphisms.

    To prove that last claim, we observe that the map
    \[
        \mathcal{H}^{-1,n-1}\to H^1(M,\R),\quad \xi_-\wedge e^{\i F}\mapsto [\Re(\xi_-)]
    \]
    between two vector spaces of equal dimensions is also injective for similar reasons as for $i_-$. Indeed, if $\Re(\xi_-)=d\phi$ is exact, then $\phi$ is $dd_J^c$-closed, which is only possible when $\phi$ is a constant. Therefore, the above map is an isomorphism, and every element in $H^1(M,\R)$ has a closed, $d^c_J$-closed representative.
\end{proof}

A further key consequence of the Hodge identities is a $\delta_+\delta_-$-lemma for $(U^{p,q},d)$:

\begin{lemma}[{$\delta_+\delta_-$ and $\delta_+\bar{\delta_-}$-lemma \cite{Gualtieri-Hodge,CavalcantiBook}}]
    On a compact generalized K\"ahler manifold
    \[
        \begin{split}
        \mathrm{Im}(\delta_+)\cap\Ker(\delta_-)
        &=\Ker(\delta_+)\cap\mathrm{Im}(\delta_-)
        =\mathrm{Im}(\delta_+\delta_-)\\
        \mathrm{Im}(\delta_+)\cap\Ker(\bar\delta_-)
        &=\Ker(\delta_+)\cap\mathrm{Im}(\bar\delta_-)
        =\mathrm{Im}(\delta_+\bar\delta_-).
        \end{split}
    \]
 \end{lemma}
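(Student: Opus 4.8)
The plan is to prove this exactly as one proves the classical $\partial\bar\partial$-lemma on a compact K\"ahler manifold, with $\delta_+$ playing the role of $\partial$ and $\delta_-$ (resp.\ $\bar{\delta_-}$) the role of $\bar\partial$. The two essential ingredients are already in hand. First, from $d^2=0$ together with the bidegree count of the four components of $d$, one reads off $\delta_+^2=\delta_-^2=\bar{\delta_-}^2=0$ and the anticommutation relations $\delta_+\delta_-=-\delta_-\delta_+$, $\delta_+\bar{\delta_-}=-\bar{\delta_-}\delta_+$, $\delta_-\bar{\delta_+}=-\bar{\delta_+}\delta_-$. Second, the Hodge identities theorem gives $4\Delta=\Delta_{\delta_+}=\Delta_{\delta_-}=\Delta_{\bar{\delta_-}}$, so that all four operators commute with a single elliptic Laplacian $\Delta$; consequently there is one harmonic space $\mathcal H^{p,q}=\Ker\Delta$ on each $U^{p,q}$, a form is harmonic iff it is killed by any one of $\delta_+,\delta_+^*,\delta_-,\delta_-^*,\dots$, and each of $\delta_+,\delta_-,\bar{\delta_-}$ admits an orthogonal Hodge decomposition of $U^{p,q}$ into harmonic, exact, and coexact parts.

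The inclusions $\mathrm{Im}(\delta_+\delta_-)\subseteq \mathrm{Im}(\delta_+)\cap\Ker(\delta_-)$ and $\mathrm{Im}(\delta_+\delta_-)\subseteq\Ker(\delta_+)\cap\mathrm{Im}(\delta_-)$ are immediate from $\delta_+^2=\delta_-^2=0$ and $\delta_+\delta_-=-\delta_-\delta_+$, and likewise for the $\bar{\delta_-}$ statement. The content is the reverse inclusion, which I would establish as follows. Let $\xi\in\mathrm{Im}(\delta_+)\cap\Ker(\delta_-)$ and write $\xi=\delta_+\eta$. Applying the $\delta_-$-Hodge decomposition to $\eta$ gives $\eta=h+\delta_-\beta+\delta_-^*\gamma$ with $h$ harmonic; pushing $\delta_+$ through and using $\delta_+h=0$ (harmonicity) yields $\xi=\delta_+\delta_-\beta+\delta_+\delta_-^*\gamma$. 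The crux is to eliminate the second term. For this I would use the adjoint anticommutation relation $\delta_+\delta_-^*=-\delta_-^*\delta_+$, rewriting $\xi=\delta_+\delta_-\beta-\delta_-^*\delta_+\gamma$, and then impose $\delta_-\xi=0$. Since $\delta_-\delta_+\delta_-\beta=-\delta_+\delta_-^2\beta=0$, this forces $\delta_-\delta_-^*(\delta_+\gamma)=0$; pairing with $\delta_+\gamma$ gives $\|\delta_-^*\delta_+\gamma\|^2=0$, hence $\delta_-^*\delta_+\gamma=0$ and $\xi=\delta_+\delta_-\beta\in\mathrm{Im}(\delta_+\delta_-)$.

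The one genuinely non-formal step --- and the point I would treat most carefully --- is the adjoint anticommutation relation $\{\delta_+,\delta_-^*\}=0$. This is where the Hodge identities enter decisively: from $\delta_-^*=-\bar{\delta_-}$ one has $\delta_+\delta_-^*+\delta_-^*\delta_+=-(\delta_+\bar{\delta_-}+\bar{\delta_-}\delta_+)$, and the bracket on the right is precisely the bidegree $(+2,0)$ component of $d^2=0$, hence vanishes. The remaining equality, with $\bar{\delta_-}$ in place of $\delta_-$, follows by the identical argument: here $\bar{\delta_-}^*=-\delta_-$, so $\{\delta_+,\bar{\delta_-}^*\}=-\{\delta_+,\delta_-\}=0$ by the bidegree $(0,-2)$ relation, and the rest of the computation goes through verbatim under $\delta_-\mapsto\bar{\delta_-}$. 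I do not anticipate analytic difficulties, since ellipticity of $\Delta$ and the resulting Hodge decompositions are already guaranteed by the stated Hodge identities; the entire argument is formal once the anticommutation of $\delta_+$ with the adjoints $\delta_-^*$ and $\bar{\delta_-}^*$ is in place.
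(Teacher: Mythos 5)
The paper does not prove this lemma at all: it is stated with a citation to Gualtieri and Cavalcanti and used as a black box. Your argument is the standard formal $\partial\bar\partial$-lemma proof transplanted to the $(U^{p,q},\delta_\pm)$ complex, and it is correct: the decomposition $\eta=h+\delta_-\beta+\delta_-^*\gamma$, the vanishing $\delta_+h=0$ from the equality of Laplacians, and above all the key anticommutator $\{\delta_+,\delta_-^*\}=-\{\delta_+,\bar{\delta_-}\}=0$ obtained from the Hodge identity $\delta_-^*=-\bar{\delta_-}$ together with the bidegree $(2,0)$ component of $d^2=0$ are exactly the right ingredients, and the same scheme with $\bar{\delta_-}^*=-\delta_-$ handles the second chain of equalities. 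The only step left implicit is the reverse inclusion for $\Ker(\delta_+)\cap\mathrm{Im}(\delta_-)$, which follows by the symmetric argument with the roles of $\delta_+$ and $\delta_-$ exchanged (using $\delta_+^*=\bar{\delta_+}$ and the $(-2,0)$ component of $d^2=0$); it is worth a sentence but is not a gap.
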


\begin{cor}\label{c:holo_closed}
    On a compact generalized K\"ahler manifold of symplectic type $(M, F, J)$ any holomorphic $p$-form is  closed.
\end{cor}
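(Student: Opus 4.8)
The plan is to push the statement into Gualtieri's bigraded complex $(U^{*,*},d)$ and exhibit a holomorphic $p$-form as a pure-type form which is forced to be harmonic, hence closed. First I would generalize the identification \eqref{e:u_(1,n-1)} to all degrees: using \eqref{e:upq} and the Clifford action of $\Wedge^{\bullet}\bar{L_+}$ on $U^{0,n}=\mathrm{span}(e^{\i F})$, the map $\xi\mapsto \xi\wedge e^{\i F}$ sends $\Wedge^{p,0}_J(M)$ into $U^{-p,n-p}$ (the cases $p=0,1$ being $U^{0,n}$ itself and \eqref{e:u_(1,n-1)}). Thus a holomorphic $p$-form $\alpha\in\Wedge^{p,0}_J(M)$, i.e.\ one with $\bar\partial_J\alpha=0$, determines a pure-type element $\rho:=\alpha\wedge e^{\i F}\in U^{-p,n-p}$.

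Since $F$ is closed we have $d(e^{\i F})=0$, and using $\bar\partial_J\alpha=0$,
\[
    d\rho = d\alpha\wedge e^{\i F} = \partial_J\alpha\wedge e^{\i F}.
\]
As $\partial_J\alpha\in\Wedge^{p+1,0}_J(M)$, the same identification places $d\rho$ in the single space $U^{-p-1,n-p-1}$. Now $d=\delta_++\delta_-+\bar{\delta_+}+\bar{\delta_-}$ splits into four pieces of distinct bidegrees, and $U^{-p-1,n-p-1}$ is exactly the target of $\delta_-$ acting on $U^{-p,n-p}$; comparing components therefore gives $\delta_+\rho=\bar{\delta_+}\rho=\bar{\delta_-}\rho=0$ together with $d\rho=\delta_-\rho$. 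In particular $\rho$ is $\delta_+$-closed, and since the Hodge identities give $\delta_+^*=\bar{\delta_+}$, the vanishing $\bar{\delta_+}\rho=0$ says that $\rho$ is also $\delta_+$-coclosed. Hence $\Delta_{\delta_+}\rho=0$, and the identity $4\Delta=\Delta_{\delta_+}$ shows $\rho$ is harmonic. As $\rho$ is of pure type, harmonicity is equivalent to closedness (ultimately because $G$ is positive definite), so $d\rho=0$. Then $\delta_-\rho=\partial_J\alpha\wedge e^{\i F}=0$, and since wedging with $e^{\i F}$ is injective we get $\partial_J\alpha=0$; combined with $\bar\partial_J\alpha=0$ this gives $d\alpha=0$, as desired.

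The step I expect to be the main obstacle is the degree-wise placement $\xi\wedge e^{\i F}\in U^{-p,n-p}$ for $\xi\in\Wedge^{p,0}_J(M)$ when $p\geq 2$. Applying a generator $w+\i F(w,\cdot)\in\bar{L_+}$ to $\xi\wedge e^{\i F}$ produces not only the expected term $2\i F(w,\cdot)\wedge\xi\wedge e^{\i F}$ but also a contraction term $(\iota_w\xi)\wedge e^{\i F}$, and one must check that the latter does not spoil membership in $U^{-p,n-p}$; this can be verified directly from the structure of $L_\pm$ in \eqref{e:L_pm} or quoted from \cite{CavalcantiBook}. Once this identification is secured the remainder is formal, its only inputs being the Hodge identities $\delta_+^*=\bar{\delta_+}$ and $4\Delta=\Delta_{\delta_+}$ together with the positivity of $G$.
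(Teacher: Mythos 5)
Your strategy is genuinely different from the paper's and can be made to work, but the step you flagged as the main obstacle is in fact a real failure: the pure\nobreakdash-type placement $\xi\wedge e^{\i F}\in U^{-p,n-p}$ for $\xi\in\Wedge^{p,0}_J(M)$ is \emph{false} for $p\geq 2$ whenever $\poiss\neq 0$. Concretely, for $w_1,w_2\in T^{0,1}_IM$ the Clifford action gives
\[
\bigl(w_2+\i F(w_2,\cdot)\bigr)\cdot\bigl(w_1+\i F(w_1,\cdot)\bigr)\cdot e^{\i F}
=4\,F(w_1,\cdot)\wedge F(w_2,\cdot)\wedge e^{\i F}+2\i\,F(w_1,w_2)\,e^{\i F},
\]
and the left-hand side spans $U^{-2,n-2}=\Wedge^2\bar{L_+}\cdot U^{0,n}$ while $F(w_1,\cdot)\wedge F(w_2,\cdot)$ runs over $\Wedge^{2,0}_J(M)$. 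Hence $\Wedge^{2,0}_J(M)\wedge e^{\i F}$ lies in $U^{-2,n-2}\oplus U^{0,n}$ with $U^{0,n}$-component proportional to $F(w_1,w_2)$, i.e., to the $(0,2)_I$-part of $F$; and a short computation from $F=-2g(I+J)^{-1}$ shows that $F$ is of type $(1,1)$ with respect to $I$ if and only if $[I,J]=0$. So the contraction term does spoil purity exactly in the non-K\"ahler case of interest, which is why the paper only asserts the graded isomorphism $\bigoplus_p\Wedge^{p,0}_J(M)\simeq\bigoplus_p U^{-p,n-p}$. This invalidates your intermediate claims that $d\rho$ sits in the single space $U^{-p-1,n-p-1}$ and that $\bar{\delta_-}\rho=0$: since $\delta_-$ and $\bar{\delta_-}$ both preserve the diagonal $\bigoplus_q U^{-q,n-q}$, their contributions cannot be separated by bidegree.

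The repair is immediate, because your harmonicity argument only ever uses $\delta_+\rho=0$ and $\bar{\delta_+}\rho=0$, and both survive the weakening: from $\rho,\,d\rho\in\bigoplus_q U^{-q,n-q}$ one still gets $\delta_+\rho=0$ (its target $\bigoplus_q U^{-q+1,n-q-1}$ meets the diagonal trivially) and $\bar{\delta_+}\rho=0$ (its target spaces $U^{-q-1,n-q+1}$ vanish for dimensional reasons). Then $\Delta_{\delta_+}\rho=\delta_+\bar{\delta_+}\rho+\bar{\delta_+}\delta_+\rho=0$, so $\Delta\rho=0$, and positivity of $G$ gives $d\rho=0$ for \emph{any} harmonic form, pure type or not (the pure-type hypothesis is only needed for the converse implication closed $\Rightarrow$ harmonic). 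Injectivity of wedging with $e^{\i F}$ then yields $d\xi=0$. With this correction your proof is valid and takes a genuinely different route from the paper: the paper obtains $\delta_+\rho=\bar{\delta_+}\rho=0$ the same way but then kills $\delta_-\rho$ and $\bar{\delta_-}\rho$ one at a time via the $\delta_+\delta_-$- and $\delta_+\bar{\delta_-}$-lemmas applied to source spaces that vanish dimensionally, whereas you get closedness in one stroke from the K\"ahler identity $\delta_+^*=\bar{\delta_+}$. Your version is shorter and bypasses the $dd^c$-type lemmas entirely.
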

\begin{proof}
    First, we observe that from the identities~\eqref{e:L_pm} and~\eqref{e:upq}  we get an isomorphism
    \begin{equation}\label{e:lambda_{p,0}}
        \bigoplus_p\Wedge^{p,0}_J(M)\simeq \bigoplus_p U^{-p,n-p}\quad \xi\mapsto \xi\wedge e^{\i F}.
    \end{equation}
    Let $\xi\in \Wedge_J^{p,0}(M)$ be a holomorphic form. Then $d\xi=\i\del_J\xi\in \Wedge_J^{p+1,0}(M)$ and we claim that this form vanishes. Indeed, from isomorphism~\eqref{e:lambda_{p,0}} we conclude that
    \[
        \xi\wedge e^{\i F},d\xi\wedge e^{\i F}\in  \bigoplus_p U^{-p,n-p}
    \]
    which implies that $\xi\wedge e^{\i F}$ is $\delta_+$-closed. Consider the form $\delta_-(\xi\wedge e^{\i F})$. This form is $\delta_-$-exact and $\delta_+$-closed, therefore by the $\delta_+\delta_-$-lemma it has to be in the image of
    \[
        \delta_+\delta_-: \bigoplus_p U^{-p,n-p-2}\to \bigoplus_p U^{-p,n-p}.
    \]
    As the former space is trivial,  $\delta_-  (\xi\wedge e^{\i F})$ is zero. Similarly, applying the $\delta_+\bar{\delta_-}$-lemma we conclude that $\bar{\delta_-}(\xi\wedge e^{\i F})=0$. Since for dimensional reasons we also have $\bar{\delta_+}(\xi\wedge e^{\i F})=0$, it follows that $\xi\wedge e^{\i F}$ is closed which is equivalent to $\xi$ being closed.
\end{proof}

\subsection{Hamiltonian symplectic type generalized K\"ahler  deformations}\label{s:HFD}

Let $(M,\omega_0, \Jj)$ be a compact K\"ahler manifold.  A key feature of K\"ahler geometry is the possibility to deform $\omega_0$ by smooth functions as follows: given $\varphi\in C^\infty(M,\R)/\R$ we can define a new (1,1)-form
\[
\omega_\varphi:=\omega_0+dd^c_\Jj\varphi,
\]
and as long as $\omega_\varphi$ is positive definite $(M,\omega_\varphi, \Jj)$ is again a K\"ahler manifold. This procedure is reversible,  due to  the $dd^c_{\Jj}$-lemma: a  K\"ahler metric $\omega$ on $(M, J)$ is of the form $ \omega= \omega_\varphi$  for some smooth function $\varphi$ (uniquely defined  up to an additive constant) iff  $\omega \in \alpha= [\omega_0] \in H^{2}(M,\R)$.
 In particular, the space ${\KK}_\alpha$  of all K\"ahler metrics within a given deRham class $\alpha$  has a structure of a  Fr\'echet manifold modeled on the vector space  $C^\infty(M,\R)/\R$:
for any $\phi \in C^\infty(M,\R)/\R$,  there is an infinitesimal deformation $\omega_t$ of $\omega$ in  ${\KK}_\alpha$, such that
\[
\dt \omega_t=dd^c_\Jj \phi.
\]
There is a similar construction in generalized K\"ahler geometry, which was first presented in the context of generalized K\"ahler structures with nondegenerate Poisson tensor on 4-dimensional manifolds in~\cite[\S 4.2]{AGG},  where it is attributed to Joyce (see also~\cite{hi-07}). Gualtieri in \cite[\S 7]{gu-10} defined, more generally,  Hamiltonian deformations of  symplectic type generalized K\"ahler manifolds whereas \cite{GiS} defined a version adapted to the general biHermitian case.  We recall the construction of \cite{gu-10}:

\begin{thm}[{\cite{gu-10}}]\label{t:cstr:flow}
    Let $(M,\Jj, F_0)$ be a compact symplectic type generalized K\"ahler manifold with the second complex structure $I_0$ and real Poisson tensor $\poiss$. Let $\phi_t\in C^\infty(M,\R), \, t\in (-\varepsilon, \varepsilon)$  be a one-parameter family of smooth functions on $M$,   $X_{\phi_t}:=\sj\nonhalf\poiss(d\phi_t)$  the time dependent $(\sj \nonhalf\poiss)$-Hamiltonian vector field,  and $\Phi_t,\, \Phi_0={\rm Id}$ the corresponding isotopy of diffeomorphisms.  Define
\begin{equation} \label{f:flowconst}
	\begin{split}
F_{t}&:= F_0 +\int_0^t \left(dd^c_{I_s}\phi_s\right) ds, \qquad I_s = \Phi_s \cdot I_0 := (\Phi_{s})_* I_0 (\Phi_{s})_*^{-1}.
	\end{split}
	\end{equation}
Then $F_{t}$ defines a one-parameter family of  symplectic type generalized K\"ahler structures on $(M, J)$ as long as $(F_t)_J^{1,1}>0$. Furthermore,
$I_t = - F_t^{-1} J^*  F_t= \Phi_t \cdot I_0$.
\end{thm}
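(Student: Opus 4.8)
The plan is to separate the three requirements of Definition~\ref{symplectic typeGKdef} and to reduce the whole statement to one infinitesimal compatibility identity. Throughout write $X_t:=X_{\phi_t}=\sj\nonhalf\poiss(d\phi_t)$, let $\Phi_t$ be its isotopy, and abbreviate $I_t:=\Phi_t\cdot I_0$. The symplectic condition is immediate: each $dd^c_{I_s}\phi_s$ is exact, hence closed, and since $d$ commutes with $\int_0^t(\cdot)\,ds$ we get $dF_t=dF_0=0$, so every $F_t$ is closed; nondegeneracy and the fact that $g_t:=-(F_tJ)^{\mathrm{sym}}$ is a Riemannian metric --- equivalently that $F_t$ tames $J$ --- are exactly the hypothesis $(F_t)_J^{1,1}>0$ together with Remark~\ref{r:LA}. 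Granting that the intrinsic almost complex structure $-F_t^{-1}J^*F_t$ equals the flowed structure $I_t=\Phi_t\cdot I_0$, the latter is integrable because it is the push-forward of the integrable $I_0$ by a diffeomorphism, and then $(F_t,J)$ satisfies Definition~\ref{symplectic typeGKdef}. Thus everything comes down to the identity $I_t=-F_t^{-1}J^*F_t$.

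I would prove this identity by comparing two first-order evolutions. Since $\Phi_t^*I_t=I_0$ is constant, the operator identity $\tfrac{d}{dt}(\Phi_t^*T_t)=\Phi_t^*\big(\dt T_t+\LL_{X_t}T_t\big)$ forces $\dt I_t=-\LL_{X_t}I_t$. On the other hand, setting $I_t^{\mathrm{int}}:=-F_t^{-1}J^*F_t$ and using $\dt F_t=dd^c_{I_t}\phi_t$ together with $\dt(F_t^{-1})=-F_t^{-1}(\dt F_t)F_t^{-1}$, a direct computation gives
\[
\dt I_t^{\mathrm{int}}=-F_t^{-1}\big(dd^c_{I_t}\phi_t\cdot I_t^{\mathrm{int}}+J^*\,dd^c_{I_t}\phi_t\big).
\]
As $I_0^{\mathrm{int}}=I_0$, it suffices to show that the tensor $B_t:=I_t^{\mathrm{int}}-I_t$, which vanishes at $t=0$, satisfies a linear ordinary differential equation $\dt B_t=\mathcal M_t(B_t)$ for a smooth family of linear operators $\mathcal M_t$; uniqueness then yields $B_t\equiv 0$ on the interval where $(F_t)_J^{1,1}>0$. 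Comparing the two displayed derivatives, this reduces to the pointwise identity
\[
F\,\LL_{X_\phi}I=dd^c_I\phi\cdot I+J^*\,dd^c_I\phi,\qquad X_\phi=\sj\nonhalf\poiss(d\phi),
\]
valid for every symplectic type generalized K\"ahler structure $(F,J,I)$, together with the fact that the failure of this identity for the triple $(F_t,J,I_t)$, which need not be generalized K\"ahler, is a linear expression in $B_t$.

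Establishing this pointwise identity is the main obstacle, and is the genuine content of the theorem. The direct biHermitian route is to expand $\LL_{X_\phi}I$ through $(\LL_{X_\phi}I)(Y)=[X_\phi,IY]-I[X_\phi,Y]$, to substitute $X_\phi=\sj\nonhalf\poiss(d\phi)$ with $\poiss=\half(I-J)(I+J)g^{-1}$, and then to simplify using the integrability of $I$ and $J$ (so that the Nijenhuis tensors vanish and brackets may be traded for the Levi-Civita connection), the algebraic relations \eqref{a:basic} and \eqref{a:identity}, and the Poisson property $\LL_{X_\phi}\poiss=0$ of a Hamiltonian vector field. A cleaner coordinate-free alternative, which I would pursue in parallel, is to pass to the generalized complex picture of Proposition~\ref{p:GK-interpretation}: the data $(F_t,\Phi_t)$ assemble the motion of the pair $(\JJ,\II)$ into a flow by $B$-field-plus-diffeomorphism orthogonal automorphisms of $TM\oplus T^*M$ that preserves the generalized complex structure $\JJ=\JJ_{P,Q}$ built from the fixed $(J,\poiss_J)$. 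In this formulation the integrability of $(\JJ,\II)$ and their commuting are automatic, and the identity above is exactly the infinitesimal statement that the generator of the flow preserves $\JJ$; this is the viewpoint of~\cite{gu-10}.

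Finally, combining the integrability of $I_t$ just obtained with the taming furnished by $(F_t)_J^{1,1}>0$ shows $(F_t,J)\in\GK_{\poiss,\alpha}$, while the equality $B_t\equiv0$ is precisely the asserted formula $I_t=-F_t^{-1}J^*F_t=\Phi_t\cdot I_0$.
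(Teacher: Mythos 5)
First, a point of reference: the paper itself offers no proof of this statement — it is imported from Gualtieri's work \cite{gu-10} and used as a black box — so there is no in-paper argument to compare yours against. Judged on its own terms, the outer layers of your reduction are sound: closedness of $F_t$ is automatic, the hypothesis $(F_t)_J^{1,1}>0$ is exactly taming, and your target identity $F\,\mathcal L_{X_\phi}I = dd^c_I\phi\cdot I + J^*\,dd^c_I\phi$ is equivalent, via $F\circ\poiss = J^*-I^*$ and the fact that $dd^c_I\phi$ has type $(1,1)$ with respect to the integrable $I$, to the formula $\mathcal L_{X_\phi}I=\poiss\circ(dd^c_I\phi)$ that the paper records in \eqref{e:flow_evolution}. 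You have correctly located the crux.

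However, you have neither proved the crux nor set up a valid mechanism for exploiting it. (1) The pointwise identity \emph{is} the content of the theorem: it encodes Hitchin's observation that $\poiss_I$ is holomorphic Poisson and that the flow of $-\poiss(d\phi)$ deforms $I$ in precisely the way that matches the deformation of $F$. Writing ``expand $\mathcal L_{X_\phi}I$, substitute $\poiss=\half(I-J)(I+J)g^{-1}$, and simplify using integrability'' is a description of a computation, not the computation; all of the work of the theorem lives there. (2) Your uniqueness-of-ODE step requires $\dot B_t=\mathcal M_t(B_t)$ with \emph{no} zeroth-order term, i.e.\ that the error $\mathcal L_{X_t}I_t-F_t^{-1}\bigl(dd^c_{I_t}\phi_t\cdot I_t + J^*\,dd^c_{I_t}\phi_t\bigr)$ vanishes along the flow. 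But this expression involves only the triple $(F_t,J,I_t)$ with $I_t=\Phi_t\cdot I_0$, which is not yet known to satisfy the compatibility $F_tJ+J^*F_t-F_t\circ\poiss\circ F_t=0$; that compatibility is equivalent to $B_t=0$, which is exactly what you are trying to prove. So you cannot invoke ``the identity holds for generalized K\"ahler structures'' to kill the error, and the assertion that the failure ``is a linear expression in $B_t$'' begs the question. A correct argument must either run the ODE on the algebraic defect $N_t:=F_tJ+J^*F_t-F_t\circ\poiss\circ F_t$ itself, showing $\dot N_t$ is linear homogeneous in $N_t$ using only the known integrability of $J$ and of $I_t=\Phi_t\cdot I_0$ and the Poisson property of $\poiss$, or be carried out in the generalized-complex picture of \cite{gu-10}, where the flow visibly preserves $\JJ$ and carries $\JJ_{F_0}$ to $\JJ_{F_t}$ — but in either case that computation must actually be done.
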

\begin{defn}[Hamiltonian deformations of symplectic type generalized K\"ahler structures]\label{d:flow_construction} The family of symplectic type generalized K\"ahler structures $F_t$  on $(M, \Jj, \poiss_{\Jj})$ given by Theorem~\ref{t:cstr:flow} for  some $\phi_t\in C^\infty(M,\R)$ will be referred to as a \emph{Hamiltonian deformation} of $F_0$.
Under this deformation
\begin{equation}\label{e:flow_evolution}
	\begin{split}
	\dt \Jj&=0, \qquad \dt \Ii =-\mathcal L_{X_{\phi}} \Ii= \sj\nonhalf\poiss \circ (dd^c_{\Ii}\phi),  \qquad \dt F= dd^c_\Ii\phi, \qquad  \dt \poiss  = 0.
	\end{split}
	\end{equation}
It follows that the Hamiltonian deformations are $\poiss_{\Jj}$-compatible and preserve the given symplectic generalized K\"ahler class $\alpha=[F_0]$, i.e., $F_t \in {\GK}_{\poiss, \alpha}$.
\end{defn}

\subsection{Formal manifold structure}\label{s:AGK-poiss}

According to \cite{BGZ,gu-10}, the space ${\GK}_{\poiss}$ is parametrized by the closed $2$-forms $F$ which tame $\Jj$ and satisfy the algebraic (zero order) identity
\begin{equation}\label{algebraic}
F \Jj + \Jj^* F \sj \nonhalf F\circ \poiss \circ F=0.\end{equation}
In fact, the algebraic condition \eqref{algebraic} is equivalent to
\begin{equation}\label{AGK}
\poiss =\half [I, J]g^{-1}, \qquad g:=-(F\Jj)^{\rm sym}, \qquad \Ii:= - F^{-1}\Jj^* F.
\end{equation}
Assuming this condition, when $F$ is closed, $\Ii$ defined as above is automatically \emph{integrable} and $F$ defines a symplectic type generalized K\"ahler structure.  This observation allows us to consider a weaker space of structures satisfying the algebraic constraints but not the integrability condition, which is useful from an analytic perspective.

\begin{defn} \label{d:AGKpi} Given a complex manifold $(M, J)$ with real Poisson tensor $\pi$, let
\begin{align*}
\AGK_{\poiss} = \{ F \in \Wedge^2 (M) \ |\ F J + J^* F - F \circ \pi \circ F = 0 \}.
\end{align*}
\end{defn}

\begin{rmk} By differentiating the defining relation along a path $F_s$ we see that
\[ 0 = \dot{F}\Jj +  \Jj^*{\dot F} \sj \nonhalf \left(\dot{F}  \circ \poiss \circ F  + F \circ \poiss \circ \dot{F}\right) = 2 (\dot F \Ii)^{(2,0)+ (0, 2)}_{\Ii},\]
where we have used that $I - J = - \pi \circ F$.  In particular, the tangent space of ${\AGK}_{\poiss}$ at $F$ is:
\begin{equation}\label{tangent-identification}
 {\bf T}_{F} \left({\AGK}_{\poiss}\right) = \Wedge^{1,1}_{\Ii} (M), \qquad  \Ii:= - F^{-1}\Jj^* F.
 \end{equation}
 \end{rmk}

\begin{defn} \label{d:fundamental-field} For any smooth (time independent) function $\phi \in C^{\infty}(M,\R)$, we define a vector field ${\bf X}_{\phi}$ on ${\AGK}_{\poiss}$ given by
\begin{equation}\label{e:fundamental-field}
{\bf X}_{\phi} (F) :=  (dd^c_\Ii \phi)^{1,1}_{\Ii}, \qquad \Ii := -F^{-1} \Jj^* F, \end{equation}
and  call it a \emph{fundamental vector field} associated to $\phi$.  We denote by ${\bf D}$ the distribution generated by the fundamental vector fields on ${\AGK}_{\poiss}$.
\end{defn}
Given a $\phi \in C^{\infty}(M,\R)$,  by \eqref{e:flow_evolution}, for any $F \in {\GK}_{\poiss}$  the Hamiltonian flow construction with the (time independent) function $\phi$ and starting at $F$ produces a smooth path $F_t$   representing ${\bf X}_{\phi}(F)$; it follows that  $F_t$ is an integral curve of ${\bf X}_\phi$.
We have the following consequence of this fact:
\begin{lemma}\label{l:commuting} The subset ${\GK}_{\poiss}\subset {\AGK}_{\poiss}$ is a (formal) integrable submanifold of ${\bf D}$. In particular, it is  ${\bf D}$-invariant and,  when restricted to ${\GK}_{\poiss}$, ${\bf D}$ is involutive. More precisely, we have on ${\GK}_{\poiss}$,
\[ [{\bf X}_{\phi}, {\bf X}_{\psi}] = - {\bf X}_{\{\phi, \psi\}_{\poiss}}, \]
where $\{\phi, \psi\}_{\poiss}:= \poiss(d\phi, d\psi)$ is the $\poiss$-Poisson bracket.
\end{lemma}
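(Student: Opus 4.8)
The plan is to establish the submanifold claim and the involutivity/bracket formula in two stages, first using the Hamiltonian flow construction to control integral curves, then computing the bracket of two fundamental fields directly.

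First I would address the submanifold claim. As already noted before the statement, for any $\phi \in C^{\infty}(M,\R)$ the Hamiltonian deformation of Theorem~\ref{t:cstr:flow} starting at an element $F \in {\GK}_{\poiss}$ produces a path $F_t$ which stays in ${\GK}_{\poiss}$ and satisfies $\dt F_t = {\bf X}_\phi(F_t)$ by \eqref{e:flow_evolution}; hence $F_t$ is genuinely an integral curve of ${\bf X}_\phi$ lying entirely in ${\GK}_{\poiss}$. This shows that ${\bf X}_\phi$ is tangent to ${\GK}_{\poiss}$ at every point of ${\GK}_{\poiss}$, i.e. the distribution ${\bf D}$ restricts to a sub-distribution of $T{\GK}_{\poiss}$. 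Since the fundamental fields ${\bf X}_\phi$ span ${\bf D}$ and their flows preserve ${\GK}_{\poiss}$, the space ${\GK}_{\poiss}$ is ${\bf D}$-invariant, which is the content of being a (formal) integral submanifold of the distribution.

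Next I would compute the Lie bracket $[{\bf X}_\phi, {\bf X}_\psi]$ along ${\GK}_{\poiss}$. The natural approach is to use the flow interpretation: the flow of ${\bf X}_\phi$ on ${\GK}_{\poiss}$ is exactly the Hamiltonian deformation, which by \eqref{e:flow_evolution} is generated on the underlying geometric data by the diffeomorphism isotopy $\Phi_t$ associated to the vector field $X_\phi = \sj\nonhalf\poiss(d\phi)$. Under this flow the complex structure $\Ii$ evolves by $\dt \Ii = -\mathcal L_{X_\phi} \Ii$, so that ${\bf X}_\psi$ is transported by pullback along $\Phi_t$. Differentiating $(\Phi_t)^* {\bf X}_\psi$ at $t=0$ and comparing with the defining formula \eqref{e:fundamental-field} for ${\bf X}_\psi$, the Lie derivative $\mathcal L_{{\bf X}_\phi} {\bf X}_\psi = [{\bf X}_\phi, {\bf X}_\psi]$ reduces to the Lie derivative of the potential data by $X_\phi$. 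Concretely, since a fundamental field is determined by its generating function, the bracket of two such fields should again be a fundamental field whose generating function is the Poisson action of $X_\phi$ on $\psi$, namely $-X_\phi(\psi) = \nonhalf\poiss(d\phi, d\psi) = \{\phi,\psi\}_{\poiss}$; the sign and factor then produce $[{\bf X}_\phi, {\bf X}_\psi] = -{\bf X}_{\{\phi,\psi\}_{\poiss}}$. The involutivity of ${\bf D}$ on ${\GK}_{\poiss}$ follows immediately, since the right side is again a fundamental field.

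The main obstacle I anticipate is the bookkeeping in the bracket computation. The difficulty is that the dependence of ${\bf X}_\psi(F) = (dd^c_{\Ii}\psi)^{1,1}_{\Ii}$ on the base point $F$ enters through $\Ii = -F^{-1}J^* F$ in two places—through the operator $d^c_{\Ii}$ and through the $(1,1)_{\Ii}$-projection—so linearizing ${\bf X}_\psi$ in the direction ${\bf X}_\phi$ requires carefully differentiating both the connection-type term $d^c_{\Ii}\psi$ and the projection, using $\dt \Ii = \sj\nonhalf\poiss\circ(dd^c_{\Ii}\phi)$ from \eqref{e:flow_evolution}. I expect the cleanest route is to bypass the explicit linearization by working with the flow $\Phi_t$ and the naturality of $dd^c_{\Ii}$ under the pullback $\Ii_t = \Phi_t \cdot I_0$, so that the bracket is read off from the equivariance $\Phi_t^*\bigl(dd^c_{\Ii}\psi\bigr) = dd^c_{\Phi_t^*\Ii}(\psi\circ\Phi_t)$ together with the identity $\dt(\psi\circ\Phi_t)|_{t=0} = X_\phi(\psi) = -\{\phi,\psi\}_{\poiss}$; the antisymmetry of the Poisson bracket then reconciles the sign in the asserted formula.
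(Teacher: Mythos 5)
Your treatment of the first claim matches the paper's: the Hamiltonian deformations of Theorem~\ref{t:cstr:flow} are integral curves of ${\bf X}_{\phi}$ lying in ${\GK}_{\poiss}$, which gives ${\bf D}$-invariance, and combined with the identification of ${\bf D}_F$ with the tangent space of ${\GK}_{\poiss}$ this yields the integral-submanifold statement.

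The bracket computation, however, has a genuine gap. You propose to read off $[{\bf X}_{\phi},{\bf X}_{\psi}]$ from the equivariance $\Phi_t^*(dd^c_{\Ii}\psi)=dd^c_{\Phi_t^*\Ii}(\psi\circ\Phi_t)$ together with $\tfrac{d}{dt}(\psi\circ\Phi_t)|_{t=0}=X_{\phi}(\psi)=-\{\phi,\psi\}_{\poiss}$, with antisymmetry ``reconciling the sign.'' Carried out carefully, that differentiation gives
\[
\left.\frac{d}{dt}\right|_{t=0} dd^c_{\Ii_t}\psi = -\,\mathcal L_{X_{\phi}}\big(dd^c_{\Ii_0}\psi\big)-dd^c_{\Ii_0}\{\phi,\psi\}_{\poiss},
\]
so that after antisymmetrizing in $\phi$ and $\psi$ one obtains
\[
[{\bf X}_{\phi},{\bf X}_{\psi}] = -\,\mathcal L_{X_{\phi}}\big(dd^c_{\Ii_0}\psi\big)+\mathcal L_{X_{\psi}}\big(dd^c_{\Ii_0}\phi\big)-2\,dd^c_{\Ii_0}\{\phi,\psi\}_{\poiss}.
\]
The two Lie-derivative terms are the contribution of the moving operator $dd^c_{\Ii_t}$ (equivalently, of differentiating the pullback $(\Phi_t^{-1})^*$); they do not vanish, and the substantive content of the lemma is precisely that they sum to $+\,dd^c_{\Ii_0}\{\phi,\psi\}_{\poiss}$, cancelling half of the last term and producing the coefficient $-1$ rather than $-2$. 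Your sketch never identifies this step; a naive execution either stalls on these terms or returns $-2{\bf X}_{\{\phi,\psi\}_{\poiss}}$. The paper closes this by producing two independent expressions for the bracket --- once as $d\big(-(\mathcal L_{X_{\phi}}\Ii_0^*)(d\psi)+(\mathcal L_{X_{\psi}}\Ii_0^*)(d\phi)\big)$ directly from the variation of $\Ii$ along the composed flows, and once by expanding $\mathcal L_{X_{\phi}}(dd^c_{\Ii_0}\psi)$ via Cartan's formula and the Leibniz rule --- and equating them to solve for the unknown term. Some such argument (or a direct verification that $-\mathcal L_{X_{\phi}}(dd^c_{\Ii_0}\psi)+\mathcal L_{X_{\psi}}(dd^c_{\Ii_0}\phi)=dd^c_{\Ii_0}\{\phi,\psi\}_{\poiss}$, using the integrability of $\Ii_0$ and the Poisson property of $\poiss$) is needed to complete your proof.
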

\begin{proof} The first part follows from the facts that ${\GK}_{\poiss}$ is ${\bf D}$-invariant (as we have already checked) and at each point $F\in {\GK}_{\poiss}$, by \eqref{e:flow_evolution}, ${\bf D}_F$ coincides with the sub-space of  tangent vectors in ${\bf T}_F\left({\AGK}_{\poiss}\right)$ generated by smooth paths in ${\GK}_{\poiss}$, i.e.
\[ {\bf T}_F\left({\GK}_{\poiss}\right) = {\bf D}_F \subset {\bf T}_F\left({\AGK}_{\poiss}\right).\]
We now compute the vector field bracket of ${\bf X}_{\phi}$ and ${\bf X}_{\psi}$. At each point $F$, we let $F_t$ denote the Hamiltonian deformation with respect to $\phi$ and $F_s$ the Hamiltonian deformation  with respect to $\psi$. We also denote by $\Phi_t$ and $\Psi_s$ the flows of the Poisson vector fields $X_{\phi}:= \sj\nonhalf \poiss(d\phi)$ and $X_{\psi}:= \sj\nonhalf\poiss(d\psi)$ on $M$. Noting that $F_t$ and $F_s$ are respectively integral curves of ${\bf X}_\phi$  and ${\bf X}_\psi$, we compute
\[ \begin{split}
 [{\bf X}_{\phi}, {\bf X}_{\psi}](F_0) &= \left. \frac{d}{dt} \right |_{t=0} \left(\frac{d}{ds}_{|_s=0} \Big(\big(F_t\big)_s\Big)_{-t}\right) \\
 &=\left. \frac{d}{dt} \right |_{t=0} \frac{d}{ds}_{|_s=0}\left(\int_0^t \Big(dd^c_{\Phi_r \cdot \Ii_0} \phi\Big) dr + \int_0^s \Big(dd^c_{\Psi_p\Phi_t \cdot \Ii_0} \psi \Big)dp - \int_0^t \Big(dd^c_{\Phi_{-q}\Psi_s \Phi_t\cdot \Ii_0} \phi \Big) dq \right) \\
        &= d\left(-\Big({\mathcal L}_{X_{\phi}} \Ii^*_0\Big)(d\psi) + \Big({\mathcal L}_{X_{\psi}} \Ii^*_0\Big)(d\phi)\right) \\
        &= d\left(\imath_{X_{\psi}}\Big(dd^c_{\Ii_0} \phi\Big) - \imath_{X_{\phi}}\Big(dd^c_{\Ii_0} \psi\Big)\right) \\
        & = - \left({\mathcal L}_{X_{\phi}}\Big(dd^c_{\Ii_0} \psi\Big) - {\mathcal L}_{X_{\psi}}\Big(dd^c_{\Ii_0} \phi\Big)\right) \\
        & = -\twice dd_{\Ii_0}^c\{\phi, \psi\}_{\poiss} + d\left(\big({\mathcal L}_{X_{\phi}}I^*_0\big)(d\psi) - \big({\mathcal L}_{X_{\psi}}{\Ii^*_0}\big)(d\phi)\right),
        \end{split}\]
        where we have used formulae \eqref{e:flow_evolution} to pass from the third line to the forth. Comparing the third line with the last line in the above equalities, we conclude
\[ [{\bf X}_{\phi}, {\bf X}_{\psi}](F_0)  = \sj\nonhalf dd^c_{\Ii_0} \{\phi, \psi\}_{\poiss} = -{\bf X}_{\{\phi, \psi\}_{\poiss}} (F_0).\]
\end{proof}

Note that at this point we have defined the natural class of Hamiltonian deformations in Definition \ref{d:flow_construction}, but on the other hand it is natural to consider the space of GK structures in $\mathcal GK_{\pi,\ga}$ with fixed cohomological background data.  We next show that these constructions are the same, in particular showing that any smooth path $F_t\in \GK_{\poiss,\alpha}$ is given by the construction of Theorem~\ref{t:cstr:flow}.

\begin{prop}\label{p:gk_class}
	Let $F_t\in \GK_{\poiss,\alpha}$ be a smooth path of generalized K\"ahler structures of symplectic type.  Denote the underlying second complex structure by $I_t$. Then \[
	\frac{dF_t}{dt}=dd^c_{I_t}\phi_t
	\]
	for some $\phi_t\in C^\infty(M,\R)$.
\end{prop}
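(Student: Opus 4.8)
The plan is to reduce the statement to a $dd^c_{I_t}$-lemma on the complex manifold $(M,I_t)$ and to supply that lemma from the generalized K\"ahler Hodge theory of the previous subsection. Fix $t$ and write $\eta:=\tfrac{dF_t}{dt}$, $I:=I_t$, $F:=F_t$. First I would record three properties of $\eta$. It is $d$-closed, since $dF_t=0$ for all $t$; it is $d$-exact, since $[F_t]=\alpha$ is constant in $H^2(M,\R)$, so $[\eta]=\tfrac{d}{dt}[F_t]=0$; and it is of type $(1,1)$ with respect to $I$, because $F_t\in\GK_{\poiss,\alpha}\subset\AGK_\poiss$ forces $\eta\in\mathbf{T}_F(\AGK_\poiss)=\Wedge^{1,1}_I(M)$ by \eqref{tangent-identification} (equivalently, differentiate the algebraic relation \eqref{algebraic} as in the Remark after Definition~\ref{d:AGKpi}). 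Thus the claim is precisely that a real, $d$-closed, $d$-exact $(1,1)_I$-form admits a real $dd^c_I$-potential. Since $(M,I)$ need not be K\"ahler, the ordinary $\partial\bar\partial$-lemma is unavailable and must be replaced by its generalized-complex counterpart, the $\delta_+\delta_-$-lemma.

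Next I would transport the problem into the complex $(U^{*,*},d)$ by wedging with the pure spinor $e^{\i F}$. Because $e^{\i F}$ spans $U^{0,n}$ and raising the $\II$-eigenvalue past $n$ annihilates it, a first-order count shows that for any real $(1,1)_I$-form the element $\eta\wedge e^{\i F}$ lies in $U^{2,n-2}\oplus U^{0,n}\oplus U^{0,n-2}$, and $d(\eta\wedge e^{\i F})=d\eta\wedge e^{\i F}=0$, so each graded piece is closed for the appropriate $\delta$-operator. The key dictionary is the one already extracted in the proof of Lemma~\ref{l:h1_isomorphism}: for $f=\phi+\i\psi$ the operator $\delta_+$ on $U^{0,n}$ satisfies $\delta_+(f\,e^{\i F})=\big((d\psi+Jd\phi)\wedge e^{\i F}\big)^{1,n-1}$, i.e.\ it encodes a concrete first-order operator on functions. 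Composing with the relevant component of $d$ carrying $U^{1,n-1}$ (resp.\ $U^{-1,n-1}$) back down, one checks that $\phi\mapsto dd^c_I\phi$ is, after wedging with $e^{\i F}$, computed by a fixed second-order composite of the form $\delta_+\delta_-$ applied to $\phi\,e^{\i F}\in U^{0,n}$. Under this identification the three properties of $\eta$ become exactly the hypotheses of the $\delta_+\delta_-$-lemma.

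I would then invoke that lemma: the relevant graded component of $\eta\wedge e^{\i F}$, being both in the image of one differential and in the kernel of the complementary one, lies in the image of the composite, so $\eta\wedge e^{\i F}=\delta_+\delta_-(f\,e^{\i F})$ for some $f\in C^\infty(M,\C)$. Unwinding the dictionary yields $\eta=dd^c_I(\Re f)$ once the reality of $\eta$ is used to discard the imaginary part and the remaining potential $H^1$-obstruction is seen to vanish by exactness, via the isomorphisms \eqref{Hodge-isomorphism} and the injectivity arguments of Lemma~\ref{l:h1_isomorphism}. Setting $\phi_t:=\Re f$ gives $\tfrac{dF_t}{dt}=dd^c_{I_t}\phi_t$. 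Smoothness of $\phi_t$ in $t$ is not required by the statement, but can be arranged by taking $\phi_t$ to be the image of $\eta$ under the ($t$-dependent) Green operator of the Laplacian $\Delta$.

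The main obstacle is the middle step: pinning down the bidegree bookkeeping for a $(1,1)_I$-form under $\wedge e^{\i F}$ and verifying that it is exactly the composite $\delta_+\delta_-$ (rather than some other pairing such as $\delta_+\bar\delta_-$) that reproduces $dd^c_I$ on functions, so that the closed/exact input on $\eta$ matches the exactness output of the lemma. Once this translation and the attendant reality check are in place, producing the real potential is formal.
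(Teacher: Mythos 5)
Your reduction of the statement to the three properties of $\eta:=\dot F_t$ — closed, exact, and of type $(1,1)$ with respect to $I_t$ via \eqref{tangent-identification} — matches the paper, and the generalized K\"ahler Hodge theory is indeed the right tool. But the central step, which you yourself flag as the ``main obstacle,'' does not go through as proposed. You want to wedge the \emph{two-form} $\eta$ with $e^{\i F}$ and conclude $\eta\wedge e^{\i F}=\delta_+\delta_-(f\,e^{\i F})$ from the $\delta_+\delta_-$-lemma. This cannot hold: $\delta_+\delta_-$ maps $U^{0,n}$ into $U^{0,n-2}$ only, whereas (as your own bidegree count shows) $\eta\wedge e^{\i F}$ has components in $U^{2,n-2}$ and $U^{0,n}$ as well, and these are generically nonzero — the $U^{2,n-2}$-component is extracted by the Clifford action of two elements of $L_-$ and equals (a multiple of) $\eta^{(0,2)_J}\otimes e^{\i F}$, and $\eta=dd^c_{I}\phi$ is $(1,1)$ for $I$ but \emph{not} for $J$ in general. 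So the claimed dictionary ``$dd^c_I$ corresponds to $\delta_+\delta_-$ after wedging with $e^{\i F}$'' is false on the nose; only the $U^{0,n-2}$-components could match. Even if you restrict to that single component, you would still need to (a) verify the hypotheses of the lemma — the $U^{0,n-2}$-component of $d(\xi\wedge e^{\i F})$ is $\delta_-(\xi_+\wedge e^{\i F})+\delta_+(\xi_-\wedge e^{\i F})$, a sum of images of the two \emph{different} operators, so it is not visibly in $\Img(\delta_+)\cap\Ker(\delta_-)$ — and (b) supply an injectivity statement recovering $\eta$ from that one component. Neither is provided, and both force you to introduce a primitive of $\eta$ anyway.

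The paper avoids all of this by working one degree lower. Write $\eta=d\xi$ with $\xi\wedge e^{\i F}\in U^{1,n-1}\oplus U^{-1,n-1}$ and $\xi=\xi_++\xi_-$ as in Lemma~\ref{l:h1_isomorphism}. The $(1,1)_{I}$-condition on $\eta$ is \emph{exactly} the vanishing of the $U^{-2,n-2}$-component of $d(\xi\wedge e^{\i F})$, i.e., $\delta_-(\xi_-\wedge e^{\i F})=0$. The $\delta_-$-Hodge decomposition (not the $\delta_+\delta_-$-lemma) then gives $\xi_-\wedge e^{\i F}=h+\delta_-\big((\phi+\i\psi)e^{\i F}\big)$ with $h$ harmonic; the harmonic part is matched by a closed $1$-form via the isomorphism $i_-$ of Lemma~\ref{l:h1_isomorphism} and therefore does not contribute to $d\xi$; and unwinding $\delta_-$ on $U^{0,n}$ at the level of $1$-forms gives $\dot F=d\xi=dd^c_{I_t}\phi$. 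If you try to repair your route you will essentially be forced to reproduce this one-degree-lower analysis, so the fix is to start from the primitive $\xi$ rather than from $\eta$ itself.
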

\begin{proof}
    Since $[F_t]\in H^2(M,\R)$ is fixed, by~\eqref{tangent-identification} we conclude that
    \[
        \dot F:=\frac{dF_t}{dt}=d\xi\in\Wedge^{1,1}_{I_t}(M),\quad \xi\in\Wedge^1(M)
    \]
    is an exact form of type $(1,1)$ with respect to $I_t$. The differential form $\xi\wedge e^{\i F}$ belongs to the space $U^{1,n-1}\oplus U^{-1,n-1}$ so that,  by virtue of Lemma~\ref{l:h1_isomorphism},  we can decompose $\xi=\xi_++\xi_-$ with  $\xi_+\in \Wedge_I^{0,1}(M),$  $\xi_-\in\Wedge_J^{1,0}(M)$,  and $\xi_{\pm}\wedge e^{\i F}\in U^{\pm 1,n-1}$.

    We claim that the $U^{-2,n-2}$-component of $d(\xi\wedge e^{\i F})=\dot F\wedge e^{\i F}$ vanishes. Indeed, for any two elements
    \[
        v-\i F(v,\cdot),w-\i F(w,\cdot)\in L_+,\quad v,w\in T_{I_t}^{1,0}M
    \]
    we have the vanishing of the Clifford action
    \[
        (v-\i F(v,\cdot))\cdot (w-\i F(w,\cdot))\cdot (\dot F\wedge e^{\i F})=2\dot F(w,v)\wedge e^{\i F}=0
    \]
    since $\dot F$ is of  type $(1,1)$ with respect to $I_t$. This implies that the element $\xi_-\wedge e^{\i F}$
    is $\delta_-$-closed. By Lemma~\ref{l:h1_isomorphism} the natural map $i_-: H^{1}(M,\R)\to \mathcal{H}^{-1,n-1}$, $[\xi]\mapsto (\xi\wedge e^{\i F})^{-1,n-1}_h$ is an isomorphism, so we can find a closed 1-form $\eta$ such that $((\xi-\eta)\wedge e^{\i F})^{-1,n-1}_h=0$. Therefore, by the $\delta_-$-Hodge decomposition we conclude that
    \[
        ((\xi-\eta)\wedge e^{\i F})^{-1,n-1}=\delta_-((\phi+\i \psi) e^{\i F}),
    \]
    where $\phi+\i \psi$ is some complex-valued function. This means that $(d\phi+\i d\psi)_-=(\xi-\eta)_-$ which yields $(\xi-\eta)=\Re(d\phi+\i d\psi+I_t(d\phi+\i\psi))$ so that
    \[
    \dot F=d\xi=d(\xi-\eta)=dd^c_{I_t}\phi
    \]
    as claimed.
\end{proof}

\begin{rmk}\label{d:O}
    On a compact manifold $dd_I^c\phi$ is nonzero unless $\phi\in C^\infty(M,\R)$ is a constant. Thus in view of Proposition~\ref{p:gk_class} we have an identification of the tangent space to $\GK_{\poiss,\alpha}$
    \[
    {\bf T}_{F}\left(\GK_{\poiss,\alpha}\right)\simeq C^\infty(M,\R)/\R
    \]
    at any $F\in \GK_{\poiss,\alpha}$.
\end{rmk}

\section{The generalized K\"ahler scalar curvature as a momentum map}

Central to our understanding of the YTD conjecture in the K\"ahler setting is the GIT formulation due to Fujiki-Donaldson \cite{fujiki-GIT, donaldson-GIT}.  The fundamental point in this framework is that the moment map for the space of Hamiltonian diffeomorphisms of a fixed K\"ahler form acting on complex structures is given by the scalar curvature.  In attempting to extend this circle of ideas to generalized K\"ahler geometry one is faced with the subtle issue that there is not an obvious choice of scalar curvature due to the lack of a connection preserving all structure.  Rather, the works of Boulanger-Goto \cite{boulanger,Gotomoment} take the point of view of defining a natural Hamiltonian action on the space of GK structures and use its momentum map to \emph{define} scalar curvature.  In the work of Goto \cite{Gotoscal} this does lead to a general definition of scalar curvature for GK structures, although one expressed implicitly in terms of local defining spinors determining the underlying generalized complex structures.  Furthermore, ideas from mathematical physics and generalized geometry (cf.\,\cite{GRFbook, Polchinski, Str-Scal}) suggest an explicit definition of scalar curvature for GK structures, and in special cases Boulanger and Goto have shown that this definition agrees with what arises from the moment map framework.  In this section we will review the moment map construction, and furthermore close this circle of ideas by showing that the a priori different definitions of scalar curvature agree for all symplectic type GK structures.  The key input is the nondegenerate perturbation technique introduced in \cite{AFSU}.

\subsection{Scalar curvature of GK structures}

We begin by explicitly stating our definition of scalar curvature of GK structures of symplectic type in terms of the biHermitian data.  It was already noted in Boulanger \cite{boulanger} that the moment map takes this explicit form in the toric setting, and this was inspirational for our work.

\begin{defn}\label{d:GK-scal} Let $(g, b, I, J, F)$ be a symplectic type generalized K\"ahler structure on $M$.  The \emph{generalized scalar curvature} is defined by
\begin{equation}\label{e:GK-scal}
    {\Gscal}_{(F, J)} :=  {\Scal}_g - \frac{1}{12}|db|^2_g + 2 \Delta_g \Psi - |d\Psi|^2_g , \qquad \Psi = -\log\frac{dV_{F}}{dV_g},
\end{equation}
where $\Scal_{g}$ is the scalar curvature of $g$, $\Delta_g= -d^*d$ is the Laplacian, and $dV_F=F^{[n]}, \, dV_g = \omega_I^{[n]}=\omega_J^{[n]}$ are respectively the symplectic and Riemannian volume forms.
\end{defn}

In~\cite{Gotomoment} Goto constructed a smooth function
\[
\Gscal_{(F,J)}^{\mathrm{Goto}}\in C^\infty(M,\R)
\]
associated to a symplectic form $F$ and
a generalized almost complex structure $\JJ\in\End(TM\oplus T^*M)$ which via the Gualtieri map~\eqref{e:GK-interpretation} is equivalent to an \emph{almost} complex structure $J$ tamed by $F$. Goto's motivation for defining this function originates from the formal momentum map picture, and we discuss it later. For now we record several important features of $\Gscal_{(F,J)}^{\mathrm{Goto}}$, which will be used throughout the paper.
\begin{enumerate}
    \item $\Gscal_{(F,J)}^{\mathrm{Goto}}$ at a point $x\in M$ depends algebraically on the second jets of $F_x$ and $J_x$, is additive with respect to the Cartesian product, and vanishes on flat (linear) structures
    \item A generalized \emph{almost} complex structure $\J$ given by~\eqref{e:GK-interpretation} determines a \emph{canonical}  complex line bundle $K_{\J}$ 
    and there exists a representative $\rho_{(F,J)}\in \Wedge^2(M)$ of $2\pi c_1(K_{\J}^{-1})$ such that
    \begin{equation}\label{e:gscal_trace}
        \Gscal_{(F,J)}^{\mathrm{Goto}}
        =\frac{\rho_{(F,J)}\wedge F^{[n-1]}}{F^{[n]}},
    \end{equation}
    see~\cite[Def.\,5.3 \& Prop.\,5.5]{Gotomoment}.
    \item If $(F,J)$ defines a genuine generalized K\"ahler structure, i.e., both complex structures $J$ and $I=-F^{-1}J^*F$ are integrable and $I-J$ is invertible, then
    \begin{equation}\label{e:gscal_nondeg}
        \Gscal_{(F,J)}^{\mathrm{Goto}}=
        \tr_{F}(d(Fg^{-1}d\Phi)),\quad \Phi=\frac{1}{2}\log{\det(I-J)}-\frac{1}{2}\log{\det(I+J)},
    \end{equation}
 see~\cite[Prop.\,10.4]{Gotomoment}.
\end{enumerate}

\begin{thm}\label{t:gscal_equiv}
    If $(F,J)$ is a symplectic type generalized K\"ahler structure on $M$, then the generalized scalar curvature given by~\eqref{e:GK-scal} coincides with Goto's scalar curvature:
    \[
        \Gscal_{(F,J)}
        =\Gscal_{(F,J)}^{\mathrm{Goto}}.
    \]
\end{thm}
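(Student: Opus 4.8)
The plan is to use that both $\Gscal_{(F,J)}$ from \eqref{e:GK-scal} and Goto's $\Gscal^{\mathrm{Goto}}_{(F,J)}$ are pointwise invariants of the $2$-jet of $(F,J)$, and to reduce the asserted identity to the nondegenerate locus where Goto's explicit expression \eqref{e:gscal_nondeg} is available.

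First I would observe that the right-hand side of \eqref{e:GK-scal} depends algebraically, hence continuously, on the $2$-jet $j^2_x(F,J)$ at each point $x$: since $g=-(FJ)^{\mathrm{sym}}$, $b=-(FJ)^{\mathrm{skew}}$ and $I=-F^{-1}J^*F$ are algebraic in $(F,J)$, the term $\Scal_g$ is algebraic in the $2$-jet, $|db|_g^2$ in the $1$-jet, and the dilaton terms $2\Delta_g\Psi-|d\Psi|_g^2$ in the $2$-jet through $\Psi=-\log(dV_F/dV_g)$. By Goto's property~(1), the same holds for $\Gscal^{\mathrm{Goto}}_{(F,J)}$. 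Thus the equality to be proved is an identity between two continuous functions of the $2$-jet on the locus of jets satisfying the generalized K\"ahler constraints.

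Next I would invoke the local nondegenerate approximation of \cite{AFSU}: the $2$-jet of any symplectic type generalized K\"ahler structure at a point can be approximated by $2$-jets of such structures for which $\poiss=\tfrac12[I,J]g^{-1}$ is nondegenerate, i.e. $I-J$ is invertible. By the continuity just established, it then suffices to prove $\Gscal_{(F,J)}=\Gscal^{\mathrm{Goto}}_{(F,J)}$ under the assumption that $I-J$ is invertible. There formula \eqref{e:gscal_nondeg} gives $\Gscal^{\mathrm{Goto}}_{(F,J)}=\tr_F\bigl(d(Fg^{-1}d\Phi)\bigr)$ with $\Phi=\tfrac12\log\det(I-J)-\tfrac12\log\det(I+J)$, and the first step is to relate $\Phi$ to the dilaton $\Psi$: using \eqref{a:basic} and $\omega_J=gJ$ (so $\det J=1$), a determinant computation gives $dV_F/dV_g=2^n\det(I+J)^{-1/2}$, whence $\tfrac12\log\det(I+J)=\Psi+\mathrm{const}$ and $d\Phi=\tfrac12\,d\log\det(I-J)-d\Psi$.

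It then remains to compute $\tr_F\bigl(d(Fg^{-1}d\Phi)\bigr)$ in Riemannian terms and match it to \eqref{e:GK-scal}, expressing the trace-of-Hessian expressions through the Levi-Civita connection of $g$ and its curvature. The $-d\Psi$ part of $d\Phi$ is expected to reproduce the dilaton combination $2\Delta_g\Psi-|d\Psi|_g^2$, while the $\tfrac12\,d\log\det(I-J)$ part, after applying the Ricci identity to the second derivatives of $\log\det(I-J)$ and substituting $db=-(FJ)^{\mathrm{skew}}$, is expected to produce $\Scal_g-\tfrac1{12}|db|_g^2$. I expect this final computation to be the main obstacle: one must verify that the second covariant derivatives of $\log\det(I\pm J)$ generate precisely $\Scal_g$, and that the cross terms involving $\nabla I$ and $\nabla J$ assemble exactly into the torsion norm $\tfrac1{12}|db|_g^2$, with all numerical constants matching. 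The algebraic identities of Section~\ref{s:appendix}---in particular \eqref{a:basic}, \eqref{a:identity} and \eqref{a:deep2}, together with the $g$-skew-symmetry of $I$ and $J$---should be the principal tools for reducing these trace expressions to the required form.
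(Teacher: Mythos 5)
Your overall strategy coincides with the paper's: reduce to the locus where $I-J$ is invertible using the nondegenerate approximation of \cite{AFSU}, apply Goto's explicit formula \eqref{e:gscal_nondeg} there, and pass to the limit using continuity of both sides in the $2$-jet. However, the step you yourself flag as ``the main obstacle'' --- converting $\tr_F\bigl(d(Fg^{-1}d\Phi)\bigr)$ into the biHermitian expression \eqref{e:GK-scal} --- is precisely where all of the content lies, and it is not carried out. The paper does not rederive it ab initio either: it quotes two nontrivial identities from \cite{AFSU}, namely the Lee-form identity $I\theta_I+J\theta_J=\tfrac{1}{2}(I+J)\,d\log\det(I+J)$, which gives $\tr_F(d(Fg^{-1}d\Phi))=\Delta_g\Phi-\tfrac{1}{2}\la d\log\det(I+J),d\Phi\ra_g$, and the formula
\[
\Scal_g-\tfrac{1}{12}|db|^2_g=-\tfrac{1}{2}\Delta_g\bigl(\log\det(I+J)+\log\det(I-J)\bigr)+\tfrac{1}{4}\la d\log\det(I+J),d\log\det(I-J)\ra_g .
\]
Without these (or an equivalent computation) the theorem is not proved. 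Note also that your heuristic for how the computation should close is not how it actually works: it is not the case that the second derivatives of $\log\det(I-J)$ ``generate $\Scal_g$'' while cross terms assemble into $\tfrac{1}{12}|db|^2_g$; rather, the block $\Scal_g-\tfrac{1}{12}|db|^2_g$ appears as a single package via the displayed identity, and the $\log\det(I-J)$ contributions cancel between the two identities rather than individually reproducing curvature and torsion.

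A second, smaller gap concerns your use of the approximation result. The statement in \cite{AFSU} is not about $2$-jets at a point: it says that, possibly after taking the product with a flat factor $(\C,g_{\mathrm{flat}})$, any GK structure can be $C^{2,\alpha}$-approximated locally on an open dense set by GK structures with $I-J$ invertible. To apply it you need (a) the additivity of $\Gscal^{\mathrm{Goto}}$ under Cartesian products and its vanishing on flat factors (property (1) in the paper), together with the corresponding elementary behaviour of the right-hand side of \eqref{e:GK-scal}, in order to undo the stabilization; and (b) an argument passing from equality on an open dense set to equality everywhere, which follows from continuity of both sides on $M$. Both points are easy to repair, but your jet-level formulation glosses over them and should be made precise.
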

\begin{proof}
    Our goal is to give an explicit biHermitian expression for $\Gscal_{(F,J)}^{\mathrm{Goto}}$. We use the result of~\cite[\S 3]{AFSU}, where we proved that possibly after taking the product with a flat factor $(\C, g_{\mathrm{flat}})$ any GK structure locally on an open dense set can be approximated in any $C^{k,\alpha}$ norm by a GK structure with invertible $I\pm J$. Let $(F^l,J^l)$ be such a sequence of locally defined GK structures converging to a given $(F_0,J_0)$ in the $C^{2,\alpha}$ norm. Since $\Gscal^{\mathrm{Goto}}$ depends continuously on the second jet of the underlying biHermitian data, we have $\Gscal_{(F^l,J^l)}^{\mathrm{Goto}}\mapsto \Gscal_{(F_0,J_0)}^{\mathrm{Goto}}$ as $l\to \infty$.
    
    Let $(F,J)$ be any member of the sequence $(F^l,J^l)$. Using~\eqref{e:gscal_nondeg} and the identities $\theta_I=I\delta^g I$, $\theta_J=J\delta^g J$ for the Lee forms we compute:
    \begin{equation}\label{e:gscal_pf1}
    \begin{split}
        \tr_{F}(d(Fg^{-1}d\Phi))&=\sum_{i=1}^{2n}\big\la(I+J)\nabla_{e_i}((I+J)^{-1}g^{-1}d\Phi),e_i\big\ra_g
        \\
        &=\Delta \Phi-\sum_{i=1}^{2n}\la (\nabla_{e_i}(I+J))(I+J)^{-1} g^{-1}d\Phi,e_i\ra_g
        \\
        &=\Delta \Phi-\la (I+J)^{-1}g^{-1}d\Phi, 	\delta^gI+\delta^gJ\ra\\&=
        \Delta\Phi + \la (I+J)^{-1}g^{-1}d\Phi, 	I\theta_I+J\theta_J\ra\\&=\Delta\Phi -\frac{1}{2}\la d\log\det(I+J),d\Phi\ra_g,
    \end{split}
    \end{equation}
    where in the last step we applied the identity $I\theta_I+J\theta_J=\frac{1}{2}(I+J)d\log\det(I+J)$, which holds on any symplectic type generalized K\"ahler manifold see~\cite[Prop.\,4.3]{AFSU}.

	It remains to combine the identity~\eqref{e:gscal_pf1} with a formula
 	\begin{equation}\label{e:gscal_pf_formula}
		\Scal_g-\frac{1}{12}|db|^2_g=-\frac{1}{2}\Delta_g(\log\det(I+J)+\log\det(I-J))+\frac{1}{4}\la d\log\det(I+J),d\log\det(I-J)\ra_g,
    \end{equation}
    see~\cite[Lemma\,4.5]{AFSU}, to conclude that
    \begin{equation}\label{e:gscal_pf2}
        \begin{split}
        \Gscal_{(F,J)}^{\mathrm{Goto}}=
        &\tr_{F}(d(Fg^{-1}d\Phi))
            =\Scal_g-\frac{1}{12}|db|^2_g+2\Delta_g\Psi-|d\Psi|^2_g,
        \end{split}
    \end{equation}
    where $\Psi=\frac{1}{2}\log\det(I+J)=-\log\frac{dV_F}{dV_g}$.
    By~\eqref{e:gscal_pf2} along the sequence $(F^l,J^l)$ we have $\Gscal_{(F^l,J^l)}^{\mathrm{Goto}}=\Scal_{g^l}-\frac{1}{12}|db^l|^2_{g^l}+2\Delta_{g^l}\Psi^l-|d\Psi^l|^2_{g^l}$. Passing to the $C^{2,\alpha}$ limit $(F^l,J^l)\to (F_0, J_0)$ we conclude that
    \[
    \Gscal_{(F_0,J_0)}^{\mathrm{Goto}}=\Scal_{g_0}-\frac{1}{12}|db_0|^2_{g_0}+2\Delta_{g_0}\Psi_0-|d \Psi_0|^2_{g_0}=\Gscal_{(F_0,J_0)},\quad \Psi_0=-\log\frac{dV_{F_0}}{dV_{g_0}}
    \]
    in general, without assuming that $I_0-J_0$ is invertible.
\end{proof}
In the view of the above proposition, we will denote Goto's scalar curvature simply by $\Gscal_{(F,J)}$ bearing in mind that it is given by~\eqref{e:GK-scal} if $(F,J)$ defines a generalized K\"ahler structure.
\begin{rmk} 
Goto \cite{Gotoscal} further introduced a generalized scalar curvature associated to an arbitrary generalized K\"ahler structure $(\JJ,\II)$ on $(M,H_0)$, $H_0\in\Wedge^3(M)$ and a volume form $d\mu_f=e^{-f}dV_g$.  We will show in forthcoming work that similarly to Proposition~\ref{t:gscal_equiv} this quantity can be computed for $(g, b, I, J)$ by an analogous formula
\[
\Gscal^f_{(\JJ,\II)}=\Scal_g-\frac{1}{12}|H|^2_g+2\Delta_g f-|df|^2_g,
\]
where $H = H_0 + db$.
\end{rmk}

\subsection{Geometry of the space of almost generalized K\"ahler structures}

On a compact smooth oriented manifold $M$, we  consider the space  ${\AC}$  of (oriented) almost complex structures $J$, endowed with the Fr\'echet topology of smooth sections of $T^*M\otimes TM$. Thus the tangent space of ${\AC}$   at a point $J$ can be identified with the vector space of smooth sections  $\dot J$ of $T^*M\otimes TM$ satisfying $J\dot J = - \dot J J$.  Similarly, on a given compact symplectic manifold $(M, F)$, we consider the subspace  ${\AGK}_F \subset {\AC}$ of  $F$-tamed almost complex structures, i.e.
\[{\AGK}_F:= \left\{ J \in {\AC} \, \Big| \, F_p(v, Jv) >0,\, \,  \forall \, 0\neq v \in T_pM, \, \forall \, p\in M \right\}. \]
As the $F$-taming condition is open in the $C^{\infty}$ topology, ${\AGK}_F$ is an open subspace of ${\AC}$, with the same tangent space at $J\in {\AGK}_F$. For each element $J \in {\AGK}_F$ , we denote by $I:= - F^{-1} J^* F^{-1}\in {\AGK}_F$ its $F$-conjugate.  Writing $-FJ = g + b$ where $g$ is the symmetric part and $b$ is the skew-part of $-FJ$, the taming condition means  that $g$ is a Riemannian metric on $M$ and $b$ is a $2$-form.  It is easy to check that both $J$ and its $F$-conjugate $I$ are $g$-orthogonal, so we have a quadruple $(g, b, I, J)$ which  gives rise to a symplectic type generalized K\"ahler structure if both $I$ and $J$ are integrable. Irrespective of the integrability of $I$ and $J$, we will refer to $(g, b, I, J)$
as  an \emph{almost generalized K\"ahler structure}. For any such structure, we still have
\[ \det(I+J) \neq 0, \qquad  F= -2g (I+J)^{-1}, \qquad b= g (J-I)(I+J)^{-1} = -g(I+J)^{-1} (J-I).\]
Letting $P:= (I+J)^{-1}$ and $Q:=(J-I) (I+J)^{-1}$,  we can  still consider the endomorphisms $\JJ=\JJ_{P,Q}$ and  $ \II={\mathbb J}_F$ of $TM \oplus T^*M$,  introduced by \eqref{e:GK-interpretation};  $(\JJ, \II)$ give rise to a commuting pair of almost complex structures on $TM \oplus T^*M$ (called \emph{generalized almost complex} structures),   and a  positive definite bilinear form $-\left\langle \JJ \II \cdot, \cdot \right\rangle$ (where $\langle \cdot, \cdot \rangle$ is the natural symmetric product  on $TM \oplus T^*M$). Unlike $\II=\JJ_{F}$, the endomorphism $\JJ_{P,Q}$ will not be in general an \emph{integrable} generalized almost complex structure, but $(\JJ, \II)$ still gives rise to what we will refer to as a  $F$-compatible almost generalized K\"ahler structure.  We introduce the space
\[ \AGK_{\J_F}:=\left\{\JJ \in {\rm O}(TM\oplus T^*M, \la\cdot,\cdot\ra)\ |\ \J^2=-\mathrm{Id},\ \J_F\J=\J_F\J, \ -\la\J\J_F\cdot,\cdot\ra>0\right\}. \]
By Proposition~\ref{p:GK-interpretation} the correspondence $J\mapsto \JJ=\JJ_{P,Q}$ given by~\eqref{e:GK-interpretation}  provides an isomorphism of formal Fr\'echet manifolds
\[
\Gu : \AGK_F\to \AGK_{\J_F}.
\]
These Fr\'echet manifolds have the following tangent spaces at $F$ (respectively $\JJ_F$)
\[
\begin{split}
    {\bf T}_J \left(\AGK_F\right) &=\{\dot{J}\in \End(TM)\ |\ J\dot{J}=-\dot{J}J \},\\
    {\bf T}_{\JJ}\left(\AGK_{\J_F}\right)&=\{\dot{\J}\in \mathfrak{o}(TM\oplus T^*M,\la\cdot,\cdot\ra)\ |\ \J\dot{\J}=-\dot{\J}\J,\  [\J_F,\dot \J]=0\}.
\end{split}
\]
Both ${\bf T}_J\left(\AGK_F\right)$ and ${\bf T}_\JJ\left(\AGK_{\J_F}\right)$ admit a formal almost complex structure,  via the left multiplications by $J$ and  $\J$, respectively. It turns out that $\Gu$ preserves these almost complex structures:

\begin{lemma}\label{l:complex}
The isomorphism of Fr\'echet manifolds $\Gu: \AGK_F\mapsto \AGK_{\J_F}$ preserves the underlying almost complex structures. Namely, if $\dot J$ is a tangent vector at $J\in {\AGK}_F$, then
\[
\J\, d\Gu(\dot J)=d\Gu(J\dot J).
\]
\end{lemma}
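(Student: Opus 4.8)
The plan is to verify the identity directly and blockwise, using the explicit form \eqref{e:GK-interpretation} of $\JJ=\JJ_{P,Q}=\Gu(J)$. Write $A:=I+J$ and $B:=J-I$ (both defined since $\det(I+J)\neq 0$), so that $P=-2A^{-1}$ and $Q=BA^{-1}$. On $\AGK_F$ the form $F$ is held fixed and $I=-F^{-1}J^*F$ is determined by $J$; hence a tangent vector $\dot J$ (with $J\dot J=-\dot J J$) induces $\dot I=-F^{-1}\dot J^*F$, and, since $F$ is undifferentiated, the differential of $\Gu$ reads
\[
d\Gu(\dot J)=\begin{pmatrix}\dot P & \dot Q F^{-1}\\ -F\dot Q & -\dot P^*\end{pmatrix},\qquad \dot P=2A^{-1}\dot A A^{-1},\quad \dot Q=\dot B A^{-1}-BA^{-1}\dot A A^{-1},
\]
where $\dot A=\dot I+\dot J$ and $\dot B=\dot J-\dot I$. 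The formal almost complex structures are left multiplication by $J$ on $\mathbf T_J(\AGK_F)$ and by $\JJ$ on $\mathbf T_\JJ(\AGK_{\J_F})$, so the claim is precisely $\JJ\cdot d\Gu(\dot J)=d\Gu(J\dot J)$. The first step is to note that the substitution $\dot J\mapsto J\dot J$ propagates to $\dot I\mapsto I\dot I$: indeed $(J\dot J)^*=\dot J^*J^*$, $J^*F=-FI$, and $\dot I I=-I\dot I$ combine to give $-F^{-1}(J\dot J)^*F=I\dot I$. Thus $d\Gu(J\dot J)$ is obtained from the displayed formulas by the replacement $(\dot A,\dot B)\mapsto(\dot A^{\mathrm{new}},\dot B^{\mathrm{new}})=(I\dot I+J\dot J,\,J\dot J-I\dot I)$, and the identities $A\dot A+B\dot B=2\dot A^{\mathrm{new}}$ and $A\dot B+B\dot A=2\dot B^{\mathrm{new}}$ (pure expansion, cross terms cancelling) let me re-express the new variations through the old ones.

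Next I would record the two purely algebraic facts that follow from $I^2=J^2=-\Id$ alone: $AB=[I,J]=-BA$ (equivalently $A^{-1}B=-BA^{-1}$) and $A^2+B^2=-4\,\Id$ (which is exactly the constraint $\JJ^2=-\Id$, i.e. $P^2-Q^2=-\Id$). With these, the top-left block $P\dot P-Q\dot Q$ of $\JJ\,d\Gu(\dot J)$ collapses to $(\dot A-BA^{-1}\dot B)A^{-1}$; on the other side $\dot P^{\mathrm{new}}=2A^{-1}\dot A^{\mathrm{new}}A^{-1}=A^{-1}(A\dot A+B\dot B)A^{-1}=(\dot A+A^{-1}B\dot B)A^{-1}$, which equals $(\dot A-BA^{-1}\dot B)A^{-1}$ by $A^{-1}B=-BA^{-1}$. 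So the diagonal block matches, and, notably, this block never uses that $F$ is fixed.

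The main obstacle is the off-diagonal block, where holding $F$ fixed enters decisively. The top-right entry of $\JJ\,d\Gu(\dot J)$ is $(P\dot Q-QF^{-1}\dot P^*F)F^{-1}$, so I must match $\dot Q^{\mathrm{new}}F^{-1}$. To handle the adjoint $\dot P^*$ I would first derive, from $I=-F^{-1}J^*F$, the conjugation relations $A^*=-FAF^{-1}$ and $\dot A^*=-F\dot A F^{-1}$; these give $F^{-1}\dot P^*F=-2A^{-1}\dot A A^{-1}$, and then $A^{-1}B=-BA^{-1}$ cancels every $\dot A$-term, leaving $P\dot Q-QF^{-1}\dot P^*F=-2A^{-1}\dot B A^{-1}$. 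Separately, $\dot Q^{\mathrm{new}}=\dot B^{\mathrm{new}}A^{-1}-BA^{-1}\dot A^{\mathrm{new}}A^{-1}$ must be simplified: inserting $\dot A^{\mathrm{new}}=\tfrac12(A\dot A+B\dot B)$ and $\dot B^{\mathrm{new}}=\tfrac12(A\dot B+B\dot A)$ and using $A^2+B^2=-4\,\Id$ together with $AB=-BA$ collapses it to the same $-2A^{-1}\dot B A^{-1}$ (with the original $\dot B=\dot J-\dot I$). I expect this coincidence — tracking $\dot P^*$ through the $F$-conjugations, and recognizing that two a priori different expressions agree only after invoking $A^2+B^2=-4\,\Id$ and $AB=-BA$ — to be the delicate, bookkeeping-heavy step.

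Finally, since $\Gu$ is an isomorphism of Fr\'echet manifolds, every tangent vector to $\AGK_{\J_F}$ has the block shape displayed above, with its lower two blocks determined by the upper two; as $\JJ\,d\Gu(\dot J)$ again lies in $\mathbf T_\JJ(\AGK_{\J_F})$ (left multiplication by $\JJ$ preserves this tangent space), matching the top-left and top-right blocks automatically forces the lower blocks to agree, completing $\JJ\,d\Gu(\dot J)=d\Gu(J\dot J)$. As a conceptual cross-check one could instead argue via the pointwise splitting \eqref{e:eqigenspace_tangent}: a tangent vector to $\AGK_{\J_F}$ is determined by its $L_+\to L_-$ component, on which $\JJ$ acts as $-\i$; identifying $L_+\cong T^{1,0}_IM$ and $L_-\cong T^{0,1}_JM$ through \eqref{e:L_pm} presents $d\Gu(\dot J)|_{L_+}$ as $v\mapsto \mathrm{pr}_{T^{0,1}_JM}(\dot I v)$ (projection along $T^{1,0}_IM$), and since $\dot I v\in T^{0,1}_IM$, on which $I$ acts as $-\i$, the replacement $\dot I\mapsto I\dot I$ inserts exactly the factor $-\i$. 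Either route shows $\Gu$ is formally holomorphic.
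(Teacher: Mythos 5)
Your proof is correct and follows essentially the same route as the paper's: a direct blockwise computation of $\JJ\,d\Gu(\dot J)$ compared against $d\Gu(J\dot J)$, using the forced replacement $\dot I\mapsto I\dot I$ and the algebraic relations coming from $I^2=J^2=-\Id$ and $I=-F^{-1}J^*F$. The only differences are cosmetic bookkeeping (you work with $A=I+J$, $B=J-I$ and the adjoint relations $A^*=-FAF^{-1}$, where the paper instead substitutes $\dot I=(I+J)\dot J(I+J)^{-1}$ and $I\dot I=(I+J)J\dot J(I+J)^{-1}$ before multiplying out), plus your optional eigenspace cross-check, which the paper does not include.
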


\begin{proof} Along the map
    \[
    \Gu: J\mapsto
    \left(
        \begin{matrix}
        P & QF^{-1} \\ -FQ & -P^*
        \end{matrix}
    \right), \quad P=-2(I+J)^{-1}, Q=(J-I)(I+J)^{-1}=-(I+J)^{-1}(J-I)
    \]
    we have
    \[
    d\Gu(\dot J)=
    \left(
        \begin{matrix}
            \dot{P} & \dot{Q}F^{-1} \\ -F\dot{Q} & -\dot{P}^*
        \end{matrix}
        \right),
    \]
    where
    \[
    \dot{P}=2(I+J)^{-1}(\dot{J}+\dot{I})(I+J)^{-1},\quad \dot{Q}=2(I+J)^{-1}(J\dot J-I\dot I)(I+J)^{-1}.
    \]
    Furthermore, since $I=-F^{-1}JF^*$, we find
    \[
    \dot I=(I+J)\dot{J}(I+J)^{-1},
    \]
    \[
    I\dot I=(I+J)J\dot{J}(I+J)^{-1},
    \]
    hence
    \[
    \begin{split}
        \J d\Gu(\dot{J})=
        \left(
        \begin{matrix}
            P & QF^{-1} \\ -FQ & -P^*
        \end{matrix}\right)
        \left(
        \begin{matrix}
            \dot{P} & \dot{Q}F^{-1} \\ -F\dot{Q} & -\dot{P}^*
        \end{matrix}
        \right)=
        \left(
        \begin{matrix}
            P\dot{P}-Q\dot Q & (P\dot{Q}+Q\dot P)F^{-1} \\ -F(P\dot{Q}+Q\dot P) & -(P\dot{P}-Q\dot Q)^*
        \end{matrix}
        \right).
    \end{split}
    \]
    We thus can compute
    \[
    \begin{split}
    P\dot P-Q\dot Q&=-4(I+J)^{-2}(\dot J+\dot I)(I+J)^{-1}-2(J-I)(I+J)^{-2}(J\dot J-I\dot I)(I+J)^{-1}\\&=
    2(I+J)^{-2}\left(
        -2(\dot J+\dot I)-(J-I)(J\dot J-I\dot I)
    \right)(I+J)^{-1}\\&=
    2(I+J)^{-2}(-\dot J-\dot I+JI\dot I+IJ\dot J)(I+J)^{-1}\\&=
    2(I+J)^{-1}(J\dot J+I\dot I)(I+J)^{-1}.
    \end{split}
    \]
    \[
    \begin{split}
    P\dot Q+Q\dot P&=-4(I+J)^{-2}(J\dot J-I\dot I)(I+J)^{-1}+2(J-I)(I+J)^{-2}(\dot J+\dot I)(I+J)^{-1}\\
    &=2(I+J)^{-2}(-2(J\dot J-I\dot I)+(J-I)(\dot J+\dot I))(I+J)^{-1}\\
    &=2(I+J)^{-1}(-\dot J+\dot I)(I+J)^{-1}
    \end{split}
    \]
    Comparing the resulting expressions with the entries of the matrix $d\Gu(J\dot J)$ we conclude that
    \[
    \J d\Gu(\dot J)=d\Gu(J\dot J)
    \]
    as claimed.
\end{proof}
Because of Lemma~\ref{l:complex},  we will abuse notations slightly and  tacitly identify $\AGK_F$ and $\AGK_{\J_F}$ via the map $\Gu$. We will denote the underlying almost complex structure given by the left multiplication with $J$ on ${\bf T}_{J}\left(\AGK_F\right)$ by ${\bf J}$.

\begin{rmk}
The $F$-taming condition  yields that any two elements $J_0, J \in {\AGK}_F$ are commensurable, i.e., $\det(J_0 + J) \neq 0$. Therefore one can apply the  Cayley transform with base-point $J_0$
\[
J\mapsto (J+J_0)^{-1}(J_0-J),
\]
to endow ${\AGK}_F$ with a structure of an open contractible subset ${\bf U}$ in a  Fr\'echet complex vector space. It then follows from Lemma~\ref{l:complex} and the identification in \cite[App.~B]{gauduchon-book} of the induced complex structure on ${\bf U}$ with the complex multiplication by $i$ that ${\bf J}$ is integrable.
\end{rmk}

Following Goto~\cite{Gotomoment} and Gauduchon~\cite{gauduchon-GIT} we define a formal K\"ahler structure on $(\AGK_F, {\bf J})$. We use the identification $\Gu: \AGK_F\to \AGK_{\J_F}$ throughout.
\begin{defn}[Formal K\"ahler structure on $\AGK_F$]\label{d:Omega}
    Let $\dot J_1,\dot J_2\in {\bf T}_J\AGK_F$ be two tangent vectors at $J\in \AGK_F$. Denote by $\dot\J_1=d\Gu(\dot J_1)$, $\dot\J_2=d\Gu(\dot J_2)$ the corresponding tangent vectors at $\J:=\Gu(J)\in \AGK_{\J_F}$. One defines a 2-form
    \begin{equation}\label{e:Omega}
        \boldsymbol{\Omega}(\dot J_1,\dot J_2)= \cc \int_M \tr(\J\dot\J_1\dot\J_2)F^{[n]}.
    \end{equation}
    It is shown in~\cite{Gotomoment} that $(\AGK_F,\bf J,\boldsymbol{\Omega})$ gives rise to a formal K\"ahler manifold with the underlying Riemannian metric
    \[
    \boldsymbol{g}(\dot J_1,\dot J_2)=\cc\int_M \tr(\dot\J_1\dot\J_2)F^{[n]}.
    \]
\end{defn}

One can trace through the identification $\Gu: \AGK_F\to \AGK_{\J_F}$ similarly to the proof of Lemma~\ref{l:complex} and get a more explicit expression for $\boldsymbol{\Omega}$ and $\boldsymbol{g}$ which is useful in finding the linearization of scalar curvature. We leave the details to the reader. 

\begin{lemma}\label{l:symplectic} The formal symplectic $\boldsymbol{\Omega}$ form and Riemannian metric $\boldsymbol{g}$ are given by
\[
\begin{split}
\boldsymbol{\Omega}_J(\dot J_1, \dot J_2) &= \ccc\int_M  \tr\left((I+J)^{-2}\big[(J\dot J_1 + I \dot I_1)(I+J)^{-1}(\dot J_2 + \dot I_2) + (\dot J_1 - \dot I_1) (I+ J)^{-1}(J \dot J_2 - I \dot I_2)\big] \right) F^{[n]}; \\
\boldsymbol{g}_J(\dot J_1, \dot J_2)&= \ccc\int_M  \tr\left((I+J)^{-2}\big[(\dot J_1 + \dot I_1)(I+J)^{-2}(\dot J_2 + \dot I_2) - (J\dot J_1 - I\dot I_1) (I+ J)^{-2}(J \dot J_2 - I \dot I_2)\big] \right) F^{[n]}.
\end{split}\]
\end{lemma}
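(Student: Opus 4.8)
The plan is to produce the explicit formulas for $\boldsymbol\Omega$ and $\boldsymbol g$ by unwinding the definitions in Definition~\ref{d:Omega} through the isomorphism $\Gu$, exactly parallel to the matrix computation already carried out in the proof of Lemma~\ref{l:complex}. The only genuinely new input beyond that lemma is the evaluation of the trace of a product of two block matrices of the form $d\Gu(\dot J)$, so the whole proof is a bookkeeping exercise on $2\times 2$ block endomorphisms of $TM\oplus T^*M$.

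First I would recall from the proof of Lemma~\ref{l:complex} the explicit form of the tangent vector
\[
d\Gu(\dot J)=\begin{pmatrix}\dot P & \dot Q F^{-1}\\ -F\dot Q & -\dot P^*\end{pmatrix},
\qquad
\dot P = 2(I+J)^{-1}(\dot J+\dot I)(I+J)^{-1},\quad
\dot Q = 2(I+J)^{-1}(J\dot J - I\dot I)(I+J)^{-1},
\]
together with the relation $\dot I = (I+J)\dot J(I+J)^{-1}$ derived there from $I=-F^{-1}J^*F$. Next I would substitute $\dot\J_1=d\Gu(\dot J_1)$ and $\dot\J_2=d\Gu(\dot J_2)$ into the defining integrands
$\tfrac14\tr(\dot\J_1\dot\J_2)$ and $\tfrac14\tr(\J\dot\J_1\dot\J_2)$ and expand the block product. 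For the metric, $\tr(\dot\J_1\dot\J_2)=\tr(\dot P_1\dot P_2)-\tr(\dot Q_1 F^{-1}F\dot Q_2)+\tr(-F\dot Q_1 \dot Q_2 F^{-1})-\tr(\dot P_1^*\dot P_2^*)$ collapses, using cyclicity of the trace and $\tr(A^*)=\tr(A)$ on $\End(TM)$, to a combination of $\tr(\dot P_1\dot P_2)$ and $\tr(\dot Q_1\dot Q_2)$; inserting the expressions for $\dot P,\dot Q$ and the substitution $\dot I=(I+J)\dot J(I+J)^{-1}$ (so that $\dot J_i+\dot I_i$ and $J\dot J_i-I\dot I_i$ appear) yields the stated formula for $\boldsymbol g_J$ after collecting the $(I+J)^{-2}$ factors. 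The symplectic form is handled identically, with the extra left factor $\J=\begin{pmatrix}P&QF^{-1}\\-FQ&-P^*\end{pmatrix}$ producing the asymmetric pairing between the $(\dot J+\dot I)$ and $(J\dot J - I\dot I)$ combinations recorded in the formula for $\boldsymbol\Omega_J$.

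The only point requiring care — and the one I would flag as the main obstacle — is the correct tracking of the conjugating factors $(I+J)^{\pm1}$ under the trace, since the matrix blocks do not all commute and one must use cyclicity repeatedly to move the $(I+J)^{-1}$'s into the symmetric $(I+J)^{-2}$ form displayed in the statement; here the identity $I(I+J)=(I+J)J$ from~\eqref{a:identity}, and the skew-symmetry of $I,J$ with respect to $g$, are the tools that make the rearrangement consistent. Since the authors explicitly say ``We leave the details to the reader,'' I would present the proof at the level of indicating the substitution and the trace collapse rather than writing out every intermediate block, as the computation, though lengthy, introduces no new idea beyond Lemma~\ref{l:complex}.
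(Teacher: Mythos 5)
Your approach is exactly the one the paper intends: the authors give no written proof of Lemma~\ref{l:symplectic}, remarking only that one can ``trace through the identification $\Gu$'' similarly to the proof of Lemma~\ref{l:complex} and leaving the details to the reader, and your plan of substituting the block matrices $d\Gu(\dot J_i)$ into Definition~\ref{d:Omega} and collapsing the trace via cyclicity and $\tr(A^*)=\tr(A)$ is precisely that computation. One small slip worth fixing: the $(2,2)$-block of $\dot\J_1\dot\J_2$ is $-F\dot Q_1\dot Q_2F^{-1}+\dot P_1^*\dot P_2^*$, so the last term enters with a plus sign and the trace equals $2\tr(\dot P_1\dot P_2)-2\tr(\dot Q_1\dot Q_2)$ rather than having the $\dot P$ contributions cancel --- this does not affect your (correct) conclusion that everything reduces to a combination of $\tr(\dot P_1\dot P_2)$ and $\tr(\dot Q_1\dot Q_2)$.
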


\subsection{The momentum map}

Let ${\rm Ham}(M,F)$ be the group of Hamiltonian diffeomorphisms of $(M,F)$  whose Lie algebra $\mathfrak{ham}(M,F)$ is identified with the space $\CCF$ of zero mean smooth functions  on $(M, F)$ by
\[ \CCF \ni f \mapsto  -F^{-1}(df) \in \mathfrak{ham}(M,F), \]
thus giving rise to the usual Lie algebra isomorphism
\[
\mathfrak{ham}(M,F)\simeq \{f\in \CCF, \{\cdot,\cdot\}_F\},
\]
where $\{f,g\}_F=F^{-1}(df,dg)$ is the $F^{-1}$-Poisson pairing.
The group ${\rm Ham}(M,F)$ naturally acts on $\AGK_F$ by the induced action on the underlying almost complex structures $J$:
\[  \Phi \cdot  J : = (\Phi_*) J (\Phi_*)^{-1}, \qquad \Phi \in {\rm Ham}(M,F), \]
leading to a Lie algebra representation
\[
\CCF \ni f \mapsto {\bf Y}_f(J):= {\mathcal L}_{F^{-1}(df)} J \in {\bf T}_{J}\left(\AGK_F\right).
\]
From the point of view of generalized complex structures $(\JJ, \J_F)$ this action fixes $\J_F$ and pulls back $\J\in \End(TM\oplus T^*M)$. The crucial result of Goto~\cite{Gotomoment} (see also \cite{boulanger} for the toric case) is that this action on the K\"ahler Fr\'echet manifold $(\AGK_F,\boldsymbol{\Omega},{\bf J})$ is Hamiltonian.  Proposition \ref{t:gscal_equiv} above complements this with a concrete geometric interpretation of the momentum map in terms of the biHermitian geometry.

\begin{thm}[{\cite{Gotomoment}}]\label{t:GKscal-moment-map} The action of ${\rm Ham}(M, F)$ on $({\AGK}_F, {\bf \Omega}, {\bf J})$ is Hamiltonian. Specifically, for each $J\in {\AGK}_F$ there exists a function $\Gscal_{(F,J)}$ such that the pairing
\[
\langle \boldsymbol{\mu}(J), f \rangle := -\int_M \Gscal_{(F,J)} f F^{[n]}, \qquad f\in \CCF,
\]
is a momentum map for the action, i.e., for any infinitesimal variation $\dot J=\frac{d}{dt}\Big|_{t=0}J_t$, $J_t\in {\AGK}_F$ and any smooth function $f\in \CCF$ we have
\[
{\bf\Omega}({\mathcal L}_{F^{-1}(df)} J, \dot J) = \int_M \left(\frac{d}{dt}\Gscal_{(F, J_t)}\right) f  F^{[n]}.
\]
\end{thm}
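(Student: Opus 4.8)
The plan is to reduce the statement to the infinitesimal momentum map identity and then to extract the Hamiltonian from the curvature representation \eqref{e:gscal_trace} of $\Gscal_{(F,J)}$. First I would record that ${\rm Ham}(M,F)$ acts on $(\AGK_F,\boldsymbol{\Omega})$ by symplectomorphisms: indeed $\boldsymbol{\Omega}$ is the integral against $F^{[n]}$ of a pointwise natural expression in $(\J,F)$ (Definition~\ref{d:Omega}), while every $\Phi\in{\rm Ham}(M,F)$ preserves $F$ and acts naturally on $\J$. Fixing $f\in\CCF$ and setting $X_f:=F^{-1}(df)$, so that ${\bf Y}_f(J)=\mathcal{L}_{X_f}J$, it then remains to check, for every $\dot J\in{\bf T}_J(\AGK_F)$, the identity
\[
\boldsymbol{\Omega}(\mathcal{L}_{X_f}J,\dot J)=\int_M\Big(\tfrac{d}{dt}\Gscal_{(F,J_t)}\Big)\,f\,F^{[n]},
\]
which is precisely the assertion that $\langle\boldsymbol{\mu}(J),f\rangle=-\int_M\Gscal_{(F,J)}f\,F^{[n]}$ is a Hamiltonian for ${\bf Y}_f$.

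The main step is to linearize the right-hand side. By \eqref{e:gscal_trace}, $\Gscal_{(F,J)}F^{[n]}=\rho_{(F,J)}\wedge F^{[n-1]}$, where $\rho_{(F,J)}$ represents $2\pi c_1(K_{\J}^{-1})$. Along a path $J_t$ the bundles $K_{\J_t}$ form a continuous family of complex line bundles, so their first Chern class is constant and $\dot\rho:=\tfrac{d}{dt}\rho_{(F,J_t)}$ is exact. I would obtain an explicit primitive by differentiating the local expression for $\rho_{(F,J)}$ in terms of a nonvanishing local spinor trivializing $K_{\J}$; the resulting forms patch to a global $1$-form $\alpha(\dot J)$, linear in $\dot J$, with $\dot\rho=d\alpha(\dot J)$. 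Since $dF^{[n-1]}=0$, integrating by parts yields
\[
\int_M\Big(\tfrac{d}{dt}\Gscal_{(F,J_t)}\Big)f\,F^{[n]}=\int_M f\,\dot\rho\wedge F^{[n-1]}=-\int_M df\wedge\alpha(\dot J)\wedge F^{[n-1]}.
\]

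It then remains to identify $\boldsymbol{\Omega}(\mathcal{L}_{X_f}J,\dot J)$ with this last integral. Here I would compute $\boldsymbol{\Omega}$ from the explicit integrand of Lemma~\ref{l:symplectic}, and use Cartan's formula $\mathcal{L}_{X_f}=d\iota_{X_f}+\iota_{X_f}d$ together with $\mathcal{L}_{X_f}F=0$ and $\mathcal{L}_{X_f}F^{[n]}=0$ to integrate the Lie derivative in the first slot by parts and convert it into a contraction by $X_f$. The matching is motivated by ${\rm Ham}(M,F)$-equivariance of $\rho$: differentiating $\rho_{(F,\Phi_t\cdot J)}=\Phi_t^*\rho_{(F,J)}$ gives $\mathcal{L}_{X_f}\rho_{(F,J)}=d\,\iota_{X_f}\rho_{(F,J)}$, which matches $\iota_{X_f}\rho$ to the primitive $\alpha$ along the Hamiltonian directions; the full identity for arbitrary $\dot J$ then follows by combining the general linearization formula for $\rho$ with the explicit form of $\boldsymbol{\Omega}$, up to the normalization constant of Definition~\ref{d:Omega}. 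The hard part is exactly this generalized Chern--Weil input: producing the explicit primitive $\alpha(\dot J)$ from the variation of the pure spinor defining $K_{\J}$, and verifying the resulting contraction identity term by term for a general $\dot J$. This is the genuinely $\mathfrak{spin}(n,n)$ part of the argument, in which the Clifford action of $TM\oplus T^*M$ controls how the local trivializing spinors vary and patch, and it replaces the classical K\"ahler computation where $\rho$ is the Ricci form $-dd^c\log\det g$ and $\boldsymbol{\Omega}$ is the Donaldson form. As an independent check one can instead linearize the explicit biHermitian formula \eqref{e:GK-scal} in the direction $\dot J$ (legitimate by Theorem~\ref{t:gscal_equiv}) and compare directly with the integrand of Lemma~\ref{l:symplectic}, at the price of a substantially heavier computation.
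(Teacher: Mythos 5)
The paper does not prove this theorem: it is imported verbatim from Goto \cite{Gotomoment} (with \cite{boulanger} for the toric case), and the authors' own contribution is only the identification of the resulting momentum map with the biHermitian formula \eqref{e:GK-scal} (Theorem~\ref{t:gscal_equiv}). So there is no in-paper proof to compare against; I can only assess your sketch against what Goto actually does, and on that score your outline is the right strategy and essentially his: write $\Gscal_{(F,J)}F^{[n]}=\rho_{(F,J)}\wedge F^{[n-1]}$ with $\rho_{(F,J)}$ a Ricci-type representative of $2\pi c_1(K_{\J}^{-1})$ built from local pure spinors, show $\tfrac{d}{dt}\rho_{(F,J_t)}=d\alpha(\dot J)$ for an explicit, pointwise-algebraic primitive, integrate by parts against $f$, and match the result with $\boldsymbol{\Omega}(\mathcal L_{X_f}J,\dot J)$.

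The gap is that your proposal defers exactly the step that constitutes the proof. The existence of \emph{some} primitive of $\dot\rho$ follows from constancy of $c_1(K_{\J_t})$, but the theorem needs a \emph{specific} $\alpha(\dot J)$, local and linear in $\dot J$, whose contraction identity against $df\wedge F^{[n-1]}$ reproduces the integrand of $\boldsymbol{\Omega}$ from Lemma~\ref{l:symplectic} with the correct normalization; you correctly flag this as ``the genuinely $\mathfrak{spin}(n,n)$ part'' but do not carry it out, and the equivariance relation $\mathcal L_{X_f}\rho=d\iota_{X_f}\rho$ only constrains $\alpha$ along the Hamiltonian directions $\dot J=\mathcal L_{X_f}J$, not for arbitrary $\dot J$, so it cannot substitute for the computation. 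Two smaller cautions: your fallback of linearizing \eqref{e:GK-scal} directly is only available on the locus where $I$ and $J$ are integrable (Theorem~\ref{t:gscal_equiv} identifies the two scalar curvatures only there), whereas the theorem is asserted for all of $\AGK_F$; and the statement that $\boldsymbol{\Omega}$ is $\mathrm{Ham}(M,F)$-invariant, while true, is not by itself the momentum map property — the identity $\boldsymbol{\Omega}({\bf Y}_f,\dot J)=d\langle\boldsymbol{\mu}(\cdot),f\rangle(\dot J)$ must be verified, which returns you to the undone computation. As written this is a correct roadmap to Goto's proof, not a proof.
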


We note that the setup in~\cite{Gotomoment} is slightly more general than stated here, as it allows for a symplectic-type generalized complex structure $\II$ twisted by an additional \textit{$B$-field transform}. However, in the special case when the $B$-field transform is trivial, the group acting on $J$ (or equivalently on $\JJ$) reduces to $\Ham(M, F)\subset \mathrm{Diff}(M)$.

\begin{rmk}[Average value of $\Gscal$] \label{r:avescal}
    Using~\eqref{e:gscal_trace} we observe that for any $J\in \AGK_{F}$ the average value of $\Gscal$
    \begin{equation}\label{e:gscal_average}
    \overline\mu:=\frac{\int_M \Gscal_{(F,J)}F^{[n]}}{\int_M F^{[n]}}=\frac{\int_M \rho_{(F,J)}\wedge F^{[n-1]}}{{\int_M F^{[n]}}}=2\pi \frac{\la c_1(K_\JJ^{-1})\cdot [F]^{n-1}, [M]\ra}{\la [F]^n,[M]\ra}
    \end{equation}
    is a topological constant depending only on $c_1(K_\JJ)\in H^2(M,\mathbb Z)$, $[F]\in H^2(M,\R)$.
\end{rmk}

\begin{defn}\label{d:extremal} We say that a symplectic type generalized K\"ahler structure $(F, J)$ is of \emph{constant  generalized scalar curvature}, and abbreviate \emph{cscGK}, if $\Gscal_{(F, J)}=\overline{\mu}$ is a constant function. We say that $(F, J)$ is an \emph{extremal} generalized K\"ahler structure if the vector field $\chi:= - F^{-1}\left(d\Gscal_{(F, J)}\right)$ preserves  $J$, i.e., ${\mathcal L}_{\chi} J= 0$. Vector $\chi$ will be referred to as the \emph{extremal vector field}.
\end{defn}
These definitions are motivated by the following immediate corollary of Theorem~\ref{t:GKscal-moment-map} (see also  \cite[\S 9]{GotoLichne}):

\begin{cor}\label{c:symplectic-Gscal} Let $(F, J)$ be a symplectic type generalized K\"ahler structure defined on a compact manifold $M$. Then,
\begin{itemize}
\item $(F, J)$ is cscGK iff $J$ is a zero of the momentum map $\boldsymbol{\mu}$ of ${\AGK}_{F}$.
\item $(F, J)$ is extremal iff $J$ is a critical point of the norm functional  $J \to ||\boldsymbol{\mu}(J)||^2$ on ${\AGK}_{F}$,
 where the norm is  the $L^2(M, F)$ norm  $||\boldsymbol{\mu}(J)||^2:= \int_M \left(\Gscal_{(F, J)} -\overline{\mu}\right)^2 F^{[n]}$.
 \end{itemize}
 \end{cor}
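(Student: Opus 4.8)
The plan is to read off both equivalences directly from the momentum map formula of Theorem~\ref{t:GKscal-moment-map}, using only two elementary facts: that $\CCF$ consists of functions of zero $dV_F$-average, and that $\boldsymbol{\Omega}$ is a (formal) \emph{nondegenerate} K\"ahler form on $\AGK_F$. No new analytic input is needed; this is the standard Kempf--Ness-type unwinding of a momentum map.

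\emph{First bullet.} By definition $\boldsymbol{\mu}(J)=0$ means $\langle\boldsymbol{\mu}(J),f\rangle=0$ for every $f\in\CCF$, i.e.
\[
\int_M \Gscal_{(F,J)}\, f\, F^{[n]}=0 \qquad \text{for all } f\in\CCF.
\]
Since $\overline{\mu}$ is constant and each $f\in\CCF$ has zero average, the integrand is unchanged if $\Gscal_{(F,J)}$ is replaced by $\Gscal_{(F,J)}-\overline{\mu}$; moreover $\Gscal_{(F,J)}-\overline{\mu}$ itself lies in $\CCF$. Testing against the particular choice $f=\Gscal_{(F,J)}-\overline{\mu}$ then forces $\int_M(\Gscal_{(F,J)}-\overline{\mu})^2 F^{[n]}=0$, whence $\Gscal_{(F,J)}\equiv\overline{\mu}$; that is, $(F,J)$ is cscGK. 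The converse is immediate, as a constant function pairs to zero against every element of $\CCF$.

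\emph{Second bullet.} I would differentiate the norm functional along a smooth path $J_t\in\AGK_F$ with $\dot J:=\tfrac{d}{dt}\big|_{t=0}J_t$. Since $F$ (hence $F^{[n]}$) is fixed and $\Gscal_{(F,J)}-\overline{\mu}$ has zero average, the variation of $\overline{\mu}$ contributes nothing and
\[
\tfrac{d}{dt}\big|_{t=0}\|\boldsymbol{\mu}(J_t)\|^2
=2\int_M\big(\Gscal_{(F,J)}-\overline{\mu}\big)\Big(\tfrac{d}{dt}\big|_{t=0}\Gscal_{(F,J_t)}\Big)F^{[n]}.
\]
Applying the momentum map identity of Theorem~\ref{t:GKscal-moment-map} with the fixed test function $f=\Gscal_{(F,J)}-\overline{\mu}\in\CCF$, and noting that $F^{-1}(df)=F^{-1}(d\Gscal_{(F,J)})=-\chi$ where $\chi=-F^{-1}(d\Gscal_{(F,J)})$, the right-hand side becomes
\[
\tfrac{d}{dt}\big|_{t=0}\|\boldsymbol{\mu}(J_t)\|^2
=2\,\boldsymbol{\Omega}\big(\mathcal{L}_{F^{-1}(df)}J,\dot J\big)
=-2\,\boldsymbol{\Omega}\big(\mathcal{L}_{\chi}J,\dot J\big).
\]
Thus $J$ is a critical point of $\|\boldsymbol{\mu}\|^2$ exactly when $\boldsymbol{\Omega}(\mathcal{L}_\chi J,\dot J)=0$ for all $\dot J\in{\bf T}_J(\AGK_F)$. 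To conclude I would observe that $\mathcal{L}_\chi J$ is itself a tangent vector in ${\bf T}_J(\AGK_F)$ (differentiating $J^2=-\mathrm{Id}$ gives $J(\mathcal{L}_\chi J)+(\mathcal{L}_\chi J)J=0$), and that $\boldsymbol{\Omega}$ is nondegenerate since $\boldsymbol{\Omega}(\cdot,\cdot)=\boldsymbol{g}({\bf J}\cdot,\cdot)$ with $\boldsymbol{g}$ positive definite. Nondegeneracy then upgrades $\boldsymbol{\Omega}(\mathcal{L}_\chi J,\cdot)\equiv 0$ to $\mathcal{L}_\chi J=0$, which is precisely the extremality condition of Definition~\ref{d:extremal}.

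I expect the only genuine points requiring care to be formal/infinite-dimensional bookkeeping rather than any real obstacle: justifying that $\boldsymbol{\Omega}$ is nondegenerate as a \emph{formal} K\"ahler form (so that one may pass from $\boldsymbol{\Omega}(\mathcal{L}_\chi J,\cdot)\equiv 0$ to $\mathcal{L}_\chi J=0$), and checking that it is legitimate to insert the $J$-dependent function $\Gscal_{(F,J)}-\overline{\mu}$ as a \emph{fixed} test function at the base point $J$ where the first variation is evaluated. The observation from Remark~\ref{r:avescal} that $\overline{\mu}$ is a topological constant (so its variation drops out, and anyway is killed by the zero-average condition) removes the last potential source of extra terms.
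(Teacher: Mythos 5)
Your argument is correct and follows essentially the same route as the paper: the first bullet is the standard ``constant iff orthogonal to all zero-mean functions'' observation, and the second bullet computes $\tfrac{d}{dt}\|\boldsymbol{\mu}(J_t)\|^2 = 2\boldsymbol{\Omega}\bigl(-\mathcal{L}_{\chi}J,\dot J\bigr)$ via Theorem~\ref{t:GKscal-moment-map} with the fixed test function $\Gscal_{(F,J)}-\overline{\mu}$ and concludes by nondegeneracy of $\boldsymbol{\Omega}$, exactly as in the paper. The points you flag (constancy of $\overline{\mu}$ from Remark~\ref{r:avescal}, freezing the test function at the base point) are handled the same way there.
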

 \begin{proof}
    It is immediate by Theorem~\ref{t:GKscal-moment-map} that $(F,J)$ is cscGK if and only if $J$ is a zero of the moment map $\boldsymbol{\mu}\colon \AGK_F\mapsto (C_0^\infty(M,\R))^*$.
    
    For the second part we compute the variation of $||\boldsymbol{\mu}(J)||^2 := \int_M (\Gscal_{(F, J)} - \overline{\mu})^2 dV_F$ along the corresponding Hamiltonian vector field $\chi:=-F^{-1}(d\Gscal_{(F, J)})$.  By Theorem~\ref{t:GKscal-moment-map}, for any smooth path $J_t \in {\AGK}_{F}$ tangent to $\dot J$ at $J$, we have
    \[
        \frac{d}{dt} ||\boldsymbol{\mu}(J_t)||^2 =  2\boldsymbol{\Omega}_J\left(-{\mathcal L}_{\chi} J, \dot J \right).
    \]
    As  $\boldsymbol{\Omega}_J$ is nondegenerate,  $J\in {\AGK}_{F}$ is a critical point of $||\boldsymbol{\mu}(J)||^2$ iff $\chi$ preserves $J$.
    \end{proof}

\section{The Calabi program for symplectic type GK structures}\label{s:calabi_variational}

Motivated by Calabi's program in K\"ahler geometry, which seeks extremal and constant scalar curvature K\"ahler metrics in ${\KK}_{\alpha}$, the discussion in the previous section naturally leads one to ask a similar question in the symplectic type generalized K\"ahler context.  First in this section we show that the space $\GK^0_{\poiss,\alpha}$ is the formal complexified orbit for $\Ham(M, F)$, further justifying it as a natural generalization of $\KK_{\ga}$.  We then give variational characterizations of extremal and cscGK structures in terms of a Calabi functional and Mabuchi energy, respectively.  Using this structure we establish the Calabi--Lichnerowicz--Matsushima-type obstruction, the existence of extremal vector field, and Futaki character discussed in the introduction.  We end by computing the linearization of the scalar curvature, yielding a natural generalization of the Lichnerowicz operator in this setting.

\subsection{The complexified orbits for the action of \texorpdfstring{${\rm Ham}(M, F)$}{Ham(M,F)}}\label{s:Moser's trick}

We start by extending a key observation from \cite{donaldson-GIT} to the symplectic type generalized K\"ahler setting.
\begin{lemma}\label{l:complex-orbit}
    Let $(F_t, J_0)$ be a Hamiltonian flow deformation of a symplectic type generalized K\"ahler structure $(F_0, J_0)$,  with respect to a time dependent smooth function $\phi_t$. Let $\Phi_t \in {\rm Diff}(M), \, \Phi_0 ={\rm Id}$ be the isotopy of diffeomorphisms corresponding to the time dependent vector field $Z_t := - F_t^{-1}(I_td\phi_t)$.  Let
    \[
        J_t := \Phi_t^{-1} \cdot J_0:= (\Phi_t)_*^{-1} J_0 (\Phi_t)_*, \qquad \psi_t : = \Phi_t^*(\phi_t), \qquad  Y_{\psi_t} := -F_0^{-1}(d\psi_t).
    \]
    Then  for any $t$
    \[
    \Phi_t^*(F_t)= F_0, \qquad \dot J_t = J_t \left({\mathcal L}_{Y_{\psi_t}} J_t\right).
    \]
\end{lemma}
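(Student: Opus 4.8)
The plan is to adapt Donaldson's Moser-trick argument \cite{donaldson-GIT} to the symplectic-type generalized K\"ahler setting: the first identity is the Moser normalization, and the second says that the normalized path moves in the formal complex direction ${\bf J}$.

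\emph{Step 1 (Moser normalization).} Since $\Phi_t$ is the isotopy generated by $Z_t$, for a time-dependent form $\beta_t$ one has $\tfrac{d}{dt}\Phi_t^*\beta_t = \Phi_t^*\big(\mathcal{L}_{Z_t}\beta_t + \dot\beta_t\big)$. I apply this to $F_t$. Because $F_t$ is closed, Cartan's formula gives $\mathcal{L}_{Z_t}F_t = d(\iota_{Z_t}F_t)$, and by the convention $B(X,\cdot)=\iota_X B$ of Section~\ref{s:defns} together with the notation $d^c_{I_t}\phi_t = I_t\,d\phi_t$, I get $\iota_{Z_t}F_t = F_t(Z_t,\cdot) = -I_t\,d\phi_t = -d^c_{I_t}\phi_t$. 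Hence $\mathcal{L}_{Z_t}F_t = -dd^c_{I_t}\phi_t = -\dot F_t$ by \eqref{e:flow_evolution}, the two terms cancel, and $\Phi_t^*F_t$ is constant in $t$, equal to $\Phi_0^*F_0 = F_0$.

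\emph{Step 2 (reduction to a pointwise identity).} As $J_0$ is time-independent, naturality of the Lie derivative under pullback gives $\dot J_t = \Phi_t^*(\mathcal{L}_{Z_t}J_0) = \mathcal{L}_{\Phi_t^*Z_t}J_t$. I then pull back the expression defining $Z_t$: using $\Phi_t^*F_t = F_0$ from Step 1 one has $\Phi_t^*(F_t^{-1}) = F_0^{-1}$, while $\Phi_t^*(I_t) = \Phi_t^*(-F_t^{-1}J_0^*F_t) = -F_0^{-1}J_t^*F_0 =: I_t^{(0)}$ and $\Phi_t^*(d\phi_t) = d\psi_t$. Therefore $\Phi_t^*Z_t = -F_0^{-1}(I_t^{(0)}\,d\psi_t)$. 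Observe that $(F_0,J_t)$ is itself a symplectic-type GK structure, being the $\Phi_t$-pullback of $(F_t,J_0)$, and $I_t^{(0)}$ is its second complex structure.

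\emph{Step 3 (the complexified direction).} Two pointwise facts about the fixed GK structure $(F_0,J_t,I_t^{(0)})$ finish the proof. The first is the algebraic identity $-F_0^{-1}(I_t^{(0)}\,d\psi_t) = J_t\big(-F_0^{-1}d\psi_t\big) = J_tY_{\psi_t}$: writing $g$ for the Riemannian metric of $(F_0,J_t)$, $G=\grad_g\psi_t$, and noting that $I_t^{(0)}d\psi_t$ means the $g$-compatible action $gI_t^{(0)}g^{-1}d\psi_t$, one uses $F_0^{-1}g = -\tfrac12(I_t^{(0)}+J_t)$ from \eqref{a:basic} together with $(I_t^{(0)})^2 = J_t^2 = -\mathrm{Id}$ to reduce both sides to $\tfrac12(J_tI_t^{(0)}-\mathrm{Id})G$. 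The second is that for any integrable complex structure $J$ and any vector field $X$ one has $\mathcal{L}_{JX}J = J\,\mathcal{L}_XJ$, since the difference evaluated on $Y$ equals the Nijenhuis expression $[JX,JY]-J[JX,Y]-J[X,JY]-[X,Y]$, which vanishes as $J_t$ is integrable. Combining these, $\dot J_t = \mathcal{L}_{J_tY_{\psi_t}}J_t = J_t\,\mathcal{L}_{Y_{\psi_t}}J_t$, as claimed.

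The one substantive step is the algebraic identity $\Phi_t^*Z_t = J_tY_{\psi_t}$ of Step 3; it is precisely here that the symplectic-type relation $I = -F^{-1}J^*F$ (via \eqref{a:basic}) enters, and it encodes the fact that the Moser velocity, transported back to the fixed symplectic form, points in the formal complex direction ${\bf J}$. Beyond this, the only delicate issue is keeping track of the conventions for the action of $I$ on $1$-forms and the sign of $d^c_{I}$; the rest is formal.
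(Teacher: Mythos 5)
Your proof is correct and follows essentially the same route as the paper's: Moser's trick for $\Phi_t^*F_t=F_0$, the algebraic identity $-F^{-1}(I\,d\phi)=J\bigl(-F^{-1}d\phi\bigr)$ coming from $F^{-1}=-\tfrac12(I+J)g^{-1}$, and the integrability identity $\mathcal{L}_{JX}J=J\,\mathcal{L}_XJ$. The only cosmetic difference is that you pull back to $(F_0,J_t)$ before doing the linear algebra, whereas the paper computes $Z_t=-J_0F_t^{-1}(d\phi_t)$ at $(F_t,J_0)$ first and then pulls back; you also spell out the Nijenhuis-tensor justification that the paper leaves implicit.
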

\begin{proof} Using that  along the Hamiltonian flow deformation $\dot F  = dd^c_I \phi$,  $Z_t= - F_t^{-1}(I_td\phi_t)$ defines a Moser isotopy, i.e, $\Phi_t^*(F_t)=F_0$. Furthermore, we compute
\[ Z_t = - F_t^{-1}(d^c_{I_t}\phi_t)= \tfrac{1}{2}(I_t+J_0) I_t(d\phi_t) = \tfrac{1}{2}J_0(I_t+J_0)(d\phi_t)= -J_0 F_t^{-1}(d\phi_t).\]
Letting $J_t =  (\Phi_t)_*^{-1} J_0 (\Phi_{t})_{*}$,   we get
\[ \frac{d}{dt} J_t= \mathcal{L}_{(\Phi_t)^{-1} \cdot Z_t} J_t  = {\mathcal L}_{J_t Y_{\psi_t}} J_t= J_t\left({\mathcal L}_{Y_{\psi_t}} J\right)
\]
as claimed.
\end{proof}

\begin{rmk} By virtue of Lemma~\ref{l:complex}, it follows that $F_t\in \GK_{\poiss,\alpha}$ is mapped via $\Phi_t$ into a curve inside the formal ``complexified orbit'' of ${\rm Ham}(M, F_0)$ in  $({\AGK}_{F_0}, {\bf J})$, which is transversal to the  ${\rm Ham}(M, F_0)$ orbit of $(F_0, J_0)$.
\end{rmk}

\subsection{The generalized K\"ahler Calabi functional}

We now consider an extension of the Calabi functional to our notion of a generalized K\"ahler class $\GK_{\poiss,\alpha}$, which gives a further variational characterization of extremal metrics. This functional is related to the the square norm of the momentum map studied in \cite[\S 9]{GotoLichne} (see also Corollary~\ref{c:symplectic-Gscal}), where the notion of extremal generalized K\"ahler structures was introduced.

\begin{defn}[Generalized K\"ahler Calabi functional]\label{d:Calabi} We define the Calabi functional by
\[ {\bf Ca}(F)= \int_M {\Gscal}_{(F, J)}^2 F^{[n]},  \qquad F \in \GK_{\poiss,\alpha}.\]
\end{defn}
\begin{prop}\label{p:Calabi} $F \in \GK_{\poiss,\alpha}$ is a critical point of ${\bf Ca}$ if and only if  $F$ is extremal.
\end{prop}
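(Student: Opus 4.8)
The plan is to run the standard Donaldson--Fujiki critical point argument, transported to our setting via the momentum map picture of Theorem~\ref{t:GKscal-moment-map} and the complexified orbit description of Lemma~\ref{l:complex-orbit}. First I would fix a path $F_t \in \GK_{\poiss,\alpha}$ through $F = F_0$ with $\dot F = dd^c_{I}\phi$, and transport it to $\AGK_{F_0}$: by Lemma~\ref{l:complex-orbit} the Moser isotopy $\Phi_t$ satisfies $\Phi_t^* F_t = F_0$ and produces a path $J_t = \Phi_t^{-1}\cdot J_0$ with $\dot J_t = {\bf J}\,{\bf Y}_{\psi_t}$ (up to an overall sign), i.e.\ tangent to the ${\bf J}$-image of the $\Ham(M, F_0)$-orbit. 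Since $\Gscal$ is natural under diffeomorphisms and $\Phi_t$ preserves the volume form, $\mathbf{Ca}(F_t) = \int_M \Gscal_{(F_0, J_t)}^2 F_0^{[n]}$. Moreover the mean value $\overline{\mu}$ and the volume $\int_M F_0^{[n]}$ are topological constants (Remark~\ref{r:avescal}), so differentiating and discarding the vanishing term $2\overline{\mu}\int_M \dot\Gscal\, F_0^{[n]} = 2\overline{\mu}\tfrac{d}{dt}(\overline{\mu}\,\Vol) = 0$ gives
\[
\frac{d}{dt}\Big|_{t=0}\mathbf{Ca}(F_t) = 2\int_M (\Gscal_{(F_0,J_0)} - \overline{\mu})\,\dot\Gscal\, F_0^{[n]}.
\]

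Next I would apply the momentum map identity of Theorem~\ref{t:GKscal-moment-map} with the zero-mean function $f = \Gscal_{(F_0,J_0)} - \overline{\mu} \in \CCF$. This rewrites the right-hand side as $2\,\boldsymbol{\Omega}(-\mathcal{L}_\chi J_0,\, \dot J_0)$, exactly as in the proof of Corollary~\ref{c:symplectic-Gscal}, where $\chi = -F_0^{-1}(d\Gscal)$. The key observation is that $-\mathcal{L}_\chi J_0 = {\bf Y}_{\Gscal - \overline{\mu}}$ is itself a fundamental vector field of the Hamiltonian action, while the admissible variations $\dot J_0 = {\bf J}\,{\bf Y}_\psi$ are precisely the ${\bf J}$-images of such fields. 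Using the Kähler compatibility $\boldsymbol{\Omega}(X, {\bf J} Y) = \boldsymbol{g}(X, Y)$ coming from Definition~\ref{d:Omega} (together with ${\bf J}$-invariance of $\boldsymbol{g}$), the first variation becomes, up to an overall sign, $2\,\boldsymbol{g}\big({\bf Y}_{\Gscal - \overline{\mu}},\, {\bf Y}_\psi\big)$.

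Finally, $F_0$ is critical iff $\boldsymbol{g}({\bf Y}_{\Gscal - \overline{\mu}}, {\bf Y}_\psi) = 0$ for all $\psi \in C^\infty(M,\R)$; choosing $\psi = \Gscal - \overline{\mu}$ and invoking the positive-definiteness of $\boldsymbol{g}$ forces ${\bf Y}_{\Gscal - \overline{\mu}} = -\mathcal{L}_\chi J_0 = 0$, i.e.\ $F_0$ is extremal. The converse is immediate, since extremality gives ${\bf Y}_{\Gscal-\overline{\mu}} = 0$ and hence a vanishing first variation in every direction.

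I expect the main difficulty to be bookkeeping rather than conceptual: one must confirm that the admissible directions $\dot J_0$ span exactly ${\bf J}\cdot\{{\bf Y}_\psi : \psi\in C^\infty(M,\R)\}$, so that restricting the variation to $\GK_{\poiss,\alpha}$ (rather than all of $\AGK_{F_0}$) still detects extremality, and track the signs in $\boldsymbol{\Omega}(X, {\bf J} Y) = \boldsymbol{g}(X,Y)$ through the anticommutation ${\bf J}\dot J = -\dot J\,{\bf J}$ on the tangent space. It is precisely here that we use the positive-definiteness of $\boldsymbol{g}$ in place of the mere nondegeneracy of $\boldsymbol{\Omega}$ that sufficed in Corollary~\ref{c:symplectic-Gscal}: on the full space $\AGK_{F_0}$ nondegeneracy of $\boldsymbol{\Omega}$ already characterizes extremality, whereas along the complexified orbit one only tests against the directions ${\bf J}\,{\bf Y}_\psi$, and positivity of $\boldsymbol{g}$ is what upgrades vanishing of all these pairings to the vanishing of ${\bf Y}_{\Gscal-\overline{\mu}}$.
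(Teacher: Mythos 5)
Your proposal is correct and follows essentially the same route as the paper's proof: transport the variation to $\AGK_{F}$ via the Moser isotopy of Lemma~\ref{l:complex-orbit}, use that $\overline{\mu}$ is topological, invoke the momentum map identity of Theorem~\ref{t:GKscal-moment-map} to rewrite the first variation as $\boldsymbol{\Omega}(-\mathcal{L}_{\chi}J, {\bf J}{\bf Y}_{\psi}) = \pm\boldsymbol{g}({\mathcal L}_{\chi}J, {\bf Y}_{\psi})$, and specialize $\psi = \Gscal - \overline{\mu}$ to conclude $\mathcal{L}_{\chi}J = 0$ from positive-definiteness of $\boldsymbol{g}$. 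The only difference is expository: you spell out the Kähler compatibility step and the sign bookkeeping that the paper leaves implicit.
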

\begin{proof} This is similar to the proof of the second part of Corollary~\ref{c:symplectic-Gscal}.  Let $F_t$ a Hamiltonian flow starting at $F$ and corresponding to $\phi_t$.  Using a Moser isotopy $\Phi_t$ as in Lemma~\ref{l:complex-orbit} we can translate the above property with respect to a path $(F, J_t) \in {\AGK}_{F}$, and then compute
\[
\begin{split}
\left. \frac{d}{dt} \right |_{t=0} \int_M {\Gscal}_{(F, J_t)}^2 F^{[n]} &= \left. \frac{d}{dt} \right |_{t=0} \int_M \left({\Gscal}_{(F, J_t)}- \overline{\mu}\right)^2 F^{[n]}\\
      & = 2 \boldsymbol{\Omega}\left( -{\mathcal L}_{\chi} J, \dot J\right) \\
   &= -2 \boldsymbol{\Omega}\left( {\mathcal L}_{\chi} J, J {\mathcal L}_Y J\right),
   \end{split}
\]
where $\chi:= -F^{-1}(d\Gscal_{(F, J_0)})$ and $Y= -F^{-1}(d\phi_0)$. In the above equalities, we first used that the averaged generalized scalar curvature is constant in $t$, see Remark~\ref{r:avescal}, while the third line follows from the computation in Corollary \ref{c:symplectic-Gscal}, and for the last line we have used Lemma~\ref{l:complex-orbit}.  The result follows by specializing the above computation with $\phi_t := \Gscal_{(F_t, J_0)}$ we conclude that $\boldsymbol{g}(\mathcal L_{\chi}J,\mathcal L_{\chi}J)=0$ so that $\mathcal L_{\chi}J=0$. \end{proof}

\subsection{The generalized K\"ahler  Mabuchi functional}\label{ss:Mabuchi}

In \cite{Mabuchi} Mabuchi further introduced his ``$K$-energy'' which is essential to the proof of uniqueness of cscK metrics.  Formally it can be thought of as extending the moment map to the complexified orbit of the action of Hamiltonian diffeomorphisms of a K\"ahler form.  A similar phenomenon exists here, although in general we only obtain a one-form corresponding to the differential of the Mabuchi energy.  Specifically, we introduce a $1$-form $\boldsymbol{\tau}$ on  $\GK_{\poiss,\alpha}$  by its value at fundamental vector fields ${\bf X}_{\phi}\in {\bf T}_F\GK_{\poiss,\alpha}$, $\phi\in \CCF$, see \eqref{e:fundamental-field}:
\[
\boldsymbol{\tau}_{F}({\bf X}_{\phi}):= - \int_M  \phi\,{\Gscal}_{(F, J)} \, F^{[n]}, \qquad F \in \GK_{\poiss,\alpha}.
\]

\begin{prop}\label{p:Mabuchi} The $1$-form $\boldsymbol{\tau}$ is closed.
\end{prop}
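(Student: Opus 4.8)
The plan is to verify that $\boldsymbol{\tau}$ is closed by computing $d\boldsymbol{\tau}$ on pairs of fundamental vector fields, which span the tangent distribution by Lemma~\ref{l:commuting} and Proposition~\ref{p:gk_class}. Recall the standard formula for the exterior derivative of a $1$-form: for vector fields ${\bf X}_\phi, {\bf X}_\psi$ with $\phi,\psi\in\CCF$,
\[
d\boldsymbol{\tau}({\bf X}_\phi,{\bf X}_\psi) = {\bf X}_\phi\big(\boldsymbol{\tau}({\bf X}_\psi)\big) - {\bf X}_\psi\big(\boldsymbol{\tau}({\bf X}_\phi)\big) - \boldsymbol{\tau}\big([{\bf X}_\phi,{\bf X}_\psi]\big).
\]
The first two terms are directional derivatives of $\boldsymbol{\tau}_F({\bf X}_\psi)=-\int_M \psi\,\Gscal_{(F,J)}F^{[n]}$ along the Hamiltonian deformation generated by $\phi$ (and vice versa); the bracket term is controlled by Lemma~\ref{l:commuting}, which gives $[{\bf X}_\phi,{\bf X}_\psi]=-{\bf X}_{\{\phi,\psi\}_\poiss}$, so that $\boldsymbol{\tau}([{\bf X}_\phi,{\bf X}_\psi]) = \int_M \{\phi,\psi\}_\poiss\,\Gscal_{(F,J)}F^{[n]}$.

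\textbf{Computing the directional derivatives.} First I would fix $\phi,\psi$ as time-independent functions and differentiate $\boldsymbol{\tau}_{F_t}({\bf X}_\psi)$ along the Hamiltonian deformation $F_t$ generated by $\phi$, using the evolution equations~\eqref{e:flow_evolution}: $\dt F = dd^c_{I}\phi$ and $\dt F^{[n]} = \tr_F(dd^c_I\phi)\,F^{[n]} = (\Delta^{\scriptscriptstyle F}_I\phi)\,F^{[n]}$ in the obvious notation. The key ingredient will be the variation of the generalized scalar curvature $\Gscal_{(F_t,J)}$ along this deformation. Here I would invoke the momentum map property of Theorem~\ref{t:GKscal-moment-map}: after transporting the deformation via a Moser isotopy $\Phi_t$ as in Lemma~\ref{l:complex-orbit} into a path $(F,J_t)\in\AGK_F$ with $\dot J_t = J_t(\mathcal L_{Y_\psi}J_t)$, the pairing $\int_M (\tfrac{d}{dt}\Gscal)\,f\,F^{[n]}$ is identified with $\boldsymbol{\Omega}(\mathcal L_{F^{-1}(df)}J,\dot J)$. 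This should express the $\phi$-derivative of $\int_M \psi\,\Gscal\,F^{[n]}$ in terms of $\boldsymbol{\Omega}$ evaluated on $\mathcal L_{F^{-1}(d\psi)}J$ and $\mathcal L_{F^{-1}(d\phi)}J$, plus a term coming from differentiating the volume form $F^{[n]}$ and the function $\psi$ itself under the Moser isotopy (since $\psi_t=\Phi_t^*\phi$ varies even though $\phi$ is fixed).

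\textbf{The symmetric/antisymmetric split.} The strategy is to show that the symmetric part of the $\boldsymbol{\Omega}$-contributions cancels in $d\boldsymbol{\tau}({\bf X}_\phi,{\bf X}_\psi)$, since $\boldsymbol{\Omega}$ is antisymmetric in its two slots and the leading contribution should be of the form $\boldsymbol{\Omega}(\mathcal L_{F^{-1}(d\phi)}J,\mathcal L_{F^{-1}(d\psi)}J)$, which is manifestly antisymmetric in $\phi,\psi$. Thus this leading term survives antisymmetrization but must then be matched against the bracket term. The crucial identity to establish is that the antisymmetric $\boldsymbol{\Omega}$-piece together with the volume-form/transport correction exactly combines into $\int_M\{\phi,\psi\}_\poiss\,\Gscal\,F^{[n]}$, cancelling $\boldsymbol{\tau}([{\bf X}_\phi,{\bf X}_\psi])$. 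This is the structural heart of the Fujiki--Donaldson moment map picture: the closedness of $\boldsymbol{\tau}$ is the formal statement that the moment map has a ``potential'' in the direction of the complexified orbit.

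\textbf{The main obstacle.} The hardest point will be bookkeeping the correction terms arising because the deformation $F_t$ is genuinely nonlinear in the GK setting: unlike the K\"ahler case, the complex structure $I_t$ evolves (by~\eqref{e:flow_evolution}, $\dt I = \sj\poiss\circ(dd^c_I\phi)$), so the operator $dd^c_{I_t}$ defining ${\bf X}_\psi$ is itself $t$-dependent. Consequently $\boldsymbol{\tau}_{F_t}({\bf X}_\psi)$ involves a representative $\psi$ whose interpretation as a normalized function shifts as $dV_{F_t}$ changes, and the Poisson bracket $\{\phi,\psi\}_\poiss$ (with respect to the \emph{real Poisson tensor} $\poiss$, not $F$) enters through the bracket of fundamental fields. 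I expect the computation to require care in distinguishing the $F$-Poisson bracket $\{\cdot,\cdot\}_F$ from the $\poiss$-bracket $\{\cdot,\cdot\}_\poiss$ appearing in Lemma~\ref{l:commuting}, and in tracking how the Moser transport reconciles these. The integration-by-parts identity~\eqref{e:by-parts} and the relation $I-J=-\poiss\circ F$ from Remark after Definition~\ref{d:AGKpi} will likely be the technical tools that convert the $\boldsymbol{\Omega}$-terms into the $\poiss$-bracket expression, closing the loop.
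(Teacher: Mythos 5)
Your skeleton is the same as the paper's: evaluate $d\boldsymbol{\tau}$ on fundamental vector fields, handle the bracket term via Lemma~\ref{l:commuting}, transport the Hamiltonian deformation into a path $(F,J_t)\in\AGK_F$ by the Moser isotopy of Lemma~\ref{l:complex-orbit}, and compute the variation of $\int_M\phi\,\Gscal\,F^{[n]}$ via the moment map property of Theorem~\ref{t:GKscal-moment-map}. However, the cancellation mechanism you predict is inverted, and as written this step would fail. You claim the leading contribution is $\boldsymbol{\Omega}\bigl(\mathcal L_{F^{-1}(d\phi)}J,\mathcal L_{F^{-1}(d\psi)}J\bigr)$, ``manifestly antisymmetric,'' and that this is what must be matched against $\boldsymbol{\tau}([{\bf X}_\phi,{\bf X}_\psi])$. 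In fact Lemma~\ref{l:complex-orbit} gives $\dot J_t=J_t(\mathcal L_{Y_{\psi_t}}J_t)$, i.e.\ the transported path moves in the ${\bf J}$-rotated direction, so the moment map identity produces $\boldsymbol{\Omega}({\bf Y}_{\phi},{\bf J}{\bf Y}_{\psi})=\boldsymbol{g}({\bf Y}_{\phi},{\bf Y}_{\psi})$, which is \emph{symmetric} in $\phi,\psi$ and hence cancels completely upon antisymmetrization, contributing nothing to $d\boldsymbol{\tau}$.

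What actually survives and cancels the bracket term is the transport correction you mention only in passing: since $\Phi_t^*F_t^{[n]}=F^{[n]}$ is constant (so there is no volume correction at all), the only extra term comes from dragging $\phi$ along the isotopy generated by $Z_t=-JF_t^{-1}(d\psi)$, namely $-\int_M\Gscal\,d\phi(Z_t)\,F^{[n]}=\tfrac12\int_M\Gscal\,\langle d\phi,J(I+J)d\psi\rangle_g\,F^{[n]}$. Its symmetric part (the $-\langle d\phi,d\psi\rangle_g$ piece) drops out under antisymmetrization, and the antisymmetric remainder is $-\tfrac12\int_M\Gscal\,\langle d\phi,[I,J]d\psi\rangle_g\,F^{[n]}$, which by \eqref{a:poisson} and $\poiss=\tfrac12[I,J]g^{-1}$ is exactly the $\poiss$-Poisson bracket term needed to kill $\boldsymbol{\tau}({\bf X}_{\{\phi,\psi\}_\poiss})$. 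So your final paragraph correctly anticipates that the $\{\cdot,\cdot\}_F$ versus $\{\cdot,\cdot\}_\poiss$ bookkeeping is the crux, but the division of labor between the $\boldsymbol{\Omega}$-term and the transport term needs to be swapped for the argument to close.
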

\begin{proof} Using Lemma~\ref{l:commuting}, we need to check that
\[
    {\bf X}_{\psi}\left(\boldsymbol{\tau}({\bf X}_{\phi})\right) - {\bf X}_{\phi}\left(\boldsymbol{\tau}({\bf X}_{\psi})\right) - \boldsymbol{\tau}({\bf X}_{\{\phi, \psi\}_{\nonhalf\poiss}}) =0.
\]
Let us examine the first term at a point $F\in \GK_{\poiss,\alpha}^0$.
\[ {\bf X}_{\psi}\left(\boldsymbol{\tau}({\bf X}_{\phi})\right) = -\frac{d}{dt} \int_M \phi\,{\Gscal}_{(F_t, J)} F_t^{[n]}, \]
where $F_t$ is a Hamiltonian generalized K\"ahler deformation of $F$ generated by $\psi$.  We use a Moser isotopy $\Phi_t$ corresponding to the vector field $Z_{t} =-F^{-1}(I_td\psi)= -J F^{-1}(d\psi)$, as in the proof of Lemma~\ref{l:complex-orbit}, and pull back by $\Phi_t$ the integrand of the above expression to yield
\[
    {\bf X}_{\psi}\left(\boldsymbol{\tau}({\bf X}_{\phi})\right) = -\frac{d}{dt} \int_M \phi_t\,{\Gscal}_{(F, J_t)} F^{[n]},
\]
where $J_t$  and $\phi_t$ are the pull-backs of $J_0=J$ and $\phi_0 = \phi$ under $\Phi_t$.  We also put $\psi_t =\Phi_t^*(\psi)$. Using Theorem~\ref{t:GKscal-moment-map}] and Lemma~\ref{l:complex-orbit} we compute
\[
\begin{split}
-\frac{d}{dt} \int_M \phi_t\,{\Gscal}_{(F, J_t)} F^{[n]}  =&\ \boldsymbol{\Omega}({\bf Y}_{\phi_t}, {\bf J}{\bf Y}_{\psi_t}) - \int_M {\Gscal}_{(F, J_t)} d\phi_t (Z_t)   F^{[n]}  \\
=&\ \boldsymbol{\Omega}({\bf Y}_{\phi_t}, {\bf J}{\bf Y}_{\psi_t})  + \tfrac{1}{2} \int_M {\Gscal}_{(F, J_t)} \left\langle d\phi_t,   J_t (I_t+ J_t) d\psi_t\right\rangle_{g_t}   F^{[n]}   \\
=&\ \boldsymbol{\Omega}({\bf Y}_{\phi_t}, {\bf J}{\bf Y}_{\psi_t})  - \tfrac{1}{2} \int_M {\Gscal}_{(F, J_t)} \left\langle d\phi_t,  d\psi_t\right\rangle_{g_t}   F^{[n]} \\
&\ 
+\tfrac{1}{2} \int_M {\Gscal}_{(F, J_t)} \left\langle d\phi_t,   J_t I_t d\psi_t\right\rangle_{g_t}   F^{[n]}.
\end{split}
\]
Noting that the two terms at  the third line are symmetric in $\phi$ and $\psi$, we thus get from the above
\[
\begin{split}
{\bf X}_{\psi}\left(\boldsymbol{\tau}({\bf X}_{\phi})\right) - {\bf X}_{\phi}\left(\boldsymbol{\tau}({\bf X}_{\psi})\right) =& -\tfrac{1}{2} \int_M {\Gscal}_{(F_0, J_t)} \langle d\phi_t, [I_t, J_t] d\psi_t \rangle_{g_t} F_0^{[n]} \\
=  &  -\tfrac{1}{2} \int_M {\Gscal}_{(F_0, J_t)} d\phi_t (\poiss_t(d\psi_t)) F_0^{[n]} \\
= & \int_M {\Gscal}_{(F_0, J_t)} \{\phi_t, \psi_t\}_{\nonhalf\poiss} F_0^{[n]}.
 \end{split}\]
Pulling back via $\Phi_t^{-1}$, we get the claim. \end{proof}
 As  $\GK_{\poiss,\alpha}$ may have a  nontrivial topology, it is not immediately clear whether  
 \[
    \boldsymbol{\tau} = {\bf d} {\bf M}
\] for some functional ${\bf M} : \GK_{\poiss,\alpha} \to \R$.
 
\begin{defn}[Generalized K\"ahler Mabuchi functional]\label{d:Mabuchi} We define the \emph{Mabuchi functional} ${\bf M}_{F_0}$ to be the primitive of $\boldsymbol{\tau}$ (if it exists) satisfying
${\mathbf M}_{F_0}(F_0)=0$.
\end{defn}

\subsection{The generalized K\"ahler Calabi--Lichnerowicz--Matsushima Theorem}

Here we prove a structural result on the group of Poisson automorphisms of an extremal GK structure, extending the classical results of Calabi-Lichnerowicz-Matsushima.

\subsubsection{The reduced automorphism group}

Let ${\Aut}_0(J, \poiss_J)$ be the connected component of the (complex) automorphism group of $J$ and $\poiss_J$. We denote by
\[
\mathfrak{h}(J,\poiss_J):=\mathrm{Lie}({\Aut}_0(J, \poiss_J))
\]
its Lie algebra, which is  a $J$-invariant subalgebra of the algebra of real holomorphic vector fields on $(M, J)$.  We first observe several basic structural results concerning $\mathfrak{h}(F,J)$, extending the theory in the K\"ahler case.

Let us denote by
\[
\mathcal{H}^{1}_J:=\left\{ \xi \in \Wedge^1(M) \, \Big| \, d\xi = d^c_J\xi =0\right\},
\]
the space of $d$ and $d^c_J$-closed $1$-forms on $(M, J)$:  $\mathcal{H}^{1}_J$ is just the underlying real vector space of the space  of holomorphic $1$-forms on $(M, J)$, which are automatically closed by Corollary~\ref{c:holo_closed}.

Given $X\in \mathfrak{h}(J,\poiss_J)$,  we consider an infinitesimal variation $\mathcal L_X F$ of $F\in \GK_{\poiss,\alpha}$. As in the proof of Proposition~\ref{p:gk_class} we conclude that the $(-1,n-1)$-degree component of $F(X)\wedge e^{\i F}$ can be decomposed as 
\[
(F(X)\wedge e^{\i F})^{(-1,n-1)}=(F(X)\wedge e^{\i F})^{(-1,n-1)}_h+\delta_-\left(\big(\psi + \i\phi\big) e^{\i F}\right),
\]
$(F(X)\wedge e^{\i F})^{(-1,n-1)}_h=\eta^\C_X\wedge e^{\i F}$ for a closed 1-form $\eta^\C_X\in\Lambda^{1,0}_J(M)$, see \eqref{Hodge-isomorphism}. Equivalently,
\begin{equation}\label{e:X-decomposition}
    F(X)=(I+J)\eta_X+d\phi+Id\psi,
\end{equation}
where $\eta_X\in \mathcal{H}^1_J$.

We note that for any $\eta\in \mathcal{H}^1_J$ and any $X\in \mathfrak{h}(J,\poiss_J)$
\[
\mathcal L_{X}\eta=0.
\]
Indeed, as any element $\eta\in \mathcal{H}^1_J$ is uniquely determined by its cohomology class $[\eta]\in H^1(M, \R)$, and since an infinitesimal symmetry of $J$ preserves both the cohomology classes of closed forms and the space $\mathcal{H}^1_J$, the Lie derivative $\mathcal L_X\eta$ must vanish. This observation implies that $\eta(X)$ is a constant.
We thus get a real linear map
\[
\tau \, :  \, \mathfrak{h}(J,\poiss_J)\mapsto \left(\mathcal{H}^1_J\right)^*,\quad \tau(X)(\eta)=\eta(X).
\]
Since for $X,Y\in\mathfrak{h}(J,\poiss_J)$ we have $\eta(X)=\mathrm{const}$ and $\eta(Y)=\mathrm{const}$, and $\eta$ is closed, it  follows that $\eta([X,Y])=0$. Therefore $\tau$ is a Lie algebra homomorphism  from $\mathfrak{h}(J,\poiss_J)$  to the abelian Lie algebra $\left(\mathcal{H}^1_J\right)^*$, and its kernel
\begin{equation}\label{e:hred_ker}
\hred(J,\poiss_J):=\Ker(\tau) < \mathfrak{h}(J,\poiss_J)
\end{equation}
is an ideal. We have an alternative description of $\hred(J,\poiss_J)$:
\begin{lemma}
    The Lie subalgebra $\hred(J,\poiss_J)\subset \mathfrak{h}(J,\poiss_J)$ consists of all elements $X\in\mathfrak{h}(J,\poiss_J)$ which can be represented as
    \begin{equation}\label{e:X_reduced}
    X=F^{-1}(d\varphi)+F^{-1}(Id\psi),\quad \phi,\psi\in C^\infty(M,\R).
    \end{equation}
\end{lemma}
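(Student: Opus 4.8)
The plan is to reduce the statement to the vanishing of the ``harmonic part'' $\eta_X$ appearing in the decomposition \eqref{e:X-decomposition}, and then to detect this harmonic part through the linear functional $\tau(X)$. First I would observe that, by \eqref{e:X-decomposition}, any $X \in \mathfrak{h}(J,\poiss_J)$ satisfies $F(X) = (I+J)\eta_X + d\phi + Id\psi$ with $\eta_X \in \mathcal{H}^1_J$, hence $X = F^{-1}((I+J)\eta_X) + F^{-1}(d\phi) + F^{-1}(Id\psi)$. Since $\det(I+J) \neq 0$ by Remark~\ref{r:LA}, the vector field $X$ has the form \eqref{e:X_reduced} if and only if $\eta_X = 0$. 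Recalling $\hred(J,\poiss_J) = \Ker(\tau)$ from \eqref{e:hred_ker}, it therefore suffices to prove that $\tau(X) = 0$ if and only if $\eta_X = 0$.

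Next I would evaluate the pairing $\tau(X)(\eta') = \eta'(X)$ for an arbitrary $\eta' \in \mathcal{H}^1_J$. As noted before \eqref{e:hred_ker}, $\eta'(X)$ is a constant, so integrating against the symplectic volume gives $\eta'(X)\,\Vol(M,F) = \int_M \eta'\wedge \iota_X F^{[n]} = \int_M \eta'\wedge F(X)\wedge F^{[n-1]}$, using $\iota_X F^{[n]} = F(X)\wedge F^{[n-1]}$. Substituting the decomposition of $F(X)$, the exact term contributes $\int_M \eta'\wedge d\phi\wedge F^{[n-1]} = 0$ after integration by parts, since $\eta'$ and $F$ are closed. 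For the term $\int_M \eta'\wedge Id\psi\wedge F^{[n-1]}$, I would use the $I\leftrightarrow J$ analogue of \eqref{f:Fvolumeidentity}, namely $\alpha\wedge I\beta\wedge F^{[n-1]} = -J\alpha\wedge\beta\wedge F^{[n-1]}$ (which follows from the companion identity $J(I+J) = (I+J)I$ to \eqref{a:identity} exactly as \eqref{a:Fvolumeidentity} was derived), to rewrite it as $-\int_M J\eta'\wedge d\psi\wedge F^{[n-1]}$; this also vanishes after integration by parts, because $J\eta'$ is closed. Indeed $\eta'$ is the real part of a holomorphic $1$-form, which is closed by Corollary~\ref{c:holo_closed}, so both $\eta'$ and $J\eta'$ are closed.

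Only the harmonic term survives, giving $\eta'(X)\,\Vol(M,F) = \int_M \tr_F\bigl(\eta'\wedge(I+J)\eta_X\bigr)F^{[n]} = \tfrac{1}{2}\int_M \langle (I+J)\eta', (I+J)\eta_X\rangle_g\,F^{[n]}$ by \eqref{a:deep1}. This exhibits $\tau$, restricted to the harmonic part, as the symmetric bilinear form $B(\eta',\eta_X) := \tfrac{1}{2}\int_M \langle(I+J)\eta',(I+J)\eta_X\rangle_g F^{[n]}$ on $\mathcal{H}^1_J$, which is positive definite since $B(\eta,\eta) = \tfrac{1}{2}\int_M |(I+J)\eta|_g^2\,F^{[n]}$ vanishes only when $(I+J)\eta \equiv 0$, i.e.\ (by invertibility of $I+J$) only when $\eta = 0$. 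As $\mathcal{H}^1_J$ is finite-dimensional, $B$ is nondegenerate, so $\tau(X) = 0$, i.e.\ $B(\eta',\eta_X) = 0$ for all $\eta'\in\mathcal{H}^1_J$, holds precisely when $\eta_X = 0$, which completes the argument. The only genuinely delicate point is the vanishing of the $d\phi$ and $Id\psi$ contributions: it is essential that $\eta'$ lies in $\mathcal{H}^1_J$, so that both $\eta'$ and $J\eta'$ are closed, rather than being merely $d$-closed, and this is exactly where the $d^c_J$-closedness built into the definition of $\mathcal{H}^1_J$ is used.
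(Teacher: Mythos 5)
Your proof is correct and follows essentially the same route as the paper's: decompose $F(X)$ as in \eqref{e:X-decomposition}, kill the $d\phi$ and $Id\psi$ contributions by \eqref{a:Fvolumeidentity} together with the closedness of $\eta'$ and $J\eta'$, and detect $\eta_X$ through the positive-definite pairing $\tfrac12\int_M\langle(I+J)\eta',(I+J)\eta_X\rangle_g\,F^{[n]}$ (the paper simply specializes to $\eta'=\eta_X$ rather than invoking nondegeneracy of the full bilinear form). The only cosmetic caveat is that your opening sentence asserts ``$X$ has the form \eqref{e:X_reduced} iff $\eta_X=0$'' as a consequence of $\det(I+J)\neq 0$ alone, whereas the ``only if'' direction really rests on the subsequent pairing argument --- which you do supply, so the proof is complete.
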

\begin{proof}
    First we prove that $X$ of the form~\eqref{e:X_reduced} lies in $\Ker \tau$.  Indeed, for any 1-form $\eta\in \mathcal{H}^1_J$ we compute
    \[
    \begin{split}
    \int_M \eta(X)F^{[n]}= &\ \int_M \tr_F(\eta\wedge (d\varphi+Id\psi))F^{[n]}\\
    =&\ \int_M \eta\wedge d\varphi\wedge F^{[n-1]}-
    \int_M J\eta\wedge d\psi\wedge F^{[n-1]}=0,
    \end{split}
    \]
    where we used that $\tr_F(\eta\wedge Id\psi)=-\tr_F(J\eta\wedge d\psi)$ (see \eqref{a:Fvolumeidentity}), and that the forms $\eta, J\eta$ and $F$ are closed. Since $\eta(X)$ is constant on $M$, this implies that $\eta(X)=0$, so that $X$ belongs to the kernel of $\tau$.

    Now conversely assume that $X\in\mathfrak{h}(J,\poiss_J)$ is any element with the corresponding decomposition~\eqref{e:X-decomposition}. Using the computation in the first part of the proof, we know have
    \[
    \begin{split}
        \eta_X(X)=\frac{1}{V}\int_M \tr_F(\eta_X\wedge (I+J)\eta_X)F^{[n]}=\frac{1}{2V}\int_M \left|(I+J)\eta_X\right|_g^2  F^{[n]}\geq 0,
    \end{split}
    \]
    where $V=\int_M F^{[n]}$ and $g$ is the Riemannian metric determined by $(F, J)$. The latter expression is nonzero unless $\eta_X=0$, as claimed.
\end{proof}

Motivated by the previous lemma and the classical K\"ahler setup,  we make the following definition:

\begin{defn}\label{d:hred}
    The Lie algebra of~\emph{reduced automorphisms} of $(J,\poiss_J)$ is defined as
    \[
    \hred(J,\poiss_J):=\{X\in\mathfrak{h}(J,\poiss_J)\ |\ X=F^{-1}(d\varphi+Id\psi),\quad \phi,\psi\in C^\infty(M,\R)\}
    \]
    which can be equivalently rewritten as
    \[
    \hred(J,\poiss_J)=\{X\in\mathfrak{h}(J,\poiss_J)\ |\ X=F^{-1}(d\varphi)+JF^{-1}(d\psi),\quad \phi,\psi\in C^\infty(M,\R)\},
    \]
    since $I^*=-FJF^{-1}$.
\end{defn}

\begin{rmk}
   By the second equality in Definition~\ref{d:hred},   $\hred(J,\poiss_J)$ is invariant under $J$, so that it can be endowed with a complex Lie algebra structure.
    It is clear from the identification~\eqref{e:hred_ker} that $\hred(J,\poiss_J)$ is independent of $F\in\GK_{\poiss}$. Following the argument of LeBrun and Simanca \cite{LS}, one can alternatively characterize $\hred(J,\poiss_J)< \mathfrak{h}(J,\poiss_J)$ as the ideal  of holomorphic vector fields  preserving $\poiss_J$ and whose zero set is nonempty.
\end{rmk}

\begin{rmk}[Comparison with the approach of Goto] 
    To put our definition into context let us compare it to the approach in~\cite[\S 5.1]{GotoLichne}. Using the formalism of  generalized geometry,  Goto defines a complex Lie algebra $\mathfrak{g}_0\subset \bar{L_{\JJ}}\subset \End(TM\oplus T^*M)\otimes\C$, where $\bar{L_{\JJ}}$ is the $(-\i)$-eigenspace of $\JJ:=\JJ_{P,Q}$ (see~\eqref{e:GK-interpretation}). The Lie bracket on $\mathfrak g_0$ is given by the \emph{Dorfman bracket} (see~\cite[Ch.\,2]{GRFbook}) on the sections of $\End(TM\oplus T^*M)\otimes\C$. The underlying \emph{real} Lie algebra $\Re(\mathfrak{g}_0)$ is
    \[
        \Re(\mathfrak{g}_0)=\left\{e\in TM\oplus T^*M\ |\ e=F^{-1}(d\phi)+\JJ F^{-1}(d\psi),\quad L^{\mathrm{Dor}}_e\JJ=0, \,  \phi,\psi\in C^\infty(M)\right\},
    \]
    where $L^{\mathrm{Dor}}_e$ is the infinitesimal action of an element $e\in TM\oplus T^*M$ on $\JJ$ given by the Dorfman bracket. The Lie algebra structure on $\Re(\mathfrak g_0)$ is equivalently given by the $F$-Poisson bracket on the corresponding smooth complex-valued functions $\phi + i \psi$. One can show that $e:=F^{-1}(d\phi)+\JJ F^{-1}(d\psi)$ is in $\Re(\mathfrak{g}_0)$ if and only if $X:=F^{-1}(d\phi)+JF^{-1}(d\psi)\in\hred(J,\poiss_J)$. This gives rise to a natural isomorphism of real Lie algebras. 
    \[
    \hred(J,\poiss_J)\simeq \mathfrak \Re(\mathfrak{g}_0),
    \]
    respecting the underlying complex structures and the $F$-poisson bracket on $\{\phi + i \psi\}$. 
\end{rmk}

As in the K\"ahler case (see e.g., \cite{gauduchon-book}), the Lie algebra homomorphism $\tau \, : \,  \mathfrak{h}(J,\poiss_J)\mapsto \left(\mathcal{H}^1_J\right)^*$ can be integrated to a Lie group morphism
\[
\hat \tau : \Aut_0(J,\poiss_J)\to \left(\mathcal{H}^1_J\right)^*/\Gamma,
\]
where $\Gamma=H_1(M,\mathbb Z)$.

\begin{defn}[Reduced automorphisms of $(M,J,\poiss_J)$]\label{d:gred}
    The Lie  group $\Aut_{\mathrm{red}}(J,\poiss_J)$ of~\emph{reduced automorphisms} of $(J,\poiss_J)$ is  the connected component of identity of the kernel of $\hat \tau$ inside $\Aut_0(J,\poiss_J)$. Thus $\Aut_{\mathrm{red}}(J,\poiss_J) \subset \Aut_0(J,\poiss_J)$ is a closed subgroup with Lie algebra $\hred(J,\poiss_J)$.
\end{defn}

\subsubsection{The reduced automorphism group of an extremal generalized K\"ahler manifold}

We note that the infinitesimal action of $X = F^{-1} (d \phi + I d \psi) \in\hred(J,\poiss_J)$ on $F$ is $\mathcal L_XF=dd^c_I\psi$, which vanishes if and only if $\psi=\mathrm{const}$. Thus we get a Lie subalgebra
\begin{equation}\label{lie-algebra-k}
\mathfrak{k}(F,J)=\{X\in\hred(J,\poiss_J)\ |\ \mathcal L_XF=0\}= \{ X \in \mathfrak{h}(J,\poiss_J)\ |\ X=F^{-1}(d\varphi)\} < \hred(J,\poiss_J).
\end{equation}
We observe that $\mathfrak{k}(F,J)$ is the Lie algebra of the \emph{compact} group $K_0$~--- the connected component of the identity of
\[
K:=\mathrm{Isom}(g)\cap \mathrm{Aut}(I)\cap \mathrm{Aut}_{\mathrm{red}}(J,\poiss_J)=\mathrm{Ham}(M,F) \cap \mathrm{Aut}_{\mathrm{red}}(J,\poiss_J).
\]
Note that, since $\Aut(I)$ and ${\Aut}_{\mathrm{red}}(J,\poiss_J)$ are closed Lie subgroups of $\mathrm{Diff}(M)$ and $\mathrm{Isom}(g)$ is compact, it follows that $K$ is a compact Lie group.

\begin{thm}\label{t:Calabi-Lichne-Matsushima}
Suppose $F \in {\GK}_{\poiss, \alpha}$ is an extremal generalized K\"ahler structure with the holomorphic extremal vector field
\[
\chi = -F^{-1}(d\Gscal_{(F, J)}).
\]
Denote by ${\Aut}_{\rm red}(J, \poiss_J)^{\chi}\subset {\Aut}_{\rm red}(J, \poiss_J)$ the connected subgroup preserving $\chi$. Then the group 
\[
K_0:=({\Aut}_{\rm red}(J, \poiss_J)\cap \Ham(M,F))_0
\]
is a maximal compact connected subgroup of ${\Aut}_{\rm red}(J, \poiss_J)$ and ${\Aut}_{\rm red}(J, \poiss_J)^{\chi}=K_0^\C$.
In particular, $(F, J)$  must be invariant under a maximal real torus in ${\Aut}_{\rm red}(J, \poiss_J)$ containing the one-parameter subgroup $\exp(t\chi)$. If $F$ is cscGK,  then $\chi=0$ and  ${\Aut}_{\rm red}(J, \poiss_J)= K_0^{\C}$.
\end{thm}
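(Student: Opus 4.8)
The plan is to reduce the statement to a Cartan-type decomposition of the reduced automorphism algebra and to read it off from the momentum-map geometry of $\AGK_F$, exactly as in the formal GIT proof of the classical Matsushima theorem. First I would set up the linear dictionary. By Definition~\ref{d:hred} every $X\in\hred(J,\poiss_J)$ can be written $X=F^{-1}(d\phi)+JF^{-1}(d\psi)$ for $\phi,\psi\in C^\infty(M,\R)$. Using integrability of $J$ (the identity $\mathcal L_{JY}J=J\mathcal L_Y J$, valid since $N_J=0$), the holomorphy $\mathcal L_X J=0$ becomes
\[
\mathcal L_X J={\bf Y}_\phi+{\bf J}{\bf Y}_\psi=0,\qquad {\bf Y}_f:=\mathcal L_{F^{-1}(df)}J,
\]
where ${\bf J}$ is the formal complex structure on $\AGK_F$ (left multiplication by $J$). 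Thus $X$ corresponds to an element of the \emph{complexified stabilizer} of $J$, while the compact part $\mathfrak k(F,J)=\{F^{-1}(d\phi)\mid {\bf Y}_\phi=0\}$ from~\eqref{lie-algebra-k} is $\mathrm{Lie}(K_0)$. The target is then the decomposition $\hred^\chi=\mathfrak k\oplus J\mathfrak k=\mathfrak k^\C$, the non-obvious content being that the a priori merely Hamiltonian fields $F^{-1}(d\phi),F^{-1}(d\psi)$ are in fact holomorphic.

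The heart of the argument is a single positivity computation. I first record that $\chi\in\mathfrak k$ and is central there: extremality gives ${\bf Y}_{\Gscal}=\mathcal L_\chi J=0$, so $\Gscal$ is a Killing potential, and every $Y\in\mathfrak k$ preserves $(g,I,J,\poiss)$ hence $\Gscal$, so $[Y,\chi]=0$. Now take $X=F^{-1}(d\phi+Id\psi)\in\hred^\chi$. Since $X$ commutes with the torus generated by $\chi$, I would normalize the holomorphy potentials (unique modulo constants, by the Hodge decomposition used in Definition~\ref{d:hred}) to be $\exp(t\chi)$-invariant by averaging, which gives $\{\psi,\Gscal\}_F=d\psi(\chi)=0$. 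From ${\bf Y}_\phi=-{\bf J}{\bf Y}_\psi$ we get ${\bf Y}_\psi={\bf J}{\bf Y}_\phi$, and using $\boldsymbol\Omega(A,B)=\boldsymbol g({\bf J}A,B)$ (Definition~\ref{d:Omega}) together with Theorem~\ref{t:GKscal-moment-map} applied to $f=\phi$ and the variation $\dot J={\bf Y}_\psi$ (along which $\tfrac{d}{dt}\Gscal=\{\psi,\Gscal\}_F$):
\[
\|{\bf Y}_\psi\|_{\boldsymbol g}^2=\boldsymbol\Omega({\bf Y}_\phi,{\bf Y}_\psi)=\int_M \{\psi,\Gscal_{(F,J)}\}_F\,\phi\,F^{[n]}=0.
\]
Hence ${\bf Y}_\psi=0$ and therefore ${\bf Y}_\phi=-{\bf J}{\bf Y}_\psi=0$, so both $F^{-1}(d\phi)$ and $F^{-1}(d\psi)$ preserve $J$; being Hamiltonian they preserve $F$, hence $I,g,\poiss$, so they lie in $\mathfrak k$. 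This yields $X\in\mathfrak k\oplus J\mathfrak k$, and since $\mathfrak k\cap J\mathfrak k=0$ and $\mathfrak k$ is a subalgebra, $\hred^\chi=\mathfrak k^\C$ as complex Lie algebras.

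It remains to pass to groups and to establish maximality. Integrating the Cartan decomposition gives $\Aut_{\rm red}(J,\poiss_J)^\chi=K_0^\C$, and the reductive structure $\hred^\chi=\mathfrak k\oplus i\mathfrak k$ identifies $\mathfrak k$ as a maximal compact subalgebra of $\hred^\chi$. For maximality in the \emph{full} $\Aut_{\rm red}$ I would invoke the canonicity of the extremal field: by the Futaki--Mabuchi characterization (Theorem~\ref{t:GK-extremal}) $\chi$ is intrinsically attached to the class, so any compact connected subgroup $K'\supseteq K_0$ preserves $\chi$ and thus lies in $\Aut_{\rm red}^\chi=K_0^\C$, whose maximal compact connected subgroup is $K_0$; hence $K'=K_0$. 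Finally, the \emph{cscGK} specialization is immediate and cleaner: $\Gscal\equiv\overline\mu$ forces $\chi=-F^{-1}(d\overline\mu)=0$, so $\Aut_{\rm red}^\chi=\Aut_{\rm red}$ and $\boldsymbol\mu(J)=0$; the computation above applies verbatim with no $\chi$-normalization needed (the integrand $\{\psi,\overline\mu\}_F$ vanishes identically), giving $\hred=\mathfrak k^\C$ and $\Aut_{\rm red}(J,\poiss_J)=K_0^\C$, with $K_0$ directly maximal compact by the Cartan decomposition.

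The main obstacle I anticipate is not the positivity computation, which is robust, but the rigorous handling of the formal/Fréchet setting around it: justifying the $\exp(t\chi)$-invariant normalization of potentials (via uniqueness modulo constants in the generalized Hodge decomposition of Lemma~\ref{l:h1_isomorphism}), integrating the Lie-algebra splitting to the stated closed-subgroup identity $\Aut_{\rm red}^\chi=K_0^\C$, and — in the extremal (non-cscGK) case — the maximal-compactness claim in the possibly non-reductive group $\Aut_{\rm red}$, which genuinely relies on the canonicity of $\chi$ to reduce to the reductive centralizer. The cscGK conclusion, by contrast, is essentially obstruction-free once $\chi=0$ and $\boldsymbol\mu(J)=0$ are in hand.
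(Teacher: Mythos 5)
Your core computation is correct and in fact reconstructs by hand the key step that the paper outsources: the paper verifies the same setup (that the stabilizer of $J$ under the infinitesimal action of $\mathfrak{ham}(M,F)^{\C}$ is $\hred(J,\poiss_J)$, and that $\chi$ lies in the center of $\mathfrak k(F,J)$) and then simply invokes the abstract moment-map decomposition of \cite[Thm.~3.3]{LSW}. Your positivity argument
\[
\|{\bf Y}_\psi\|^2_{\boldsymbol g}=\boldsymbol\Omega({\bf Y}_\phi,{\bf Y}_\psi)=\int_M\{\psi,\Gscal_{(F,J)}\}_F\,\phi\,F^{[n]}=0
\]
is a valid direct proof of $\hred(J,\poiss_J)^\chi=\mathfrak k(F,J)^{\C}$, and the $\exp(t\chi)$-invariant normalization of the potentials is legitimate: they are unique modulo constants, and the cocycle $\exp(t\chi)^*\phi-\phi$ is an additive constant linear in $t$, hence zero on a compact $M$. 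This buys self-containedness at the price of reproving a special case of the cited abstract result.

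The gap is in the maximality of $K_0$. Beyond $\hred^\chi=\mathfrak k^{\C}$, what \cite[Thm.~3.3]{LSW} delivers is the semidirect splitting $\hred(J,\poiss_J)=\hred(J,\poiss_J)^{\chi}+\mathfrak s(J,\poiss_J)$ with $\mathfrak s$ a \emph{solvable ideal}; it is this extra structure that makes $\hred^\chi$ a maximal reductive subalgebra and hence $K_0$ a maximal compact connected subgroup. Your substitute --- ``any compact connected $K'\supseteq K_0$ preserves $\chi$ by canonicity of the extremal field'' --- is not justified by Theorem~\ref{t:GK-extremal}: that theorem fixes a torus $\T$ and shows $\chi$ is independent of the choice of $\T$-invariant $F\in\GK^{\T}_{\poiss,\alpha}$; it does not show $\chi$ is fixed by the adjoint action of an arbitrary compact connected subgroup of $\Aut_{\rm red}(J,\poiss_J)$ containing $K_0$. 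Such a $K'$ need not preserve $F$ (hence not $\Gscal_{(F,J)}$), and $\chi$ being central in $\mathfrak k$ does not make it central in $\mathrm{Lie}(K')$. Without either the solvable complement or an $\mathrm{ad}(\chi)$-eigenspace decomposition of $\hred$, the identity $\hred^\chi=\mathfrak k^{\C}$ alone does not exclude a strictly larger compact subgroup. The cscGK specialization is unaffected: there $\chi=0$, your computation gives $\hred=\mathfrak k^{\C}$ outright, and $K_0$ is maximal compact in $K_0^{\C}$ by the standard Cartan decomposition.
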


\begin{proof}
    We follow the recent treatment \cite{LSW} which deduces the above properties by formal arguments  from the moment map picture.  Another relevant reference for this approach is \cite{Wang}. We want to apply \cite[Thm.3.3]{LSW}. In our setting, we consider the (holomorphic) action of ${\rm Ham}(M, F)$ on $({\AGK}_F, {\bf J})$ endowed with the formal K\"ahler structure ${\bf \Omega}$,  and with momentum map $\boldsymbol{\mu}\colon\AGK_F\to (\CCF)^*$:
    \[
        \langle \boldsymbol{\mu}(J), \phi \rangle:= - \int_M \Gscal_{(F, J)} \phi F^{[n]}, \qquad J \in {\AGK}_F, \, \phi \in \CCF.
    \]
    Recall that we identify the Lie algebra $\mathfrak{ham}(M, F)$ of Hamiltonian vector fields with the space of normalized smooth functions $\CCF$ equipped with the $F$-Poisson bracket $\{\cdot, \cdot\}_F$, through the standard Lie algebra isomorphism
    \[ 
        \CCF \ni \phi \mapsto  Y_{\phi}:= -F^{-1}(d\phi)
        \in  \mathfrak{ham}(M, F).
    \]
    We further consider the ${\rm ad}$-invariant inner product $\llangle \cdot, \cdot \rrangle$ on $\mathfrak{ham}(M, F)$, defined by
    \[
        \llangle \phi_1, \phi_2 \rrangle := \int_M \phi_1 \phi_2 F^{[n]}, \qquad \phi_1, \phi_2 \in \CCF \simeq \mathfrak{ham}(M, F).
    \]
    The real Lie algebra $\left(\CCF, \{\cdot, \cdot \}_F\right) \simeq \mathfrak{ham}(M, J)$ can be complexified, and is then denoted by
    \[
        \mathfrak{ham}(M, F)^{\C}=
        \{-F^{-1}(d\phi)-JF^{-1}(d\psi) \}\simeq \{ \phi + i \psi\},\quad \phi, \psi \in \CCF.
    \]
     There is an ``infinitesimal action'' of  $\mathfrak{ham}(M, F)^{\C}$  on ${\AGK}_F$, such that $\phi + i \psi$ is mapped to the vector field $J \to {\mathcal L}_{F^{-1}(d\phi)} J + {\bf J} {\mathcal L}_{F^{-1}(d\psi)} J \in {\bf T}_J \left({\AGK}_F\right)$.  Suppose that $J \in {\AGK}_F$ is a generalized K\"ahler structure, i.e., $J$ and $I$ are both integrable. We claim then that the stabilizer of $J$ in  $\mathfrak{ham}(M, F)^{\C}$ is $\hred(J,\poiss_J)$. Indeed,  in this case ${\mathcal L}_{F^{-1}(d\phi)} J + {\bf J} {\mathcal L}_{F^{-1}(d\psi)} J= - \mathcal{L}_{X} J$ with  $X=-F^{-1}(d\phi)-JF^{-1}(d\psi)$. If $X=-F^{-1}(d\phi)-JF^{-1}(d\psi)$ preserves $J$, then $\mathcal{L}_X F=-dd^c_I\psi$ is of $I$-type $(1,1)$, so that by~\eqref{algebraic} and \eqref{tangent-identification} $\mathcal{L}_X\poiss=0$. Therefore $X\in\mathfrak{h}(J,\poiss_J)$ and being of the form as in Definition~\ref{d:hred} it lies in $\hred(J,\poiss_J)$.

    We now consider the functional $||\boldsymbol{\mu}(J)||^2$ on the formal K\"ahler Fr\`echet manifold $({\AGK}_F, \bf{\Omega}, \bf{J})$ as in Corollary~\ref{c:symplectic-Gscal}, which is the square-norm of the momentum map $\boldsymbol{\mu}$ with respect to $\llangle \cdot, \cdot \rrangle$. Let $J \in {\AGK}_F$ be a generalized K\"ahler structure which is a critical point of $||\boldsymbol{\mu}(J)||^2$, i.e., an extremal generalized K\"ahler structure by Corollary~\ref{c:symplectic-Gscal}. Then the extremal vector field
    \[
        \chi := -F^{-1}(d\Gscal_{(F, J)})    \]
    belongs to the center of $\mathfrak{k}(F, J)$. By the generalized Calabi--Lichnerowicz--Matsushima decomposition (see \cite[Thm.3.3]{LSW}) applied to the convex functional $||\boldsymbol{\mu}||^2$ on
    \[
        \left({\AGK}_F, {\bf \Omega}, {\bf J}, \mathfrak{ham}(M, F), \llangle \cdot, \cdot \rrangle\right),
    \]
    there is a semidirect splitting $\hred(J,\poiss_J) = \hred(J,\poiss_J)^{\chi} + \mathfrak{s}(J,\poiss_J)$ where
    the centralizer $\hred(J,\poiss_J)^{\chi}$ of $\chi$ is a reductive algebra satisfying
    \begin{equation*}
        \hred(J,\poiss_J)^{\chi} = \mathfrak{k}(F, J) \otimes \C, 
    \end{equation*}
    whereas $\mathfrak{s}(J,\poiss_J)$ is a solvable ideal. This shows that $\hred(J,\poiss_J)^{\chi}$ is a maximal reductive Lie subalgebra of  $\hred(J, \poiss_J)$. The claims follow from this.
\end{proof}

\begin{rmk}\label{r:b_1=0} In the case when $b_1(M)=0$, we have that  $\mathcal{H}^1_J =\{0\}$ and then $\Aut_{\rm red}(J, \poiss_J) = \Aut_0(J, \poiss_J)$ is just the connected component of the identity of the group of Poisson automorphisms of $(J, \poiss_J)$. Furthermore, in this case $\mathfrak{k}(F, J)$ reduces to the Lie algebra of Killing fields of $(F, J)$,  so that $K_0$  is just the connected component of the isometry group of $(F, J)$.
\end{rmk}

\begin{rmk}\label{r:Hitchin} 
(a) Theorem~\ref{t:Calabi-Lichne-Matsushima} gives obstructions to the existence of extremal,  non-cscGK generalized K\"ahler structures. We illustrate this on the following example.  Let us consider the second Hirzebruch complex surface \[(M, J)={\bf F}_2:= {\bf P}({\mathcal O} \oplus {\mathcal O}(-2))\to {\bf CP}^1,\] endowed with its Liouville Poisson structure $\poiss^{L}_{J}$,  i.e., $\poiss^{L}_{J}$ is the inverse of the Liouville symplectic form on ${\mathcal O}(-2)=T^*{\bf CP}^1$.  Let $\omega$ be a K\"ahler  metric on $(M, J)$, defining a deRham class  $\alpha = [\omega]$. By \cite[Theorem~8.15]{Goto-AM}  or  \cite[Corollary~7.3]{Gualtieri-Hamiltonian}, one can deform $\omega$ in order to obtain on
$(M, J)$ a $t\poiss^{L}_J$-compatible symplectic type GK structure $F_t$,  for  $|t|<\varepsilon$ with $F_0=\omega$.  As $H^{0,1}(M, J)=0$, the arguments in the proof of \cite[Theorem~7.1]{Gualtieri-Hamiltonian} actually show that $F_t \in \alpha$. 
Notice that the automorphism group  of the base ${\bf CP}^1$ preserves $\poiss_J^L$ and, in fact,
$\Aut_0(J, \poiss_J^{L}) = {\rm PGL}(2, \C)$. On the other hand,  the $\C^*$-action on the fibers of ${\bf F}_2$ preserves $\alpha$ and scales $\poiss_J^{L}$, so by acting with an element of $\C^*$,  the above results yield a symplectic type generalized K\"ahler structure $F \in {\GK}_{\poiss, \alpha}(M, J)$. As ${\rm PGL}(2, \C)$ is a semi-simple group (and thus has a trivial center),  by Theorem~\ref{t:Calabi-Lichne-Matsushima} any extremal generalized structure in ${\GK}_{\poiss,\alpha}^0(M, J)$ must be cscGK. Theorem~\ref{t:Calabi-Lichne-Matsushima} gives no further obstructions for the existence of a cscGK metric in ${\GK}_{\poiss,\alpha}^0(M, J)$.  It is thus interesting to know whether or not ${\GK}_{\poiss,\alpha}^0(M, J)$ does admit a ${\rm PGL}(2, \C)$-invariant cscGK structure.  At the same time, it is well-known that ${\GK}_{0,\alpha}(M, J)=\KK_{\alpha}(M, J)$ does not admit a  K\"ahler metric of constant scalar curvature. In fact, it admits an extremal K\"ahler metric with  non-constant scalar curvature~\cite{calabi}.  

\smallskip

(b) Still considering the same example, an interesting phenomenon related to the connectedness of $\GK_{\poiss, \alpha}$ and the size of $\poiss$ appears. Let us denote by $I$ the second complex structure defined by $(F, J)$. It is shown  \cite{hi-07}  (see the Remark on page 6) that
\[
    (M, I) \simeq {\bf CP}^1 \times {\bf CP}^1.
\]
The corresponding holomorphic Poisson tensor $\poiss_I$ on $(M, I)$ has zeros of order $2$ along the diagonal $\Delta \subset {\bf CP}^1 \times {\bf CP}^1$. We find therefore $\Aut_0(I, \poiss_I)= {\rm PGL}(2, \C)$, which is also the stabilizer of $\Delta$ inside $\Aut_0(M,I)={\rm PGL}(2, \C) \times {\rm PGL}(2, \C)$. Notice that, unlike $(M, J, \poiss_{J}^{L})$, we cannot rescale $\poiss_I$ with  an element of  $\Aut_0(M, I)$. As $(M, I)=H^{0,2}(M,I)=0$,   $H^2(M, \R)= H^{1,1}(M, \R)$, $\alpha$ is a $(1,1)$-class on $(M, I)$. As $F\in \alpha$ tames $I$,  $\alpha$ satisfies the Nakai--Moishezon positivity condition~\cite{DP} and thus is a K\"ahler class, i.e., there exists a K\"ahler structure $\omega' \in \alpha$ on $(M, I)$. Using \cite[Theorem~8.15]{Goto-AM}  and  \cite[Corollary~7.3]{Gualtieri-Hamiltonian} on $(M, I, \omega')$, one gets symplectic type generalized K\"ahler structures $F'_t \in {\GK}_{t\poiss, \alpha}(M, I)$, defined for $|t|< \varepsilon'$.  Notice that the second complex structures $J_t'$ are now biholomorphic to $I$: this follows for instance by the fact that $H^1(M, T_I^{1,0}M)=0$, i.e., ${\bf CP}^1\times {\bf CP}^1$ is rigid. If we were able to obtain such deformations up to $t=1$, we will have  for $F':=F'_1$ that $F, F' \in {\GK}_{\poiss, \alpha}(M, I)$ but $F'$ and $F$ cannot be in the same connected component  ${\GK}^0_{\poiss, \alpha}(M, I)$ as $J'$ and $J$ are not biholomorphic.  \end{rmk}

\subsection{The generalized K\"ahler extremal vector field}\label{ss:extremal}

The momentum map interpretation of the generalized K\"ahler scalar curvature leads to an alternative intrinsic description of the extremal vector field.

\begin{prop} \label{p:futkai_symplectic} Let $G\subset {\rm Ham}(M, F)$ be a compact subgroup with Lie algebra $\mathfrak{g}$. Let $\AGK^G_F$ be the space of $G$-invariant structures in $\AGK_F$. For an element $a \in \mathfrak{g}$ let $\phi:=\phi_{a,F}\in \CCF$ be the corresponding Hamiltonian. Then the integral
\[
\int_M \phi_{a, F}\,\Gscal_{(F,J)}dV_F
\]
is independent of $J\in \AGK^G_F$. In other words, the $L^2(M,dV_F)$-projection 
$\Pi_F(\Gscal_{(F,J)})$ of $\Gscal_{(F,J)}$ on the space of Hamiltonian potentials $\{\phi_{a,F}\ |\ a\in\mathfrak{g}\}\subset \CCF$ is constant on $\AGK^G_F$. In particular the vector field $\chi\in\mathfrak{g}$
\[
\chi=-F^{-1}(d\Pi_F(\Gscal_{(F,J)}))
\]
is independent of $J\in\AGK_G^G$.
\end{prop}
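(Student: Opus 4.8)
The plan is to use the momentum map identity of Theorem~\ref{t:GKscal-moment-map} to show that the quantity $\int_M \phi_{a,F}\,\Gscal_{(F,J)}\,F^{[n]}$ has vanishing derivative along every smooth path inside $\AGK_F^G$, and then to invoke path-connectedness of $\AGK_F^G$ to conclude it is constant. Fix $a\in\mathfrak{g}$; the corresponding normalized Hamiltonian $\phi:=\phi_{a,F}\in\CCF$ depends only on $a$ and on the fixed symplectic form $F$, not on $J$. Let $J_t$ be any smooth path in $\AGK_F^G$ and write $\dot J:=\tfrac{d}{dt}J_t$. Since $\phi$ and $F^{[n]}$ are independent of $t$, differentiating under the integral sign and applying Theorem~\ref{t:GKscal-moment-map} with $f=\phi$ gives
\[
\frac{d}{dt}\int_M \phi\,\Gscal_{(F,J_t)}\,F^{[n]} = \int_M \Big(\frac{d}{dt}\Gscal_{(F,J_t)}\Big)\phi\,F^{[n]} = \boldsymbol{\Omega}\big(\mathcal{L}_{F^{-1}(d\phi)}J_t,\,\dot J\big).
\]

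The crucial observation is that the first slot vanishes identically along the path. Indeed, $-F^{-1}(d\phi)$ is precisely the Hamiltonian vector field generated by $a\in\mathfrak{g}=\mathrm{Lie}(G)$ under the isomorphism $\CCF\simeq\mathfrak{ham}(M,F)$, and since each $J_t$ is $G$-invariant this vector field preserves it: $\mathcal{L}_{F^{-1}(d\phi)}J_t=0$. Hence $\boldsymbol{\Omega}\big(\mathcal{L}_{F^{-1}(d\phi)}J_t,\dot J\big)=0$, so the integral is constant along $J_t$. Note that this cancellation uses only the $G$-invariance of the base point $J_t$, and no invariance of the variation $\dot J$.

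It remains to connect two arbitrary members of $\AGK_F^G$ by a path inside $\AGK_F^G$. For this I would use the Cayley transform (cf.\ the remark following Lemma~\ref{l:complex}) based at a $G$-invariant element $J_0\in\AGK_F^G$, which exists by averaging an $F$-compatible metric over the compact group $G$ and polarizing (recall $G\subset\Ham(M,F)$ preserves $F$). A direct computation shows that the action $J\mapsto\Phi_* J\Phi_*^{-1}$ becomes the \emph{linear} action $A\mapsto\Phi_* A\Phi_*^{-1}$ in the Cayley chart, so that $\AGK_F^G$ is identified with the intersection of the contractible (star-shaped about the origin) domain $\mathbf{U}$ with the fixed linear subspace; this intersection is star-shaped about the origin, hence path-connected. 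I expect this connectedness statement to be the only genuinely non-formal point, since the moment map identity supplies merely the local (constant-derivative) statement and we need a global conclusion.

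Finally, the projection and vector-field statements follow formally. The subspace $W:=\{\phi_{a,F}\ |\ a\in\mathfrak{g}\}\subset\CCF$ is finite-dimensional and independent of $J$, and the $L^2(M,dV_F)$ inner product restricted to $W$ is likewise $J$-independent. Since the orthogonal projection $\Pi_F(\Gscal_{(F,J)})$ onto $W$ is determined by the pairings $\int_M \phi_{a,F}\,\Gscal_{(F,J)}\,dV_F$, $a\in\mathfrak{g}$, which are $J$-independent by the above, the projection $\Pi_F(\Gscal_{(F,J)})$ is itself independent of $J\in\AGK_F^G$. Consequently so is $\chi=-F^{-1}(d\Pi_F(\Gscal_{(F,J)}))$, which proves the last claim.
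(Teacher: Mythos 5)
Your proof is correct and follows essentially the same route as the paper: differentiate $\int_M \phi\,\Gscal_{(F,J_t)}\,F^{[n]}$ along a path in $\AGK_F^G$, apply the momentum map identity of Theorem~\ref{t:GKscal-moment-map}, and observe that $\mathcal L_{F^{-1}(d\phi)}J_t=0$ because $J_t$ is $G$-invariant. The only cosmetic difference is in the connectedness step: the paper writes down the explicit Cayley segment $J_t=(1+tS)J_0(1+tS)^{-1}$ with $S=(J_1+J_0)^{-1}(J_0-J_1)$ joining two given invariant structures, whereas you argue that the invariant part of the Cayley chart is star-shaped; these amount to the same thing.
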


\begin{proof} The claim follows from Theorem~\ref{t:GKscal-moment-map} along the lines of proof of Corollary~\ref{c:symplectic-Gscal}. Let $J_0,J_1\in \AGK_F^G$ be two invariant almost complex structure. Using the Cayley transform we can connect them via a path $J_t\in \AGK_F^G$:
\[
J_t=(1+tS)J_0(1+tS)^{-1},\quad S=(J_1+J_0)^{-1}(J_0-J_1).
\]
Consider a Hamiltonian vector field
$Y_{\phi} = -F^{-1}(d\phi)\in \mathfrak{g}$ on $(M, F)$. Let ${\bf Y}_{\phi} (J) = -{\mathcal L}_{Y_{\phi}} J$ be the induced fundamental vector field on ${\AGK}_{F}$. As ${\bf Y}_{\phi}$ preserves any $G$-invariant  element of ${\AGK}_F$,  we have ${\bf Y}_{\phi}(J)=0$ for $J\in\AGK_F^G$ and thus  $\boldsymbol{\Omega}_J({\bf Y}_{\phi}, \cdot) =0$. We apply this to a path $J_t\in \AGK_F^G$: by the definition of  momentum  map  we  have
\[
    \frac{d}{dt}\int_M \phi\, {\Gscal}_{(F, J_t)} dV_F = \boldsymbol{\Omega}_J({\bf Y}_{\phi}, \dot J) =0.
\]
This shows that the $L^2(M,dV_F)$ projection  ${\Gscal}_{(F, J_t)}$ onto the space of normalized Hamiltonian potentials of $\mathfrak{g}$ is constant.
\end{proof}

Now we will change the point of view by fixing the \textit{holomorphic} data $(J,\poiss_J)$ and varying $F\in\GK_{\pi,\alpha}$. Suppose $(F,J)$ is an extremal generalized K\"ahler structure. Then by Theorem~\ref{t:Calabi-Lichne-Matsushima} the extremal vector field $\chi=-F^{-1}(d\Gscal_{(F,J)})$ is contained in the Lie algebra $\tor$ of a maximal torus $\T\subset  \mathrm{Aut}_{\mathrm{red}}(J,\poiss_J)\cap\mathrm{Ham}(M,F)$. It turns out that by the means of Proposition~\ref{p:futkai_symplectic}, vector field $\chi$ can be defined intrinsically from any $\T$-invariant generalized K\"ahler structure $F_0$. In particular, $\chi$ being nonzero provides an obstruction for the existence of $\T$-invariant cscGK structures in $\GK_{\poiss,\alpha}$.

\begin{thm}[Extremal vector field]\label{t:GK-extremal}
    Given a torus $\T\subset \mathrm{Aut}_{\mathrm{red}}(J,\poiss_J)$ let $\GK_{\poiss,\alpha}^\T$ be the space of $\T$-invariant generalized K\"ahler structures. Then, for any  $F_0\in \GK_{\poiss,\alpha}^\T$, $\T\subset \mathrm{Ham}(M,F_0)$ and we denote by 
    $\Pi_{F_0}\colon C^\infty(M,\R)\mapsto C^\infty(M,\R)$ the $L^2(M,dV_F)$ projection onto the space of $F_0$-Hamiltonians of $\T$.
    Moreover, the vector field
    \begin{equation}\label{e:extremal_vf_i2}
        \chi=-F^{-1}_0(d\Pi_{F_0}(\Gscal_{(F_0,J)}))
    \end{equation}
    is independent of the choice of a $\T$-invariant symplectic form $F_0\in \GK_{\poiss,\alpha}^\T$.  If, furthermore, $\T$ is a maximal torus in $\mathrm{Aut}_{\mathrm{red}}(J,\poiss_J)$,  then $F\in\GK_{\poiss,\alpha}^\T$ is an extremal structure if and only if
    \[
        \Gscal_{(F,J)}-\Pi_F(\Gscal_{(F,J)})=0.
    \]
    In particular,  the underlying extremal vector field $-F^{-1}(d\Gscal_{(F,J)})$ is necessarily given by~\eqref{e:extremal_vf_i2}.
\end{thm}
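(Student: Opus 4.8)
The plan is to reduce the independence statement to the invariance result of Proposition~\ref{p:futkai_symplectic} by transporting the varying data to a fixed symplectic form via a Moser isotopy, and to deduce the extremality criterion from the Calabi--Lichnerowicz--Matsushima decomposition of Theorem~\ref{t:Calabi-Lichne-Matsushima}.

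First I would record that $\T\subset\Ham(M,F_0)$. For $X\in\tor$ we have $X\in\hred(J,\poiss_J)$, and since $F_0$ is $\T$-invariant, $\mathcal L_X F_0=0$. By the identity $\mathcal L_X F_0=dd^c_I\psi$ preceding \eqref{lie-algebra-k} and the maximum principle, this forces the $\psi$-component of $X$ to be constant, so $X=F_0^{-1}(d\varphi)\in\mathfrak k(F_0,J)$ is Hamiltonian; as $\T$ is connected, $\T\subset\Ham(M,F_0)$ and $\Pi_{F_0}$ is well defined. For the independence of $\chi$, connect two structures $F_0,F_1\in\GK_{\poiss,\alpha}^\T$ by a smooth $\T$-invariant path $F_t$ (possible within a path-component; it suffices to treat the infinitesimal version $\tfrac{d}{dt}\chi_t=0$). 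By Proposition~\ref{p:gk_class} one has $\dot F_t=dd^c_{I_t}\phi_t$, and averaging $\phi_t$ over $\T$ — which leaves $dd^c_{I_t}\phi_t=\dot F_t$ unchanged because $\dot F_t$ and $I_t$ are already $\T$-invariant — lets me take $\phi_t$ to be $\T$-invariant.

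Then the Moser vector field $Z_t=-F_t^{-1}(I_t d\phi_t)$ of Lemma~\ref{l:complex-orbit} is $\T$-invariant, so its isotopy $\Phi_t$ commutes with $\T$; consequently $J_t:=(\Phi_t)_*^{-1}J(\Phi_t)_*\in\AGK_{F_0}^\T$ and $\Phi_t^*F_t=F_0$. Using the diffeomorphism-equivariance of the generalized scalar curvature, $\Gscal_{(F_0,J_t)}=\Phi_t^*\Gscal_{(F_t,J)}$, and the fact that $\Phi_t$ is a symplectomorphism $(M,F_0)\to(M,F_t)$ commuting with $\T$ — hence an $L^2$-isometry carrying $\T$-Hamiltonians to $\T$-Hamiltonians — I obtain $\Pi_{F_0}\circ\Phi_t^*=\Phi_t^*\circ\Pi_{F_t}$, so that
\[
\Pi_{F_0}\big(\Gscal_{(F_0,J_t)}\big)=\Phi_t^*\,\Pi_{F_t}\big(\Gscal_{(F_t,J)}\big).
\]
Proposition~\ref{p:futkai_symplectic} applied with $G=\T$ shows the left-hand side is constant in $t$, equal to $\Pi_{F_0}(\Gscal_{(F_0,J)})$. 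Pulling this back and using $F_0^{-1}(\Phi_t^*\beta)=(\Phi_t)_*^{-1}(F_t^{-1}(\beta))$ yields $\chi_0=(\Phi_t)_*^{-1}\chi_t$; since $\chi_t=-F_t^{-1}(d\Pi_{F_t}(\Gscal_{(F_t,J)}))$ is a $\T$-Hamiltonian vector field, hence lies in $\tor$ and is fixed by $(\Phi_t)_*$, we conclude $\chi_0=\chi_t=\chi_1$, proving the asserted independence.

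For the maximal-torus case, suppose $\T$ is maximal in $\mathrm{Aut}_{\mathrm{red}}(J,\poiss_J)$ and $F\in\GK_{\poiss,\alpha}^\T$. If $\Gscal_{(F,J)}-\Pi_F(\Gscal_{(F,J)})=0$ then $\chi=-F^{-1}(d\Gscal_{(F,J)})=-F^{-1}(d\Pi_F(\Gscal_{(F,J)}))\in\tor\subset\hred(J,\poiss_J)$, so $\mathcal L_\chi J=0$ and $F$ is extremal. Conversely, if $F$ is extremal then $\chi$ preserves $J$ and, being the Hamiltonian field of the real function $\Gscal_{(F,J)}$, also preserves $F$, hence $g,I,\poiss$; thus $\chi\in\mathfrak k(F,J)$, the Lie algebra of the compact group $K_0$. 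By Theorem~\ref{t:Calabi-Lichne-Matsushima} $\chi$ lies in the center of $\mathfrak k(F,J)$ and so commutes with $\tor$; the closure of $\{\exp(t\chi)\}\cdot\T$ is then a torus in $\mathrm{Aut}_{\mathrm{red}}(J,\poiss_J)$ containing $\T$, and maximality forces $\chi\in\tor$. Hence $\Gscal_{(F,J)}$ is a $\T$-Hamiltonian, $\Gscal_{(F,J)}-\Pi_F(\Gscal_{(F,J)})=0$, and since the differentials of $\Gscal_{(F,J)}$ and $\Pi_F(\Gscal_{(F,J)})$ agree, the extremal vector field equals \eqref{e:extremal_vf_i2}. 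The main obstacle is the $\T$-equivariance bookkeeping of the third paragraph: one must arrange the Moser isotopy to commute with $\T$ so that the transported path $J_t$ lands in $\AGK_{F_0}^\T$ (where Proposition~\ref{p:futkai_symplectic} applies), and then carefully track how $\Gscal$, the projection $\Pi$, and the assignment $\beta\mapsto-F^{-1}(d\beta)$ transform under $\Phi_t$; the analytic heart, that the projected scalar curvature is locally constant, is already furnished by the moment-map computation in Proposition~\ref{p:futkai_symplectic}.
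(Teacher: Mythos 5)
Your proof is correct and follows essentially the same strategy as the paper: establish $\T\subset\Ham(M,F_0)$ from the decomposition of Definition~\ref{d:hred}, transport the data to a fixed symplectic form by a $\T$-equivariant Moser isotopy so that Proposition~\ref{p:futkai_symplectic} applies, and settle the maximal-torus case via Theorem~\ref{t:Calabi-Lichne-Matsushima}. The one substantive difference is in how you build the Moser isotopy: routing it through a smooth path in $\GK_{\poiss,\alpha}^\T$ (via Proposition~\ref{p:gk_class} and Lemma~\ref{l:complex-orbit}) only yields independence of $\chi$ on path components, whereas the paper applies the equivariant Moser lemma directly to the linear interpolation $(1-t)F_0+tF_1$ — nondegenerate because both forms tame $J$ — which gives a $\T$-equivariant diffeomorphism between \emph{any} two elements of $\GK_{\poiss,\alpha}^\T$ and hence the full statement; your argument would be both shorter and stronger if you did the same.
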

\begin{proof}
    First we prove that $\T\subset \mathrm{Aut}_{\mathrm{red}}(J,\poiss_J)$ acts in a Hamiltonian fashion. Indeed, any vector $X\in\hred(J,\poiss_J)$ in its Lie algebra has a decomposition $X=F_0^{-1}(d\phi)+F_0^{-1}(Id\psi)$, see Definition~\ref{d:hred}. Since $F_0$ is $\T$-invariant, we have $\mathcal L_X F_0=0$, which implies $dd^c_I\psi=0$, so $\psi=\mathrm{const}$ and  $X$ is a Hamiltonian vector field.

    For the claim about the constancy of $\chi$ on $\GK_{\poiss,\alpha}^\T$, take another $F_1\in \GK_{\poiss,\alpha}^\T$. Then applying an equivariant Moser isotopy we find a $\T$-equivariant diffeomorphism $\Phi\colon M\to M$ such that $\Phi^*(F_1)=F_0$. Then we have $J, \Phi^*(J)\in \AGK_{F_0}^\T$, and we can apply Proposition~\ref{p:futkai_symplectic} to conclude that
    \[
        \chi=
            -F^{-1}_0(d\Pi_{F_0}(\Gscal_{(F_0,J)}))=
            -F^{-1}_0(d\Pi_{F_0}(\Gscal_{(F_0,\Phi^*(J))}))
    \]
    Since $\Phi$ is $\T$-equivariant, we observe that $(\Phi^*)\chi=\chi$ so that applying $(\Phi^*)^{-1}$ to the last term we deduce
    \[
        -F^{-1}_0(d\Pi_{F_0}(\Gscal_{(F_0,J)}))=
        -F^{-1}_1(d\Pi_{F_1}(\Gscal_{(F_1,J)}))
    \]
    as claimed.

    Finally, assume that $\T$ is a maximal torus in $\mathrm{Aut}_{\mathrm{red}}(J,\poiss_J)$ and $F\in\GK_{\poiss,\alpha}^\T$ is an extremal generalized K\"ahler structure with the extremal vector field
    \[
    \chi_{\mathrm{ext}}=-F(d\Gscal_{(F,J)}).
    \]
    Since $(F,J)$ and hence $\chi_{\mathrm{ext}}$ is invariant under  $\T$,  by the maximality of $\T$ and Theorem~\ref{t:Calabi-Lichne-Matsushima} we have $\chi_{\mathrm{ext}}\in\tor$, so that  $\Pi_{F}\Gscal_{(F,J)}=\Gscal_{(F,J)}$ and $\chi_{\mathrm{ext}}=\chi$. Conversely, if $\Pi_{F}\Gscal_{(F,J)}=\Gscal_{(F,J)}$, then $\chi=-F^{-1}(d\Gscal_{(F,J)})\in\tor\subset \hred(J,\poiss_J)$ is holomorphic, and $(F,J)$ is extremal.
\end{proof}

\subsection{Futaki character}

In this section we introduce a generalized K\"ahler analogue of the Futaki character on $\hred(J,\poiss_J)$. Recall that given a generalized K\"ahler structure $F\in\GK_{\poiss,\alpha}$ we have a decomposition (see~\ref{d:hred}) of a holomorphic vector field  $X\in\hred(J,\poiss_J)$
\[
X=F^{-1}(d\phi_{(X,F)})+F^{-1}(Id\psi_{(X,F)}),\quad \phi_{(X,F)},\psi_{(X,F)}\in \CCF.
\]

\begin{thm}[Futaki character]\label{t:futaki-character}
    Let $F\in\GK_{\poiss,\alpha}$ be a generalized K\"ahler structure on a $(M,J,\poiss_J)$. Define a homomorphism $\mathcal F_{(F,J)}\colon \hred(J,\poiss_J)\to \R$ by
    \begin{equation}\label{e:futaki-character}
        \mathcal F_{(F,J)}(X)=\int_M \psi_{(X,F)}\Gscal_{(F,J)}dV_F
        =-\boldsymbol{\tau}(\mathbf X_{\psi_{(X,F)}}).
    \end{equation}
    Then $\mathcal F_{(F,J)}$ is independent of $F\in\GK_{\poiss,\alpha}^0$ and vanishes on the commutator $[\hred(J,\poiss_J),\hred(J,\poiss_J)]$.
    In particular $\mathcal F_{(F,J)}$ is a character of $\hred(J,\poiss_J)$ and is identically zero if $\GK_{\poiss,\alpha}^0$ admits a cscGK metric.
\end{thm}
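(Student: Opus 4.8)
The plan is to encode $\mathcal F_{(F,J)}$ as the contraction of the closed $1$-form $\boldsymbol{\tau}$ with the vector field generating the flow of a reduced automorphism, and then to run the standard argument that a closed, flow-invariant $1$-form contracts to a locally constant function. Fix $X\in\hred(J,\poiss_J)$ and write $X=F^{-1}(d\phi_{(X,F)}+Id\psi_{(X,F)})$ as in Definition~\ref{d:hred}. Recall that the infinitesimal action of $X$ on $F$ is ${\mathcal L}_XF=dd^c_I\psi_{(X,F)}$ (see the discussion preceding~\eqref{lie-algebra-k}); since $I$ is integrable, $dd^c_I\psi_{(X,F)}$ is already of $I$-type $(1,1)$, so ${\mathcal L}_XF={\bf X}_{\psi_{(X,F)}}(F)$. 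Because $\exp(sX)\in\Aut_0(J,\poiss_J)$ preserves $J$, $\poiss_J$ and the class $\alpha$, the pullback flow $\Theta_s\colon F\mapsto(\exp sX)^*F$ preserves $\GK_{\poiss,\alpha}^0$, and it is generated by the vector field ${\bf V}_X$ with ${\bf V}_X(F)={\bf X}_{\psi_{(X,F)}}(F)$. With this notation $\mathcal F_{(F,J)}(X)=-\boldsymbol{\tau}_F({\bf V}_X)$.

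First I would establish independence of $F$ by proving $\Theta_s^*\boldsymbol{\tau}=\boldsymbol{\tau}$. Writing $\Phi_s=\exp(sX)$ and using that $\Phi_s$ is a biholomorphism of $(M,J)$, the second complex structure transforms naturally, $\Phi_s^*I=I_{\Phi_s^*F}$, so the differential of $\Theta_s$ sends ${\bf X}_\phi(F)=dd^c_I\phi$ to ${\bf X}_{\Phi_s^*\phi}(\Theta_sF)$. The crucial input is the equivariance $\Gscal_{(\Phi_s^*F,J)}=\Phi_s^*\Gscal_{(F,J)}$, which holds because the generalized scalar curvature of Definition~\ref{d:GK-scal} is a natural invariant of the biHermitian data and $\Phi_s^*J=J$. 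Combining these with the invariance of the integral under the change of variables $\Phi_s$ gives
\[
(\Theta_s^*\boldsymbol{\tau})_F({\bf X}_\phi)=-\int_M(\Phi_s^*\phi)\,\Phi_s^*\Gscal_{(F,J)}\,\Phi_s^*F^{[n]}=-\int_M\phi\,\Gscal_{(F,J)}\,F^{[n]}=\boldsymbol{\tau}_F({\bf X}_\phi),
\]
whence ${\mathcal L}_{{\bf V}_X}\boldsymbol{\tau}=0$. Since $\boldsymbol{\tau}$ is closed (Proposition~\ref{p:Mabuchi}), Cartan's formula yields ${\bf d}\bigl(\boldsymbol{\tau}({\bf V}_X)\bigr)={\mathcal L}_{{\bf V}_X}\boldsymbol{\tau}-\iota_{{\bf V}_X}{\bf d}\boldsymbol{\tau}=0$. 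As $\GK_{\poiss,\alpha}^0$ is connected (Proposition~\ref{p:gk_class}), $\mathcal F_{(F,J)}(X)=-\boldsymbol{\tau}_F({\bf V}_X)$ is constant in $F$.

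Next I would deduce the vanishing on commutators directly from closedness. For $X,Y\in\hred(J,\poiss_J)$ the identity ${\bf d}\boldsymbol{\tau}({\bf V}_X,{\bf V}_Y)=0$ reads
\[
{\bf V}_X\bigl(\boldsymbol{\tau}({\bf V}_Y)\bigr)-{\bf V}_Y\bigl(\boldsymbol{\tau}({\bf V}_X)\bigr)-\boldsymbol{\tau}\bigl([{\bf V}_X,{\bf V}_Y]\bigr)=0,
\]
and the first two terms vanish because $\boldsymbol{\tau}({\bf V}_X)$ and $\boldsymbol{\tau}({\bf V}_Y)$ are constant by the previous step. Since ${\bf V}_Z(F)={\mathcal L}_ZF$ depends linearly on $F$, the identity $[{\mathcal L}_X,{\mathcal L}_Y]={\mathcal L}_{[X,Y]}$ gives $[{\bf V}_X,{\bf V}_Y]=-{\bf V}_{[X,Y]}$ (the usual sign for fundamental fields of a pullback action). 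Hence $\boldsymbol{\tau}({\bf V}_{[X,Y]})=0$, i.e. $\mathcal F_{(F,J)}([X,Y])=0$; as $\R$ is abelian this is precisely the assertion that $\mathcal F_{(F,J)}$ is a character of $\hred(J,\poiss_J)$.

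Finally, if $\GK_{\poiss,\alpha}^0$ admits a cscGK structure $F_*$, then $\Gscal_{(F_*,J)}=\overline{\mu}$ is constant and, since $\psi_{(X,F_*)}$ has zero mean against $F_*^{[n]}$, $\mathcal F_{(F_*,J)}(X)=\overline{\mu}\int_M\psi_{(X,F_*)}F_*^{[n]}=0$; the independence just proved then forces $\mathcal F_{(F,J)}\equiv0$ on all of $\GK_{\poiss,\alpha}^0$. I expect the main obstacle to be the invariance ${\mathcal L}_{{\bf V}_X}\boldsymbol{\tau}=0$, whose crux is the $\Aut_0(J,\poiss_J)$-equivariance of the generalized scalar curvature; once this naturality is secured, every other step is a formal consequence of $\boldsymbol{\tau}$ being closed together with $[{\bf V}_X,{\bf V}_Y]=-{\bf V}_{[X,Y]}$.
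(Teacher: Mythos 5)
Your proposal is correct and follows essentially the same route as the paper: identify $\mathcal F_{(F,J)}(X)$ with $-\boldsymbol{\tau}(\mathbf X_{\psi_{(X,F)}})$, use invariance of $\boldsymbol{\tau}$ under the flow induced by $X$ (which the paper asserts from naturality under $\mathrm{Diff}(M)$ and you usefully spell out via the equivariance of $\Gscal$), apply Cartan's formula with closedness of $\boldsymbol{\tau}$ to get constancy in $F$, and then evaluate $d\boldsymbol{\tau}$ on the two fundamental fields to kill the commutator. The only difference is expository detail; the argument is the paper's.
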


\begin{proof}
The infinitesimal action of the vector field $X\in\hred(J,\poiss_J)$ on $F\in\GK_{\poiss,\alpha}$ is given by $\mathcal L_XF=dd^c_I\psi_{(X,F)}$ thus it induces a vector field ${\mathbf X}$ on $\GK_{\poiss,\alpha}$, which at a point $F\in\GK_{\poiss,\alpha}$ is given by
\[
{\mathbf X}(F)={\mathbf X}_{\psi_{(X,F)}}.
\]
Now, the 1-form $\boldsymbol{\tau}$ on $\GK_{\poiss,\alpha}^0$ is invariant under the vector field $\mathbf{X}$, since it is induced by an action of $\mathrm{Diff}(M)$ on the entire structure $(F,J)$. Thus by Cartan's formula we have
\[
0=d(\boldsymbol{\tau}(\mathbf X))+d\boldsymbol\tau({\mathbf X},\cdot).
\]
Since $\boldsymbol{\tau}$ is closed, it follows that $\boldsymbol{\tau}(\mathbf X)$ is constant on $\GK_{\poiss,\alpha}^0$ which is equivalent to the statement of the constancy of $\mathcal F_{(F,J)}$ on $\GK_{\poiss,\alpha}^0$.

Now, if $Y\in\hred(J,\poiss_J)$ is another holomorphic vector field inducing a vector field $\mathbf Y$ on $\GK_{\poiss,\alpha}$, then
\[
\mathcal F_{(F,J)}([X,Y])=-\boldsymbol\tau([\mathbf X,\mathbf Y])=d\boldsymbol{\tau}(\mathbf X,\mathbf Y)+\mathbf{Y}\cdot \boldsymbol{\tau}(\mathbf X)-\mathbf{X}\cdot \boldsymbol{\tau}(\mathbf Y)=0,
\]
as claimed, so that $\mathcal F_{(F,J)}$ is a character of Lie algebra $\hred(J,\poiss_J)$.
\end{proof}

\subsection{Linearization of the normalized generalized scalar curvature} 

We present here the linearization of ${\Gscal}_{(F_t, J)}$ when $F_t$ varies within a given generalized K\"ahler class $\GK_{\poiss,\alpha}$, similar to \cite[Sec.\,6]{GotoLichne}.  Motivated by Theorem~\ref{t:GK-extremal} we will, more generally, fix a compact torus $\T \subset \Aut_{\mathrm{red}}(M, J, \poiss_J)$ and consider the space $\mathcal {GK}_{\poiss,\ga}^{\T}$ of $\T$-invariant elements of $\mathcal{GK}_{\poiss,\ga}$, and define a notion of $\T$-\emph{normalized} scalar curvature:

\begin{defn} For any $F\in {\GK}^{\T}_{\poiss,\alpha}$, denote by $\tor_F$ the vector space of smooth functions $f$ such that $-F^{-1}(df) \in \tor := {\rm Lie}(\T)$ and by $\Pi_{F}$ the $L^{2}(M, dV_F)$-orthogonal projection of $C^{\infty}(M,\R)$ to $\tor_F$. Then
the \emph{$\T$-normalized generalized scalar curvature} is
\[ \Gscal^{\T}_{(F, J)}:= \Gscal_{(F, J)} - \Pi_F\left(\Gscal_{(F, J)}\right).\]
\end{defn}

\begin{lemma}\label{GK-dot-scal} Let $F_t$ be a smooth path of generalized K\"ahler structures in ${\GK}^{\T}_{\poiss,\alpha}$, corresponding to a Hamiltonian  deformation with  a $\T$-invariant function $\phi \in \left(C^\infty(M,\R)\right)^\T$. Then, for any smooth function $\psi \in \left(C^{\infty}(M,\R)\right)^{\T}$
\[
\begin{split}
\int_M  \psi \left(\left. \frac{d}{dt} \right |_{t=0} \Gscal^{\T}_{(F_t, J)}\right)F_0^{[n]} =& -{\bf g}_{J}\left({\mathcal L}_{F_0^{-1}(d\phi)}J, {\mathcal L}_{F_0^{-1}(d\psi)}J\right) \\
& + \int_M \psi \Big(\tr_F\left(dd^c_J \phi  - dd^c_I \phi + d\Gscal^{\T}_{(F_0, J)} \wedge Jd\phi \right)\Big) F_0^{[n]} ,
\end{split}\]
where ${\bf g}_J$ is defined in Lemma~\ref{l:symplectic}.
\end{lemma}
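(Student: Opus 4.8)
The plan is to convert the variation of $F$ (with $J$ held fixed) into a variation of $J$ (with $F_0$ held fixed) by a Moser isotopy, so that the momentum-map identity of Theorem~\ref{t:GKscal-moment-map} can be applied. Let $F_t$ be the Hamiltonian deformation generated by $\phi$ and let $\Phi_t$ be the Moser isotopy of Lemma~\ref{l:complex-orbit}, so $\Phi_t^*F_t=F_0$ and $J_t:=\Phi_t^*J$ has $\dot J_0 = J\,\mathcal L_{F_0^{-1}(d\phi)}J$, i.e. $\dot J_0$ points in the complexified (${\bf J}$-) direction $-{\bf J}\,{\bf Y}_\phi$. Because $\Gscal$ is a natural (diffeomorphism-equivariant) invariant of $(F,J)$ by Theorem~\ref{t:gscal_equiv}, one has $\Gscal_{(F_0,J_t)}=\Gscal_{(F_t,J)}\circ\Phi_t$, and the change of variables $x=\Phi_t(y)$ rewrites $\int_M\psi\,\Gscal_{(F_t,J)}\,F_t^{[n]}=\int_M(\Phi_t^*\psi)\,\Gscal_{(F_0,J_t)}\,F_0^{[n]}$. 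Differentiating at $t=0$ and invoking the momentum map with test function $\psi$ gives $\int_M\psi\,\dot H_0\,F_0^{[n]}=\boldsymbol\Omega({\bf Y}_\psi,\dot J_0)=-{\bf g}_J(\mathcal L_{F_0^{-1}(d\phi)}J,\mathcal L_{F_0^{-1}(d\psi)}J)$, using $\boldsymbol\Omega(\,\cdot\,,{\bf J}\,\cdot\,)={\bf g}$; this is exactly the leading term of the statement.

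Next I assemble the first-order corrections. Two appear. First, the transport of the test function, $\tfrac{d}{dt}\Phi_t^*\psi|_{0}=d\psi(Z_0)$ with $Z_0=-F_0^{-1}(d^c_I\phi)$ the Moser field, contributes $\int_M\Gscal\,d\psi(Z_0)\,F_0^{[n]}$. Second, since the statement integrates $\dot\Gscal^{\T}$ against the \emph{fixed} form $F_0^{[n]}$ whereas the change of variables produces $F_t^{[n]}$, there is a volume discrepancy contributing $-\int_M\psi\,\Gscal\,dd^c_I\phi\wedge F_0^{[n-1]}$, because $\dot F_0=dd^c_I\phi$ by \eqref{e:flow_evolution}. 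Writing $d\psi(Z_0)\,F_0^{[n]}=d\psi\wedge(\imath_{Z_0}F_0)\wedge F_0^{[n-1]}=-d\psi\wedge Id\phi\wedge F_0^{[n-1]}$ and integrating by parts (using $dF_0=0$) collapses these two contributions to $\int_M\psi\,\tr_F(d\Gscal\wedge I d\phi)\,F_0^{[n]}$. The passage from $Id\phi$ here to the $Jd\phi$ of the statement is governed by the Poisson tensor: by \eqref{a:Fvolumeidentity}--\eqref{a:poisson} the difference $\tr_F(d\Gscal\wedge(I-J)d\phi)$ is a $\poiss$-bracket term, and integrating it by parts against $\psi$ via \eqref{e:by-parts} is what generates the purely second-order pieces $\tr_F(dd^c_J\phi-dd^c_I\phi)$.

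Finally I install the $\T$-normalization $\Gscal^{\T}=\Gscal-\Pi_F\Gscal$. The key structural input is Theorem~\ref{t:GK-extremal}: the field $\chi=-F^{-1}(d\Pi_F\Gscal_{(F,J)})$ is independent of $F\in\GK^{\T}_{\poiss,\alpha}$, so $\Pi_{F_t}\Gscal_{(F_t,J)}=h^{F_t}_{\chi}$ is the $F_t$-Hamiltonian of a single fixed $\chi\in\tor$, which moreover satisfies $\mathcal L_\chi J=\mathcal L_\chi I=\mathcal L_\chi F_0=0$. Differentiating $h^{F_t}_\chi$ in $t$, and using $\mathcal L_\chi\phi=0$ (as $\phi$ is $\T$-invariant), yields $\dot h_\chi=(d^c_I\phi)(\chi)$ up to an additive constant; this reproduces the transport computation of the previous paragraph with $\Gscal$ replaced by $\Pi_F\Gscal$ and contributes nothing to the ${\bf g}_J$-term (since $\mathcal L_\chi J=0$). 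Subtracting it therefore replaces $\Gscal$ by $\Gscal^{\T}$ precisely in the transport term, and the leftover additive constant is harmless because tangent vectors lie in $C^\infty(M,\R)/\R$ (Remark~\ref{d:O}); this produces the stated formula.

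The main obstacle is exactly the bookkeeping in the middle step: correctly tracking the Poisson-tensor--dependent contributions that distinguish $Id\phi$ from $Jd\phi$ and that generate $dd^c_J\phi-dd^c_I\phi$. These are precisely the terms that vanish in the K\"ahler limit $\poiss=0$, $I=J$, where the identity degenerates to the classical Lichnerowicz linearization of normalized scalar curvature; thus all genuinely generalized-K\"ahler content is concentrated in these corrections, and nailing their signs requires careful and repeated use of the conventions $d^c_I\phi=Id\phi$, $I-J=-\poiss\circ F$, and the algebraic identities of Section~\ref{s:appendix}, together with the interplay between the momentum map normalization and the $F_t$-dependence of $\Pi_{F_t}$.
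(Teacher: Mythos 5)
Your overall strategy coincides with the paper's: use the Moser isotopy of Lemma~\ref{l:complex-orbit} to trade the variation of $F$ for a variation of $J$ inside $\AGK_{F_0}$, extract the leading ${\bf g}_J$-term from the momentum map identity of Theorem~\ref{t:GKscal-moment-map}, and then account for the transport of the test function and the variation of the Liouville volume, with the $\T$-normalization riding along because $\Pi_{F_t}\Gscal_{(F_t,J)}$ is the pullback by the moment map of one fixed affine-linear function (the paper handles this via Proposition~\ref{p:futkai_symplectic}; your detour through $\dot h_\chi$ is more roundabout but amounts to the same point).

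The gap is in the final assembly, exactly where you locate the ``main obstacle''. After your two corrections collapse to $\int_M\psi\,\tr_F(d\Gscal^{\T}\wedge Id\phi)\,F_0^{[n]}$, you assert that rewriting $Id\phi$ as $Jd\phi$ plus a Poisson-tensor correction and integrating by parts via \eqref{e:by-parts} produces the terms $\tr_F(dd^c_J\phi-dd^c_I\phi)$. This does not work: the discrepancy $\int_M\psi\,\tr_F\bigl(d\Gscal^{\T}\wedge(I-J)d\phi\bigr)F_0^{[n]}$ is, by \eqref{a:poisson}, an integral of $\psi$ against the $\poiss$-bracket of $\Gscal^{\T}$ with $\phi$, whereas $\int_M\psi\,\tr_F(dd^c_J\phi-dd^c_I\phi)F_0^{[n]}$ equals, after one integration by parts against the closed form $F_0^{[n-1]}$, the integral $\int_M\tr_F\bigl(d\psi\wedge(I-J)d\phi\bigr)F_0^{[n]}$, i.e.\ the $\poiss$-bracket of $\psi$ with $\phi$; these are different quantities, and \eqref{e:by-parts} does not convert one into the other. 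In the paper's derivation the second-order terms are not produced by an $I\to J$ conversion at all: the transport term is kept in the form $-\int_M\Gscal^{\T}\,\tr_F(d\psi\wedge Jd\phi)F_0^{[n]}$ and the exterior derivative is moved off $\psi$, giving $\int_M\psi\,d\Gscal^{\T}\wedge Jd\phi\wedge F_0^{[n-1]}+\int_M\Gscal^{\T}\psi\,dd^c_J\phi\wedge F_0^{[n-1]}$, whose second piece combines with the volume correction $-\int_M\Gscal^{\T}\psi\,dd^c_I\phi\wedge F_0^{[n-1]}$ to produce the $(dd^c_J-dd^c_I)\phi$ contribution (which in that computation carries a factor of $\Gscal^{\T}$). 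You should also pin down the transport term itself before proceeding: a direct computation of $d\psi(Z_0)$ with $Z_0=-F_0^{-1}(Id\phi)$ gives $-\tr_F(d\psi\wedge Id\phi)$, as you have, while the paper's displayed intermediate step records $-\tr_F(d\psi\wedge Jd\phi)$; the two differ by exactly the kind of $\poiss$-term at issue, so resolving this convention question is essential to nailing the identity.
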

\begin{proof} Notice that, by the assumption for $\T$ with respect to $F_0$,  for any $F\in {\GK}^{\T}_{\poiss,\alpha}$ we have a Lie algebra isomorphism
\[ \left( \tor_F /\R, \{\cdot, \cdot \}_F \right)  \simeq (\tor, [\cdot, \cdot ]), \]
i.e., each vector field $Y \in \tor$ is $F$-Hamiltonian.  Denote by $\mu_{F_0} : M \to \tor^*$ the momentum map of $\T$ with respect to $F_0$, with momentum image
a polytope $\Pol$.  Using a $\T$-equivariant Moser isotopy with respect to  $F\in {\GK}^{\T}_{\poiss,\alpha}$ (notice that $(1-t)F_0 + t F$ defines a $\T$-invariant isotopy of symplectic forms as they tame $J$ for all $t$), it follows that there is a uniquely determined  momentum map $\mu_F : M \to \Pol$.  Furthermore, the $\Pi_{F_0}$-projection of $\Gscal_{(F_0, J)}$ is of the form
$\mu_{F_0}^* \ell$, where $\ell$ is an affine-linear function on $\tor^*$. By the $\T$-equivariant Moser lemma and Theorem~\ref{t:GK-extremal}, we have
$\Pi_F(\Gscal_{(F, J)}) = \mu_F^*(\ell)$ for the same affine-linear function $\ell$.
We now use a slight modification of the computation in the  proof of Proposition~\ref{p:Mabuchi}: letting $\psi_t:= \Phi_t^*(\psi)$ and $J_t:= (\Phi_t)_* J (\Phi_t)_*^{-1}$ where $\Phi_t$ is the isotopy defined in Lemma~\ref{l:complex-orbit}, we have
\[
\begin{split}
\int_M & \left(\left. \frac{d}{dt} \right |_{t=0} \Gscal^{\T}_{(F_t, J)}\right) \psi F_0^{[n]}  = \left. \frac{d}{dt} \right |_{t=0} \int_M \Gscal^{\T}_{(F_t, J)} \psi F_t^{[n]} - \int_M  \Gscal^{\T}_{(F_0, J)}  \psi dd^c_I \phi \wedge F_0^{[n-1]}  \\
 =&\ \left. \frac{d}{dt} \right |_{t=0} \int_M \Gscal^{\T}_{(F_0, J_t)} \psi_t F_0^{[n]} - \int_M  \Gscal^{\T}_{(F_0, J)}  \psi dd^c_I \phi \wedge F_0^{[n-1]}  \\
 =&\ \left. \frac{d}{dt} \right |_{t=0} \int_M \left(\Gscal_{(F_0, J_t)}- \mu_{F_0}^*(\ell)\right) \psi_t F_0^{[n]} - \int_M  \Gscal^{\T}_{(F_0, J)}  \psi dd^c_I \phi \wedge F_0^{[n-1]} \\
 =&\ -{\bf g}_{J}\left({\mathcal L}_{F_0^{-1}(d\phi)}J, {\mathcal L}_{F_0^{-1}(d\psi)}J\right)  + \tfrac{1}{2} \int_M  \Gscal^{\T}_{(F_0, J)}\left\langle d\phi, J(I+J) d\psi \right\rangle_{g_0} F_0^{[n]} \\
 &\  - \int_M  \Gscal^{\T}_{(F_0, J)}  \psi dd^c_I \phi \wedge F_0^{[n-1]}.
 \end{split}
\]
Notice that  (cf.\,\eqref{a:deep1}, \eqref{a:identity})
\[\tfrac{1}{2}\left\langle d\phi, J(I+J) d\psi \right\rangle_{g_0} = \tfrac{1}{2}\left\langle d\psi, (I+J) J d\phi \right\rangle_{g_0} =-\tr_{F_0} (d\psi \wedge Jd\phi).\]
Substituting in the previous identity and integrating by parts yields the claim. \end{proof}

\begin{cor}\label{c:GK-dot-scal}  Suppose $(F_0, J)$ is an extremal generalized K\"ahler structure and let $\T$ be  a torus  in ${\rm Ham}(M, F_0)\cap \Aut(M, J)$ containing the exponential of the extremal vector field $\chi=-F_0^{-1}(d\Gscal_{(F,J)})$. Then the linearization of $\Gscal^{\T}_{(F, J)}$  at $F_0$ in the space ${\GK}^{\T}_{\poiss,\alpha}$ is a fourth-order  linear operator on $\left(C^{\infty}(M,\R)\right)^{\T}/\R$, whose kernel is the space of $F_0$-Hamiltonian holomorphic vector fields on $(M, J)$ commuting with $\chi$.
\end{cor}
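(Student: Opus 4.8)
The plan is to feed the formula of Lemma~\ref{GK-dot-scal} into an abstract linear--algebra analysis after one crucial simplification. First I would observe that, since $\T$ contains $\exp(t\chi)$ and $(F_0,J)$ is extremal, the Hamiltonian $\Gscal_{(F_0,J)}$ of the field $\chi=-F_0^{-1}(d\Gscal_{(F_0,J)})$ lies in $\tor_{F_0}$; hence $\Pi_{F_0}(\Gscal_{(F_0,J)})=\Gscal_{(F_0,J)}$ and $\Gscal^{\T}_{(F_0,J)}=0$. This kills the coefficient term $d\Gscal^{\T}_{(F_0,J)}\wedge Jd\phi$ in Lemma~\ref{GK-dot-scal}, so that the linearization $L$ on $\left(C^\infty(M,\R)\right)^{\T}/\R$ is characterized by
\[
\int_M \psi\,(L\phi)\,F_0^{[n]}=-\mathbf{g}_{J}\!\left(\mathcal{D}\phi,\mathcal{D}\psi\right)+\int_M \psi\,\tr_{F_0}\!\left(dd^c_J\phi-dd^c_I\phi\right)F_0^{[n]},\qquad \mathcal{D}\phi:=\mathcal{L}_{F_0^{-1}(d\phi)}J .
\]

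Next I would split this bilinear form into symmetric and skew parts. The term $\mathbf{g}_{J}(\mathcal{D}\phi,\mathcal{D}\psi)$ is symmetric, and since $\mathcal{D}$ is a second--order operator its associated operator is $\mathcal{D}^{*}\mathcal{D}$, a nonnegative self--adjoint fourth--order operator; this already establishes that the leading part of $L$ is of order four. For the remaining term I would use the integration--by--parts identity~\eqref{e:by-parts} to show it is \emph{antisymmetric} under $\phi\leftrightarrow\psi$, and then the algebraic identity~\eqref{a:poisson} to identify it with the Poisson pairing
\[
\int_M \psi\,\tr_{F_0}\!\left(dd^c_J\phi-dd^c_I\phi\right)F_0^{[n]}=\int_M \{\phi,\psi\}_{\poiss}\,F_0^{[n]} .
\]
Thus $L=-\mathcal{D}^{*}\mathcal{D}+A$, where $A$ is the skew, lower--order operator $A\phi=-\divg_{F_0^{[n]}}\!\big(\poiss(d\phi)\big)$ coming from this pairing; note that this term vanishes exactly in the K\"ahler case $\poiss=0$, recovering the classical Lichnerowicz operator.

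The forward inclusion is then immediate: if $L\phi=0$, pairing against $\phi$ and using the skew--symmetry $\int_M\{\phi,\phi\}_{\poiss}F_0^{[n]}=0$ gives $\mathbf{g}_{J}(\mathcal{D}\phi,\mathcal{D}\phi)=0$, hence $\mathcal{D}\phi=0$, i.e.\ $Y_\phi:=-F_0^{-1}(d\phi)$ preserves $J$. By the computation used in the proof of Theorem~\ref{t:Calabi-Lichne-Matsushima}, such a Hamiltonian $J$--holomorphic field automatically preserves $\poiss$, so $Y_\phi\in\hred(J,\poiss_J)$; and since $\phi$ is $\T$--invariant, $Y_\phi$ commutes with $\T\ni\exp(t\chi)$, hence with $\chi$. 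This exhibits $\ker L$ inside the space of $F_0$--Hamiltonian holomorphic fields commuting with $\chi$.

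The main obstacle is the reverse inclusion: every holomorphy potential must lie in $\ker L$, and not merely in $\ker(\mathcal{D}^{*}\mathcal{D})$. Because $L$ is genuinely non--self--adjoint when $\poiss\neq0$, one has $\ker L=\ker\mathcal{D}\cap\ker A$, so I must show $\mathcal{D}\phi=0\Rightarrow A\phi=0$. Using~\eqref{F-g-relation} together with the relation $I-J=-\poiss\circ F_0$ from the remark following Definition~\ref{d:AGKpi}, one obtains the purely algebraic identity $\iota_{\poiss(d\phi)}F_0=2\,\iota_{Y_\phi}b$; combined with $\mathcal{L}_{Y_\phi}b=0$ (valid once $Y_\phi$ is holomorphic) and Cartan's formula, this reduces $A\phi$ to a constant multiple of $\tr_{F_0}(\iota_{Y_\phi}H)$, where $H=db$. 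The crux is therefore the intrinsic identity that the one--form $X\mapsto\tr_{F_0}(\iota_X H)$ is exact with the natural potential $\Psi=-\log\frac{dV_{F_0}}{dV_g}$, which I expect to follow from the Lee--form identity $I\theta_I+J\theta_J=(I+J)d\Psi$ of~\cite{AFSU}; granting it, $A\phi$ is a constant multiple of $\mathcal{L}_{Y_\phi}\Psi$, which vanishes because the holomorphic Killing field $Y_\phi$ preserves both $dV_{F_0}$ and $dV_g$. With $\ker L=\ker\mathcal{D}$ in hand, the correspondence $\phi\mapsto Y_\phi$ identifies the kernel with the $\T$--invariant $F_0$--Hamiltonian holomorphic vector fields; since $\chi$ is central in $\mathfrak{k}(F_0,J)$, these are precisely the $F_0$--Hamiltonian holomorphic fields commuting with $\chi$.
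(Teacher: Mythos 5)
Your forward inclusion (pair against $\phi$, kill the skew part, conclude $\mathcal{L}_{F_0^{-1}(d\phi)}J=0$, then upgrade to membership in $\hred(J,\poiss_J)$ and to commutation with $\chi$) is correct and coincides with the paper's argument. The genuine gap is in the reverse inclusion: your reduction of $A\phi$ to a multiple of $\mathcal{L}_{Y_\phi}\Psi$ hinges on the identity that the one-form $X\mapsto\tr_{F_0}(\iota_X H)$ is exact with potential proportional to $\Psi$, which you only say you ``expect'' to follow from the Lee-form identity of \cite{AFSU}. As written this is a conjecture, not a proof, so you have only established $\ker L\subseteq\ker\mathcal{D}$ rather than the asserted equality with the space of holomorphy potentials.

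Moreover, the skew term $A$ you are fighting is a phantom. Rerunning the final integration by parts in the proof of Lemma~\ref{GK-dot-scal}, the two terms carrying a factor of $\Gscal^{\T}_{(F_0,J)}$ combine to
\[
\int_M\psi\left(d\Gscal^{\T}_{(F_0,J)}\wedge Jd\phi+\Gscal^{\T}_{(F_0,J)}\,\big(dd^c_J\phi-dd^c_I\phi\big)\right)\wedge F_0^{[n-1]},
\]
i.e.\ the second-order term $\tr_F\big(dd^c_J\phi-dd^c_I\phi\big)$ enters multiplied by $\Gscal^{\T}_{(F_0,J)}$ (the displayed formula of the lemma drops this factor). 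At an extremal structure $\Gscal^{\T}_{(F_0,J)}\equiv 0$, so everything except the first term vanishes and the linearization $T_0$ satisfies $\int_M\psi\,T_0(\phi)\,F_0^{[n]}=-\mathbf{g}_J\big(\mathcal{L}_{F_0^{-1}(d\phi)}J,\mathcal{L}_{F_0^{-1}(d\psi)}J\big)$: a self-adjoint, nonpositive fourth-order operator whose kernel is exactly $\{\phi:\mathcal{L}_{F_0^{-1}(d\phi)}J=0\}$ in both directions. This is the paper's (one-line) argument. Your observation that the printed formula, taken literally, yields a non-self-adjoint operator and leaves the reverse inclusion unaddressed is a fair criticism of the text, but the remedy is to correct the formula, not to prove the (unestablished) vanishing of $A$ on holomorphy potentials.
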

\begin{proof}  By the definition of $\T$-normalized scalar curvature, if $(F_0, J)$ is extremal then $\Gscal^{\T}_{(F_0, \T)}=0$.
The first part of the claim then follows from Lemmas~\ref{GK-dot-scal}  and \ref{l:symplectic}, noticing that (after integrating by parts) the RHS of the expression in Lemma~\ref{GK-dot-scal} vanishes for $\psi=\mathrm{const}$.  Furthermore, to identify the kernel of the linearization, denoted here $T_0$, we note using \eqref{e:by-parts}
\[ \int_M T_0(\psi) \psi F_0^{n}= -{\bf g}_J \left({\mathcal L}_{F_0^{-1}(d\psi)}J, {\mathcal L}_{F_0^{-1}(d\psi)}J\right),\]
and the claim follows. \end{proof}

\section{Formal Riemannian structure on $\mathcal {GK}_{\pi,\ga}$ and uniqueness}
\subsection{The Riemannian metric on $\GK_{\poiss,\alpha}$ and geodesics}

We now introduce a formal Riemannian metric on the space $\GK_{\poiss,\alpha}$, generalizing the Mabuchi--Semmes--Donaldson Riemannian structure~\cite{Mabuchi,Semmes,donaldson-GIT} on $\KK_{\alpha}$.  Recall that at any $F\in \GK_{\poiss,\alpha}$, we showed that the tangent space ${\bf T}_F \left(\GK_{\poiss,\alpha}\right)$ is identified with $C^{\infty}(M,\R)/\R$, see Remark~\ref{d:O}.  For any given $F\in \GK_{\poiss,\alpha}$, we will further identify $C^{\infty}(M,\R)/\R$ with the space of \emph{F-normalized} smooth functions,
\[
{\bf T}_F\left(\GK_{\poiss,\alpha}\right) \simeq \CCF := \left\{ \phi \in C^{\infty}(M,\R) \, \big| \, \int_M \phi F^{[n]} =0\right\}.
\]
We can then define a (formal) Riemannian metric $\llangle \cdot, \cdot \rrangle$ on $\GK_{\poiss,\alpha}$, by letting
\[\big\llangle \phi_1, \phi_2 \big\rrangle_F := \int_M \phi_1\phi_2 F^{[n]}, \qquad \phi_1, \phi_2 \in \CCF.\]
We note by Stokes Theorem that at any fixed point $F$, the inner product $\llangle \cdot, \cdot \rrangle_F$ is ${\rm ad}$-invariant with respect the Poisson bracket of $F$, i.e., satisfies
\begin{equation}\label{ad-invariance}
\Big\llangle \{\phi_1, \phi_3\}_F, \phi_2 \Big\rrangle_F + \Big\llangle \phi_1, \{\phi_2, \phi_3\}_F \Big\rrangle_F =0, \qquad \forall \phi_1, \phi_2, \phi_3 \in \CCF.\end{equation}

\begin{lemma}\label{l:connection} Let $F\to \bxi_F \in {\bf T}_F \left(\GK_{\poiss,\alpha}\right)= \CCF$ be a formal vector field on $\GK_{\poiss,\alpha}$. Then the Riemannian structure $\llangle \cdot, \cdot \rrangle$ on ${\GK}$ admits a unique Levi--Civita connection ${\mathcal D}$, defined by
\begin{gather} \label{f:LC}
    ({\mathcal D}_{\phi} \bxi)_F = \dot{\bxi}_F (\phi) - {\rm tr}_F\left(d\phi\wedge \Jj d\bxi_F\right), \qquad \phi \in \CCF={\bf T}_F\left(\GK_{\poiss,\alpha}\right),
\end{gather}
where $\dot{\bxi}_F(\phi) := \left. \frac{d}{dt} \right |_{t=0}\bxi_{F_t}$, with $F_t$ being the Hamiltonian deformation of $F$ defined by $\phi$.
\end{lemma}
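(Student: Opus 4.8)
The plan is to show that the operator $\mathcal D$ defined by \eqref{f:LC} is a well-defined affine connection which is torsion-free and compatible with $\llangle\cdot,\cdot\rrangle$; uniqueness and the Levi--Civita property then follow from the usual Koszul-formula argument, which is purely formal and carries over verbatim to this Fr\'echet setting.

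First I would verify that \eqref{f:LC} is a connection. In the formula $\dot{\bxi}_F(\phi)$ denotes the derivative of $\bxi$ along the Hamiltonian deformation generated by $\phi$ (Theorem~\ref{t:cstr:flow}), which is canonically attached to the tangent vector $\phi\in{\bf T}_F\left(\GK_{\poiss,\alpha}\right)\simeq\CCF$, while $\tr_F(d\phi\wedge Jd\bxi_F)$ is pointwise bilinear and depends only on $d\phi$ at $F$. Linearity in $\phi$ is then clear, and since the exterior derivative acts only in the $M$-variable while a function $f\in C^\infty(\GK_{\poiss,\alpha})$ is constant on $M$, one obtains the Leibniz rule $\mathcal D_\phi(f\bxi)=(\phi f)\bxi+f\,\mathcal D_\phi\bxi$. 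Thus $\mathcal D$ is a genuine connection.

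For metric compatibility, I would differentiate $\llangle\bxi,\boldsymbol\eta\rrangle_{F_t}=\int_M\bxi_{F_t}\boldsymbol\eta_{F_t}F_t^{[n]}$ along the Hamiltonian deformation $F_t$ generated by $\phi$, for which $\dot F=dd^c_I\phi$ and hence $\tfrac{d}{dt}F^{[n]}=dd^c_I\phi\wedge F^{[n-1]}$. Comparing with \eqref{f:LC}, and cancelling the matching $\dot\bxi,\dot{\boldsymbol\eta}$ terms, the claim reduces to the pointwise identity
\[
\int_M\bxi\,\boldsymbol\eta\,dd^c_I\phi\wedge F^{[n-1]}=-\int_M\bigl(\boldsymbol\eta\,d\phi\wedge Jd\bxi+\bxi\,d\phi\wedge Jd\boldsymbol\eta\bigr)\wedge F^{[n-1]}.
\]
This follows by integrating the left-hand side by parts (using $dF^{[n-1]}=0$) and then applying \eqref{f:Fvolumeidentity} to rewrite $d\bxi\wedge d^c_I\phi\wedge F^{[n-1]}$ as $d\phi\wedge Jd\bxi\wedge F^{[n-1]}$; this is precisely the step where the $F$-compatibility of $I$ and $J$ enters.

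The heart of the matter is torsion-freeness, which is more subtle than in the K\"ahler case because the natural coordinate fields no longer commute. I would evaluate the torsion on the fundamental vector fields ${\bf X}_\phi,{\bf X}_\psi$ of Definition~\ref{d:fundamental-field}, whose bracket is $[{\bf X}_\phi,{\bf X}_\psi]=-{\bf X}_{\{\phi,\psi\}_\poiss}$ by Lemma~\ref{l:commuting}. Since $d$ annihilates the moving normalization constants, the two trace terms in \eqref{f:LC} contribute $-\tr_F(d\phi\wedge Jd\psi)+\tr_F(d\psi\wedge Jd\phi)$, which by \eqref{a:Fvolumeidentity} and \eqref{a:poisson} equals exactly $-\{\phi,\psi\}_\poiss$. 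The terms $\dot{\bxi}_F$ contribute only the constants coming from the $F$-dependent normalization of $\phi$ and $\psi$; using the integration-by-parts formula \eqref{e:by-parts} together with \eqref{a:poisson} one checks that their difference equals $\tfrac{1}{V}\int_M\{\phi,\psi\}_\poiss\,F^{[n]}$ with $V=\int_M F^{[n]}$, which is exactly the constant needed to $F$-normalize $-\{\phi,\psi\}_\poiss$ and thereby match $[{\bf X}_\phi,{\bf X}_\psi]$. Hence the torsion vanishes. The main obstacle is precisely this computation: the nonlinearity of the generalized K\"ahler class, encoded in the nonzero bracket of fundamental fields, must be seen to cancel against the antisymmetric part of the trace term via \eqref{a:poisson}, while the constant discrepancies produced by the $F$-dependent normalization close up through \eqref{e:by-parts}. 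Once $\mathcal D$ is shown to be a torsion-free metric connection, the Koszul formula shows it is the unique such connection, completing the proof.
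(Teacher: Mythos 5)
Your proposal is correct, and the computations you outline do close up: metric compatibility reduces, exactly as you say, to one integration by parts against the closed form $F^{[n-1]}$ followed by \eqref{f:Fvolumeidentity}, and the torsion computation on fundamental fields reduces to the identity $\tr_F\left(d\phi\wedge (I-J)d\psi\right)=-\{\phi,\psi\}_{\poiss}$ coming from \eqref{a:Fvolumeidentity} and \eqref{a:poisson}, with the normalization constants matching up via \eqref{e:by-parts}. The route is genuinely different from the paper's, though. The paper does not verify the axioms for a proposed formula; it \emph{derives} the formula by evaluating the Koszul identity on triples of fundamental vector fields ${\bf X}_{\phi_1},{\bf X}_{\phi_2},{\bf X}_{\phi_3}$ (using Lemma~\ref{l:commuting} for their brackets), simplifying the six resulting terms with \eqref{e:by-parts}, \eqref{f:Fvolumeidentity} and \eqref{a:poisson} to reach \eqref{e:fundamental}, and then extends to arbitrary vector fields by imposing metric compatibility against the fundamental fields. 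Both arguments consume exactly the same three identities; yours splits the single long Koszul computation into two shorter independent verifications (compatibility for general fields, torsion on fundamental fields) at the cost of having to know the formula in advance, whereas the paper's computation produces the formula as output and delivers existence and uniqueness in one pass. Two small points worth making explicit in your write-up: first, checking torsion-freeness only on fundamental fields suffices because the torsion is tensorial and, by Proposition~\ref{p:gk_class}, the fundamental fields span ${\bf T}_F\left(\GK_{\poiss,\alpha}\right)$ at every point; second, one should note (as the paper does after \eqref{e:fundamental}) that the right-hand side of \eqref{f:LC} actually lies in $\CCF$, which again follows from \eqref{e:by-parts}.
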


\begin{proof}
We will first establish (\ref{f:LC}) for a fundamental vector field $\bxi = {\bf X}_{\psi}$ (cf.\,Definition~\ref{d:fundamental-field}). In what follows, we assume (without loss) that $\int_M F^{[n]}=1$ for  any $F \in \GK_{\poiss,\alpha}$. We will identify $\mathbf{X}_\psi$ at $F\in \GK_{\poiss,\alpha}$ with the normalized function given by $\bxi_F = \psi - \int_M \psi F^{[n]}$

Letting ${\mathcal D}$ denote the formal connection which preserves $\llangle \cdot, \cdot \rrangle$ and is torsion free and we will compute it using the Koszul's formula for fundamental vector fields $\bxi_i = {\bf X}_{\phi_i}, \, i=1,2,3$. Applying Lemma~\ref{l:commuting} we have
\begin{equation}\label{e:koszul}
\begin{split}
2\llangle {\mathcal D}_{\bxi_1} \bxi _2, \bxi_3 \rrangle =&\ {\bf X}_{\phi_1} \cdot \llangle {\bf X}_{\phi_2}, {\bf X}_{\phi_3} \rrangle  + {\bf X}_{\phi_2}\cdot \llangle {\bf X}_{\phi_1}, {\bf X}_{\phi_3}\rangle - {\bf X}_{\phi_3}\cdot \llangle {\bf X}_{\phi_1},  {\bf X}_{\phi_2}\rrangle \\
&\ - \llangle {\bf X}_{\{\phi_1, \phi_2\}_{\nonhalf\poiss}}, {\bf X}_{\phi_3}\rrangle + \llangle {\bf X}_{\{\phi_2, \phi_3\}_{\nonhalf\poiss}}, {\bf X}_{\phi_1}\rrangle
+ \llangle {\bf X}_{\{\phi_1, \phi_3\}_{\nonhalf\poiss}}, {\bf X}_{\phi_2}\rrangle.
\end{split}\end{equation}
We compute at $F$ (assuming without loss that $\int_M \phi_i F^{[n]}=0$):
\[
\begin{split}
{\bf X}_{\phi_1} \cdot \llangle {\bf X}_{\phi_2}, {\bf X}_{\phi_3} \rrangle &= \int_M \phi_2\phi_3 (dd^c_I \phi_1)\wedge F^{[n-1]}, \\
{\bf X}_{\phi_2}\cdot \llangle {\bf X}_{\phi_1}, {\bf X}_{\phi_3} \rrangle  &= \int_M \phi_1 \phi_3 (dd^c_I \phi_2)\wedge F^{[n-1]}, \\
{\bf X}_{\phi_3}\cdot \llangle {\bf X}_{\phi_1},  {\bf X}_{\phi_2}\rrangle  &=  \int_M \phi_1\phi_2 (dd^c_I \phi_3)\wedge F^{[n-1]} = \int_M \phi_3 \left(dd^c_\Jj (\phi_1\phi_2)\right) \wedge F^{[n-1]},
\end{split}\]
where for the last line we have used \eqref{e:by-parts}. It follows that
\[
\begin{split}
&{\bf X}_{\phi_1} \cdot \llangle {\bf X}_{\phi_2}, {\bf X}_{\phi_3} \rrangle   + {\bf X}_{\phi_2}\cdot \llangle {\bf X}_{\phi_1}, {\bf X}_{\phi_3}\rrangle - {\bf X}_{\phi_3}\cdot \llangle {\bf X}_{\phi_1},  {\bf X}_{\phi_2}\rrangle   \\
&= \int_M \phi_3\Big(\phi_2 dd^c_\Ii \phi_1 + \phi_1 dd^c_\Ii \phi_2 -dd^c_\Ii(\phi_1\phi_2) +  d(\Ii-\Jj) d (\phi_1\phi_2)\Big)F^{[n]} \\
&=\int_M \phi_3\Big(-d\phi_2 \wedge \Ii d\phi_1 - d\phi_1 \wedge \Ii d\phi_2 + d(\Ii-\Jj)d(\phi_1\phi_2)\Big)\wedge F^{[n-1]} \\
& =\int_M \phi_3\Big((I+ J) d\phi_1 \wedge d\phi_2 + d(\Ii-\Jj)d(\phi_1\phi_2)\Big)\wedge F^{[n-1]} \\
&=\int_M \phi_3\Big((I+ J) d\phi_1 \wedge d\phi_2 + \phi_2 d(\Ii-\Jj)(d\phi_1) + \phi_1 d(\Ii-\Jj) d\phi_2 \Big)\wedge F^{[n-1]},
\end{split}\]
where for passing from the 4th line to the 5th and from the 5th to the 6th we have used \eqref{f:Fvolumeidentity}.

Similarly, using \eqref{f:Fvolumeidentity} and \eqref{a:poisson} we compute
\[
\begin{split}
\big\llangle {\bf X}_{\{\phi_1, \phi_2\}_{\nonhalf\poiss}}, {\bf X}_{\phi_3}\big\rrangle &= \int_M \phi_3\Big((\Jj-\Ii) d\phi_1 \wedge d\phi_2\Big)\wedge F^{[n-1]}, \\
\big\llangle {\bf X}_{\{\phi_2, \phi_3\}_{\nonhalf\poiss}}, {\bf X}_{\phi_1}\big\rrangle &= \int_M \phi_1\Big((\Jj-\Ii) d\phi_2 \wedge d\phi_3\Big)\wedge F^{[n-1]} \\
&= - \int_M \phi_3\Big(d\phi_1 \wedge (\Ii-\Jj) d\phi_2 + \phi_1 d(\Ii-\Jj)(d\phi_2)\Big)\wedge F^{[n-1]} \\
\big\llangle {\bf X}_{\{\phi_1, \phi_3\}_{\nonhalf\poiss}}, {\bf X}_{\phi_2}\big\rrangle &= \sj \int_M \phi_3\Big(d\phi_2 \wedge (\Ii -\Jj) d\phi_1 + \phi_2 d(I-J) d\phi_1 \Big)\wedge F^{[n-1]}.
\end{split}\]
Using \eqref{f:Fvolumeidentity}, this yields
\[
\begin{split}
\big\llangle & {\bf X}_{\{\phi_1, \phi_2\}_{\nonhalf\poiss}}, {\bf X}_{\phi_3}\big\rrangle -\big\llangle {\bf X}_{\{\phi_2, \phi_3\}_{\nonhalf\poiss}}, {\bf X}_{\phi_1}\big\rrangle -\big\llangle {\bf X}_{\{\phi_1, \phi_3\}_{\nonhalf\poiss}}, {\bf X}_{\phi_2}\big\rrangle\\
&= \int_M \phi_3\Big((J - I)(d\phi_1)\wedge d\phi_2 + \phi_1 d(I-J) d\phi_2 + \phi_2 d(I-J d\phi_1\Big)\wedge F^{[n-1]}.
\end{split} \]
Substituting the above expressions back in \eqref{e:koszul} and using \eqref{f:Fvolumeidentity} and that $\int_M \phi_3 F^{[n]} = 0$, we get \begin{equation}\label{e:fundamental}
    \begin{split}
    \left({\mathcal D}_{\phi} \bxi\right)_F & = - {\rm tr}_F(d\phi\wedge \Jj d\psi) -\int_M \psi (dd^c_\Ii \phi)\wedge F^{[n-1]}  \\
    & = - (d\phi\wedge \Jj d\psi)\wedge F^{[n-1]}/ F^{[n]} -\int_M \psi (dd^c_\Ii \phi)\wedge F^{[n-1]},
    \end{split}
\end{equation}
which is equivalent to~\eqref{f:LC} for the fundamental vector fields. Notice that the RHS of~\eqref{e:fundamental} integrates to zero against $F^{[n]}$ by~\eqref{e:by-parts}.

We now show that \eqref{f:LC} holds for a general vector field $F \to \bxi_F$.  Using metric compatibility and \eqref{e:fundamental} we have 
\[
\begin{split}
\big\llangle {\mathcal D}_{\phi} \bxi, {\bf X}_{\psi}\big\rrangle_F &= \left({\bf X}_{\phi} \cdot \llangle \bxi, {\bf X}_{\psi}\rrangle\right)_F - \big\llangle \bxi_F, ({\mathcal D}_{\phi} {\bf X}_{\psi})_F\big\rrangle_F \\
&= \left. \frac{d}{dt} \right |_{t=0} \int_M \bxi_{F_t} \psi F_t^{[n]}  + \int_M \bxi_F\left(d\phi \wedge \Jj d\psi\right) \wedge F^{[n-1]} \\
&= \int_M \dot \bxi_F(\phi) \psi F^{[n]} + \int_M \bxi_F \psi (dd^c_\Ii \phi) \wedge F^{[n-1]} +\int_M \bxi_F\left(d\phi \wedge \Jj d\psi\right)\wedge F^{[n-1]}  \\
&=  \int_M \Big(\dot \bxi_F(\phi) - {\rm tr}_F\big(d\phi \wedge \Jj d\bxi_F\big) \Big) \psi F^{[n]}.
\end{split}\]
where for final line we applied \eqref{f:Fvolumeidentity} and integration by parts.
\end{proof}

From Lemma~\ref{l:connection}, we derive the corresponding geodesic equation which extends the expression found by Mabuchi~\cite{Mabuchi} in the K\"ahler case.

\begin{defn}\label{d:geodesic} Let $F_t$ be a smooth path in $\GK_{\poiss,\alpha}$, corresponding to a Hamiltonian deformation with respect to a path of smooth functions $\phi_t \in C^{\infty}_{0}(M,dV_{F_t})$.  We say that $F_t$ is a geodesic if $\phi_t$ satisfies
\begin{equation}\label{e:geodesic}
\begin{split}
0 =&\ \mathcal D_{\phi_t} \phi_t = \dot \phi_t - {\rm tr}_{F_t}(d\phi_t \wedge \Jj d\phi_t) = \dot{\phi}_t - \tfrac{1}{2}\Big(g_t(d\phi_t, d\phi_t) + g_t(\Ii_t d\phi_t, \Jj d\phi_t)\Big).
\end{split}
\end{equation}
\end{defn}

\begin{rmk}\label{r:geodesic-normalization} One can  more generally consider smooth solutions $\phi_t$ of \eqref{e:geodesic},  without assuming a priori that $\phi_t \in C^{\infty}_{0}(M,dV_{F_t})$.
It then follows using \eqref{e:by-parts} that
\[ \frac{d}{dt}\int_M \phi_t F_t^{[n]} = \int_M (d\phi_t \wedge \Jj d\phi_t) \wedge F^{[n-1]} + \int_M \phi_t d \Ii_t d\phi_t \wedge F^{[n-1]}= 0.\]
The above shows that $\phi_t \in C^{\infty}_0(M, dV_{F_t})$ as soon as $\phi_0 \in C^{\infty}_{0}(M, dV_{F_0})$.
\end{rmk}

\begin{rmk} \label{r:Semmes} Let $F_t$ be a smooth path in $\GK_{\poiss,\alpha}$, corresponding to a Hamiltonian deformation with respect to a path of smooth functions $\phi_t \in C^{\infty}_{0}(M,dV_{F_t})$, $t \in [0,1]$.  Let $\til{M} = M \times [0, 1] \times [0,1]$, and denote the generic point in $\til{M}$ as $(p, t, s)$.  Define
\begin{align*}
    \til{F}_{(p,t,s)} = F_{(p,t)} - d \phi_t \wedge ds + J d \phi_t \wedge dt + \dot{\phi}_t dt \wedge ds.
\end{align*}
Then the geodesic equation is equivalent to $\til{F}^{n+1} = 0$.
\end{rmk}

\begin{prop} \label{p:killing-geodesics} Let $(F_0, J)$ be a symplectic type generalized K\"ahler structure and $Y$ a vector field preserving $(F_0, J)$ which is also Hamiltonian with respect to $F_0$, i.e., there exists a smooth function $\phi_0$ such that
\[
    Y= - F_0^{-1} (d\phi_0)= \tfrac{1}{2}\left(\Ii_0\grad_{g_0} \phi_0 + \Jj \grad_{g_0} \phi_0\right), \qquad \int_M \phi_0 F_0^n =0.
\]
Then the flow $\Phi_t= \exp(-t\Jj Y)$ of $-\Jj Y$ defines a geodesic $F_t := \Phi_t^*(F_0)$ in $\GK_{\poiss,\alpha}$.
\begin{proof}
Indeed, we have that
\[
\begin{split}
\dot F_t  &= \Phi_t^*\left({\mathcal L}_{-\Jj Y}F_0 \right) = \Phi_t^*\left(d \left(F_0\Jj F_0^{-1}(d\phi_0)\right) \right)\\
              &=\Phi_t^*\left(-d \Ii_0^* (d\phi_0)\right) = \Phi_t^*\left(dd^c_{\Ii_0} \phi_0\right) = dd^c_{\Ii_t} \phi_t,
              \end{split}\]
where $\phi_t:= \Phi_t^*(\phi_0)$ satisfies $\int_M \phi_t F_t^{n}=0$. 
Furthermore, as $\Phi_t \cdot Y= Y$, we have $Y= - F_t^{-1}(d\phi_t)$, so we compute
\[
    \frac{d}{d t} \phi_t = {\mathcal L}_{-\Jj Y} \phi_t = \langle d\phi_t, -\Jj Y \rangle =   \langle -\Jj d\phi_t, F_t^{-1}(d\phi_t) \rangle= \tfrac{1}{2}\big(g_t(d\phi_t, d\phi_t) + g_t(\Jj d\phi_t, \Ii_t d\phi_t)\big),  \]
which is precisely the geodesic equation.
\end{proof}
\end{prop}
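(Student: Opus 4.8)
The plan is to verify directly that $F_t = \Phi_t^*(F_0)$ is an admissible path in $\GK_{\poiss,\alpha}$ whose velocity potential is $\phi_t := \Phi_t^*(\phi_0)$, and then to check that this $\phi_t$ solves the geodesic ODE \eqref{e:geodesic}. First I would argue that $\Phi_t$ lands in $\Aut_{\rm red}(\Jj,\poiss_\Jj)$. Since $Y$ preserves $(F_0,\Jj)$ it preserves all the algebraically derived data $\Ii_0,g_0$, hence $\poiss=\half[\Ii_0,\Jj]g_0^{-1}$ and $\poiss_\Jj$; being Hamiltonian it has nonempty zero set, so $Y\in\hred(\Jj,\poiss_\Jj)$. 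As $\hred(\Jj,\poiss_\Jj)$ is $\Jj$-invariant, $\Jj Y\in\hred(\Jj,\poiss_\Jj)$ as well; concretely, $\mathcal{L}_Y\Jj=0$ together with integrability of $\Jj$ (vanishing Nijenhuis tensor) forces $\mathcal{L}_{\Jj Y}\Jj=0$, and $\Jj Y$ preserves $\poiss_\Jj$. Thus $\Phi_t=\exp(-t\Jj Y)$ is a biholomorphism of $(M,\Jj)$ fixing $\poiss_\Jj$, so $F_t=\Phi_t^*(F_0)$ is a symplectic form taming $\Jj$, compatible with $\poiss_\Jj$, with $[F_t]=[F_0]=\alpha$ (as $\Phi_t$ is isotopic to the identity). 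Hence $F_t\in\GK_{\poiss,\alpha}$, and its second complex structure $\Ii_t=-F_t^{-1}\Jj^*F_t$ equals the pullback $\Phi_t^*\Ii_0=(\Phi_t)_*^{-1}\Ii_0(\Phi_t)_*$.

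Next I would identify the velocity. Using that $F_0$ is closed and $Y=-F_0^{-1}(d\phi_0)$, Cartan's formula gives $\mathcal{L}_{-\Jj Y}F_0=d\,\iota_{-\Jj Y}F_0=d(F_0\Jj F_0^{-1}(d\phi_0))$, and the identity $\Ii_0^*=-F_0\Jj F_0^{-1}$ rewrites this as $dd^c_{\Ii_0}\phi_0$. Differentiating $F_t=\Phi_t^*(F_0)$ and using naturality of $d^c$ under the biholomorphism $\Phi_t$ (together with $\Ii_t=\Phi_t^*\Ii_0$) yields $\dot F_t=\Phi_t^*(dd^c_{\Ii_0}\phi_0)=dd^c_{\Ii_t}\phi_t$ with $\phi_t=\Phi_t^*(\phi_0)$. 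By Proposition~\ref{p:gk_class} and Remark~\ref{d:O} this exhibits $F_t$ as a Hamiltonian deformation with velocity potential $[\phi_t]$; the normalization $\int_M\phi_t F_t^{[n]}=\int_M\phi_0 F_0^{[n]}=0$ holds since $\Phi_t$ is a symplectomorphism $(M,F_t)\to(M,F_0)$.

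Finally I would verify the geodesic equation. I would first note $[Y,\Jj Y]=\mathcal{L}_Y(\Jj Y)=(\mathcal{L}_Y\Jj)Y=0$, so the flow of $-\Jj Y$ preserves $Y$; combined with $F_t=\Phi_t^*F_0$ and $\phi_t=\Phi_t^*\phi_0$ this gives $Y=-F_t^{-1}(d\phi_t)$ for all $t$. Then $\dot\phi_t=\mathcal{L}_{-\Jj Y}\phi_t=\langle d\phi_t,-\Jj Y\rangle=\langle -\Jj d\phi_t,F_t^{-1}(d\phi_t)\rangle$, and inserting $F_t^{-1}=-\half(\Ii_t+\Jj)g_t^{-1}$ from \eqref{a:basic} and using the $g_t$-skewness and orthogonality of $\Ii_t,\Jj$ reduces this to $\half\bigl(g_t(d\phi_t,d\phi_t)+g_t(\Ii_t d\phi_t,\Jj d\phi_t)\bigr)=\tr_F(d\phi_t\wedge\Jj d\phi_t)$, which is exactly \eqref{e:geodesic}.

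The main obstacle is the first step: one must be sure that the flow of the \emph{non-holomorphic-looking} field $-\Jj Y$ nevertheless preserves the fixed complex structure $\Jj$ and the Poisson tensor $\poiss_\Jj$, so that $F_t$ genuinely stays in $\GK_{\poiss,\alpha}$ rather than drifting to a different complex structure. This is where integrability of $\Jj$ enters essentially (through $\mathcal{L}_{\Jj Y}\Jj=0$), and it is what guarantees that the pulled-back $\Ii_0$ coincides with the intrinsic second complex structure $\Ii_t$ of $F_t$; the remaining curvature-free verification of \eqref{e:geodesic} is then routine pointwise linear algebra.
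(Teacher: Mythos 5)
Your proof is correct and follows essentially the same route as the paper: compute $\dot F_t=\Phi_t^*(\mathcal{L}_{-\Jj Y}F_0)=dd^c_{\Ii_t}\phi_t$ via Cartan's formula, use $[Y,\Jj Y]=0$ to get $Y=-F_t^{-1}(d\phi_t)$, and verify the geodesic ODE pointwise. The only difference is that you spell out explicitly why $-\Jj Y$ preserves $\Jj$ and $\poiss_\Jj$ (so that $F_t$ stays in $\GK_{\poiss,\alpha}$ with $\Ii_t=\Phi_t^*\Ii_0$), a point the paper leaves implicit; this is a welcome clarification rather than a deviation.
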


\subsection{Curvature}

We next compute the curvature of the formal Riemannian connection ${\mathcal D}$. To this end, it is enough to consider fundamental vector fields $\bxi_i= {\bf X}_{\phi_i}, \, i=1, 2, 3, 4$, see Definition~\ref{d:fundamental-field}, as they generate ${\bf T}_F \left(\GK_{\poiss,\alpha}\right)$ at any given point $F \in \GK_{\poiss,\alpha}$. The following is an extension of a result by Mabuchi~\cite{Mabuchi} to the symplectic type GK case.
\begin{thm}\label{p:Mabuchi-curvature} At any given point $F\in \GK_{\poiss,\alpha}$, the curvature tensor $\mathcal R$ of ${\mathcal D}$  is given by
\[\left( {\mathcal R}_{{\bf X}_{\phi_1}, {\bf X}_{\phi_2}} {\bf X}_{\phi_3}\right)_F =-\Big\{\big\{\phi_1,\phi_2\big\}_F, \phi_3\Big\}_F,  \]
where $\{\cdot, \cdot \}_{F}$ denotes the Poisson bracket of functions with respect to the symplectic form $F$. In particular,
\[\Big\llangle {\mathcal R}_{{\bf X}_{\phi_1}, {\bf X}_{\phi_2}} {\bf X}_{\phi_1}, {\bf X}_{\phi_2}\Big\rrangle_F =-\Big\llangle  \{\phi_1,\phi_2\big\}_F, \{\phi_1,\phi_2\big\}_F\Big\rrangle_F,\]
showing that $\llangle \cdot, \cdot \rrangle$ has nonpositive sectional curvature at any point.
\end{thm}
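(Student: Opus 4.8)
The plan is to verify the identity on fundamental vector fields ${\bf X}_{\phi_i}$, which span every tangent space ${\bf T}_F(\GK_{\poiss,\alpha})$ as noted above, and to do so in its paired form: since $\llangle\cdot,\cdot\rrangle$ is the nondegenerate $L^2$-pairing and both sides below lie in $\CCF$ (the double bracket $\{\{\phi_1,\phi_2\}_F,\phi_3\}_F$ has zero $F$-mean by Stokes), it suffices to prove
\[
\big\llangle {\mathcal R}_{{\bf X}_{\phi_1},{\bf X}_{\phi_2}}{\bf X}_{\phi_3},{\bf X}_{\phi_4}\big\rrangle_F = -\big\llangle \{\{\phi_1,\phi_2\}_F,\phi_3\}_F,\phi_4\big\rrangle_F
\]
for all normalized $\phi_4$. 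Two inputs drive the computation: the connection formula on fundamental fields $({\mathcal D}_{{\bf X}_{\phi}}{\bf X}_{\psi})_F = -[\tr_F(d\phi\wedge \Jj d\psi)]_0$, where $[\,\cdot\,]_0$ denotes $F$-normalization (this is \eqref{e:fundamental} after rewriting its constant term, via \eqref{e:by-parts}, as the $F$-mean of $\tr_F(d\phi\wedge \Jj d\psi)$); and the bracket relation $[{\bf X}_{\phi_1},{\bf X}_{\phi_2}] = -{\bf X}_{\{\phi_1,\phi_2\}_{\poiss}}$ of Lemma~\ref{l:commuting}.

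Next I would expand ${\mathcal R}_{{\bf X}_{\phi_1},{\bf X}_{\phi_2}}{\bf X}_{\phi_3} = {\mathcal D}_{{\bf X}_{\phi_1}}{\mathcal D}_{{\bf X}_{\phi_2}}{\bf X}_{\phi_3} - {\mathcal D}_{{\bf X}_{\phi_2}}{\mathcal D}_{{\bf X}_{\phi_1}}{\bf X}_{\phi_3} - {\mathcal D}_{[{\bf X}_{\phi_1},{\bf X}_{\phi_2}]}{\bf X}_{\phi_3}$. The bracket term is immediately $[\tr_F(d\{\phi_1,\phi_2\}_{\poiss}\wedge \Jj d\phi_3)]_0$ by Lemma~\ref{l:commuting} and \eqref{e:fundamental}. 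For the iterated derivatives, since ${\mathcal D}_{{\bf X}_{\phi_2}}{\bf X}_{\phi_3}$ is no longer fundamental, I would pair it down by metric compatibility:
\[
\big\llangle{\mathcal D}_{{\bf X}_{\phi_1}}{\mathcal D}_{{\bf X}_{\phi_2}}{\bf X}_{\phi_3},{\bf X}_{\phi_4}\big\rrangle = {\bf X}_{\phi_1}\cdot\big\llangle{\mathcal D}_{{\bf X}_{\phi_2}}{\bf X}_{\phi_3},{\bf X}_{\phi_4}\big\rrangle - \big\llangle{\mathcal D}_{{\bf X}_{\phi_2}}{\bf X}_{\phi_3},{\mathcal D}_{{\bf X}_{\phi_1}}{\bf X}_{\phi_4}\big\rrangle.
\]
This reduces every contribution to an integral over $M$ of products of the explicit normalized functions $\tr_F(d\phi_i\wedge \Jj d\phi_j)$ and their derivatives along the Hamiltonian deformation generated by $\phi_1$.

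The core of the argument is the flow-derivative $\tfrac{d}{dt}\big|_0\tr_{F_t}(d\phi_2\wedge \Jj d\phi_3)$, computed using $\dot F = dd^c_{\Ii}\phi_1$, $\dot{\Jj}=0$, $\dot{\Ii}=\poiss\circ dd^c_{\Ii}\phi_1$ from \eqref{e:flow_evolution}. Differentiating $\tfrac{\beta\wedge F^{[n-1]}}{F^{[n]}}$ for the fixed form $\beta=d\phi_2\wedge \Jj d\phi_3$ produces a $\tr_F\beta\,\tr_F\dot F$ piece that cancels against the normalization, leaving a double $F^{-1}$-contraction of the shape $\tr(F^{-1}\beta F^{-1}dd^c_{\Ii}\phi_1)$, which I would evaluate with \eqref{a:deep2} and its ramifications \eqref{a:deep4},\eqref{a:deep5}; the trace-reshuffling uses \eqref{a:Fvolumeidentity}--\eqref{f:Fvolumeidentity} and the Poisson identity \eqref{a:poisson}, while integrations by parts use \eqref{e:by-parts}. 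After antisymmetrizing in $\phi_1\leftrightarrow\phi_2$ and adding the bracket term, I expect the contributions built from the second complex structure $\Ii$ — the genuinely non-K\"ahler part — to recombine with the $\poiss$-bracket term and collapse to the symmetric expression $-\{\{\phi_1,\phi_2\}_F,\phi_3\}_F$ in the symplectic Poisson bracket alone.

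I expect this last reorganization to be the main obstacle: one must show that the a priori $\Ii$-dependent, $\poiss$-bracket expressions conspire, through the symplectic-type identities of Section~\ref{s:appendix}, to leave a formula formally identical to Mabuchi's K\"ahler one — with the $F$-Poisson bracket replacing $\poiss$ and no residual dependence on $I-J$. Granting the pointwise formula, the sectional-curvature statement is then immediate: setting $\phi_3=\phi_1$ and pairing with ${\bf X}_{\phi_2}$, the ${\rm ad}$-invariance \eqref{ad-invariance} of $\llangle\cdot,\cdot\rrangle$ under $\{\cdot,\cdot\}_F$ gives
\[
\big\llangle{\mathcal R}_{{\bf X}_{\phi_1},{\bf X}_{\phi_2}}{\bf X}_{\phi_1},{\bf X}_{\phi_2}\big\rrangle_F = -\big\llangle\{\{\phi_1,\phi_2\}_F,\phi_1\}_F,\phi_2\big\rrangle_F = -\big\llangle\{\phi_1,\phi_2\}_F,\{\phi_1,\phi_2\}_F\big\rrangle_F \le 0,
\]
proving nonpositivity of the sectional curvature at every point.
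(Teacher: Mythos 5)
Your proposal is correct in outline and runs on exactly the same machinery as the paper's proof --- the connection formula \eqref{e:fundamental}, the bracket relation of Lemma~\ref{l:commuting}, the flow derivative of $\tr_{F_t}(d\phi_2\wedge \Jj d\phi_3)$, the Schur-type identity \eqref{e:symplectic-schur}, and the algebraic identities \eqref{a:poisson}, \eqref{a:deep4}, \eqref{a:deep5}, \eqref{f:Fvolumeidentity} --- but it organizes the computation in a genuinely different, and heavier, way. You propose to establish the full four-function identity $\llangle \mathcal R_{\mathbf X_{\phi_1},\mathbf X_{\phi_2}}\mathbf X_{\phi_3},\mathbf X_{\phi_4}\rrangle_F=-\llangle\{\{\phi_1,\phi_2\}_F,\phi_3\}_F,\phi_4\rrangle_F$ directly and then specialize. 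The paper instead first observes that the tensor formula and the sectional-curvature formula are \emph{equivalent}: the second follows from the first by the ad-invariance \eqref{ad-invariance}, and the first follows from the second because the sectional curvatures of a torsion-free metric connection determine the whole curvature tensor. It then computes only the diagonal quantity $\llangle\mathcal R_{\mathbf X_{\phi_1},\mathbf X_{\phi_2}}\mathbf X_{\phi_1},\mathbf X_{\phi_2}\rrangle_F$. This reduction is precisely what makes the ``conspiracy'' you anticipate tractable: in the diagonal case the $\Ii$-dependent contributions assemble into the perfect squares of \eqref{a:deep4} and cancel against the $\poiss$-bracket terms, leaving only $-\int_M\bigl(\tr_F(d\phi_1\wedge d\phi_2)\bigr)^2F^{[n]}$; with four independent functions the cancellation still occurs (the identity is true), but the terms no longer collapse into squares and the bookkeeping roughly doubles. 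So your route is viable, but you should either budget for the longer computation or adopt the polarization shortcut. One further caution: the paper's convention \eqref{R} is $\mathcal R_{\mathbf X_{\phi_1},\mathbf X_{\phi_2}}=-\mathcal D_{\phi_1}\mathcal D_{\phi_2}+\mathcal D_{\phi_2}\mathcal D_{\phi_1}-\mathcal D_{\{\phi_1,\phi_2\}_{\poiss}}$, which by Lemma~\ref{l:commuting} equals $+\mathcal D_{[\cdot,\cdot]}$ on the last term, i.e.\ the \emph{opposite} of the expansion $\mathcal D_X\mathcal D_Y-\mathcal D_Y\mathcal D_X-\mathcal D_{[X,Y]}$ you wrote; with your convention the computation will return $+\{\{\phi_1,\phi_2\}_F,\phi_3\}_F$, so you must flip conventions to reproduce the stated formula (the nonpositivity of the sectional curvature is of course unaffected).
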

\begin{proof}  The second formula follows from the first by the ${\rm ad}$-invariance of $\big\llangle\cdot, \cdot \big\rrangle$ (cf.\,\eqref{ad-invariance}). Conversely, the first formula follows from the second as ${\mathcal R}$ is associated to a torsion-free Riemannian connection, and thus the sectional curvature determines the Riemannian curvature tensor.  It is thus enough to establish the second formula.

At a given point $F$, by Lemma~\ref{l:commuting} the curvature is given by
\begin{equation}\label{R}
\left( {\mathcal R}_{{\bf X}_{\phi_1}, {\bf X}_{\phi_2}} {\bf X}_{\phi_3} \right)_F = \left( -{\mathcal D}_{\phi_1}\left({\mathcal D}_{\phi_2} {\bf X}_{\phi_3}\right) + {\mathcal D}_{\phi_2}\left({\mathcal D}_{\phi_1} {\bf X}_{\phi_3}\right) \sj {\mathcal D}_{\{\phi_1,\phi_2\}_{\nonhalf\poiss}} {\bf X}_{\phi_3}\right)_F.\end{equation}
With a small abuse of notation, we will ignore the normalizing additive constants  of the smooth functions, as they do not contribute to the desired formula for ${\mathcal R}_{{\bf X}_{\phi_1}, {\bf X}_{\phi_2}} {\bf X}_{\phi_3}$.  We will also drop the dependence on the basepoint $F$ from the notation.  Thus from \eqref{e:fundamental} we can express the connection as\[ {\mathcal D}_{\phi}{\bf X}_{\psi} = -{\rm tr}_F(d\phi \wedge \Jj d\psi)=-\tfrac{1}{2}\Big(g(d\phi, d\psi) + g(Id\phi, \Jj d\psi)\Big),\]
where for the second equality we used the computation in \eqref{a:Fvolumeidentity}.
Letting $\dot{F} = dd^c_{\Ii} \phi_1$  be the derivative of $F$ in direction of ${\bf X}_{\phi_1}$, it follows  from Lemma~\ref{l:connection} and \eqref{f:Fvolumeidentity} that  (up to an additive constant)
\begin{equation}\label{e:intermidiate}
\begin{split}
{\mathcal D}_{\phi_1}\left({\mathcal D}_{\phi_2} {\bf X}_{\phi_3}\right) &=  \tfrac{1}{2}{\rm tr} \left(F^{-1} \dot{F}F^{-1} (d\phi_2 \wedge \Jj d\phi_3)\right)  + \tr_F\Big(d\phi_1 \wedge \Jj d(\tr_F(d\phi_2 \wedge \Jj d\phi_3))\Big) \\
&= \tfrac{1}{2}{\rm tr} \left(F^{-1} (dd^c_\Ii \phi_1) F^{-1} (d\phi_2 \wedge \Jj d\phi_3)\right)+ \tr_F\Big(d(\tr_F(d\phi_2\wedge \Jj d\phi_3)\wedge \Ii d\phi_1)\Big).
\end{split}\end{equation}
We will use the following algebraic identity which can be deduced easily from Schur's lemma and holds for any $2$-forms $\Phi, \Psi$: \begin{equation}\label{e:symplectic-schur} \Phi \wedge \Psi \wedge F^{[n-2]}= -\tfrac{1}{2}{\rm tr}\left(F^{-1}\Phi F^{-1}\Psi\right) F^{[n]}  + \left({\rm tr}_F \Phi\right)\left({\rm tr}_F \Psi\right)F^{[n]}.\end{equation}
By \eqref{e:symplectic-schur}, we get from \eqref{e:intermidiate} and integrating by parts
\[\begin{split}
\Big\llangle{\mathcal D}_{\phi_1}\left({\mathcal D}_{\phi_2} {\bf X}_{\phi_3}\right), {\bf X}_{\phi_4}\Big\rrangle
 =&\ -\int_M \phi_4\Big(dd^c_\Ii \phi_1 \wedge d\phi_2 \wedge \Jj d\phi_3\Big) \wedge F^{[n-2]} + \int_M \phi_4\Big(\tr_F(d\phi_2\wedge \Jj d\phi_3)\Big) dd^c_\Ii \phi_1 \wedge F^{[n-1]}\\
    &\ + \int_M\phi_4\Big(d(\tr_F(d\phi_2\wedge \Jj d\phi_3))\wedge \Ii d\phi_1\Big) \wedge F^{[n-1]} \\
   =&\ \int_M \Big(d\phi_4\wedge \Ii d \phi_1 \wedge d\phi_2 \wedge \Jj d\phi_3\Big) \wedge F^{[n-2]} + \int_M \phi_4\Big( \Ii d\phi_1 \wedge d\phi_2 \wedge dd^c_\Jj \phi_3 \Big)\wedge F^{[n-2]} \\
    &- \int _M \Big(\tr_F(d\phi_2\wedge \Jj d\phi_3) \tr_F(d\phi_4 \wedge \Ii d\phi_1)\Big) F^{[n]}.
\end{split}\]
It follows that
\begin{equation*}\label{R1}
\begin{split}
\Big\llangle -{\mathcal D}_{\phi_1}&  \left({\mathcal D}_{\phi_2} {\bf X}_{\phi_1}\right) + {\mathcal D}_{\phi_2}\left({\mathcal D}_{\phi_1} {\bf X}_{\phi_1}\right) , {\bf X}_{\phi_2}\Big\rrangle\\
=& \int_M \Big(d\phi_2 \wedge \Ii d\phi_2 \wedge d\phi_1 \wedge \Jj d\phi_1\Big)\wedge F^{[n-2]} - \int_M\Big(\tr_F(d\phi_1\wedge \Jj d\phi_1)\Big)\Big(\tr_F(d\phi_2 \wedge \Ii d\phi_2) \Big)F^{[n]} \\
&-\int_M \phi_2\Big( d\phi_1 \wedge \Ii d\phi_2 + \Ii d\phi_1 \wedge d\phi_2\Big)\wedge dd^c_\Jj \phi_1\wedge  F^{[n-2]} +\int_M\Big(\tr_F(d\phi_2\wedge \Jj d\phi_1)\Big)\Big( \tr_F(d\phi_2 \wedge \Ii d\phi_1)\Big) F^{[n]}. \end{split}
 \end{equation*}
Noting that from \eqref{a:poisson} we have
\[ \{\phi_1,\phi_2\}_{\nonhalf\poiss}  = \sj \tr_F(d\phi_1 \wedge \Ii d\phi_2) \msj \tr_F(d\phi_1 \wedge \Jj d\phi_2), \]
we further compute, after integrating by parts and using \eqref{a:Fvolumeidentity} for the last equality,
\begin{equation*}\label{R2}
\begin{split}
 \Big\llangle {\mathcal D}_{\{\phi_1,\phi_2\}_{\nonhalf\poiss}} {\bf X}_{\phi_1}, {\bf X}_{\phi_2}\Big\rrangle =& -\int_M \phi_2\Big(d\{\phi_1,\phi_2\}_{\nonhalf\poiss} \wedge \Jj d\phi_1\Big) \wedge F^{[n-1]} \\
 =  & \int_M \{\phi_1,\phi_2\}_{\nonhalf\poiss}  d\phi_2 \wedge \Jj d\phi_1 \wedge F^{[n-1]}  + \int_M \phi_2 \{\phi_1,\phi_2\}_{\nonhalf\poiss}  dd^c_\Jj \phi_1 \wedge F^{[n-1]} \\
 =&\sj \int_M\Big(\tr_F(d\phi_1\wedge \Ii d\phi_2) - \tr_F(d\phi_1\wedge \Jj d\phi_2)\Big)\tr_F(d\phi_2\wedge \Jj d\phi_1) F^{[n]} \\
 &\sj \int_M \phi_2\Big(\tr_F(d\phi_1\wedge \Ii d\phi_2) - \tr_F(d\phi_1\wedge \Jj d\phi_2)\Big)\tr_F(dd^c_\Jj\phi_1) F^{[n]} \\
=&\sj \int_M\Big(\tr_F(d\phi_2\wedge \Jj d\phi_1) - \tr_F(d\phi_2\wedge \Ii d\phi_1)\Big)\tr_F(d\phi_2\wedge \Jj d\phi_1) F^{[n]} \\
 &\sj \int_M \phi_2\Big(\tr_F(d\phi_1\wedge \Ii d\phi_2) + \tr_F(\Ii d\phi_1\wedge d\phi_2)\Big)\tr_F(dd^c_\Jj\phi_1) F^{[n]}.
\end{split}
\end{equation*}
Substituting the latter two expressions above back in \eqref{R},  and regrouping the terms, we get
\[
\begin{split}
&\Big\llangle {\mathcal R}_{{\bf X}_{\phi_1}, {\bf X}_{\phi_2}} {\bf X}_{\phi_1}, {\bf X}_{\phi_2}\Big\rrangle =  \int_M \Big(\tr_F(d\phi_2 \wedge \Jj d\phi_1)\Big)^2F^{[n]}\\
&+ \int_M \Big(d\phi_2 \wedge \Ii d\phi_2 \wedge d\phi_1 \wedge \Jj d\phi_1\Big)\wedge F^{[n-2]} - \int_M\Big(\tr_F(d\phi_1\wedge \Jj d\phi_1)\Big)\Big(\tr_F(d\phi_2 \wedge \Ii d\phi_2) \Big)F^{[n]} \\
&-\int_M \phi_2\Big(d\phi_1 \wedge \Ii d\phi_2 \wedge dd^c_\Jj \phi_1\Big)\wedge  F^{[n-2]} + \int_M \phi_2\big(\tr_F(d\phi_1 \wedge \Ii d\phi_2)\tr_F(dd^c_\Jj\phi_1)\Big)F^{[n]}\\
&-\int_M \phi_2\Big( \Ii d\phi_1 \wedge d\phi_2\wedge dd^c_\Jj \phi_1\Big)\wedge  F^{[n-2]} + \int_M \phi_2\Big(\tr_F(\Ii d\phi_1 \wedge d\phi_2)\Big)\tr_F(dd^c_\Jj\phi_1)F^{[n]}.\\
\end{split}
\]
We now apply \eqref{e:symplectic-schur} to each of the sums on the last three lines. The algebraic identity \eqref{a:deep4} shows that the last two lines cancel out whereas \eqref{a:deep5} allows us to simplify the expressions at the first and second lines to
\[
\begin{split}
&\Big\llangle {\mathcal R}_{{\bf X}_{\phi_1}, {\bf X}_{\phi_2}} {\bf X}_{\phi_1}, {\bf X}_{\phi_2}\Big\rrangle =  \int_M \Big(\tr_F(d\phi_2 \wedge \Jj d\phi_1)\Big)^2F^{[n]}\\
&+ \int_M \Big(d\phi_2 \wedge \Ii d\phi_2 \wedge d\phi_1 \wedge \Jj d\phi_1\Big)\wedge F^{[n-2]} - \int_M\Big(\tr_F(d\phi_1\wedge \Jj d\phi_1)\Big)\Big(\tr_F(d\phi_2 \wedge \Ii d\phi_2) \Big)F^{[n]} \\
&=\int_M \Big(\tr_F (d\phi_2 \wedge \Jj d\phi_1)\Big)^2F^{[n]} -\tfrac{1}{2}\int_M \tr \Big(F^{-1}(d\phi_2 \wedge \Ii d\phi_2) F^{-1}(d\phi_1 \wedge Jd\phi_1)\Big)F^{[n]} \\
&=\int_M \Big(\tr_F (d\phi_2 \wedge Jd\phi_1)\Big)^2F^{[n]}  -\int_M \Big(\tr_F(d\phi_1 \wedge d\phi_2)\Big)^2 F^{[n]} - \int_M\Big(\tr_F(d\phi_1 \wedge I d\phi_2)\Big)^2F^{[n]} \\
& = -\int_M \Big(\tr_F(d\phi_1 \wedge d\phi_2)\Big)^2 F^{[n]},
\end{split} \]
where we have used \eqref{a:Fvolumeidentity}  to obtain the last equality. The proposition is proved.
\end{proof}

\subsection{Formal uniqueness}

The formal moment map picture yields that if it exists, ${\mathbf M}_{F_0}$ is convex on geodesics.  More generally, the pairing of the $1$-form $\boldsymbol{\tau}$ with the velocity of a geodesic is monotone:

\begin{prop}\label{p:Mabuchi-convexity} Let $F_t$ be a geodesic in $(\GK_{\poiss,\alpha}, \llangle \cdot, \cdot \rrangle)$, corresponding to a time dependent smooth function $\phi_t$ satisfying \eqref{e:geodesic}.  Then
\[
    \frac{d}{dt} \boldsymbol{\tau} (\mathbf{X}_{\phi_t}) \geq 0,
\]
with equality for all $t$ if and only if $\phi_t$ is a geodesic induced by a Hamiltonian Killing field as in Proposition~\ref{p:killing-geodesics}.
\end{prop}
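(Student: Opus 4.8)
The plan is to imitate the proof of Proposition~\ref{p:Mabuchi} and transport the computation from the varying forms $F_t$ to the fixed symplectic manifold $(M,F_0)$, where the momentum map of Theorem~\ref{t:GKscal-moment-map} and the formal K\"ahler data of Definition~\ref{d:Omega} are available. First I would introduce the Moser isotopy $\Phi_t$ of Lemma~\ref{l:complex-orbit}, generated by $Z_t=-F_t^{-1}(d^c_{I_t}\phi_t)=-\Jj F_t^{-1}(d\phi_t)$, so that $\Phi_t^*F_t=F_0$, and set $J_t:=\Phi_t^{*}\Jj$, $\psi_t:=\Phi_t^{*}\phi_t$. Pulling the integrand of $\boldsymbol{\tau}(\mathbf{X}_{\phi_t})=-\int_M\phi_t\,\Gscal_{(F_t,\Jj)}F_t^{[n]}$ back by $\Phi_t$ and using naturality of $\Gscal$ gives
\[
\boldsymbol{\tau}(\mathbf{X}_{\phi_t})=-\int_M\psi_t\,\Gscal_{(F_0,J_t)}F_0^{[n]},\qquad \int_M\psi_t\,F_0^{[n]}=0 .
\]

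The decisive simplification is that along a geodesic the pulled-back potential is \emph{constant} in time. Indeed $\tfrac{d}{dt}\psi_t=\Phi_t^{*}(\dot\phi_t+\mathcal L_{Z_t}\phi_t)$, and computing $\langle d\phi_t,Z_t\rangle$ from $Z_t=-\Jj F_t^{-1}(d\phi_t)$ together with $F_t^{-1}=-\tfrac12(\Ii_t+\Jj)g_t^{-1}$ and the $g_t$-skewness of $\Jj$ yields
\[
\langle d\phi_t, Z_t\rangle=-\tfrac12\big(g_t(d\phi_t,d\phi_t)+g_t(\Ii_t d\phi_t,\Jj d\phi_t)\big)=-\dot\phi_t,
\]
the last equality being exactly the geodesic equation~\eqref{e:geodesic}. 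Hence $\psi_t\equiv\phi_0=:\psi$. Differentiating $\boldsymbol{\tau}(\mathbf{X}_{\phi_t})$ in $t$ then only hits $\Gscal_{(F_0,J_t)}$; applying Theorem~\ref{t:GKscal-moment-map} with $f=\psi$, inserting $\dot J_t=\mathbf{J}\,\mathcal L_{Y_\psi}J_t$ with $Y_\psi=-F_0^{-1}(d\psi)$ from Lemma~\ref{l:complex-orbit}, and using $\boldsymbol{\Omega}(\cdot,\cdot)=\boldsymbol{g}(\mathbf{J}\cdot,\cdot)$ together with the $\mathbf{J}$-invariance $\boldsymbol{g}(\mathbf{J}\cdot,\mathbf{J}\cdot)=\boldsymbol{g}(\cdot,\cdot)$ of Definition~\ref{d:Omega}, I expect
\[
\tfrac{d}{dt}\boldsymbol{\tau}(\mathbf{X}_{\phi_t})
=\boldsymbol{g}_{J_t}\!\big(\mathcal L_{F_0^{-1}(d\psi)}J_t,\ \mathcal L_{F_0^{-1}(d\psi)}J_t\big)\ \ge 0,
\]
which is the asserted monotonicity (and recovers geodesic convexity of $\mathbf{M}_{F_0}$ when the primitive exists).

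For the equality case I would argue as follows. Since $\boldsymbol{g}$ is positive definite, equality for all $t$ forces $A_t:=\mathcal L_{F_0^{-1}(d\psi)}J_t=0$ for all $t$; then $\dot J_t=\pm\mathbf{J}A_t=0$, so $J_t\equiv\Jj$, and $A_0=0$ says $Y:=-F_0^{-1}(d\phi_0)$ preserves $\Jj$. As $Y$ is $F_0$-Hamiltonian it then preserves $(F_0,\Jj)$ and hence the entire generalized K\"ahler structure, i.e.\ it is a Hamiltonian Killing field in the sense of Proposition~\ref{p:killing-geodesics}. The geodesic built there from $Y$ and the given $F_t$ share the base point $F_0$ and the initial velocity $\mathbf{X}_{\phi_0}$, so by uniqueness of solutions of~\eqref{e:geodesic} they coincide, i.e.\ $F_t=\exp(-t\Jj Y)^{*}F_0$. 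Conversely, for a geodesic induced by such a $Y$ one has $[Y,\Jj Y]=(\mathcal L_Y\Jj)Y=0$, so $Y$ commutes with the flow $\exp(-t\Jj Y)$ and $\mathcal L_Y J_t=\Phi_t^{*}(\mathcal L_Y\Jj)=0$; thus $A_t\equiv0$ and equality holds identically.

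The main obstacle I anticipate is twofold. First, the sign bookkeeping in the identity $\langle d\phi_t,Z_t\rangle=-\dot\phi_t$ and in the final chain of substitutions must be reconciled carefully with the conventions for $\Ii,\Jj$ acting on forms versus vectors, so that~\eqref{e:geodesic} is matched on the nose; this is elementary but error-prone. Second, the appeal to uniqueness of geodesics in the equality case is only a \emph{formal} ODE statement on the Fr\'echet manifold $\GK_{\poiss,\alpha}$, and I would frame it as a comparison of two smooth solutions of~\eqref{e:geodesic} with identical Cauchy data rather than as an existence assertion, in keeping with the formal nature of the connection $\mathcal D$.
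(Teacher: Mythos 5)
Your proposal is correct and follows essentially the same route as the paper: the Moser isotopy of Lemma~\ref{l:complex-orbit}, the key observation that the geodesic equation forces the pulled-back potential $\psi_t=\Phi_t^*\phi_t$ to be constant in $t$, and the momentum-map identity of Theorem~\ref{t:GKscal-moment-map} combined with Lemmas~\ref{l:complex-orbit} and~\ref{l:complex} to exhibit the derivative as $\big\|\mathcal L_{Y_{\phi_0}}J\big\|^2_{\boldsymbol{g}_J}\ge 0$. The only (inessential) divergence is in the equality case, where the paper verifies directly from the geodesic equation that $Y_t=-F_t^{-1}(d\phi_t)$ is time-independent instead of appealing to formal uniqueness of geodesics with identical Cauchy data; both arguments are formal and of comparable rigor.
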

\begin{proof}
We suppose (without loss, see Remark~\ref{r:geodesic-normalization}) that $\phi_t$ is $dV_{F_t}$-normalized, i.e., $\int_M \phi_t F_t^{[n]} =0$. We then have to compute
\[-\frac{d}{dt} \int_M {\Gscal}_{(F_t, J)} \phi_t F_t^{[n]}.\]
It is enough to establish the positivity of the derivative at $t=0$. The computation at time $t_0$ will follow from the latter by considering the re-parametrized geodesic $\phi_{t+ t_0}$.  To this end, we apply the Moser isotopy $\Phi_t$ defined in Lemma~\ref{l:complex-orbit}.  In particular, we first claim that $\psi_t := \Phi_t^* (\phi_t) = \phi_0$.  Using the notation of Lemma \ref{l:complex-orbit} we compute
\begin{gather*}
\begin{split}
\frac{d}{dt} \psi_t =&\ \Phi_t^*\Big({\mathcal L}_{Z_t} \phi_t + \dot \phi_t \Big) =\Phi_t^*\Big(\big\langle -F_t^{-1}(I_td\phi_t), d\phi_t \big\rangle + \dot \phi_t\Big)\\
 =&\  \Phi_t^*\Big(\tr_{F_t} (I_t d\phi_t  \wedge d\phi_t) + \dot \phi_t\Big) =\Phi_t^*\Big(-\tr_{F_t} (d\phi_t  \wedge Jd\phi_t) + \dot \phi_t\Big)=0. \end{split}
 \end{gather*}
Using this fact we have
\[ \begin{split}
-\left. \frac{d}{dt} \right |_{t=0}\int_M {\Gscal}_{(F_t, J)} \phi_t F_t^{[n]} &= -\left. \frac{d}{dt} \right |_{t=0}\int_M  \Phi_t^*\Big({\Gscal}_{(F_t, J)} \phi_t F_t^{[n]}\Big) \\
&=- \left. \frac{d}{dt} \right |_{t=0} \int_M {\Gscal}_{(F_0, J_t)} \phi_0 F_0^{[n]} \\
&=- {\boldsymbol{\Omega}}_{J}\Big(\big(-{\mathcal L}_{Y_{\phi_0}} J\big), \dot{J} \Big) \\
&= {\boldsymbol{\Omega}}_{J}\Big(\big(-{\mathcal L}_{Y_{\phi_0}} J\big), J \big(-{\mathcal L}_{Y_{\phi_0}} J\big)\Big) \\
&= \Big\|{\mathcal L}_{Y_{\phi_0}} J\Big\|^2_{\boldsymbol{g}_J}.
\end{split}\]
In the above equalities, $Y_{\phi_0}:= - F_0^{-1}(d\phi_0)$  and for passing to the third line we have used that ${\Gscal}_{(F_0, J_t)}$ is a moment map for the action of ${\rm Ham}(M, F_0)$ on $\left({\AGK}_F,\boldsymbol{\Omega}\right)$,  for passing to the fourth line we have used Lemma~\ref{l:complex-orbit} to identify $\dot J$, and for passing to the last line we have used Lemma~\ref{l:complex}.

This yields the claimed inequality, and for equality to hold for all $t$, one must have that the vector field  $Y_t:=-F_t^{-1}(d\phi_t)$ is $J$-holomorphic.  As it preserves $F_t$, $Y_t$ thus preserves the whole biHermitian structure $(g_t, b_t, I_t, J, F_t)$, i.e., it is a Killing field with potential $\phi_t$. To obtain the claim, it is enough to show that $Y_t=Y$ is time independent, as the $F_t$-normalized potential of the Killing field $Y$ with respect to $F_t$ is unique by the maximum principle, and is given by the geodesic determined by $Y$ via Proposition \ref{p:killing-geodesics}. Below we check that $Y_t$ is time independent, by using the geodesic equation \eqref{e:geodesic}, expressed as $\dot \phi = \langle Y, Id\phi \rangle$:
\[
\begin{split}
\dot{Y} &= F^{-1} \dot F F^{-1} (d\phi) - F^{-1}(d\dot \phi) = F^{-1}\Big(- \imath_Y dd^c_I \phi -  d \imath_Y I\phi\Big)\\
            &= F^{-1}\Big(-{\mathcal L}_Y (Id\phi) \Big) =  F^{-1}\Big(-I{\mathcal L}_Y d\phi \Big) =0,
\end{split}
\]
where in the last line we used that $Y$ is Killing (and thus ${\mathcal L}_Y I=0$) and ${\mathcal L}_Y \phi = -\big\langle F^{-1}(d\phi), d\phi \big\rangle =0$.
\end{proof}

\begin{cor}[Conditional uniqueness]\label{c:conditional-uniqueness}
Suppose $F_0, F_1 \in \GK_{\poiss,\alpha}$ are cscGK structures connected by a smooth geodesic $F_t$.  Then there exists $Y\in\hred(J,\poiss_J)$ such that $F_t=\Phi_t^* F_0$, where $\Phi_t=\exp(-t J Y)\in \mathrm{Aut}_{\mathrm{red}}(J,\poiss_J)$.
\end{cor}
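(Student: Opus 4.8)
The plan is to combine the monotonicity of the Mabuchi $1$-form along geodesics (Proposition~\ref{p:Mabuchi-convexity}) with the fact that both endpoints are cscGK in order to pin down the geodesic exactly. Write $F_t$ as the Hamiltonian deformation associated to a path $\phi_t$ solving the geodesic equation~\eqref{e:geodesic}, normalized (using Remark~\ref{r:geodesic-normalization}) so that $\int_M \phi_t F_t^{[n]} = 0$ for all $t$. The velocity of the geodesic at time $t$ is the fundamental vector field $\mathbf{X}_{\phi_t}$, so the quantity to track is $t \mapsto \boldsymbol{\tau}(\mathbf{X}_{\phi_t})$. First I would evaluate this at the two endpoints: since $F_0$ and $F_1$ are cscGK, $\Gscal_{(F_i, J)} = \overline{\mu}$ is constant there, whence
\[
\boldsymbol{\tau}_{F_i}(\mathbf{X}_{\phi_i}) = -\int_M \phi_i\,\Gscal_{(F_i, J)}\,F_i^{[n]} = -\overline{\mu}\int_M \phi_i\,F_i^{[n]} = 0, \qquad i = 0, 1,
\]
by the normalization of $\phi_i$.

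Next I would run the pinching argument. By Proposition~\ref{p:Mabuchi-convexity} the function $t \mapsto \boldsymbol{\tau}(\mathbf{X}_{\phi_t})$ is nondecreasing on $[0,1]$; since it vanishes at both $t = 0$ and $t = 1$, it must be identically zero, so in particular $\tfrac{d}{dt}\boldsymbol{\tau}(\mathbf{X}_{\phi_t}) \equiv 0$. Invoking the equality case of Proposition~\ref{p:Mabuchi-convexity}, I conclude that $\phi_t$ is precisely the geodesic induced by a Hamiltonian Killing field in the sense of Proposition~\ref{p:killing-geodesics}: there is a vector field $Y = -F_0^{-1}(d\phi_0)$ preserving $(F_0, J)$, and $F_t = \Phi_t^* F_0$ with $\Phi_t = \exp(-tJY)$.

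It then remains to verify that $Y \in \hred(J, \poiss_J)$ and that $\Phi_t \in \mathrm{Aut}_{\mathrm{red}}(J, \poiss_J)$. By construction $Y$ is $F_0$-Hamiltonian and holomorphic, and, as observed in the proof of Proposition~\ref{p:Mabuchi-convexity}, a Hamiltonian Killing field of $(F_0, J)$ preserves the entire biHermitian structure $(g_0, b_0, I_0, J, F_0)$; hence it preserves $\poiss = \tfrac{1}{2}[I_0, J]g_0^{-1}$ and therefore $\poiss_J$. Thus $Y \in \mathfrak{h}(J, \poiss_J)$, and being of the form $Y = F_0^{-1}(d(-\phi_0))$ it lies in $\hred(J, \poiss_J)$ by Definition~\ref{d:hred}. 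Since $\hred(J, \poiss_J)$ is $J$-invariant, $-JY \in \hred(J, \poiss_J)$, so the one-parameter subgroup $\Phi_t = \exp(-tJY)$ is contained in $\mathrm{Aut}_{\mathrm{red}}(J, \poiss_J)$, as claimed.

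I do not anticipate a serious obstacle, since the analytic content is entirely packaged in Proposition~\ref{p:Mabuchi-convexity} and the remaining work is endpoint bookkeeping. The one point requiring care is confirming that the Killing field $Y$ produced by the equality case genuinely preserves the holomorphic Poisson tensor $\poiss_J$, and not merely $J$, so that $\Phi_t$ lands in $\mathrm{Aut}_{\mathrm{red}}(J, \poiss_J)$ rather than only in $\Aut(J)$; this is what the observation that $Y$ preserves the full biHermitian data secures.
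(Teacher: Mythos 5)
Your proof is correct and follows essentially the same route as the paper: vanishing of $\boldsymbol{\tau}$ at the cscGK endpoints, the monotonicity of $t\mapsto\boldsymbol{\tau}(\mathbf{X}_{\phi_t})$ from Proposition~\ref{p:Mabuchi-convexity} forcing it to vanish identically, and the equality case producing the Hamiltonian Killing field $Y$. Your extra paragraph checking that $Y$ preserves $\poiss_J$ and lies in $\hred(J,\poiss_J)$ is a correct elaboration of a point the paper's proof leaves implicit.
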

\begin{proof}
    Let $\phi_t\in C^\infty_0(M,\R), 0\leq t\leq 1$ be a path inducing a geodesic between $F_0$ and $F_1$. Since $F_0$ and $F_1$ are cscGK, we have $\boldsymbol{\tau}\big|_{F_0}=\boldsymbol{\tau}\big|_{F_1}=0$. By Proposition~\ref{p:Mabuchi-convexity} function $t\mapsto \boldsymbol{\tau}(\mathbf{X}_{\phi_t})$ is non-decreasing, and being 0 at $t=0$ and $t=1$, it must vanish identically. 
    Therefore by the second part of Proposition~\ref{p:Mabuchi-convexity} there exists $Y\in\hred(J,\poiss_J)\cap \mathfrak{ham}(M,F_0)$ such that the flow of $-JY$ induces the path $F_t$.
\end{proof}

\section{The toric case}~\label{s:toric}

In this section we consider the case when $(M, J, \T^{\C})$ is a (projective) smooth toric variety under the effective action of a complex  $n$-dimensional complex torus $\T^{\C}\simeq (\C^*)^{n}$. We denote by $\T$ the corresponding compact (real) $n$-dimensional torus and by $\tor$ its Lie algebra.   We will assume that $\poiss_J$ is a $\T^{\C}$-invariant holomorphic Poisson tensor on $(M, J)$, i.e., given by an element of $\Wedge^{2}(\tor\otimes \C)$ (see \cite[Prop.~2.14]{Hong}).

\subsection{Reduction to invariant structures} 

As a corollary of Theorem~\ref{t:Calabi-Lichne-Matsushima}  we have
\begin{cor}\label{c:T-reduction} Suppose $(M, J, \T^{\C})$ is a smooth projective toric variety and $\poiss_J$ a $\T^{\C}$-invariant holomorphic Poisson tensor.  Let $F_0$ be  a  $\T$-invariant symplectic type generalized K\"ahler structure in ${\GK}_{\poiss}$. Then, up to isometry in $\Aut_0(J, \poiss_J)$, any extremal generalized K\"ahler structure $F\in\GK_{\poiss}$ is $\T$-invariant.
\end{cor}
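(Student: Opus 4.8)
The plan is to exploit the extremal structure via the Calabi--Lichnerowicz--Matsushima result (Theorem~\ref{t:Calabi-Lichne-Matsushima}) to force the extremal structure into a maximal torus, and then conjugate that torus onto the reference torus $\T$ by an element of $\Aut_0(J,\poiss_J)$. Suppose $F\in\GK_{\poiss}$ is extremal with extremal vector field $\chi=-F^{-1}(d\Gscal_{(F,J)})$. By Theorem~\ref{t:Calabi-Lichne-Matsushima}, $(F,J)$ is invariant under a maximal compact connected subgroup $K_0=(\Aut_{\mathrm{red}}(J,\poiss_J)\cap\Ham(M,F))_0$, and in particular $F$ is invariant under a maximal real torus $\T'\subset\Aut_{\mathrm{red}}(J,\poiss_J)$ containing $\exp(t\chi)$. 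The first step is therefore to record that $F$ is $\T'$-invariant for such a maximal torus $\T'$.

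Next I would compare $\T'$ with the given torus $\T$. Since $(M,J,\T^\C)$ is a smooth projective toric variety, $\T$ is a maximal torus in $\Aut_0(M,J)$, and since $\poiss_J$ is $\T^\C$-invariant, $\T\subset\Aut_0(J,\poiss_J)$. The torus $\T$ is maximal in $\Aut_{\mathrm{red}}(J,\poiss_J)$ as well: indeed $\Aut_{\mathrm{red}}(J,\poiss_J)$ is a closed subgroup of $\Aut_0(J,\poiss_J)$ containing $\T$ (as $\T$ preserves $F_0$, hence acts in a Hamiltonian fashion and lies in the kernel of $\hat\tau$), and a maximal torus of the ambient toric automorphism group restricts to a maximal torus of any closed connected subgroup containing it. Thus both $\T$ and $\T'$ are maximal tori in the connected complex Lie group $\Aut_{\mathrm{red}}(J,\poiss_J)$. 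The key step is then to invoke the standard conjugacy of maximal compact tori inside a connected (complex) Lie group: there exists $a\in\Aut_{\mathrm{red}}(J,\poiss_J)\subset\Aut_0(J,\poiss_J)$ with $a\,\T'\,a^{-1}=\T$.

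Transporting $F$ by this $a$ produces $a^*F$ (or $a_*F$, depending on orientation of the action), which is again a symplectic type GK structure in $\GK_{\poiss}$ since $a$ preserves both $J$ and $\poiss_J$, and which is now invariant under $\T=a\T' a^{-1}$. This gives the desired $\T$-invariant structure isometric to $F$ via an element of $\Aut_0(J,\poiss_J)$, completing the argument.

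I expect the main obstacle to be the verification that $\T$ is genuinely \emph{maximal} in $\Aut_{\mathrm{red}}(J,\poiss_J)$ rather than merely in $\Aut_0(M,J)$, and the careful bookkeeping that the conjugating element can be chosen inside $\Aut_{\mathrm{red}}(J,\poiss_J)$ (so that it preserves $\poiss_J$ and the relevant cohomological normalization) rather than in the larger group $\Aut_0(J,\poiss_J)$. Once the two tori are identified as maximal tori of the same connected group, the conjugacy theorem for maximal tori does the rest; the only delicacy is ensuring the group-theoretic statements apply in the (possibly non-reductive) Lie group $\Aut_{\mathrm{red}}(J,\poiss_J)$, for which one restricts to a maximal compact subgroup whose complexification is $\Aut_{\mathrm{red}}(J,\poiss_J)^\chi=K_0^\C$ by Theorem~\ref{t:Calabi-Lichne-Matsushima}.
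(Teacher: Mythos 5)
Your proposal is correct and follows essentially the same route as the paper: invoke Theorem~\ref{t:Calabi-Lichne-Matsushima} to get invariance under a maximal torus $\T'$ of $\Aut_{\mathrm{red}}(J,\poiss_J)$, observe that $\T$ is also maximal there, and conjugate $\T'$ onto $\T$ by an element of $\Aut_0(J,\poiss_J)$. The only cosmetic difference is that the paper disposes of the distinction between $\Aut_{\mathrm{red}}$ and $\Aut_0$ at once via $b_1(M)=0$ (Remark~\ref{r:b_1=0}), whereas you verify $\T\subset\Aut_{\mathrm{red}}(J,\poiss_J)$ and its maximality there directly; both are fine.
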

\begin{proof} By the toric assumption on $(M, J)$, we know that $b_1(M)=0$ and thus  $\Aut_{0}(J, \poiss_J)= \Aut_{\rm red}(J,\poiss_J)$ by Remark~\ref{r:b_1=0}. Note that $\T$ is a maximal compact torus in $\Aut_{0}(J, \poiss_J)$. By Theorem~\ref{t:Calabi-Lichne-Matsushima}, $(F, J)$ is invariant under a maximal torus $\T'$ in $\Aut_0(J, \poiss_J)$. As any two such tori are conjugated inside $\Aut_0(J, \poiss_J)$,  after acting with an element of $\Aut_0(J, \poiss_J)$, we can assume that $(F, J)$ is $\T$-invariant.
\end{proof}

\subsection{Abreu--Guillemin type description of toric structures} 

Because of  Corollary~\ref{c:T-reduction}, we focus from now on $\T$-invariant symplectic type generalized K\"ahler structures. The corresponding spaces will be denoted by upper script $\T$:   ${\GK}^{\T}_{\poiss}, (\GK_{\poiss,\alpha}^{\T})^{0}$,  ${\GK}_{\poiss, \alpha}^{\T}$, etc.   As $b_1(M)=0$,  for any $F\in {\GK}^{\T}_{\poiss}$ the action of $\T$ is $F$-Hamiltonian,  i.e., $(F, J)$ is  toric in the sense of  \cite{ASU,W2}. Toric symplectic type  generalized K\"ahler structures were extensively studied in \cite{boulanger,W1,W2},  where an Abreu--Guillemin type description~\cite{Abreu, guillemin} is obtained. We follow the notation of our previous work~\cite{ASU} where we have recast the classification of \cite{boulanger,W1,W2} in a  formalism  compatible with the one used in the current article. The following discussion is a brief summary of \cite[Sec.4]{ASU}. We refer the reader to this work for further details and literature review.

We thus assume that  $(M, g, J, I, F)$ is a compact symplectic type generalized K\"ahler structure which admits an effective isometric and $F$-Hamiltonian action of a compact torus $\T$ with $2\, {\rm dim}_{\R} \T ={\rm dim}_{\R} M=2n$. We denote by  $(\Pol, \Lab)$ the (labeled) Delzant polytope of $(M, F)$ in $\tor^*\simeq \R^n$,  and by $\mu=(\mu_1, \ldots, \mu_n) : M \to \Pol$  the corresponding momenta.  Here $\tor = {\rm Lie}(\T)$  is the Lie algebra of $\T$ and we use a lattice basis (and its dual basis) to identify respectively  $\tor$  and its dual vector space $\tor^*$ with $\R^n$;  in particular,   $\T = \R^n/2\pi {\mathbb Z}^n$. In this set up, it is observed by Guillemin~\cite{guillemin} that there are canonical \emph{angular coordinates}  $d\ang=\{d\ang_1, \ldots, d\ang_n\}$ defined on the dense open subset  $\mathring{M}:= \mu^{-1}(\mathring{\Pol})$ (where $\mathring{\Pol}$ is the interior of $\Pol$).
Furthermore, with respect to the coordinate system $(\mu, \theta)$,
the $\T$-invariant $2$-form on $\mathring{M}$
\[\omega := \langle d\mu, d\ang \rangle = \sum_{i=1}^n d\mu_i \wedge d\ang_i\]
is smoothly extendable to $M$ and defines a symplectic structure (still denoted by $\omega$). In general, $F$ and $\omega$ are different symplectic forms, even though they are $\T$-equivariantly symplectomorphic,  belong to the same deRham class $[F] =[\omega]$, and share the same momentum coordinates and Delzant polytope $(\Pol, \Lab)$.
The main conclusion in \cite{boulanger,W2} is that, up to a $\T$-equivariant isometry, $(F, J)$ is obtained as follows:
\begin{itemize}
\item There exists a unique,  up to the addition of an affine-linear function on $\tor^*$,  convex smooth function $u(x)$ defined on  the interior $\mathring{P}$ such that
\begin{equation*}\label{g-u}
g_{u} := \sum_{i,j=1}^n  u_{,ij}(\mu) d\mu_i d\mu_j  + u^{,ij}(\mu) d\ang_i d\ang_j \end{equation*}
is a $\omega$-compatible K\"ahler structure (defined on $\mathring{M}$ and extendable to $M$) whose complex structure is \[J_u: = g_u^{-1}\omega = \sum_{i,j=1}^n\left(u_{,ij}d\mu_i \otimes \frac{\partial}{\partial \ang_j} - u^{,ij}  d\ang_i\otimes \frac{\partial}{\partial \mu_j}\right). \]\
\item There exist unique elements $A, B \in \Wedge^2 \tor$ such that $(F, J)$ is obtained from $(\omega, J_u)$ by deformations of type A and B, i.e.
\[
\begin{split}
J &= \sum_{i,j=1}^n\left(\big(u_{,ij} + A_{ij}\big) d\mu_i \otimes \frac{\partial}{\partial \ang_j} - \big(u_{,ij} + A_{ij}\big)^{-1}_{ij} d\ang_i\otimes \frac{\partial}{\partial \mu_j}\right), \\
F &= \sum_{i=1}^n d\mu_i \wedge d\ang_i + \sum_{i,j=1}^n B_{ij} d\mu_i \wedge d\mu_j,
\end{split}\]
where in the above formulae  $A=(A_{ij}), B=(B_{ij})$ using the chosen basis of $\tor$.\\
\item The corresponding Poisson tensor $\poiss_J$ is
\begin{equation}\label{p:toric-Poisson}
\poiss_J= \nontwice \sum_{i,j=1}^n \Big(A_{ij} + \i B_{ij} \Big)\Big(\frac{\partial}{\partial \ang_i} -\i J\frac{\partial}{\partial \ang_i}\Big) \wedge \Big(\frac{\partial}{\partial \ang_j} - \i J\frac{\partial}{\partial \ang_j}\Big).
\end{equation}
\end{itemize}

\begin{defn}[Abreu--Guillemin data]\label{d:abreu-guillemin} For a given toric symplectic type GK structure $(M, F, J, \T)$ with  Delzant polytope $(\Pol, \Lab)$, we will refer to  $(u, A, B)$ defined above as the corresponding \emph{Abreu--Guillemin data}. We note that $A, B \in \Wedge^2\tor$ determine and are determined by the holomorphic Poisson tensor $\poiss_J$ of $(F, J)$ via \eqref{p:toric-Poisson}. The convex function  $u(x)$ is defined only up to additive affine-linear function,  and the K\"ahler toric structure $(\omega, J_u, g_u)$ is referred to as \emph{the toric  K\"ahler reduction} of $(F, J)$.
\end{defn}

We now describe more precisely the space ${\mathcal S}_{A, B}(\Pol, \Lab)$ of smooth convex functions $u(x)$ on $\mathring{\Pol}$ appearing in Definition~\ref{d:abreu-guillemin}:
\begin{lemma}\label{l:boundary} A smooth strictly convex function $u(x)$ on $\mathring{\Pol}$   belongs to ${\mathcal S}_{A, B}(\Pol, \Lab)$ iff the following conditions hold:
\begin{enumerate}
\item[(i)] $u(x)$ is smooth and strictly convex on $\mathring{\Pol}$ such that $\left(\Hess(u) + \i B\right)$ is positive definite Hermitian form at any point of $\mathring{\Pol}$;
\item[(ii)] $u(x) = \tfrac{1}{2} \sum_{L \in \Lab} L(x) \log L(x)  + v(x)$ with  $v(x)$ smooth over  $\Pol$;
\item[(iii)] on the interior  $\mathring{{\mathrm F}}$ of any face  $\mathrm{F} \subset \Pol$,  $\left(\Hess(u) + \i B\right)$ is a smooth and positive definite Hermitian form on $\tor_{\mathrm{F}}^*\otimes \C$.
\end{enumerate}
The above conditions  (i)-(ii)-(iii) are equivalent to the conditions (i)-(ii)-(iii)' where
\begin{enumerate}
\item[(iii)'] $\left(\Hess(u) + \i B\right)^{-1} {\bf G}_0$  extends smoothly on $\Pol$, where ${\bf G}_0 = \Hess\left(\tfrac{1}{2} \sum_{L \in \Lab} L(x) \log L(x)\right)$.
\end{enumerate}
\end{lemma}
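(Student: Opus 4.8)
The plan is to deduce the lemma from the Abreu--Guillemin type classification of toric symplectic type GK structures recalled above, in which membership of $u$ in $\mathcal{S}_{A,B}(\Pol,\Lab)$ means precisely that the Abreu--Guillemin data $(u,A,B)$, \emph{a priori} defining a structure $(F,J)$ only on the dense set $\mathring M=\mu^{-1}(\mathring\Pol)$, extends to a genuine symplectic type GK structure on the compact manifold $M$. Since $A,B\in\Wedge^2\tor$ are \emph{constant} bivectors, this extension is a boundary regularity problem for the single convex function $u$ on $\Pol$, which I would attack by a local analysis near each face $\mathrm F\subset\Pol$, exactly as in the toric K\"ahler case but with the real Hessian replaced throughout by the Hermitian matrix $H:=\Hess(u)+\i B$ governing the metric $g$, while the complex structure $J$ is built from $\Hess(u)+A$. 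Note that condition (i) forces $H$ to be positive definite Hermitian, which in turn makes $\Hess(u)$ positive definite (pairing against real vectors kills the skew part $\i B$); hence $\Hess(u)+A$ is automatically invertible on $\mathring\Pol$, so $J$ is well defined there and the whole issue is the behaviour across the collapsing orbits over the boundary.

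First I would reduce to the local model near a face. In the coordinates $(\mu,\theta)$ the tensors $g$, $F$ and $J$ are explicit rational expressions in the entries of $H$, of $\Hess(u)+A$, and of their inverses. Smooth extendability of $(F,J)$ across the isotropic submanifold $\mu^{-1}(\mathring{\mathrm F})$ attached to a face $\mathrm F$ of codimension $k$, cut out by the vanishing of labels $L_{r_1},\dots,L_{r_k}\in\Lab$ whose inward normals $\nu_{r_1},\dots,\nu_{r_k}$ extend to a lattice basis by the Delzant condition, forces the standard logarithmic singularity of $u$; this yields condition (ii), with the Guillemin potential $u_G=\tfrac12\sum_{L\in\Lab}L\log L$ supplying exactly the singular part and $v=u-u_G$ smooth up to $\Pol$. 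Positivity of $g$ on $\mathring M$ is condition (i), while the requirement that $g$ stay smooth and positive definite along the orbits over $\mathring{\mathrm F}$, after collapsing the circle factors normal to $\mathrm F$, is condition (iii): the restriction of $H$ to $\tor_{\mathrm F}^*\otimes\C$ must be a smooth positive definite Hermitian form.

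For the equivalence of (iii) and (iii)$'$, I would argue algebraically from (ii). Writing $H={\bf G}_0+C$, where ${\bf G}_0=\Hess(u_G)=\tfrac12\sum_{L\in\Lab}L^{-1}\,\nu_L\otimes\nu_L$ and $C:=\Hess(v)+\i B$ is Hermitian and smooth up to $\Pol$, one has
\[
H^{-1}{\bf G}_0 = H^{-1}(H-C) = \Id - H^{-1}C,
\]
so (iii)$'$ is equivalent to the smooth extendability of $H^{-1}C$ to $\Pol$. It then remains to verify, via the face-adapted local model, that $H^{-1}C$ extends smoothly precisely when the restriction of $H$ to each face tangent space is smooth and invertible, i.e. condition (iii). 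Near a facet ($k=1$, normal $\nu$) one decomposes $\tor^*$ into the $\nu$-direction and the facet tangent space, so that ${\bf G}_0$ blows up like $L^{-1}$ only in the normal block; the Schur complement of that blown-up block in $H$ has boundary limit equal to the restriction of $H$ to the facet tangent space, and its smooth invertibility is exactly what controls $H^{-1}$ paired against both the bounded and the singular parts of ${\bf G}_0$. The higher-codimension faces follow by iterating this normal/tangential block decomposition.

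The main obstacle I anticipate is the codimension $\geq 2$ analysis: there ${\bf G}_0$ degenerates simultaneously in several normal directions with independent rates $L_{r_a}^{-1}$, and one must show that the mixed singularities of $H$ and of $H^{-1}$ organize themselves so that $H^{-1}{\bf G}_0$ stays smooth exactly under (iii). The new feature relative to the classical K\"ahler situation is the skew imaginary part $\i B$; but since $B$ is constant it contributes only a bounded Hermitian perturbation that does not alter the leading singular block carried by ${\bf G}_0$, and the positivity of $H$ on each $\tor_{\mathrm F}^*\otimes\C$ is precisely what replaces positivity of the real Hessian and guarantees invertibility of the relevant Schur complements. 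I expect that, modulo this bookkeeping, the argument runs parallel to the toric K\"ahler boundary analysis and to our treatment in \cite{ASU}.
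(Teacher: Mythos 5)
Your proposal is correct and follows essentially the same route as the paper, which itself only sketches the argument: it notes that (i)--(iii) extend Donaldson's boundary conditions from the toric K\"ahler case, leaves the local face analysis to the reader, and obtains (iii)$\Leftrightarrow$(iii)$'$ by a Taylor expansion near a face point as in the cited references (AAS, W2, ACGT2). Your reduction $H^{-1}{\bf G}_0=\Id-H^{-1}C$ and the normal/tangential Schur-complement analysis are precisely the content of those cited boundary-condition arguments, with the constant skew perturbation $\i B$ handled exactly as you describe.
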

\begin{proof}
The conditions (i)-(ii)-(iii) extend the one obtained by Donaldson~\cite{donaldson-interior} in the K\"ahler case, and reflect the fact that $(F, J)$ induces a symplectic structure taming $J$ on the pre-image of each face ${\rm F}$ (which is a smooth toric sub-manifold of $(M, J)$). The proof is similar and left to Reader. One can alternatively  use a Taylor expansion  around a point of $\mathring{F}$ as in \cite[App.A2]{AAS} to show   that (i)-(ii)-(iii) are equivalent to (i)-(ii)-(iii)'. The latter are necessary and sufficient for the extension of $(F, J)$ to $M$ by \cite{W2} (which in turn uses \cite[Lemma~3 and Remark 4(ii)]{ACGT2}). \end{proof}

\begin{lemma}\label{l:S-convex} ${\mathcal S}_{A, B}(\Pol, \Lab) = {\mathcal S}_{0, B}(\Pol, \Lab)$ is a  linearly convex subset of $\mathcal{S}_{0,0}(\Pol, \Lab)$,  which is a Fr\'echet space modeled on $C^{\infty}(\Pol)$.
\end{lemma}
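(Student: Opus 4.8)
The plan is to deduce all three assertions directly from the intrinsic membership criterion of Lemma~\ref{l:boundary}, which characterizes ${\mathcal S}_{A,B}(\Pol,\Lab)$ by the conditions (i)--(ii)--(iii) (equivalently (i)--(ii)--(iii)$'$). The first observation is the $A$-independence: each of the conditions (i), (ii), (iii) involves only the Hermitian form $\Hess(u)+\i B$ and the fixed singular reference potential $\tfrac{1}{2}\sum_{L\in\Lab} L\log L$, while the bivector $A\in\Wedge^2\tor$ never enters. Hence the set of admissible $u$ is insensitive to $A$, giving ${\mathcal S}_{A,B}(\Pol,\Lab)={\mathcal S}_{0,B}(\Pol,\Lab)$ with no further argument.

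Next I would record the inclusion ${\mathcal S}_{0,B}\subseteq{\mathcal S}_{0,0}$ and linear convexity together. Since $B$ is antisymmetric, $\i B$ is Hermitian and $\langle Bz,z\rangle=0$ for every real vector $z$; thus the restriction to real vectors of the positive definite Hermitian form $\Hess(u)+\i B$ equals $\Hess(u)$, which is therefore positive definite, and likewise on each face by restricting condition (iii). This yields the $B=0$ conditions, so ${\mathcal S}_{0,B}\subseteq{\mathcal S}_{0,0}$. For convexity, let $u_0,u_1\in{\mathcal S}_{0,B}$ and $u_t=(1-t)u_0+tu_1$, $t\in[0,1]$. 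Condition (ii) persists because the singular part is common to both, so $v_t=(1-t)v_0+tv_1\in C^\infty(\Pol)$; conditions (i) and (iii) persist because $\Hess(u_t)+\i B=(1-t)\big(\Hess(u_0)+\i B\big)+t\big(\Hess(u_1)+\i B\big)$ is a convex combination of positive definite Hermitian forms, and the cone of such forms is convex. Hence $u_t\in{\mathcal S}_{0,B}$, so ${\mathcal S}_{0,B}$ is a linearly convex subset of ${\mathcal S}_{0,0}$.

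For the Fréchet structure I would use the affine chart $u\mapsto v:=u-\tfrac{1}{2}\sum_{L\in\Lab}L\log L$. By condition (ii) this is a bijection of ${\mathcal S}_{0,0}(\Pol,\Lab)$ onto a subset of $C^\infty(\Pol)$, affine with model vector space the Fréchet space $C^\infty(\Pol)$. The image is convex (the case $B=0$ of the previous paragraph) and open, the openness being exactly the boundary regularity encoded in (iii)$'$: smooth extendability and positivity of $\big(\Hess(u)\big)^{-1}{\bf G}_0$ over the compact polytope $\Pol$ is stable under small $C^\infty$ perturbations of $v$, as in the Abreu--Guillemin--Donaldson description \cite{donaldson-interior,ACGT2}. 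This exhibits ${\mathcal S}_{0,0}$ as an open convex subset of an affine space modeled on $C^\infty(\Pol)$.

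I expect the only genuinely analytic point to be this last openness claim, namely that a $C^\infty$-small change of $v$ does not destroy the boundary behaviour (iii)$'$; everything else reduces to the equivalence (iii)$\Leftrightarrow$(iii)$'$ of Lemma~\ref{l:boundary} together with the elementary convexity of the positive definite Hermitian cone. Since (iii)$'$ is precisely the condition guaranteeing a smooth extension to $\Pol$, I would settle the openness by the classical argument of Donaldson, reducing it to the invertibility and positivity of $\Hess(u)+\i B$ in a neighborhood of each face, both of which are open $C^2$ conditions.
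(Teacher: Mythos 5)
Your proposal is correct and takes essentially the same route as the paper: all three assertions are read off from the membership criterion (i)--(ii)--(iii) (resp.\ (iii)$'$) of Lemma~\ref{l:boundary} --- the $A$-independence because $A$ never enters those conditions, the inclusion and linear convexity from the positivity of $\Hess(u)+\i B$ on real vectors and the convexity of the cone of positive definite Hermitian forms, and the Fr\'echet structure from the openness of (iii)$'$ under small $C^{\infty}(\Pol)$ perturbations. The paper's own proof is a terser version of exactly this argument.
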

\begin{proof} The first claim follows from the description (i)-(ii)-(iii) of ${\mathcal S}_{A, B}(\Pol, \Lab)$ in Lemma~\ref{l:boundary} and the general relation of  Hermitian forms
$\left(\Hess(u) + \i B\right)_{\tor^*_{\mathrm{F}}} \le \Hess(u)_{\tor^*_{\mathrm{F}}}$. The convexity property also follows from (i)-(ii)-(iii). To observe the relevant Fr\'echet manifold structure of ${\mathcal S}_{A, B}(\Pol, \Lab)$, it is enough to show that  for any $u\in {\mathcal S}_{A, B}(\Pol, \Lab)$ and  $v\in C^{\infty}(\Pol)$, $u+tv \in {\mathcal S}_{A, B}(\Pol, \Lab)$  for $|t|<\epsilon(u, v)$. This follows easily from the description (i)-(ii)-(iii)' of  ${\mathcal S}_{A, B}(\Pol, \Lab)$ given in Lemma~\ref{l:boundary}. \end{proof}

\subsection{Invariant classes and geodesics}

The description of ${\mathcal S}_{A,B}(\Pol, \Lab)$ gives the following connectedness result in the toric case:
\begin{prop}\label{p:contractible} Let $(M, J_0, F_0)$ be a toric symplectic type generalized K\"ahler structure corresponding to Abreu--Guillemin data $(u_0, A, B), \, u_0\in {\mathcal S}_{A,B}(\Pol, \Lab)$.  Denote by $\alpha=[F_0]$ the corresponding deRham class in $H^2(M, \R)$. Then
${\GK}_{\poiss,\alpha}^{\T}$ is path-connected.
\end{prop}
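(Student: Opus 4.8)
The plan is to transport the problem to the convex space of Abreu--Guillemin symplectic potentials and exploit its linear convexity established in Lemma~\ref{l:S-convex}. First I fix the combinatorial data common to all of $\GK_{\poiss,\alpha}^{\T}$: since the class $\alpha$ and the $\T$-action are fixed, every $F\in\GK_{\poiss,\alpha}^{\T}$ has the same labelled Delzant polytope $(\Pol,\Lab)$ (after normalizing the momentum maps to remove the translational ambiguity), and since $\poiss_J$ is fixed, the bivectors $A,B\in\Wedge^2\tor$ attached to $F$ via \eqref{p:toric-Poisson} are one and the same for all $F$. Thus, by the Abreu--Guillemin description of \cite{boulanger,W2,ASU} recalled in Definition~\ref{d:abreu-guillemin}, the structure $(F,J)$ is encoded, up to a $\T$-equivariant isometry, by its symplectic potential $u\in\mathcal{S}_{A,B}(\Pol,\Lab)$. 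The crucial observation is that in the symplectic coordinates $(\mu,\theta)$ the form $F=\sum_i d\mu_i\wedge d\theta_i+\sum_{i,j}B_{ij}d\mu_i\wedge d\mu_j$ does \emph{not} depend on $u$, while the complex structure does; that is, in the model of a fixed symplectic manifold $(M,F_{\mathrm{std}},\T)$, varying $u$ produces a family of $F_{\mathrm{std}}$-tamed complex structures $J_u$ all carrying the common Poisson data $(A,B)$.

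Now given $F_0,F_1\in\GK_{\poiss,\alpha}^{\T}$ with potentials $u_0,u_1\in\mathcal{S}_{A,B}(\Pol,\Lab)$, I form the linear path $u_t:=(1-t)u_0+tu_1$. By Lemma~\ref{l:S-convex} one has $u_t\in\mathcal{S}_{A,B}(\Pol,\Lab)$ for all $t\in[0,1]$: concretely, the positive-definiteness of $\Hess(u_t)+\i B$ holds because positive Hermitian forms form a convex cone, while the boundary conditions (ii)--(iii) of Lemma~\ref{l:boundary} are convex in $u$ (condition (ii) being affine). Hence $u_t$ yields a smooth path of genuine $\T$-invariant symplectic type GK structures $(F_{\mathrm{std}},J_t)$, with $J_t:=J_{u_t}$, all having the fixed Poisson data $(A,B)$.

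It remains to realize this path inside $\GK_{\poiss,\alpha}^{\T}$, i.e.\ on the \emph{fixed} complex manifold $(M,J)$. The complex structures $J_t$ all define the same toric variety, since the underlying fan is fixed by the labels $\Lab$; choosing a smooth family of $\T$-equivariant diffeomorphisms $\Phi_t$ of $M$ with $\Phi_t^*J_t=J$ (via the holomorphic coordinates on the open orbit, extended over $M$ by the Abreu--Guillemin theory) and with $\Phi_i^*F_{\mathrm{std}}=F_i$ at the endpoints $i=0,1$, and setting $F_t:=\Phi_t^*F_{\mathrm{std}}$, I obtain a smooth path of symplectic forms taming $J$. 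Because each $\Phi_t$ is $\T$-equivariant and $A,B\in\Wedge^2\tor$, the Poisson tensor of $(F_t,J)$ equals $\poiss_J$ for every $t$; and since the $\Phi_t$ act on $H^2(M,\Z)$ by a continuous, hence constant, family of lattice automorphisms, $[F_t]=[F_0]=\alpha$ throughout. Thus $F_t\in\GK_{\poiss,\alpha}^{\T}$ joins $F_0$ to $F_1$, proving path-connectedness.

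The main obstacle is precisely this transport step: one must check that the linear deformation of symplectic potentials, which lives most naturally in the fixed-$F_{\mathrm{std}}$, varying-$J$ model, can be carried back to a genuine smooth path of symplectic forms on the fixed complex manifold $(M,J)$ without disturbing either the cohomology class $\alpha$ or the holomorphic Poisson tensor $\poiss_J$. This is bookkeeping within the Abreu--Guillemin correspondence of \cite{boulanger,W2,ASU}, the only substantive analytic input being the convexity of $\mathcal{S}_{A,B}(\Pol,\Lab)$ from Lemma~\ref{l:S-convex}; the nonlinear structure of $\GK_{\poiss,\alpha}^{\T}$, which obstructs a direct convexity argument through Hamiltonian potentials (cf.\ Proposition~\ref{p:gk_class}), is exactly what forces the passage through the potentials $u$.
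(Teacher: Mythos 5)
Your proposal is correct and follows essentially the same route as the paper: reduce to a common labelled polytope and common $(A,B)$ via equivariant Moser/symplectomorphisms, connect the symplectic potentials by the linear path $u_t=(1-t)u_0+tu_1$ using the convexity of $\mathcal{S}_{A,B}(\Pol,\Lab)$ from Lemma~\ref{l:S-convex}, and then transport the resulting path of complex structures back to a path of symplectic forms on the fixed $(M,J)$. The only difference is that the transport step you describe informally is handled in the paper by citing \cite[Lemma~3.6]{ASU}.
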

\begin{proof} If $F \in {\GK}^{\T}_{\poiss, \alpha}$, we can use the $\T$-equivariant Moser's lemma to  send   the $\T$-invariant generalized K\"ahler structure $F$ (via an isotopy of $\T$-equivariant diffeomorphisms connecting the identity) to a $\T$-invariant  GK structure $(F_0, J)$, compatible with the symplectic form $F_0$. Acting by a further $\T$-equivariant $F$-symplectomorphism  in ${\rm Symp}_0^{\T}(M, F_0)$ if  necessary,  we can also assume that $(F_0, J_0)$ and $(F_0, J)$  correspond to Abreu--Guillemin data $(u_0, A, B), (u, A, B)$, $u, u_0 \in \mathcal{S}_{A,B}(\Pol, \Lab)$ and are respectively obtained by deformations of type A and B of two $\omega$-compatible K\"ahler structures $J_{u_0}$ and $J_{u}$ with the same momentum-angular coordinates. As the space $\mathcal{S}_{A,B}(\Pol, \Lab)$ is linearly convex, letting $u_t:= (1-t)u_0 + tu \in \mathcal{S}_{A,B}(\Pol, \Lab)$ defines a smooth path $J_{t}$  between $J_0$ and $J$ inside the space of $\T$-invariant  $F_0$-compatible generalized K\"ahler structures.  Now, by \cite[Lemma~3.6]{ASU}, we can find a $\T$-equivariant isotopy of diffeomorphisms, sending $(F_0, J_t)$ to $(F_t, J_0)$,  preserving $[F_t]=[F_0]=\alpha$.
\end{proof}

One of the key applications of the Abreu--Guillemin formalism for the theory of toric K\"ahler manifolds is that it transforms the rather complicated geodesic equation in the space ${\KK}^{\T}_{\alpha}$  into the linear equation $\ddot u=0$ for the corresponding symplectic potentials. This implies, in particular, that the space of $\T$-invariant K\"ahler metrics  ${\KK}^{\T}_{\alpha}$ is geodesically connected,  which in turn yields the uniqueness  (modulo $\T^{\C}$) of the constant scalar curvature metrics in that space (see \cite{Guan}).  We show below  that  these phenomena  persist in the generalized K\"ahler setting. For simplicity, we check these facts under the assumption $A=0$, which we can suppose without loss of generality if we study  extremal generalized K\"ahler structures (as we show in the next subsection).

\begin{prop}\label{l:geodesic-toric} Let $(F_0, J)$ be a toric symplectic type generalized  K\"ahler manifold with holomorphic Poisson tensor $\poiss_J$ given by \eqref{p:toric-Poisson} for $A=0$ and $B\in \Wedge^2 \tor$. Denote by $(\omega_0, J)$ the corresponding $\T$-invariant K\"ahler  reduction and by $\alpha := [F_0]=[\omega_0]$ the corresponding K\"ahler  class.  For any path $F_t \in {\GK}^{\T}_{\poiss,\alpha}$  let  $\kom_t$ be the unique K\"ahler metric in $\alpha$ which solves  $\kom_t^n = F_t^n$ and denote by $\phi_t$ the unique  $\T$-invariant smooth function such that
\[ \omega_t =  \omega_{0} + dd^c_J \phi_t, \qquad \int_M \dot{\phi_t} \omega_t^n = 0, \qquad \phi_0 =0.\]
Then $F_t$ is obtained from the Hamiltonian deformation of $F_0$ with respect to $\dot \phi_t$ and the following conditions are equivalent:
\begin{itemize}
\item $\phi_t$ is a geodesic in the space of $\T$-invariant $\omega_0$-relative K\"ahler potentials;
\item $\dot\phi_t$ is a geodesic  on $\GK_{\poiss,\alpha}^0$ in the sense of Definition~\ref{d:geodesic};
\item If $(u_t, 0, B)$ denotes the corresponding to Abreu--Guillemin data of $(F_t, J, \T)$, then $\ddot u_t =0$.
\end{itemize}
\end{prop}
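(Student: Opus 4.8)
The plan is to pass to the Abreu--Guillemin (symplectic) picture and reduce both geodesic equations to a single statement about the symplectic potential $u_t$, the nontrivial new input being a pointwise identity that collapses the generalized K\"ahler geodesic operator onto the operator of the underlying K\"ahler reduction.

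Since $A=0$, the Abreu--Guillemin data of $(F_t,J,\T)$ is $(u_t,0,B)$ and the complex structure of the reduction coincides with $J$: in momentum--angle coordinates $(\mu,\theta)$ on $\mathring{M}$ one has $J=J_{u_t}$ and
\[
F_t=\sum_i d\mu_i\wedge d\theta_i+\sum_{i,j}B_{ij}\,d\mu_i\wedge d\mu_j=\omega+\beta,
\]
with $\omega=\sum_i d\mu_i\wedge d\theta_i$ and $\beta=\sum_{i,j}B_{ij}\,d\mu_i\wedge d\mu_j$ fixed. A first elementary observation is that $\beta$ involves only the $d\mu_i$, so that in $(\omega+\beta)^{[n]}$ every term carrying a factor $\beta$ has more than $n$ of the $d\mu_i$ and vanishes; hence $F_t^{[n]}=\omega^{[n]}$. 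Thus the K\"ahler reduction $(\omega,J_{u_t})$ --- equivalently its image $\omega_t$ under the Legendre/Moser transform in the fixed complex manifold $(M,J)$ --- has the same volume form as $F_t$, which is exactly the normalization $\omega_t^n=F_t^n$ defining $\omega_t$, and the symplectic potential of $\omega_t$ is $u_t$. Legendre duality gives $\dot u_t=-\dot\phi_t$ modulo affine-linear functions. By Proposition~\ref{p:gk_class} we may write $\dot F_t=dd^c_{I_t}\rho_t$ for a unique $dV_{F_t}$-normalized $\T$-invariant $\rho_t$; the Abreu--Guillemin correspondence of \cite{ASU} (already invoked in Proposition~\ref{p:contractible}) shows that the Hamiltonian deformation of $F_t$ and the K\"ahler deformation of its reduction $\omega_t$ induce one and the same variation $\dot u_t$ of the symplectic potential, whence $\rho_t=\dot\phi_t$. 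This proves the first assertion.

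The equivalence of the first two bullets rests on the following pointwise identity. For any $\T$-invariant function $\rho$ the one-form $d\rho=\sum_i\rho_{,i}\,d\mu_i$ involves only the $d\mu_i$, while (because $A=0$, i.e. $J=J_{u_t}$) one computes $Jd\rho=\sum_{i,k}\rho_{,i}u^{,ik}\,d\theta_k$, involving only the $d\theta_k$. Hence in $d\rho\wedge Jd\rho\wedge F_t^{[n-1]}$, expanding $F_t^{[n-1]}=(\omega+\beta)^{[n-1]}$, any term with at least one factor $\beta$ already carries at least $n+1$ of the $d\mu_i$ and vanishes; only the $\omega^{[n-1]}$ term survives. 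Combined with $F_t^{[n]}=\omega^{[n]}$ this gives
\[
\tr_{F_t}(d\rho\wedge Jd\rho)=\frac{d\rho\wedge Jd\rho\wedge\omega^{[n-1]}}{\omega^{[n]}}=\tr_{\omega_t}(d\rho\wedge Jd\rho),
\]
so that the nonlinear term of the generalized K\"ahler geodesic operator equals its value for the K\"ahler reduction $(\omega_t,J)$. (This is precisely where $A=0$ is essential: if $A\neq 0$ then $J\neq J_{u_t}$, $Jd\rho$ acquires $d\mu$-components, and the $\beta$-terms no longer drop out, which is why linearity holds only for this special class of Poisson tensors.) Inserting $\rho=\dot\phi_t$ and using the first assertion, the geodesic equation of Definition~\ref{d:geodesic} for the path potential $\dot\phi_t$ becomes
\[
\ddot\phi_t-\tr_{\omega_t}(d\dot\phi_t\wedge Jd\dot\phi_t)=0,
\]
which is exactly the Mabuchi geodesic equation for $\phi_t$ in the space of $\T$-invariant $\omega_0$-relative K\"ahler potentials. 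Hence the first and second bullets are equivalent.

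Finally, the equivalence of the Mabuchi geodesic equation for $\phi_t$ with the linearity $\ddot u_t=0$ of the symplectic potentials is the toric K\"ahler statement of Guan~\cite{Guan} --- geodesics in $\KK^{\T}_\alpha$ are affine in the momentum coordinates --- applied to the reductions $(\omega_t,J)$, whose symplectic potentials are $u_t$; this closes the cycle of equivalences. I expect the main obstacle to be the first assertion of the previous paragraph rather than the computation: making the identification $\rho_t=\dot\phi_t$ fully rigorous requires careful bookkeeping between the two pictures --- the fixed complex manifold $(M,J)$ on the one hand, and the Abreu--Guillemin normal form reached through the $t$-dependent $\T$-equivariant diffeomorphisms on the other --- and in particular verifying that the $t$-derivative of the GK Hamiltonian deformation and of the K\"ahler reduction are intertwined through the common variation $\dot u_t$. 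By contrast, once $A=0$ places us in the right coordinates, the trace identity is a clean finite-dimensional degree count.
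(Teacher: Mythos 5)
Your proposal follows essentially the same route as the paper: identify the K\"ahler reduction with the $\omega_t$ of the statement via the volume-form observation $F_t^{[n]}=\omega_t^{[n]}$, pass to Legendre-dual potentials so that $\dot\phi_t=-\dot u_t\circ\mu_t$, reduce the GK geodesic equation to the Mabuchi one by the degree count showing $d\rho\wedge Jd\rho\wedge F_t^{[n-1]}=d\rho\wedge Jd\rho\wedge\omega_t^{[n-1]}$ for $\T$-invariant $\rho$ (the paper states exactly this relation for $f=f(y)$ and leaves the check to the reader; your explicit count of $d\mu$-factors, and your remark on why $A=0$ is essential, are both correct and welcome), and close the cycle with Guan's result.

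The one place where you fall short of a proof is the first assertion, and you have correctly identified it as the crux. Saying that ``the Abreu--Guillemin correspondence of \cite{ASU} shows that the Hamiltonian deformation of $F_t$ and the K\"ahler deformation of its reduction induce one and the same variation $\dot u_t$, whence $\rho_t=\dot\phi_t$'' is an assertion, not an argument: a priori the variation $\dot F_t$ determined by $\dot u_t$ through the normal form could equal $dd^c_{I_t}\rho_t$ for some $\rho_t$ differing from $\dot\phi_t$, since $dd^c_{I_t}$ and $dd^c_J$ are different operators. The paper closes this by a direct computation: writing $\mathbf{H}_t=\Hess(\varphi_t)=\Hess(u_t)^{-1}$, so that $\omega_t=\sum\mathbf{H}_{ij}\,dy_i\wedge d\theta_j$ and $F_t=\omega_t+(\mathbf{H}B\mathbf{H})_{ij}\,dy_i\wedge dy_j$, and using that $(y,\theta)$ and $(y,\theta-2B\mu_t)$ are pluriharmonic coordinates for $J$ and $I_t$ respectively, one computes $dd^c_J\dot\phi=\sum\dot{\mathbf{H}}_{ij}\,dy_i\wedge d\theta_j=\dot\omega_t$ and $dd^c_{I_t}\dot\phi=\sum\dot{\mathbf{H}}_{ij}\,dy_i\wedge d\theta_j+(\dot{\mathbf{H}}B\mathbf{H}+\mathbf{H}B\dot{\mathbf{H}})_{ij}\,dy_i\wedge dy_j=\dot F_t$. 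That five-line computation is exactly the ``careful bookkeeping'' you deferred; without it (or an equivalent reference to a statement in \cite{ASU} that literally asserts $\dot F_t=dd^c_{I_t}\dot\phi_t$), the first bullet of your argument is incomplete, and with it your proof coincides with the paper's.
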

\begin{proof} By \cite{guillemin} (see also \cite{ASU}), we can express, on $(\mathring{M}, J)$, $\omega_t= dd^c_{J} \varphi_t$ for  some $\T$-invariant smooth function $\varphi_t$. More precisely,   if $(y,  \, \ang)$ are the (exponential) $J$-pluriclosed coordinates defined on the open dense orbit  $\mathring{M} = \T^{\C} \cdot p_0$ of the complexified action $\T^{\C}$ (determined by fixing a base point $p_0\in \mathring{M}$), there is a unique,  up to the addition of an affine-linear function,  function $\varphi_t(y)$ with the above property. One can further require that $\phi_t(y):=\varphi_t (y) - \varphi_0(y)$ extends smoothly to $M$, which leaves only an additive constant in the definition $\varphi_t(y)$ (and $\phi_t$), once we have chosen $\varphi_0$.  This constant is further determined  via the normalization used in the lemma.

We now consider the Legendre transform, $u_t$,  of $\varphi_t$: letting $\mu_t := \nabla \varphi_t$, $(\mu_t, \ang)$ are momentum-angular coordinates of $\omega_t$ and  the functions $u_t(x)$ defined by
\[\varphi_t (y) + u_t(\mu_t)= \langle y, \mu_t \rangle\]
are elements of $\mathcal{S}_{0,B}(\Pol, \Lab)$, such that $(u_t, 0, B)$ are Abreu--Guillemin data of $(F_t, J)$ pulled back by  the diffeomorphism $\mu_t \to \mu_0, \ang \to \ang$, see \cite{ASU} for details. Using the basic property of the Legendre transform  ${\bf H} = \left({\rm Hess}(u)\right)^{-1} = \Hess(\varphi)$, we have
\[ \kom_{t} = \sum_{i,j=1}^m {\bf H}_{ij} d y_i \wedge d\ang_j, \qquad
        F_{t}= \sum_{i,j=1}^m{\bf H}_{ij}d y_{i}\wedge d\ang_j+({\bf H}B{\bf H})_{ij}d y_{i}\wedge d y_{j},
        \]
        where ${\bf H}_t(y) = \Hess(\varphi_t)(y)= \Hess(u_t)^{-1}(\mu_t)$.
Let $\dot\phi = \dot \varphi$ be a first order variation of $\phi_t$,  and $\dot{\kom}$, $\dot F$  and $\dot I$ denote the corresponding first order variations of  $\kom_{t}$, $F_{t}$ and $I_{t}$. We have
        \[
        \dot{\kom} = \sum_{i,j=1}^m \dot {\bf H}_{ij} d y_i \wedge d\ang_j, \qquad \dot F=\sum_{i,j=1}^m\dot{\bf H}_{ij}d y_i\wedge\ang_j+(\dot{\bf H}B{\bf H}+{\bf H}B\dot{\bf H})_{ij}d y_i\wedge d y_j.
        \]
        Using that $(y, \ang)$ and $(y, \bar{\ang}=\ang -2B\mu_t)$ with $\mu_t = \nabla \varphi$ (see \cite{ASU}) are respective pluriharmonic coordinates for $J$ and $I_t$, we compute
\begin{equation*}
        \begin{split}
        d d^c_J \dot \phi &= \sum_{i,j=1}^{m} {\dot \phi}_{,ij} d y_i \wedge d \ang_j= \sum_{i,j=1}^{m} {\dot {\bf H}}_{ij} d y_i \wedge d \ang_j, \\
    d d^c_I \dot{\phi}&= \sum_{i=1}^m d I\left(\dot{\phi}_{,i}d y_i\right)=d\left(\sum_{i=1}^m\dot{\phi}_{,i}d\ang_i-2\sum_{i,j,k=1}^m\mathbf{H}_{jk}B_{ki}\dot\phi_{,i}d y_j\right)\\
    &=\sum_{i,j=1}^m \left(\dot{\bf H}_{ij}d y_i\wedge d\ang_j+2({\bf H} B\dot{\bf H})_{ij}d y_i\wedge d y_j\right)\\
    &= \sum_{i,j=1}^m\left(\dot{\bf H}_{ij}d y_i\wedge d\ang_j+({\bf H} B\dot{\bf H}+\dot{\bf H} B{\bf H})_{ij}d y_i\wedge d y_j\right),
        \end{split}
        \end{equation*}
        which gives
        \[ \dot \omega = dd^c_J \dot \phi, \qquad \dot F = dd^c_I \dot \phi.\]
        The variation of $\dot I = -\tfrac{1}{2} \poiss (dd^c_I \dot \phi)$ then follows from  the general property of variations in ${\AGK}_{\poiss, \alpha}$, see Section~\ref{s:AGK-poiss}, and the $dd^c_I$-lemma. This shows that   $F_t$ is a Hamiltonian deformation of $F_0$ with function $\dot \phi_t$.

    The equivalence of the first and third statements is established in \cite{Guan}. The equivalence of the first and second statements follows from \eqref{e:geodesic} and the general relation
    \[ df\wedge Jdf \wedge F^{n-1} = df \wedge Jdf \wedge \omega^{n-1}, \qquad f=f(y),\]
    which can be checked from the above expressions for $\omega$ and $F$. \end{proof}

\subsection{The generalized K\"ahler  scalar curvature and extremal structures} In the above setting, a computation from \cite{W3} shows that Goto's scalar curvature of $(F, J, \T)$ is given by an Abreu-type formula (compare with \cite{Abreu} in the K\"ahler case):
\begin{equation}\label{GK-Abreu}
{\Gscal}_{(F, J)} = - \sum_{i,j=1}^n \frac{\partial^2}{\partial \mu_i \partial \mu_j} \left(\Hess(u) + \i B\right)_{ij}^{-1} =: \kappa(u, A, B).
\end{equation}
This can be also deduced from \cite{boulanger} (where the momentum map for the action of ${\rm Ham}^{\T}(M, F)$ on ${\AGK}^{\T}_F$ is identified with $\kappa$) and Theorem~\ref{t:GKscal-moment-map} above.

By Theorem~\ref{t:Calabi-Lichne-Matsushima}, for any $\T$-invariant \emph{extremal} generalized K\"ahler  structure  $(F, J)$ with Abreu--Guillemin data $(u, A, B), \, u \in {\mathcal S}_{A,B}(\Pol, \Lab)$, the corresponding generalized K\"ahler scalar curvature given by \eqref{GK-Abreu} is a pull-back by $\mu$ of an affine-linear function $\ell(x)$ on $\Pol$. It follows that the extremal equation is
\[ \kappa(u, A, B) = \ell (\mu), \qquad u\in {\mathcal S}_{A,B}(\Pol, \Lab). \]
An important feature is that the above expression is independent of $A$. Indeed, the LHS is manifestly independent of $A$ and we have ${\mathcal S}_{A,B}(\Pol, \Lab)= {\mathcal S}_{0,B}(\Pol, \Lab)$. Furthermore
\begin{lemma}\label{l:donaldson} If $(u, A, B), \, u \in {\mathcal S}_{A,B}(\Pol, \Lab)$ defines an extremal generalized K\"ahler structure with corresponding affine-linear function $\ell$, then $\ell=\ell_{\rm ext}$ is the extremal affine-linear function of the labeled $(\Pol, \Lab)$,  introduced in \cite{donaldsonJDG-02}.  In particular, $\ell_{\rm ext}$ is independent of $A, B$.
\end{lemma}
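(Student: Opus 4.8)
The plan is to reduce the statement to Donaldson's variational characterization of $\ell_{\mathrm{ext}}$ by establishing the generalized K\"ahler analogue of the Abreu--Donaldson integration-by-parts identity on the polytope. First note that the dependence on $A$ is vacuous: the Abreu-type formula \eqref{GK-Abreu} for $\kappa(u,A,B)$ does not involve $A$, and by Lemma~\ref{l:S-convex} the admissible class satisfies ${\mathcal S}_{A,B}(\Pol,\Lab)={\mathcal S}_{0,B}(\Pol,\Lab)$, so only the $B$-dependence is at issue. The key algebraic observation is that, writing $\mathbf{H}:=\Hess(u)+\i B$ (a smooth Hermitian positive-definite matrix on $\mathring{\Pol}$ by Lemma~\ref{l:boundary}(i)), its inverse $\mathbf{H}^{-1}$ is again Hermitian, so $\Im(\mathbf{H}^{-1})$ is real antisymmetric. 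Since $\partial_i\partial_j$ is symmetric in $(i,j)$, the antisymmetric part drops out of $\sum_{ij}\partial_i\partial_j(\mathbf{H}^{-1})_{ij}$, whence
\[
\kappa(u,0,B)=-\sum_{i,j=1}^n\frac{\partial^2}{\partial\mu_i\partial\mu_j}G_{ij},\qquad G:=\Re\big(\mathbf{H}^{-1}\big),
\]
where $G$ is a real, symmetric, positive-definite matrix-valued function on $\mathring{\Pol}$. Thus $\kappa$ has exactly the shape of Abreu's scalar curvature with the inverse Hessian $(\Hess u)^{-1}$ replaced by $G$.

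Second, I would record Donaldson's characterization \cite{donaldsonJDG-02}: the extremal affine function $\ell_{\mathrm{ext}}$ of $(\Pol,\Lab)$ is the unique affine-linear function satisfying $\int_{\Pol}\ell_{\mathrm{ext}}\,f\,dx=2\int_{\partial\Pol}f\,d\sigma$ for every affine-linear $f$, where $d\sigma$ is the boundary measure determined by the labels $\Lab$. Uniqueness is immediate: if two affine functions satisfy this, their difference $h$ is affine with $\int_\Pol h f\,dx=0$ for all affine $f$, and taking $f=h$ forces $h\equiv0$ by positivity of the $L^2(\Pol,dx)$-pairing on the finite-dimensional space of affine functions. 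Observe that $\ell_{\mathrm{ext}}$ so defined depends only on $(\Pol,\Lab)$, so the asserted independence of $A,B$ will follow once $\ell$ is shown to solve the same system.

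The main step --- and the main obstacle --- is the generalized Abreu--Donaldson identity: for $u\in{\mathcal S}_{0,B}(\Pol,\Lab)$ and any $f\in C^\infty(\bar{\Pol})$,
\[
\int_{\Pol}\kappa(u,0,B)\,f\,dx=\int_{\Pol}\sum_{i,j=1}^n G_{ij}\,f_{,ij}\,dx+2\int_{\partial\Pol}f\,d\sigma .
\]
I would prove this by integrating $-\sum_{ij}\partial_i\partial_j G_{ij}$ against $f$ by parts twice, exactly as in the toric K\"ahler argument \cite{CLS, donaldsonJDG-02}, with $G$ in place of $(\Hess u)^{-1}$. The two boundary contributions are controlled by the behaviour of $G$ at the facets, which is governed by condition (iii)$'$ of Lemma~\ref{l:boundary}: since $\mathbf{H}^{-1}\mathbf{G}_0$ extends smoothly to $\Pol$, one has $G=S\,\mathbf{G}_0^{-1}$ for a matrix $S$ smooth up to the boundary, where $\mathbf{G}_0=\Hess\big(\tfrac12\sum_{L\in\Lab}L\log L\big)$ is precisely the canonical (label-only) Guillemin matrix appearing in the K\"ahler case. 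From the rank-one blow-up $\mathbf{G}_0\sim\tfrac{1}{2L_k}a_k a_k^{\top}$ near the facet $\{L_k=0\}$ (with $a_k=dL_k$ the primitive inward conormal) one obtains $\mathbf{G}_0^{-1}a_k\to0$, hence $G\,a_k\to0$ on each facet, which kills the boundary term involving $\sum_{ij}G_{ij}f_{,i}\nu_j$. The remaining divergence term $\int_{\partial\Pol}\sum_{ij}(G_{ij})_{,j}\,\nu_i\,f\,d\mathcal{H}^{n-1}$ must then be shown to account for the boundary integral $2\int_{\partial\Pol}f\,d\sigma$ with the same label measure $d\sigma$. The delicate point is that the bounded Hermitian perturbation $\i B$ and the smooth part $v$ of $u$ alter neither the leading singular factor $\mathbf{G}_0^{-1}$ nor the resulting boundary density, so the facet limit reduces verbatim to Donaldson's; I expect this limit (taking real parts and checking that the antisymmetric imaginary contribution again integrates to zero along $\partial\Pol$) to be the only genuinely technical part.

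Finally, I would conclude. If $(F,J)$ is extremal with Abreu--Guillemin data $(u,A,B)$, then by Theorem~\ref{t:Calabi-Lichne-Matsushima} and \eqref{GK-Abreu} the generalized scalar curvature is $\Gscal_{(F,J)}=\kappa(u,0,B)=\ell\circ\mu$ for an affine-linear $\ell$, i.e.\ $\kappa(u,0,B)=\ell$ pointwise on $\Pol$. Substituting an arbitrary affine-linear $f$ into the identity above, the interior term vanishes because $f_{,ij}=0$, leaving $\int_\Pol \ell\,f\,dx=2\int_{\partial\Pol}f\,d\sigma$ for all affine $f$. By the uniqueness in the second step, $\ell=\ell_{\mathrm{ext}}$, and since the right-hand side depends only on $(\Pol,\Lab)$ this exhibits $\ell_{\mathrm{ext}}$ as independent of $A$ and $B$, as claimed.
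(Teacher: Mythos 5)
Your argument follows the paper's proof essentially verbatim: both pass to the real part $\mathbf{X}=\Re\bigl(\Hess(u)+\i B\bigr)^{-1}$ (your $G$), observe that the skew imaginary part drops out of the double divergence, invoke the boundary conditions (iii)$'$ of Lemma~\ref{l:boundary} to justify the Donaldson-type double integration by parts \eqref{toric-by-parts}, and then specialize to affine-linear $f$ to pin down $\ell=\ell_{\rm ext}$. The only blemish is the sign of the interior term in your displayed identity, which should read $\int_{\Pol}\kappa(u,0,B)\,f\,dx=2\int_{\partial\Pol}f\,d\sigma_{\Lab}-\int_{\Pol}\tr\bigl(G\circ\Hess(f)\bigr)\,dx$ as in \eqref{toric-by-parts}; this is immaterial for the present lemma since that term vanishes when $f$ is affine-linear, though the correct sign matters later for Theorem~\ref{c:easy}.
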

\begin{proof} Let
\begin{equation}\label{X}
{\bf X} := \Re e \left({\rm Hess}(u) + \i B\right)^{-1}.
\end{equation}
As  $ \Im m \left({\rm Hess}(u) + \i B\right)^{-1}$ is skew-symmetric, \eqref{GK-Abreu} becomes
\begin{equation}\label{AK-extremal}
{\Gscal}_{(F, J)}=- \sum_{i,j=1}^n {\bf X}_{ij, ij} = \ell.
\end{equation}
Notice that ${\bf X}=\left({\bf G}+ B{\bf H} B\right)^{-1}$ where ${\bf G} = {\rm Hess} (u)$ and ${\bf H}= {\bf G}^{-1}$. It then follows from the boundary conditions (i)-(ii)-(iii)'  in Lemma~\ref{l:boundary} that ${\bf X}^{-1} - {\bf G}_0$ is smooth and ${\bf X} {\bf G}_0$ is smooth and nondegenerate on $\Pol$. These in turn yield that ${\bf X}$ is smooth on $\Pol$ and satisfies first order boundary conditions at each facet of $\Pol$ (see  Lemma 2, Remark 4 and Proposition 1 in \cite{ACGT2}).   One can then obtain from \eqref{X}, by integrating by parts twice as in ~\cite{donaldsonJDG-02} (see also \cite[Lemma~3.2]{ apostolov-notes}) that for any smooth function $f$ on $\Pol$
\begin{equation}\label{toric-by-parts}
0= \int_{\Pol} f\left(\sum_{i,j=1}^n {\bf X}_{ij,ij} +\ell\right) dx = \int_{\Pol}f \ell dx -2\int_{\partial \Pol} f d\sigma_{\Lab}  + \int_{\Pol} \tr \Big({\bf X}\circ\Hess (f)\Big) dx,
\end{equation}
where $d x$ is the Lebesgue measure on $\tor^*$ associated to  the  chosen basis of $\tor$,  and $d\sigma_{\Lab}$ is the induced measure by $d x$ and the  inward normal $d L_j$ on each facet $\Pol_j \subset \partial \Pol$.  Specializing the above formula for $f$ affine-linear, we obtain that
\[- \int_{\Pol}  \ell(x) f(x)d x  + 2 \int_{\partial \Pol} f(x) d\sigma_{\Lab}=0, \qquad \forall \, f \, \textrm{affine-linear}.\]
This determines a unique affine-linear function $\ell$, denoted by $\ell_{\rm ext}$.
\end{proof}
\begin{cor}\label{c:A-independence} $(u, A, B) \in {\mathcal S}_{A,B}(\Pol, \Lab)$ defines an extremal $\T$-invariant generalized K\"ahler structure iff $(u, 0, B) \in {\mathcal S}_{0,B}(\Pol, \Lab)$ defines an extremal $\T$-invariant generalized K\"ahler structure.
\end{cor}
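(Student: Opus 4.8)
The plan is to show that the extremal equation is \emph{literally the same} partial differential equation for the data $(u,A,B)$ and $(u,0,B)$, so that the corollary follows at once from the structural results already assembled, without any new analysis. The only substantive point to set up is the reformulation of extremality as a scalar equation on $\mathring{\Pol}$.

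First I would record the characterization of extremal $\T$-invariant structures in the Abreu--Guillemin picture. By the toric case of Theorem~\ref{t:Calabi-Lichne-Matsushima}, an extremal $\T$-invariant structure is invariant under the maximal torus $\T$ and its extremal vector field $\chi$ lies in $\tor$; since the Hamiltonians of elements of $\tor$ are exactly the affine-linear functions of the momenta $\mu$, extremality of the structure with data $(u,A,B)$ is equivalent to $\Gscal_{(F,J)}$ being the pullback by $\mu$ of an affine-linear function $\ell$ on $\Pol$. Using the Abreu-type formula~\eqref{GK-Abreu}, this reads
\[
\kappa(u,A,B) = -\sum_{i,j=1}^n \frac{\partial^2}{\partial \mu_i \partial \mu_j}\left(\Hess(u)+\i B\right)^{-1}_{ij} = \ell(\mu),\qquad u\in{\mathcal S}_{A,B}(\Pol,\Lab).
\]
The converse direction is equally immediate: if $\kappa(u,A,B)=\ell(\mu)$ is affine-linear, then $\chi=-F^{-1}(d\Gscal_{(F,J)})$ is the fundamental field of the corresponding element of $\tor$, hence holomorphic, so $(F,J)$ is extremal.

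The two key inputs now make the two extremal problems coincide term by term. On the one hand, the right-hand side of~\eqref{GK-Abreu} manifestly involves only $\Hess(u)$ and $B$, so $\kappa(u,A,B)=\kappa(u,0,B)$ as functions on $\mathring{\Pol}$. On the other hand, Lemma~\ref{l:S-convex} gives ${\mathcal S}_{A,B}(\Pol,\Lab)={\mathcal S}_{0,B}(\Pol,\Lab)$, so the admissible convex potentials are exactly the same set for both values of the type-$A$ parameter. Finally, Lemma~\ref{l:donaldson} pins down the affine-linear solution $\ell$ to be $\ell_{\rm ext}$, which depends only on $(\Pol,\Lab)$ and in particular is independent of $A$ and $B$; this removes any ambiguity about which $\ell$ is being prescribed on either side. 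Putting these together, the equation $\kappa(u,A,B)=\ell_{\rm ext}(\mu)$ over ${\mathcal S}_{A,B}(\Pol,\Lab)$ is word-for-word the equation $\kappa(u,0,B)=\ell_{\rm ext}(\mu)$ over ${\mathcal S}_{0,B}(\Pol,\Lab)$, so a given $u$ solves one precisely when it solves the other.

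I do not expect any genuine obstacle here: all the content has been front-loaded into the preceding lemmas, and the corollary is obtained by matching the two equations. The only step deserving a sentence of care is the passage from ``$(u,A,B)$ is extremal'' to ``$\kappa(u,A,B)$ is affine-linear in $\mu$,'' which relies on the toric form of Theorem~\ref{t:Calabi-Lichne-Matsushima} to guarantee that the extremal field is $\T$-contained; once this biconditional is in hand, the independence of both $\kappa$ and the solution space ${\mathcal S}_{A,B}(\Pol,\Lab)$ from $A$ closes the argument in both directions simultaneously.
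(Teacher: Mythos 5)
Your proposal is correct and follows essentially the same route as the paper: the paper also derives the corollary directly from the manifest $A$-independence of the Abreu-type formula \eqref{GK-Abreu}, the identity ${\mathcal S}_{A,B}(\Pol,\Lab)={\mathcal S}_{0,B}(\Pol,\Lab)$ from Lemma~\ref{l:S-convex}, and the determination of $\ell=\ell_{\rm ext}$ in Lemma~\ref{l:donaldson}, with the reduction of extremality to the scalar equation $\kappa(u,A,B)=\ell(\mu)$ coming from Theorem~\ref{t:Calabi-Lichne-Matsushima}. Your explicit note on the converse direction (affine-linear $\Gscal$ forces $\chi\in\tor$, hence holomorphic) is a point the paper leaves implicit, but it is the same argument.
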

Following~\cite{donaldsonJDG-02}, for any affine-linear function $\ell$, we introduce a linear functional defined on the space of continuous functions on $\Pol$  by
\begin{equation}\label{Futaki}
{\bf F}_{\ell} (f) :=  - \int_{\Pol}  \ell(x) f(x)d x  + 2 \int_{\partial \Pol} f(x) d\sigma_{\Lab}.
 \end{equation}
\begin{defn}\label{d:l-ext} The \emph{extremal affine-linear function} $\ell_{\rm ext}$ of $(\Pol, \Lab)$ is the unique affine-linear function $\ell$ such that  ${\bf F}_{\ell}(f)=0$ for any affine-linear  function $f$. The corresponding functional ${\bf F}_{\ell_{\rm ext}}$ is called the \emph{relative Donaldson--Futaki invariant} of $(\Pol, \Lab)$.
\end{defn}
With the above remarks in mind, we now consider the following PDE problem which is a modified version of Abreu's
equation in \cite{Abreu}:
\begin{equation}\label{GKextremal}
- \sum_{i,j=1}^n \frac{\partial^2}{\partial \mu_i \partial \mu_j} \left(\Hess(u) + \i B\right)_{ij}^{-1} = \ell_{\rm ext} (\mu),  \qquad u \in \mathcal{S}_{0, B}(\Pol, \Lab),
\end{equation}
where $\ell_{\rm ext}$ is the extremal affine-linear function defined in Definition~\ref{d:l-ext}.

\subsection{The relative Mabuchi energy and the uniqueness of extremal generalized K\"ahler structures}

The identification of ${\GK}^{\T}_{\poiss,\alpha}$ with the convex space $\mathcal{S}_{A, B}(\Pol, \Lab)$ can be used to integrate the  formal closed 1-form $\boldsymbol{\tau}$  and obtain a well-defined  Mabuchi functional. This has been indeed obtained in \cite{ASU}, where the following  ``toric'' Mabuchi functional was introduced:
\[{\bf M}(u):= {\bf F}_{a}(u)  - \int_{\Pol} \log \det \big({\rm Hess}(u) + \i B\big) d x  + \int_{\Pol} \log \det \big({\rm Hess}(u_0)\big) d x,  \qquad  u \in \mathcal{S}_{A, B}(\Pol, \Lab). \]
In the above formula,   $u_0\in \mathcal{S}_{0,0}(\Pol, \Lab)$  is  the  convex function associated to some background toric K\"ahler metric compatible with $F$,  and  $a$ is the topological constant
\[ a:= 2\frac{\Vol\left({\partial \Pol}, d\sigma_{\Lab}\right)}{\Vol(\Pol, d x)} \]
which computes the average value of $\Gscal_{(F, J)}$ (see Remark~\ref{r:avescal}),  and  ${\bf F}_a$ denotes the linear functional \eqref{Futaki} defined with respect to the affine-linear function $\ell \equiv a$. Notice that  the difference of the last two terms is well-defined for $u\in \mathcal{S}_{A,B}(\Pol, \Lab)$, by virtue of  the conditions (i)-(ii)-(iii)' in Lemma~\ref{l:boundary}.  We can more generally introduce the \emph{relative} Mabuchi energy
\[ {\bf M}^{\ell_{\rm ext}} (u):= {\bf F}_{\ell_{\rm ext}}(u)  - \int_{\Pol} \log \det \big({\rm Hess}(u) + \i B\big) d x  + \int_{\Pol} \log \det \big({\rm Hess}(u_0)\big) d x,  \]
where, instead of the constant function $a$, we use the extremal affine-linear function $\ell_{\rm ext}$ in the linear term.

\begin{lemma}\label{l:Mabuchi-derivative} Given a one-parameter family of symplectic potentials $u_s = u + s \dot{u}$ and fixed  $A,B$, we have
\[ (\delta_{u} {\bf M}^{\ell_{\rm ext}})(\dot u) = \int_{\Pol} \dot u \left(\gk(u,A,B) - \ell_{\rm ext}\right) d x.\]
In particular, the critical points of ${\bf M}^{\ell_{\rm ext}}$ correspond to extremal toric generalized K\"ahler structures with Poisson tensor corresponding to $A, B$.  Furthermore,
\[(\delta^2_{u} {\bf M}^{\ell_{\rm ext}})(\dot u, \dot u) = \int_{\Pol}  {\rm tr}\Big[\Big(\big({\rm Hess}(u) + \i B\Big)^{-1}{\rm Hess}(\dot u)\Big)^2\Big] d x \ge 0.\]
\end{lemma}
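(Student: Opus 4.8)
The plan is to differentiate $\mathbf{M}^{\ell_{\rm ext}}$ term by term along $u_s = u + s\dot u$ and reduce everything to the Donaldson--Abreu integration-by-parts identity already used in Lemma~\ref{l:donaldson}. For the first variation I observe that the functional $\mathbf{F}_{\ell_{\rm ext}}$ from \eqref{Futaki} is affine-linear in $u$, so its derivative in the direction $\dot u$ is just $\mathbf{F}_{\ell_{\rm ext}}(\dot u) = -\int_{\Pol}\ell_{\rm ext}\,\dot u\,d x + 2\int_{\partial\Pol}\dot u\,d\sigma_{\Lab}$, while the constant term $\int_{\Pol}\log\det(\Hess(u_0))\,d x$ drops out. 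For the middle term I would apply Jacobi's formula $\delta\log\det M = \tr(M^{-1}\delta M)$ with $M = \Hess(u) + \i B$ and $\delta M = \Hess(\dot u)$. Since the imaginary part of $M^{-1}$ is skew-symmetric while $\Hess(\dot u)$ is symmetric, the pairing annihilates it, so $\tr\big((\Hess(u)+\i B)^{-1}\Hess(\dot u)\big) = \tr(\mathbf{X}\,\Hess(\dot u))$ with $\mathbf{X} = \Re(\Hess(u)+\i B)^{-1}$ as in \eqref{X}. This gives
\[(\delta_u\mathbf{M}^{\ell_{\rm ext}})(\dot u) = -\int_{\Pol}\ell_{\rm ext}\,\dot u\,d x + 2\int_{\partial\Pol}\dot u\,d\sigma_{\Lab} - \int_{\Pol}\tr(\mathbf{X}\,\Hess(\dot u))\,d x.\]

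Next I integrate the last term by parts twice. Carrying out the computation underlying \eqref{toric-by-parts} for the given (not necessarily extremal) $u$ with $f = \dot u$, and using \eqref{GK-Abreu} to recognize $-\sum_{i,j}\mathbf{X}_{ij,ij} = \gk(u,A,B)$ (the imaginary part of $(\Hess(u)+\i B)^{-1}$ again contributing nothing under $\sum_{i,j}\partial_i\partial_j$), yields
\[\int_{\Pol}\tr(\mathbf{X}\,\Hess(\dot u))\,d x = -\int_{\Pol}\gk(u,A,B)\,\dot u\,d x + 2\int_{\partial\Pol}\dot u\,d\sigma_{\Lab}.\]
Substituting, the two boundary integrals cancel and I obtain $(\delta_u\mathbf{M}^{\ell_{\rm ext}})(\dot u) = \int_{\Pol}\dot u\,(\gk(u,A,B)-\ell_{\rm ext})\,d x$, the asserted first-variation formula. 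Since this holds for every admissible $\dot u$, a critical point satisfies $\gk(u,A,B) = \ell_{\rm ext}$ pointwise, which is exactly the extremal equation \eqref{GKextremal}; thus critical points correspond to extremal toric generalized K\"ahler structures with Poisson tensor determined by $A,B$.

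For the second variation I differentiate once more, using only the $u$-dependent last term of the first-variation formula. With $\delta(M^{-1}) = -M^{-1}(\delta M)M^{-1}$ and $\delta M = \Hess(\dot u)$ constant in $s$,
\[(\delta^2_u\mathbf{M}^{\ell_{\rm ext}})(\dot u,\dot u) = -\int_{\Pol}\tr\big(\delta(M^{-1})\,\Hess(\dot u)\big)\,d x = \int_{\Pol}\tr\big(((\Hess(u)+\i B)^{-1}\Hess(\dot u))^2\big)\,d x,\]
as claimed. It remains to check the integrand is nonnegative pointwise. Writing $M = \Hess(u)+\i B$, condition (i) of Lemma~\ref{l:boundary} guarantees $M$ is Hermitian positive definite, hence $N := M^{-1}$ is Hermitian positive definite; with $S := \Hess(\dot u)$ real symmetric I would factor $N = N^{1/2}N^{1/2}$ and use cyclicity of the trace to write $\tr((NS)^2) = \tr(T^2)$ where $T = N^{1/2}SN^{1/2}$ is Hermitian, so $\tr(T^2) = \sum_k\lambda_k^2 \ge 0$. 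This establishes the pointwise positivity, hence the convexity of $\mathbf{M}^{\ell_{\rm ext}}$ along the linear paths $u_s$.

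I expect the main obstacle to be the double integration by parts: making precise that the boundary contributions assemble into the labeled measure $2\int_{\partial\Pol}\dot u\,d\sigma_{\Lab}$, and that all the integrals converge despite $\log\det(\Hess(u)+\i B)$ itself being singular near $\partial\Pol$. Both points are controlled by the boundary description of $\mathcal S_{A,B}(\Pol,\Lab)$ in Lemma~\ref{l:boundary}, in particular the smoothness and first-order facet behavior of $\mathbf{X}$ recorded there; this is the single place where one must verify that the $\i B$ deformation does not spoil the Donaldson--Abreu boundary analysis. By contrast, the two variational differentiations and the concluding positivity are routine linear algebra.
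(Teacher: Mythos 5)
Your proposal is correct and follows essentially the same route as the paper: differentiate, use Jacobi's formula together with the observation that the skew imaginary part of $(\Hess(u)+\i B)^{-1}$ pairs to zero against $\Hess(\dot u)$, reduce to $\int_{\Pol}\tr({\bf X}\circ\Hess(\dot u))\,dx$, and invoke the double integration-by-parts identity \eqref{toric-by-parts} (whose validity rests on the boundary behaviour of ${\bf X}$ from Lemma~\ref{l:boundary}, exactly as you flag); the second variation is likewise obtained by differentiating once more. Your explicit verification of pointwise nonnegativity via $T=N^{1/2}SN^{1/2}$ is a detail the paper leaves unstated, but it is the intended routine argument.
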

\begin{proof} A direct computation shows
\[
\begin{split}
\frac{d}{ds}\left({\bf M}^{\ell_{\rm ext}}(u_s)\right) &= {\bf F}_{\ell_{\rm ext}}(\dot u) -\int_{\Pol} \tr\left(\Big({\rm Hess}(u) + \i B\Big)^{-1} \circ {\rm Hess}(\dot u)\right) dx \\
&= {\bf F}_{\ell_{\rm ext}}(\dot u) -\int_{\Pol} \tr\left({\bf X} \circ  {\rm Hess}(\dot u)\right) dx,
\end{split} \]
where ${\bf X} = \Re e \Big({\rm Hess}(u) + \i B\Big)^{-1}$. The first identity follows  from  the above and using the second equality in \eqref{toric-by-parts},  whereas the second identity follows by taking a second derivative in $s$. \end{proof}
\begin{thm}\label{t:uniqueness-toric} On a toric holomorphic Poisson manifold $(M, J, \poiss_J, \T)$ endowed with a generalized K\"ahler structure $F_0 \in {\GK}^{\T}_{\poiss}$, the extremal  generalized K\"ahler structures in $\GK_{\poiss,\alpha}^0$ are unique modulo $\Aut_0(J, \poiss_J)$. Furthermore, the  symplectic potential  in ${\mathcal S}_{A, B}(\Pol, \Lab)$ of  a $\T$-invariant extremal generalized K\"ahler structure  in ${\GK}^{\T}_{\poiss,\alpha}$ minimizes ${\bf M}^{\ell_{\rm ext}}$.
\end{thm}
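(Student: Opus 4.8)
The plan is to push everything down to the convex-analytic picture on the Delzant polytope and exploit the convexity of the relative Mabuchi energy $\mathbf{M}^{\ell_{\rm ext}}$ established in Lemma~\ref{l:Mabuchi-derivative}. First, by Corollary~\ref{c:T-reduction} any extremal generalized K\"ahler structure in $\GK_{\poiss,\alpha}^0$ is isometric, via an element of $\Aut_0(J,\poiss_J)$, to a $\T$-invariant one; hence it suffices to prove that any two $\T$-invariant extremal structures $(F_0,J),(F_1,J)\in\GK_{\poiss,\alpha}^{\T}$ differ by an element of $\Aut_0(J,\poiss_J)$. Using the $\T$-equivariant Moser lemma and a further equivariant symplectomorphism exactly as in the proof of Proposition~\ref{p:contractible}, I would arrange both structures to be compatible with the fixed $F_0$ and to correspond to Abreu--Guillemin data $(u_0,A,B)$ and $(u_1,A,B)$ with $u_0,u_1\in{\mathcal S}_{A,B}(\Pol,\Lab)$ (the same $A,B$, since $\poiss_J$ is fixed). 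By Corollary~\ref{c:A-independence} we may take $A=0$, and by Lemma~\ref{l:donaldson} both potentials solve the extremal equation $\kappa(u_i,0,B)=\ell_{\rm ext}$ with the \emph{same} $\ell_{\rm ext}$; by Lemma~\ref{l:Mabuchi-derivative} they are therefore critical points of $\mathbf{M}^{\ell_{\rm ext}}$, a functional depending only on $B$.

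Next I would consider the linear segment $u_t:=(1-t)u_0+tu_1$, which remains in ${\mathcal S}_{0,B}(\Pol,\Lab)$ by the linear convexity of Lemma~\ref{l:S-convex}, and whose velocity $\dot u:=u_1-u_0$ is constant in $t$. Set $g(t):=\mathbf{M}^{\ell_{\rm ext}}(u_t)$. By Lemma~\ref{l:Mabuchi-derivative}, $g'(t)=\int_{\Pol}\dot u\,(\kappa(u_t,0,B)-\ell_{\rm ext})\,dx$, so $g'(0)=g'(1)=0$ because $u_0,u_1$ are critical, while $g''(t)=\int_{\Pol}\tr\big[\big((\Hess(u_t)+\i B)^{-1}\Hess(\dot u)\big)^2\big]\,dx\ge 0$ shows $g$ is convex. (When $A=0$ the segment $u_t$ is in fact a genuine geodesic by Proposition~\ref{l:geodesic-toric}, so this convexity is an instance of the geodesic monotonicity of Proposition~\ref{p:Mabuchi-convexity}.) A convex function with vanishing derivative at both endpoints satisfies $g'\equiv 0$, hence $g''\equiv 0$ on $[0,1]$.

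The decisive step is the equality analysis. Writing $H:=\Hess(u_t)+\i B$, a positive-definite Hermitian matrix, and $K:=\Hess(\dot u)$, a real symmetric (hence Hermitian) matrix, I factor $H=LL^{*}$ and set $M:=L^{-1}K(L^{*})^{-1}$, which is Hermitian; then $\tr\big[(H^{-1}K)^2\big]=\tr(M^2)\ge 0$ pointwise on $\Pol$, with equality precisely when $M=0$, i.e.\ when $\Hess(\dot u)=0$. Since $g''\equiv 0$ forces this integrand to vanish identically, we conclude $\Hess(u_1-u_0)\equiv 0$, so $u_1-u_0$ is affine-linear on $\Pol$. By the affine-linear ambiguity of the symplectic potential (Definition~\ref{d:abreu-guillemin}), the linear part of $u_1-u_0$ is realized by the action of an element of $\T^{\C}\subset\Aut_0(J,\poiss_J)$ (which preserves $\poiss_J$ as $\poiss_J$ is $\T^{\C}$-invariant) and the constant part is immaterial; hence $(F_1,J)$ and $(F_0,J)$ are isometric through $\Aut_0(J,\poiss_J)$, giving uniqueness. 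The minimization claim then follows from the same convexity: for any $u\in{\mathcal S}_{A,B}(\Pol,\Lab)$ and any extremal $u_{\rm ext}$, convexity of $t\mapsto\mathbf{M}^{\ell_{\rm ext}}((1-t)u_{\rm ext}+tu)$ with vanishing derivative at $t=0$ yields $\mathbf{M}^{\ell_{\rm ext}}(u)\ge\mathbf{M}^{\ell_{\rm ext}}(u_{\rm ext})$.

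I expect the main obstacle to be the rigidity in the equality case: one must verify pointwise that $\tr[(H^{-1}K)^2]$ is a genuine Hermitian square whose vanishing forces $\Hess(\dot u)=0$, and then correctly convert the conclusion ``the symplectic potentials differ by an affine-linear function'' into the geometric statement ``the two extremal structures coincide modulo $\Aut_0(J,\poiss_J)$,'' carefully tracking the $\T^{\C}$-action and the chosen normalization. The convexity and first-variation inputs are already in hand from Lemma~\ref{l:Mabuchi-derivative}, so these two points carry the remaining weight of the argument.
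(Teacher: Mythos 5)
Your proposal is correct and follows essentially the same route as the paper: reduce to $\T$-invariant structures via Corollary~\ref{c:T-reduction}, pass to Abreu--Guillemin potentials, and conclude from the convexity of $\mathbf{M}^{\ell_{\rm ext}}$ (Lemma~\ref{l:Mabuchi-derivative}) together with the linear convexity of ${\mathcal S}_{A,B}(\Pol,\Lab)$ (Lemma~\ref{l:S-convex}). The paper's proof is terser, deferring the reduction to the polytope picture to a citation, whereas you usefully spell out the equality analysis ($\tr(M^2)=0$ for Hermitian $M$ forces $\Hess(u_1-u_0)=0$) and the translation of the affine-linear ambiguity into the $\T^{\C}$-action; both of these details are sound.
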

\begin{proof} By Corollary~\ref{c:T-reduction}, it is enough to show uniqueness in $(\GK_{\poiss,\alpha}^{\T})^{0}$, modulo the action of $\T^{\C}$.  In this case, the problem is reduced to the uniqueness of \eqref{GK-Abreu} modulo affine-linear functions (see \cite{ASU}), and the latter  follows from the convexity of ${\bf M}^{\ell_{\rm ext}}$ established in Lemma~\ref{l:Mabuchi-derivative} and the fact that $\mathcal{S}_{A,B}(\Pol, \Lab)$ is linearly convex (see Lemma~\ref{l:S-convex}).   This also shows that  the symplectic potential of an extremal  generalized K\"ahler structures is a  global minimizer of ${\bf M}^{\ell_{\rm ext}}$. \end{proof}

\subsection{The obstruction theory}

\begin{defn}\cite{CLS, donaldsonJDG-02}\label{d:uni-K-stable} We say that $(\Pol, \Lab)$ is \emph{uniform relative K-stable} if, for a chosen point $x_0 \in \mathring{\Pol}$, there exist uniform positive constants $\lambda=\lambda(\Pol, \Lab, x_0), \delta=\delta(\Pol, \Lab, x_0)$ such that
 \begin{equation}\label{uniform-K-stable}
 {\bf F}_{\ell_{\rm ext}}(f) \ge \lambda \int_{\partial\Pol} f d \sigma_{\Lab}  -\delta,
 \end{equation}
for any continuous convex function $f(x)$ on ${\Pol}$, \emph{normalized}  (by adding an affine-linear function  $\ell$ which does not affect the RHS) so that  $f(x) \ge f(x_0)=0$.
\end{defn}
By the works \cite{CLS, donaldsonJDG-02},  if $(M, F, \T)$ admits a compatible extremal K\"ahler metric, then $(\Pol, \Lab)$ is necessarily uniform relative K-stable. The arguments readily extend to the generalized K\"ahler case:
\begin{thm}\label{c:easy} Let $(M, F, J, \T)$ an  extremal toric symplectic type generalized K\"ahler structure. Then the corresponding Delzant polytope $(\Pol, \Lab)$ is {uniform relative K-stable}.
 \end{thm}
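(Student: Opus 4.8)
The plan is to adapt the toric K\"ahler argument of Donaldson and Chen--Li--Sheng~\cite{donaldsonJDG-02, CLS}, with the Hessian of the K\"ahler potential replaced by the positive-definite symmetric matrix furnished by the Abreu--Guillemin description. Being toric, $(M, F, J, \T)$ is $\T$-invariant and carries Abreu--Guillemin data $(u, A, B)$ with $u \in {\mathcal S}_{A,B}(\Pol, \Lab)$, and by Lemma~\ref{l:donaldson} the extremal equation reads $\kappa(u, A, B) = \ell_{\rm ext}(\mu)$, where $\ell_{\rm ext}$ is the extremal affine-linear function of $(\Pol, \Lab)$. Following \eqref{X} I would set
\[ {\bf X} := \Re e\left(\Hess(u) + \i B\right)^{-1} = \left({\bf G} + B{\bf H}B\right)^{-1}, \qquad {\bf G} = \Hess(u),\ {\bf H} = {\bf G}^{-1}, \]
and note that ${\bf X}$ is independent of $A$ (consistent with Corollary~\ref{c:A-independence}), so only $(u, B)$ enter.

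Next I would record the three structural properties of ${\bf X}$ that drive the estimate. \emph{Positivity}: since $\Hess(u) + \i B$ is positive-definite Hermitian by Lemma~\ref{l:boundary}(i), its inverse is positive-definite Hermitian, hence the real part ${\bf X}$ is a positive-definite real-symmetric matrix at every interior point. \emph{Boundary regularity}: by the equivalent condition (iii)$'$ in Lemma~\ref{l:boundary}, $\left(\Hess(u)+\i B\right)^{-1}{\bf G}_0$ extends smoothly to $\Pol$, whence — as in \cite[Lemma~2, Remark~4, Proposition~1]{ACGT2} — ${\bf X}$ is smooth on $\Pol$ and satisfies Donaldson's first-order boundary conditions at each facet: ${\bf X}(dL_j, \cdot) = 0$ on $\Pol_j$, with the first-order normal data producing exactly the measure $d\sigma_{\Lab}$. \emph{The PDE}: by \eqref{GK-Abreu}--\eqref{AK-extremal} one has $-\sum_{i,j}{\bf X}_{ij,ij} = \ell_{\rm ext}$.

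Using these, I would establish the integral identity
\[ {\bf F}_{\ell_{\rm ext}}(f) = \int_{\Pol} \tr\left({\bf X}\circ\Hess(f)\right) dx \]
for smooth $f$ by integrating by parts twice as in \eqref{toric-by-parts}: the first integration by parts has no boundary contribution because ${\bf X}(dL_j,\cdot)|_{\Pol_j}=0$, and the second produces $-\int_\Pol \ell_{\rm ext} f\,dx + 2\int_{\partial\Pol} f\,d\sigma_{\Lab} = {\bf F}_{\ell_{\rm ext}}(f)$ via the PDE and the boundary data. A standard mollification then extends the identity to continuous convex $f$, with $\Hess(f)\ge 0$ read as a matrix-valued measure; since ${\bf X}>0$, this already gives the non-uniform semistability ${\bf F}_{\ell_{\rm ext}}(f)\ge 0$ for all convex $f$.

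The main obstacle is upgrading this to the uniform estimate \eqref{uniform-K-stable}, i.e. producing $\lambda,\delta>0$ with
\[ \int_{\Pol}\tr\left({\bf X}\circ\Hess(f)\right)dx \ge \lambda\int_{\partial\Pol} f\,d\sigma_{\Lab} - \delta \]
for every normalized convex $f$ with $f\ge f(x_0)=0$. The point is that the estimate of \cite{CLS} takes as input only the three abstract properties above: uniform interior positivity ${\bf X}\ge \epsilon\,\Id$ on compact subsets of $\mathring{\Pol}$ controls the bulk contribution, while the first-order degeneration of ${\bf X}$ along $\partial\Pol$ is precisely what converts the bulk integral into a lower bound by $\int_{\partial\Pol} f\,d\sigma_{\Lab}$ through the Chen--Li--Sheng near-boundary localization. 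I therefore expect the genuine work to lie in verifying that the additional term $B{\bf H}B$ in ${\bf X}=({\bf G}+B{\bf H}B)^{-1}$ does not degrade the boundary asymptotics relative to the K\"ahler case ${\bf X}={\bf H}$ — which is exactly guaranteed by Lemma~\ref{l:boundary}(iii)$'$ — and in checking that the CLS argument nowhere uses that ${\bf X}^{-1}$ is itself a Hessian. Once this is confirmed, the estimate applies unchanged, yielding \eqref{uniform-K-stable} and hence the uniform relative K-stability of $(\Pol, \Lab)$.
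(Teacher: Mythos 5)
Your proposal follows essentially the same route as the paper's proof: introduce ${\bf X} = \Re e\left(\Hess(u)+\i B\right)^{-1}$, verify via Lemma~\ref{l:boundary} that it is positive-definite on $\mathring{\Pol}$, smooth on $\Pol$, and satisfies the same first-order boundary conditions as an inverse Hessian, then use the Abreu-type equation \eqref{AK-extremal} and the integration-by-parts identity \eqref{toric-by-parts} to get ${\bf F}_{\ell_{\rm ext}}(f)\ge 0$, and finally invoke the Chen--Li--Sheng argument with ${\bf X}$ substituted for $\left(\Hess(u_{\rm ext})\right)^{-1}$ to upgrade to the uniform estimate. This is exactly the paper's argument (the paper additionally notes in Remark~\ref{r:AK} that the same conclusion can be deduced from Legendre's extension of CLS to extremal almost-K\"ahler metrics), so your proof is correct and matches the intended one.
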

\begin{proof}  Let $(u, A, B)$ be Abreu--Guillemin data corresponding to $(F, J, \T)$ and  ${\bf X}$ the corresponding  symmetric matrix valued function defined in \eqref{X}. As we have already observed in the proof of Lemma~\ref{l:donaldson}, ${\bf X}$ is smooth on $\Pol$,  positive definite  on $\mathring{\Pol}$, and satisfies the same first order boundary conditions at $\partial \Pol$ as  $\left({\rm Hess}(u_0)\right)^{-1}$ for any $u_0$ corresponding to a compatible toric K\"ahler structure.   Furthermore, the extremality of $(F, J)$ is equivalent to \eqref{AK-extremal}.  Notice that, using  \eqref{AK-extremal} and ${\bf X}>0$ on $\mathring{\Pol}$,
 \eqref{toric-by-parts} already shows that  for any smooth convex function $f$, ${\bf F}_{\ell_{\rm ext}} (f) \ge 0$ with equality iff $f$ is affine-linear. The improvement to uniform relative K-stability of $(\Pol, \Lab)$  is obtained in \cite[Thm.~4.4 and Prop. 4.6]{CLS}. The arguments of the proof of \cite[Theorem~4.4]{CLS} can be carried out by replacing the inverse Hessian $(v^{ij})= \left(\Hess(u_{\rm ext})\right)^{-1}$ with the matrix valued function ${\bf X}$ defined in \eqref{X}. \end{proof}

\begin{rmk}\label{r:AK} As we have seen in the course of the proof of Lemma~\ref{l:donaldson}, ${\bf X}$ is positive-definite on $\mathring{\Pol}$ and satisfies the same boundary conditions on $\partial \Pol$ as  ${\bf H}= \Hess(u)^{-1}$. By \cite{ACGT2}, ${\bf X}$  then defines an $\omega$-compatible,  $\T$-invariant Riemannian metric $\bar g$ on $\mathring{M}$ which is smoothly extendable to $M$,   by the formula
\[\bar g = \sum_{i,j=1}^n \left( {\bf X}^{-1}_{ij} d\mu_i d\mu_j + {\bf X}_{ij} d\ang_i d\ang_j\right).\]
As ${\bf X}$ satisfies \eqref{AK-extremal}, this is an instance of   \emph{extremal almost-K\"ahler} structure in the sense of \cite{lejmi}. E.\,Legendre observed in \cite{Legendre} that the proof of the uniform relative K-stability of $(\Pol, \Lab)$ in \cite{CLS} extends to the case of extremal toric almost-K\"ahler structures as above. The proof of Theorem~\ref{c:easy} can be alternatively deduced from her result.
\end{rmk}

Conversely, it is now established as a consequence of deep recent work by Chen--Cheng~\cite{CC} with a supplement by \cite{He},  and previous work by Donaldson~\cite{donaldsonJDG-02} and Zhou--Zhu~\cite{ZZ},   that any smooth toric variety with uniform relative K-stable Delzant polytope $(\Pol, {\Lab})$ in the sense of \eqref{uniform-K-stable} admits a compatible toric extremal K\"ahler metric (see \cite{apostolov-notes} for a survey of the proof).
\begin{thm}\cite{CC,He}\label{t:CC}  Let $(M, J, \T^{\C})$ be a smooth toric variety and $\alpha$ a K\"ahler deRham class on $(M, J)$, with corresponding  uniform relative K-stable labeled Delzant polytope $(\Pol, \Lab)$. Then  ${\KK}_{\alpha}^{\T}$ admits an extremal K\"ahler metric.
\end{thm}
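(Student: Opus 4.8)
The statement is the hard (existence) direction of the toric Yau--Tian--Donaldson correspondence for extremal K\"ahler metrics, and the plan is to prove it variationally, by minimizing the relative Mabuchi energy and then establishing enough a priori regularity to identify the minimizer with a smooth symplectic potential. By the Abreu--Guillemin correspondence of Section~\ref{s:toric}, a $\T$-invariant extremal K\"ahler metric in $\KK_{\alpha}^{\T}$ is the same datum as a convex $u \in \mathcal{S}_{0,0}(\Pol, \Lab)$ solving the Abreu equation $\gk(u, 0, 0) = \ell_{\rm ext}$ of \eqref{GKextremal} with $B=0$, i.e. $-\sum_{i,j}({\bf H})_{ij,ij} = \ell_{\rm ext}$ with ${\bf H} = (\Hess u)^{-1}$, where $\ell_{\rm ext}$ is the extremal affine-linear function of $(\Pol,\Lab)$ fixed by Definition~\ref{d:l-ext}. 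By Lemma~\ref{l:Mabuchi-derivative} (with $B=0$) such $u$ are precisely the critical points, and by the displayed convexity of the second variation they are the global minimizers, of the relative Mabuchi functional ${\bf M}^{\ell_{\rm ext}}$ on the linearly convex space $\mathcal{S}_{0,0}(\Pol, \Lab)$. Thus the theorem reduces to producing a smooth minimizer satisfying the Guillemin boundary conditions of Lemma~\ref{l:boundary}.

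First I would convert the uniform relative K-stability hypothesis into coercivity of ${\bf M}^{\ell_{\rm ext}}$. After normalizing $u$ by an affine-linear function so that $u \geq u(x_0) = 0$, the entropy term $-\int_{\Pol}\log\det(\Hess u)\,dx$ is controlled by a fixed fraction of the boundary integral $\int_{\partial\Pol} u\, d\sigma_{\Lab}$, using convexity and the boundary normal form $u = \tfrac12\sum_{L}L\log L + v$ of Lemma~\ref{l:boundary}. Feeding this into the stability inequality ${\bf F}_{\ell_{\rm ext}}(u) \geq \lambda\int_{\partial\Pol} u\, d\sigma_{\Lab} - \delta$ of Definition~\ref{d:uni-K-stable}, whose gap $\lambda$ is designed to dominate that fraction, yields ${\bf M}^{\ell_{\rm ext}}(u) \geq c\int_{\partial\Pol} u\,d\sigma_{\Lab} - C$ with $c>0$. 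Hence a minimizing sequence $u_k$ has uniformly bounded boundary integral and bounded entropy; this equivalence of uniform relative K-stability with properness of the relative Mabuchi energy is the toric input of \cite{CLS, He}.

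The crux, and the principal obstacle, is to upgrade these integral bounds to uniform a priori $C^0$, interior $C^2$, and higher-order estimates for the degenerate fourth-order Abreu equation, together with control of the boundary behavior ensuring the limit remains in $\mathcal{S}_{0,0}(\Pol,\Lab)$. This is exactly the content supplied by Chen--Cheng~\cite{CC}: from an entropy/energy bound one derives a uniform $C^0$ estimate on the potential and then $W^{2,p}$, Evans--Krylov and Schauder bootstrap for the second-order (Monge--Amp\`ere-type) part, while the interior higher regularity for the fourth-order structure follows the toric analysis of Donaldson~\cite{donaldson-interior, donaldsonJDG-02} and the boundary estimates refine Zhou--Zhu~\cite{ZZ}. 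The genuinely delicate points are the logarithmic degeneration of $\Hess(u)$ along $\partial\Pol$ and the lack of an a priori barrier, both of which are overcome by the Chen--Cheng estimates rather than by the classical toric methods.

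Finally, because the equation carries the non-constant affine-linear right-hand side $\ell_{\rm ext}$ rather than the constant $a$ of the cscK case, one invokes He~\cite{He}, who adapts the Chen--Cheng estimates to the relative (extremal) setting; note that $\ell_{\rm ext}$ is fixed purely combinatorially by $(\Pol,\Lab)$ through Lemma~\ref{l:donaldson}, so no separate determination of the extremal vector field is needed. Combining the estimates, the minimizing sequence converges in $C^{\infty}$ on compact subsets of $\mathring{\Pol}$ to a limit $u_{\infty} \in \mathcal{S}_{0,0}(\Pol,\Lab)$ solving \eqref{GKextremal} with $B=0$, which by the Abreu--Guillemin correspondence is the symplectic potential of the desired extremal K\"ahler metric in $\KK_{\alpha}^{\T}$.
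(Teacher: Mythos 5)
This theorem is not proved in the paper at all: it is imported verbatim from the literature, with the surrounding text attributing it to Chen--Cheng~\cite{CC}, He~\cite{He}, Donaldson~\cite{donaldsonJDG-02} and Zhou--Zhu~\cite{ZZ} and pointing to \cite{apostolov-notes} for a survey. So there is no internal proof to compare against, and any ``proof'' here necessarily amounts to a summary of those external works. Read in that light, your outline is a fair pr\'ecis of the known argument: uniform relative K-stability of $(\Pol,\Lab)$ yields coercivity/properness of the relative Mabuchi energy (the toric combinatorial step, due to \cite{CLS}, \cite{donaldsonJDG-02}, \cite{ZZ}), and properness yields existence of a smooth extremal metric by the a priori estimates of \cite{CC} as extended to the extremal setting by \cite{He}. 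You are also right that $\ell_{\rm ext}$ is determined combinatorially, so no extra input is needed there.

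Two caveats on how you package the analysis. First, you present the endgame as minimizing ${\bf M}^{\ell_{\rm ext}}$ over symplectic potentials and passing to a smooth limit via interior estimates for the fourth-order Abreu equation; that is Donaldson's original program, which was only carried out in special cases. What Chen--Cheng (and He) actually do is work on the complex side with the coupled second-order system for the K\"ahler potential, deriving $C^0$ and higher estimates from an entropy bound, and this is the step that cannot be replaced by the toric interior estimates of \cite{donaldson-interior}. You gesture at this but your narrative still reads as if the symplectic-potential minimization closes on its own; it does not, and if you were writing this proof for real you would have to switch pictures at that point. Second, the implication ``uniform stability $\Rightarrow$ properness'' is due to \cite{CLS}/\cite{ZZ}/\cite{donaldsonJDG-02} rather than to \cite{He}, whose role is the extremal version of the Chen--Cheng existence theorem. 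Neither point is a gap in substance --- the theorem is true and your chain of references covers it --- but as written the proposal slightly misallocates where the hard analysis lives.
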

Combining Theorems~\ref{c:easy} and \ref{t:CC}, we get the following
\begin{cor}\label{t:hard} Suppose $(M, F, J, \T)$ is an extremal toric symplectic type generalized K\"ahler structure. Then,   $\alpha:= [F]$ is a K\"ahler class which admits a $\T$-invariant extremal K\"ahler metric.
\end{cor}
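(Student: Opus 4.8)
The plan is to deduce the statement purely by verifying that the data $(M, J, \alpha, \T^{\C})$ satisfies the hypotheses of Theorem~\ref{t:CC}, and then invoking that theorem. Since $(M, J, \T^{\C})$ is a smooth projective toric variety by assumption, there are exactly two points that remain to be checked: that $\alpha = [F]$ is a genuine K\"ahler class on $(M, J)$, and that the labeled Delzant polytope governing ${\KK}^{\T}_{\alpha}$ is precisely the polytope $(\Pol, \Lab)$ that Theorem~\ref{c:easy} shows to be uniform relative K-stable.

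First I would argue that $\alpha = [F]$ is a K\"ahler class on $(M, J)$. A priori this is not obvious, since $F$ only tames $J$ and need not be of type $(1,1)$. However, on a smooth projective toric variety one has $H^{2,0}(M, J) = 0$, so $H^2(M, \R) = H^{1,1}(M, \R)$ and $\alpha$ is automatically a $(1,1)$-class. Because $F \in \alpha$ tames $J$, for every irreducible complex subvariety $V \subset M$ of complex dimension $p$ the restriction $F|_V$ is a symplectic form inducing the complex orientation of $V$, whence $\int_V \alpha^p = \int_V F^p > 0$. By the Nakai--Moishezon criterion of Demailly--P\u{a}un~\cite{DP}, $\alpha$ is therefore a K\"ahler class, exactly as in Remark~\ref{r:Hitchin}(b). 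Next I would observe that the labeled Delzant polytope of the polarized toric variety $(M, J, \alpha, \T^{\C})$ coincides, up to a translation, with the Delzant polytope $(\Pol, \Lab)$ of $(M, F, \T)$: the fan, and hence the labels $\Lab$, is fixed by $(M, J, \T^{\C})$, while the polytope itself depends only on the (equivariant) class $\alpha = [F]$ and the lattice. Consequently any $\T$-invariant K\"ahler metric in ${\KK}^{\T}_{\alpha}$ has the same moment polytope $(\Pol, \Lab)$ up to translation, and translations affect neither uniform relative K-stability nor the existence statement of Theorem~\ref{t:CC}.

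With these two points in hand I would simply assemble the two cited results. Theorem~\ref{c:easy} guarantees that $(\Pol, \Lab)$ is uniform relative K-stable, and the preceding paragraph identifies it with the polytope of $(M, J, \alpha, \T^{\C})$; Theorem~\ref{t:CC} of Chen--Cheng and He then produces a $\T$-invariant extremal K\"ahler metric in ${\KK}^{\T}_{\alpha}$, which is the desired conclusion. The genuine mathematical content is entirely contained in Theorems~\ref{c:easy} and~\ref{t:CC}, which I am free to cite; the only real obstacle in the gluing step is the verification that $[F]$ is K\"ahler on $(M,J)$ and that the Delzant polytope entering Theorem~\ref{c:easy} is the very same one entering Theorem~\ref{t:CC}, so that the uniform relative K-stability coming from the generalized K\"ahler side is precisely the input required by the toric Yau--Tian--Donaldson theorem on the K\"ahler side.
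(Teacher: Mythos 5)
Your proposal is correct, and its skeleton — feed the uniform relative K-stability from Theorem~\ref{c:easy} into the toric YTD theorem (Theorem~\ref{t:CC}) — is exactly the paper's. The difference lies in how you certify the two hypotheses of Theorem~\ref{t:CC}. The paper disposes of both in one stroke by invoking the Abreu--Guillemin description: the toric K\"ahler reduction $(\omega, J_u)$ of $(F,J)$ satisfies $\omega \in \alpha$ and $J_u$ biholomorphic to $J$ (citing \cite[Lemma~3.6]{ASU}), so $\alpha$ is exhibited as a K\"ahler class by an explicit $\T$-invariant K\"ahler metric that, by construction, shares the momentum coordinates and hence the labeled Delzant polytope $(\Pol,\Lab)$ with $F$. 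You instead argue abstractly: $H^{2,0}(M,J)=0$ for a projective toric variety forces $\alpha$ to be a $(1,1)$-class, taming gives positivity of $\int_V \alpha^p$ on subvarieties, and Demailly--P\u{a}un's Nakai--Moishezon criterion yields K\"ahlerness — precisely the argument the paper itself deploys in Remark~\ref{r:Hitchin}(b) in a non-toric situation — after which you separately match the polytopes via the fan and the cohomology class. Both routes are sound. The paper's is more economical here because the toric classification has already been set up and hands you the polytope identification for free; yours trades that machinery for the (deep, but citable) Demailly--P\u{a}un theorem and would survive in settings where no explicit K\"ahler reduction is available, at the cost of the extra (routine) polytope-matching step.
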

\begin{proof} This follows from Theorems~\ref{c:easy} and \ref{t:CC}, noticing that $\alpha=[F]$ is a K\"ahler class with Delzant polytope $(\Pol, \Lab)$: we already saw that the K\"ahler reduction  $(\kom, J_{u})$  satisfies $\omega\in \alpha$ and $J_{u}$ biholomorphic to $J$, see \cite[Lemma~3.6]{ASU}. \end{proof}

\begin{rmk}
By \cite{donaldsonJDG-02, hisamoto}, the uniform relative stability of $(\Pol, \Lab)$ is equivalent to a notion of uniform relative K-stability of the underlying K\"ahler manifold $(M, J, \alpha)$. The above result strongly suggests that in general, there is a similar stability notion associated to an extremal generalized K\"ahler manifold of symplectic type.
\end{rmk}

\subsection{An existence result a la LeBrun--Simanca}

Theorem~\ref{t:CC} motivates us to ask:
\begin{qtn}\label{q:existence} If $B\neq 0$, are there further obstructions,  beyond the uniform relative  K-stability of $(\Pol, \Lab)$, to the existence of a solution of \eqref{GK-Abreu}?
\end{qtn}
We show below that the answer to Question~\ref{q:existence} is negative for sufficiently ``small'' $B$. This follows from  a straightforward adaptation of  the LeBrun--Simanca~\cite{LS} openness result (compare with \cite[Theorem~8.2]{GotoLichne} which requires trivial automorphism group).
\begin{thm}\label{t:LS} Suppose $(M, J, \T^{\C})$ is a smooth toric variety which admits an extremal $\T$-invariant K\"ahler metric $\omega_{0}$ in the deRham class $\alpha$. Let $\poiss_J$ be a $\T^{\C}$-invariant Poisson structure.  Then, there exists $\varepsilon = \varepsilon(\omega_0, \poiss_J)>0$, such that for any $t\in \R, \, |t| < \varepsilon$, there exists an extremal generalized K\"ahler  structure $F_t \in {\GK}^{\T}_{t\poiss, \alpha}$. \end{thm}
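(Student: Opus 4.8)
The plan is to run a LeBrun--Simanca type openness argument \cite{LS}, with the Poisson tensor, rather than the K\"ahler class, serving as the deformation parameter. By Corollary~\ref{c:A-independence} we may take $A=0$ throughout, so the Poisson tensor $t\poiss_J$ is recorded by the single datum $tB\in\Wedge^2\tor$; by the Abreu-type formula \eqref{GK-Abreu} and Lemma~\ref{l:donaldson}, producing an extremal structure in $\GK^{\T}_{t\poiss,\alpha}$ is then equivalent to solving the Abreu equation \eqref{GKextremal},
\[
\kappa(u,0,tB)=-\sum_{i,j=1}^n\frac{\del^2}{\del\mu_i\,\del\mu_j}\bigl(\Hess(u)+\i\,tB\bigr)^{-1}_{ij}=\ell_{\rm ext}(\mu),\qquad u\in\mathcal{S}_{0,tB}(\Pol,\Lab),
\]
where the extremal affine function $\ell_{\rm ext}$ is \emph{fixed}, depending only on $(\Pol,\Lab)$ and not on $t$. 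At $t=0$ this is exactly the toric extremal equation \eqref{AK-extremal}, solved by the symplectic potential $u_0\in\mathcal{S}_{0,0}(\Pol,\Lab)$ of the given $\T$-invariant extremal K\"ahler metric $\omega_0$. Thus I would set up an implicit function theorem around the known solution $(t,u)=(0,u_0)$.

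The decisive structural observation is that the equation automatically projects trivially onto the obstruction space. Integrating \eqref{toric-by-parts} by parts shows that, for \emph{any} $u\in\mathcal{S}_{0,B}(\Pol,\Lab)$ and \emph{any} affine function $f$ on $\Pol$, one has $\int_\Pol f\,\kappa(u,0,B)\,dx=2\int_{\del\Pol}f\,d\sigma_\Lab$, independently of $u$ and $B$; since $\ell_{\rm ext}$ is characterized by $\mathbf{F}_{\ell_{\rm ext}}(f)=0$ (Definition~\ref{d:l-ext}), this gives $\int_\Pol f\,(\kappa(u,0,B)-\ell_{\rm ext})\,dx=0$ for all affine $f$. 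Consequently the nonlinear map $\phi\mapsto \kappa(u_0+\phi,0,tB)-\ell_{\rm ext}$ takes values in the $L^2(\Pol,dx)$-orthogonal complement $\mathcal{A}^\perp$ of the affine functions $\mathcal{A}$. I would therefore define, in weighted H\"older spaces adapted to the boundary conditions (i)--(iii)$'$ of Lemma~\ref{l:boundary}, the smooth map $\mathcal{F}(t,\phi)=\kappa(u_0+\phi,0,tB)-\ell_{\rm ext}$ on the product of an interval with the subspace of potential variations lying in $\mathcal{A}^\perp$, taking values in $\mathcal{A}^\perp$, and record that $\mathcal{F}(0,0)=0$.

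Next I would analyze the partial linearization $D_\phi\mathcal{F}(0,0)$. By Lemma~\ref{l:Mabuchi-derivative} the second variation of the relative Mabuchi energy is $\int_\Pol\tr\bigl[(\,(\Hess u_0)^{-1}\Hess\dot u\,)^2\bigr]\,dx\ge 0$, and $D_\phi\mathcal{F}(0,0)$ is the associated toric Lichnerowicz--Abreu operator: a formally self-adjoint, fourth-order elliptic operator whose kernel, among potentials satisfying the boundary conditions of Lemma~\ref{l:boundary}, consists precisely of the affine functions $\mathcal{A}$ --- matching the reduced automorphism algebra of the maximal torus $\T^{\C}$. Being self-adjoint and Fredholm of index zero in the appropriate spaces, its restriction to $\mathcal{A}^\perp$ is an isomorphism onto $\mathcal{A}^\perp$. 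The implicit function theorem then produces, for $|t|<\varepsilon(\omega_0,\poiss_J)$, a solution $\phi_t\in\mathcal{A}^\perp$ with $\phi_t\to 0$ as $t\to 0$ and $\kappa(u_0+\phi_t,0,tB)=\ell_{\rm ext}$. Finally I would verify admissibility: the conditions (i)--(iii)$'$ of Lemma~\ref{l:boundary} are open (as in Lemma~\ref{l:S-convex}), so since $u_0+\phi_t\to u_0\in\mathcal{S}_{0,0}$ and $tB\to 0$, after shrinking $\varepsilon$ we get $u_t:=u_0+\phi_t\in\mathcal{S}_{0,tB}(\Pol,\Lab)=\mathcal{S}_{tA,tB}(\Pol,\Lab)$, whence $(u_t,tA,tB)$ are the Abreu--Guillemin data of an extremal generalized K\"ahler structure $F_t\in\GK^{\T}_{t\poiss,\alpha}$.

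The main obstacle is precisely that the linearization has a nontrivial kernel, forced by the continuous automorphism group $\T^{\C}$ of the toric variety, so that the naive implicit function theorem does not apply; the crux is the a priori orthogonality $\int_\Pol f\,(\kappa-\ell_{\rm ext})\,dx=0$ for affine $f$, valid for the correct extremal target $\ell_{\rm ext}$ and every $B$, which is exactly what allows the implicit function theorem to close on the slice $\mathcal{A}^\perp$. Beyond this, the only remaining work is the choice of function spaces enforcing the Donaldson-type boundary behaviour at $\del\Pol$ and the ellipticity and Fredholm theory for the linearized operator there; these are supplied by the linear analysis already available in the toric setting (cf.\,\cite{donaldsonJDG-02,CLS,ACGT2}) and require no new ideas.
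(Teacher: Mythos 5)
Your strategy is the right one and you have isolated the same key structural point as the paper: the a priori identity $\int_{\Pol} f\,(\kappa(u,0,B)-\ell_{\rm ext})\,dx=0$ for affine $f$, coming from \eqref{toric-by-parts} and the definition of $\ell_{\rm ext}$, is exactly what neutralizes the cokernel forced by $\T^{\C}$ and lets a LeBrun--Simanca scheme close. Where you diverge from the paper is the arena for the implicit function theorem: you stay on the polytope and treat the symplectic potential $u$ as the unknown, whereas the paper Legendre-transforms back to the manifold, writes the deformation as a relative K\"ahler potential $\phi_t$ with $\omega_{\phi_t}=\omega_0+dd^c_J\phi_t$ and $F_{(\phi_t,tB)}=\omega_{\phi_t}+t\sum B_{ij}\,d\mu^{\phi_t}_i\wedge d\mu^{\phi_t}_j$, and linearizes $\Gscal_{(F_{(\phi,tB)},J)}-\ell_{\rm ext}(\mu^{\phi})$ at $(0,0)$. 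The payoff of the paper's route is that the linearization is then literally the Lichnerowicz operator $\mathbb{L}_{\omega_0}$ of the extremal K\"ahler metric on the compact manifold $M$: ellipticity, self-adjointness, Fredholmness in ordinary Sobolev spaces, and the identification of the kernel with $\T$-invariant Killing potentials (hence pull-backs of affine functions, by maximality of $\T$) are all classical. The projection step is then done with $(\mathrm{Id}-\Pi_0)$ on $M$, and the full equation is recovered from the observation that $\Pi_t$ annihilates the error automatically while $\Ker(\mathrm{Id}-\Pi_0)\cap\Ker(\Pi_t)=0$ for small $t$ --- morally the same as your single fixed $L^2(\Pol,dx)$ slice, which is arguably cleaner since Lebesgue measure on $\Pol$ is the pushforward of $F^{[n]}$ for every admissible structure.

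The one place where your write-up is thinner than it should be is the assertion that the linearized Abreu operator on the polytope is ``Fredholm of index zero in the appropriate spaces'' with kernel exactly $\mathcal{A}$, supplied by ``linear analysis already available in the toric setting''. This operator is fourth order and degenerates at $\del\Pol$ in the manner dictated by the Guillemin boundary conditions; setting up weighted spaces in which it is an isomorphism off the affine functions, and justifying that the boundary terms in the integration by parts identifying its kernel with $\Hess(\dot u)=0$ actually vanish, is a genuinely delicate piece of analysis (this is the content of Donaldson's interior-estimate program, not an off-the-shelf fact). This is precisely the burden the paper's Legendre transform is designed to avoid. So your argument is not wrong, but as written it outsources its hardest step; either carry out that boundary-degenerate elliptic theory or, as the paper does, transfer the problem to $M$ where the standard theory applies verbatim. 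You should also record, as the paper does at the outset, that $\mathcal{S}_{0,tB}(\Pol,\Lab)$ is nonempty for small $t$ (e.g.\ because $\omega_0+t\sum B_{ij}d\mu_i\wedge d\mu_j$ still tames $J$), so that the equation you are solving is posed on a nonempty set.
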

\begin{proof} Let $(\Pol, \Lab)$ be the Delzant polytope of $(M, \omega_0, \T)$.  By the $\T^{\C}$-invariance of  $\poiss_J$, it is of the form \eqref{p:toric-Poisson} for $A, B \in \Wedge^2 \tor$. The extremal K\"ahler metric $(J, \omega_0)$ then corresponds to a solution $u_{0} \in \mathcal{S}_{0,0}(\Pol, \Lab)$  at $t=0$ of the family of PDE's
\begin{equation}\label{GK-Abreu-t}
	 - \sum_{i,j=1}^n \frac{\partial^2}{\partial \mu_i \partial \mu_j} \left(\Hess(u_t) + \i tB\right)_{ij}^{-1} = \ell_{\rm ext}(\mu), \qquad u_t \in \mathcal{S}_{tA, tB}(\Pol, \Lab)=\mathcal{S}_{0,tB}(\Pol, \Lab).
\end{equation}
We first notice that \eqref{GK-Abreu-t} is well-defined for $|t|< \varepsilon_0$. Indeed, there exists $\varepsilon_0>0$ such that the closed $2$-form $F_t = \omega_0 + t\sum_{i,j=1}^{n} B_{ij} d\mu_i\wedge d\mu_j$ tames $J$ for any $|t|<\varepsilon_0$; the A-transform of $(F_t, J)$ with $A\in \Wedge^2\tor$ then defines a generalized K\"ahler structure with data  $(u_0, tA, tB)$ for any $|t|<\varepsilon_0$.  Thus, the spaces $\mathcal{S}_{tA, tB}(\Pol, \Lab)= \mathcal{S}_{0, tB}(\Pol, \Lab)$ are nonempty.

Up to an equivariant diffeomorphism (obtained by using Moser's lemma),  any element  of ${\GK}^{\T}_{t\poiss, \alpha}$ corresponds to  a $F$-compatible toric generalized K\"ahler structure with data $(u_t, tA, tB)$. It is thus enough to show that there exists $0<\varepsilon  < \varepsilon_0$  such that \eqref{GK-Abreu-t} has a solution $u_t\in \mathcal{S}_{tA, tB}(\Pol, \Lab)$ for any $|t|<\varepsilon$.  By  Corollary~\ref{c:A-independence}, it is sufficient to show the latter  holds for $A=0$, or equivalently, we can assume without loss of generality that the Poisson tensor $\poiss_J$ is of the form \eqref{p:toric-Poisson} with $A=0$.

In order to set up the problem in a form suitable for the application of the implicit function theorem, we will first apply the Legendre transform to \eqref{GK-Abreu-t} (assuming $A=0$), as we detailed in the proof of Proposition~\ref{l:geodesic-toric}: letting $y_t = \nabla u_t$ and $\varphi_t(y_t) + u_t(\mu)= (y_t, \mu)$, this introduces $\T$-invariant smooth functions $\phi_t(y_0)= \varphi_{u_t}(y_0) -\varphi_{u_0}(y_0)$ on $M$, such that
\begin{equation}\label{omega-t}
\begin{split}
\omega_{\phi_t} &:= \omega_0 + dd^c_J\phi_t>0,  \\
 F_{(\phi_t, tB)} &: = \omega_{\phi_t} + t\sum_{i,j=1}^n B_{ij} d\mu^{\phi_t}_i\wedge d\mu^{\phi_t}_j, \qquad \mu^{\phi_t}:= \mu + d^c_J \phi_t,
\end{split}
\end{equation}
is a generalized K\"ahler  structure in ${\GK}^{\T}_{t\poiss,\alpha}(M, J)$ with $J=J_{u_0}$,  $\T$-equivariantly isometric to the one corresponding to the data $(u_t, 0, tB)$.  The process is invertible, by applying Legendre transform with respect to  $\phi_t = \varphi_t + \phi_0$. Thus, finding solution of \eqref{GK-Abreu-t} is equivalent to finding $\phi_t\in C^{\infty}(M,\R)^{\T}$  such that
\begin{equation}\label{F-t}
{\Gscal}_{(F_{(\phi_t, tB)}, J)} = \ell_{\rm ext}(\mu^{\phi_t}).
\end{equation}
We define a map
\begin{align*}
W &:\ C^{\infty}(M,\mathbb R)^{\T} \times \Wedge^2 \tor \to C^{\infty}(M, \mathbb R)^{\T}\\
W(\phi,B) &= {\Gscal}_{(F_{(\phi, B)}, J)} - \ell_{\rm ext}(\mu^{\phi}).
\end{align*}
Note that $W$ is a nonlinear differential operator defined on an open subset of $0\in C^{\infty}(M,\R)^{\T}$.  It can be extended as a  $C^1$-map on suitable Sobolev spaces by standard theory.  We are now in a position to apply the implicit function theorem to solve ${\Gscal}_{(F_{(\phi, tB)}, J)}=0$, knowing that $\phi_0=0$ is a solution at $t=0$.

The differential of $W$ at $(0, 0)$, computed in the direction of $(\dot \phi, 0)$ is the linearization at $\phi=0$ of the \emph{normalized scalar curvature} ${\Scal}_{\omega_{\phi}}^{\T}:= \Scal_{\omega_{\phi}} - \ell_{\rm ext}(\mu^{\phi})$ with respect to $\T$ of the K\"ahler metric $\omega_{\phi}=\omega_0 + dd^c_J \phi$.  As $\omega_0$ is an extremal K\"ahler metric, $DW_{(0,0)}(\dot \phi,0)$ is given by (see for instance \cite[Lemma~5.2.9]{gauduchon-book} applied to $G=\T$ or  Corollary~\ref{c:GK-dot-scal}):
\[ DW_{(0,0)} (\dot \phi,0) = \mathbb{L}_{\omega_0}(\dot \phi), \]
where ${\mathbb L_{\gw_0}}$ is the Lichnerowicz Laplacian of $\omega_0$.  The kernel of $\mathbb L$ restricted to the space of $\T$-invariant smooth functions $C^{\infty}(M,\R)^{\T}$ therefore consists  of the $\T$-invariant $\omega_0$-Killing potentials.  Using the maximality of $\T$, these are precisely the pull-backs of affine-linear  functions on $\Pol$ by the moment map $\mu$. To remedy the nontriviality of $\Ker(\mathbb L_{\gw_0})$, LeBrun--Simanca~\cite{LS} proposed to consider the modified operator
\[ \left({\rm Id} - \Pi_0\right) \left({\Gscal}_{(F_{(\phi, B)}, J)} - \ell_{\rm ext}(\mu^{\phi})\right), \]
where $\Pi_0$ denotes the $L^2(M,dV_{\omega_0})$-orthogonal  projection to the vector space of pull-backs by $\mu$ of affine-linear functions.  This operator now acts on a neighborhood of $0\in \left({\rm Id} -\Pi_0\right)(C^{\infty}(M,\R)^{\T})$ and takes values in $\left({\rm Id} -\Pi_0\right)(C^{\infty}(M,\R)^{\T})$.  The corresponding linearization is $\left({\rm Id} -\Pi_{0}\right) \mathbb L_{\gw_0} = {\mathbb L}_{\omega_0}$ (as ${\mathbb L}_{\omega_0}$ is self-adjoint on $L^2(M, dV_{\omega_0})$) and it has a trivial kernel on $\left({\rm Id} -\Pi_0\right)(C^{\infty}(M,\R)^{\T})$. By the implicit function theorem, one can then find a family  $\phi_t, \, \phi_0=0$ (in a suitable Sobolev space embedded in $C^{4}(M)$) such that
\[\left({\rm Id} -\Pi_0\right)\left({\Gscal}_{(F_{(\phi_t, tB)}, J)} - \ell_{\rm ext}(\mu^{\phi_t})\right) =0. \]
Notice that, by the definition of $\ell_{\rm ext}$ (see \eqref{AK-extremal} and \eqref{toric-by-parts} with $f$ affine-linear),
\[\Pi_{t} \left({\Gscal}_{(F_{(\phi_t, tB)}, J)} - \ell_{\rm ext}(\mu^{\phi_t})\right) = 0,\] where $\Pi_{t}$ stands for the $L^2(M, dV_{F_{(\phi_t, tB)}})$ orthogonal projection to the space of  pull-backs by $\mu^{\phi_t}$ of affine-linear functions on $\Pol$. As ${\rm Ker}\left({\rm Id} - \Pi_0\right)  \cap {\rm Ker}\left(\Pi_t\right)=0$ for $t$ close to $0$, we conclude that \eqref{F-t} holds. The bootstrapping argument for $\phi_t$ uses the ellipticity of the linearization of $\Gscal_{(F_{\phi} tB)}$,  and the fact that the vector field $\omega_{\phi}^{-1}\left(d\ell_{\rm ext}(\mu^{\phi})\right)$ must be $J$-holomorphic (and thus smooth) as soon as $\phi \in C^{4}(M)$. \end{proof}

\begin{rmk}  Theorem~\ref{t:LS}  yields new  examples  of  cscGK structures  even on ${\bf CP}^2$ endowed with a general toric Poisson tensor $\poiss_J$. The existence results for such metrics obtained  in \cite{boulanger, Gotomoment, GotoLichne} apply only for special  toric Poisson tensors  which correspond to $B=0$ in our notation. In this case, \eqref{GK-Abreu} reduces to the usual Abreu equation and is  trivially solved by the symplectic potential of the Fubini--Study metric.

In general, when we fix the generalized K\"ahler class $\GK_{\poiss,\alpha}^0$ (and whence the cohomology class of the symplectic form and the Poisson tensor $\poiss_J$), the existence problem for extremal generalized K\"ahler structures  is not scale invariant. Our result above solves  Question~\ref{q:existence} only partially, as we have no control on how small the scale $t\poiss_J$ in Theorem~\ref{t:LS} is.
\end{rmk}

\bibliographystyle{hamsplain}

\end{document}